\numberwithin{equation}{section}
\theoremstyle{plain}
\newtheorem{theorem}{Theorem}[section]
\newtheorem{lemma}[theorem]{Lemma}
\newtheorem{proposition}[theorem]{Proposition}
\newtheorem{conjecture}[theorem]{Conjecture}
\theoremstyle{remark}
\newtheorem{remark}[theorem]{Remark}
\newtheorem{example}[theorem]{Example}
\newtheorem{definition}[theorem]{Definition}
\setlist{topsep=8pt,itemsep=4pt,partopsep=4pt, parsep=4pt}
\crefname{equation}{}{}
\Crefname{equation}{Eq.}{Eqs.}
\crefname{assumption}{Assumption}{Assumptions}
\newcommand{\R}{\mathbb{R}} % reelle
\newcommand{\Q}{\mathbb{Q}} % rationale
\newcommand{\Z}{\mathbb{Z}} % ganze
\newcommand{\N}{\mathbb{N}} % natuerliche
\newcommand{\A}{\mathcal{A}} % sigma algebra
\renewcommand{\P}{\mathbb{P}} % W-Maß
\newcommand{\E}{\mathbb{E}} % expectancy
\newcommand*\diff{\mathop{}\!\mathrm{d}}
\newcommand{\tendsto}[1]{\stackrel{#1}{\longrightarrow}}
\newcommand{\id}{\mathrm{id}}
\newcommand{\ind}{\mathds{1}}
\newcommand{\e}{\mathrm{e}}
\DeclareMathOperator{\dom}{dom}
\DeclareMathOperator{\ch}{ch}
\DeclareMathOperator{\pr}{pr}
\newcommand{\fpart}{\Pi} % \Pi, P, \eta
\newcommand{\rates}{R^\circ} %R
\newcommand{\nurates}{R} %\widetilde{R}
\newcommand{\mergers}{M}
\DeclareMathOperator{\lf}{lf}
\DeclareMathOperator{\rt}{rt}
\DeclareMathOperator{\nd}{nd}
\DeclareMathOperator{\dcb}{dec}
\DeclareMathOperator{\dct}{tm}
\DeclareMathOperator{\dcs}{sp}
\newcommand{\kbr}{K} % stands for k_ernel br_idge
\newcommand{\semi}{S}
\newcommand{\rfy}[1]{\widetilde{{#1}}} % for an Rd-version of something defined for the torus.
\DeclareMathOperator{\fr}{Fr}
\DeclareMathOperator{\tm}{Tm}
\let\sp\relax % workaround because \RedeclareMathOperator doesnt exist
\DeclareMathOperator{\sp}{Sp}
\DeclareMathOperator{\dc}{Dec}
\newcommand{\pth}{\mathrm{Path}}
\newcommand{\smashop}[1]{\,\,\smashoperator{#1}\,\,}
\newcommand{\wo}{\mathbin{{\setminus}\mspace{-5mu}{\setminus}}}
\newcommand{\ftm}{f_\mathrm{tm}^{\boldsymbol \lambda}}
\newcommand{\fsp}{f_\mathrm{sp}}
\newcommand{\fnu}{f_{\boldsymbol \nu}}
\newcommand{\interp}{\overline{x}} % for lem:NFaux
\newcommand{\nn}{\widetilde{N}} % replacement for \beta
\newcommand{\la}{\beta} % replacement for \lambda
\newcommand{\nut}[1]{|\nu_{#1}|}
\newcommand{\nkmap}{\kappa} % \phi \chi \eta \theta
\newcommand{\labelinv}{\mathcal{I}} % set of bijections
\newcommand{\nunk}{\nu_{n,\smash{\vec{k}}}}
\newcommand{\gnk}{\nu_{n,\smash{\vec{k}}}}
\newcommand{\lank}{\lambda_{n,\smash{\vec{k}}}}
\newcommand{\cnk}{c}%{c_{n,\smash{\vec{k}}}}
\newcommand{\treethree}[1]{%
  \vcenter{\hbox{%
    %\tikzset{external/export=false}%
    \begin{tikzpicture}[scale=0.10, line width=.5pt]
      \draw[#1] (0,0) -- (0,1);
      \draw[#1] (1,0) -- (1,1);
      \draw[#1] (2,0) -- (2,2);
      \draw[#1] (0,1) -- (1,1);
      \draw[#1] (.5,1) -- (.5,2);
      \draw[#1] (.5,2) -- (2,2);
      \draw[#1] (1.25,2) -- (1.25,3);
    \end{tikzpicture}%
  }}%
}
\newcommand{\rs}[1]{\accentset{\circ}{#1}} % for restricting forests and decorations from n + 1 leaves to n leaves.
\newcommand{\rsto}[1]{\mathord\downarrow_{#1}}
\def\step{0.015}
\def\pt{28.45274}
\def\sig{0.1}
\newcommand\BrownianBridgeX{} % just for safety
\def\BrownianBridgeX[#1](#2)(#3){%
    % Synopsis
    % \BrownianBridgeX[draw options](start)(end)
    \tikzmath{
        int \n;
        coordinate \pstart, \pend;
        \pstart = (#2);
        \pend = (#3);
        \n = {((\pendx-\pstartx)/(\step*\pt))};
        \n = \n + 1;
    }
    \path (#2)
    \foreach \x [count=\i] in {1,...,\n} {
        to ++(\step,rand*\sig)
        node (A\i) {}
    } node (END) {};
    \draw[#1] (#2)
    \foreach \i in {1,...,\n} {
        let \p1=(#2), \p2=(#3), \p3 = (A\i), \p4 = (END) in to (\x3,{\y3+(\i*\step*\pt/(\x2-\x1))*(\y2-\y4)})
    };
    %\draw[#1] (#2) circle[radius=2pt];
    %\draw[#1] (#3) circle[radius=2pt];
}
\NewDocumentCommand{\BBY}{ommoo}{%
    % Synopsis
    % draw options=empty, start, end, seed=random, sig=0.07
    \IfValueT{#4}{\pgfmathsetseed{#4}}
    \IfValueTF{#5}{\def\sig{#5}}{\def\sig{0.07}}
    \tikzmath{
        int \n;
        coordinate \pstart, \pend;
        \pstart = (#2);
        \pend = (#3);
        \n = {((\pendy-\pstarty)/(\step*\pt))};
        \n = \n + 1;
    }
    \path (#2)
    \foreach \x [count=\i] in {1,...,\n} {
        to ++(rand*\sig,\step)
        node (A\i) {}
    } node (END) {};
    \IfValueTF{#1}{\draw[#1]}{\draw} (#2)
    \foreach \i in {1,...,\n} {
        let \p1=(#2), \p2=(#3), \p3 = (A\i), \p4 = (END) in to ({\x3+(\i*\step*\pt/(\y2-\y1))*(\x2-\x4)},\y3)
    };
    %\draw[#1] (#2) circle[radius=2pt];
    %\draw[#1] (#3) circle[radius=2pt];
}
\title{The Brownian Spatial Coalescent}
\author{Peter Koepernik\textsuperscript{1,2}}
\thanks{\textsuperscript{2}The author gratefully acknowledges support from an EPSRC grant EP/W523781/1.}
\address{\textsuperscript{1}Department of Statistics, University of Oxford, 24–29 St Giles’, Oxford, OX1 3LB, United Kingdom}
\email{peter.koepernik@stats.ox.ac.uk}
\date{16 January 2024}
\keywords{Structured coalescent, genealogical process, spatial population models, Fleming-Viot process, Brownian motion, $\Xi$-coalescent, superprocess} %sampling consistency, stationary
\subjclass[2010]{Primary 60J90, 60J25, 92D25; Secondary 60K35, 60J80, 60J68}
\begin{document}

\begin{abstract}

    We introduce a class of Markov coalescent processes on the continuous $d$-dimensional torus, in the most general setting of simultaneous multiple mergers, called the Brownian spatial coalescent. It is axiomatically defined through a property that is satisfied by the genealogies of any population model in which individuals follow independent Brownian motions forwards in time, regardless of the branching mechanism.

    We prove that a Brownian spatial coalescent is characterised by a set of \emph{transition measures}, reminiscent of the transition rates that characterise a non-spatial coalescent.
    We prove that it is sampling consistent in a suitable sense if and only if all transition measures are uniform with intensity given by the transition rates of a $\Xi$-coalescent. %, or a $\Lambda$-coalescent if simultaneous mergers are not allowed.
    This defines the \emph{Brownian spatial $\Xi$-coalescent}, which we show describes the genealogies of neutral population models with Brownian movement in the limit of large population size, and in particular those of the $\Xi$-Fleming-Viot process---a generalisation of the well-known Fleming-Viot process---at stationarity.
    An important consequence of our results is that all spatial population models in which individuals follow independent Brownian motions and the branching mechanism is not neutral, that is, depends non-trivially on the spatial distribution, for example through local regulation, have non-Markovian genealogies.

    Byproducts of our results include explicit formulas for samples from the stationary distribution of a $\Xi$-Fleming-Viot process, and a representation of the backward dynamics of lineages in terms of Brownian motions with coupled drift. This includes calculations of the drift that leads to multiple or even simultaneous mergers in any dimension.

\end{abstract}

\maketitle

%\newpage
%\tableofcontents

\section{Introduction}
%todo cite wright. From nicks email: Wright’s (1943, 1946 Genetics) papers are quite impenetrable, but still worth citing, as the first to calculate the distribution of coalescence times in spatially continuous populations.  For nearby genes in 2D, this is ~1/t, and leads to the same approximations as Mal\'ecot (though that isn't at all obvious).
%todo another thing to cite. from nicks email: Barton & Wilson (1995) explore the model where total population size is kept constant; this is a sensible model for evolution of a neutral quantitative trait, but as a spatial model, leads to extreme clumping (as pointed out by Felsenstein). The paper also includes an attempt to explain Wright’s reasoning, and unsuccessful efforts to justify his approximation.  I'm mentioning this mainly because it might be helpful to introduce this simple neutral model early on, to contrast with models with (some) local regulation.
Coalescent processes arise when describing the genealogies, that is family relations, of a biological population by tracing the ancestries of a sample from the population backwards in time. If individuals in the population have exactly one parent, such as in haploid asexual populations, then every sampled individual has a single ancestral lineage going backwards in time, and the lineages associated with a set of individuals coalesce when their common ancestor is reached. We will not consider diploid populations, but remark that they can be treated as haploid populations of twice the size if one is only interested in a single gene, see e.g.~\cite{E2008}, Remark 2.1.
The most basic coalescent process is Kingman's coalescent, introduced in 1982 \cite{kingman1982}, in which every pair of lineages coalesces at constant rate, independently of all other pairs.

In this project, we are ultimately interested in the genealogies of \emph{structured} populations, where every individual has an evolving type or location, but we begin with a brief overview of some classical theory in the unstructured case.
%We begin with a brief overview of some classical theory.
The reader familiar with the field should feel free to skim \cref{sec:coalintro}.

\subsection{Coalescents}\label{sec:coalintro}
Mathematically, a coalescent is typically described by giving disjoint, set-valued labels to the lineages, which merge when the lineages coalesce. Denote by $\mathcal{P}$ the set of partitions of finite subsets of $\N$.

\begin{definition}\label{def:NSCPintro}
    A \emph{coalescent} is a $\mathcal{P}$-valued right-continuous Markov process $(\fpart_t)_{t \ge 0}$ whose transitions can be obtained by merging partition elements. It is called \emph{label invariant} if its law is invariant under changing the labels of the initial set of lineages.\footnote{In such a way that does not necessarily preserve their size. For example, the substitution $\left\{ 1,3 \right\} \to \left\{ 3,4,9 \right\} $ is allowed.}
    %and which is independent of particle labels in the sense that the counting process $(|\fpart_t|)_{t \ge 0}$ is also a Markov process.
\end{definition}
A precise definition is \cref{def:NSCP}. The label invariance condition is equivalent to requiring the block counting process $(|\fpart_t|)_{t \ge 0}$ to be Markovian. % second condition is equivalent to invariance of the process under reassigning labels of the initial set of lineages (not necessarily preserving their cardinality). %, including, say $\left\{ 1,2 \right\} \to \left\{ 3,5,9 \right\} $.
Another equivalent phrasing is that for any $n,m\in \N$ and $k_1 \ge \ldots \ge k_m \ge 2$ with $\sum_{i=1}^m k_i \le n$, every coalescence event in which $m$ disjoint sets of lineages of sizes $k_1$ to $k_m$ merge simultaneously while there are currently $n$ lineages---called an $(n,\vec{k})$-merger---happens at the same rate, which we denote by $\lank = \lambda_{n,k_1, \ldots ,k_m}$. % The Markov assumption is satisfied if the coalescent describes genealogies of a population in which subsequent generations are independent.

Not every coalescent in the sense of \cref{def:NSCPintro} arises from the genealogies of a population model, and a natural necessary condition is \emph{sampling consistency}: if we construct the genealogical tree corresponding to a sample of size $n+1$, then the induced genealogical tree for a subsample of size $n$ will have the same distribution as the tree obtained by constructing the coalescent directly from the subsample. %; see \cref{def:NSCP2}.
An equivalent definition is that there exists a coalescent started from all of $\N$ such that the coalescent started from a finite subset $A \subset \N$ is its restriction to $A$.

\begin{theorem}[\cite{DK99,pitmanlambda,sagitovlambda}]\label{thm:lambdaintro}
    A label invariant coalescent with no simultaneous mergers, that is $\lambda_{n,k_1, \ldots ,k_m} = 0$ whenever $m > 1$, is sampling consistent if and only if there exists a finite measure $\Lambda$ on $[0,1]$ such that \[
        \lambda_{n,k} = \int_0^1 p^{k-2} (1-p)^{n-k} \Lambda(\diff p),\qquad 2 \le k \le n.
    \] It is called the \emph{$\Lambda$-coalescent}.
\end{theorem}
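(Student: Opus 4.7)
The plan is to reduce sampling consistency to a simple algebraic recursion on the rates, and then invoke Hausdorff's moment problem to extract $\Lambda$.

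\emph{Step 1: Reformulate sampling consistency as a recursion.} Since only binary-style mergers (a single set coalescing at a time) are allowed, I first translate ``the coalescent on $n+1$ lineages restricted to $n$ lineages has the same law as the coalescent directly on $n$'' into a rate identity. When $n+1$ lineages are present, a specific set $S$ of $k$ of the first $n$ can end up merging in the restricted process in exactly two disjoint ways: either $S$ alone merges (rate $\lambda_{n+1,k}$), or $S$ together with the extra $(n+1)$st lineage merges (rate $\lambda_{n+1,k+1}$). Hence sampling consistency is equivalent to
\[
    \lambda_{n,k} \;=\; \lambda_{n+1,k} + \lambda_{n+1,k+1}, \qquad 2 \le k \le n.
\]
This is routine bookkeeping; I only need to check that no other type of event in the $(n+1)$-coalescent restricts to a $k$-merger of the chosen set.

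\emph{Step 2: Extract a completely monotone sequence.} Setting $F_n := \lambda_{n,2}$, the recursion gives $F_n - F_{n+1} = \lambda_{n+1,3} \ge 0$, and iterating, $\lambda_{n,k}$ turns out to be the $(k-2)$-th backward difference of $F$, i.e.
\[
    \lambda_{n,k} \;=\; (-1)^{k-2} \Delta^{k-2} F_{n-k+2},
\]
where $\Delta F_n = F_{n+1} - F_n$. Since all the $\lambda_{n,k}$ are non-negative rates, all iterated backward differences of $F$ are non-negative. Thus $(F_n)_{n \ge 2}$ is a completely monotone sequence.

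\emph{Step 3: Apply Hausdorff's moment theorem.} Complete monotonicity of $(F_n)$ yields a unique finite measure $\Lambda$ on $[0,1]$ with
\[
    F_n \;=\; \lambda_{n,2} \;=\; \int_0^1 (1-p)^{n-2} \,\Lambda(\diff p).
\]
Plugging this into the formula for $\lambda_{n,k}$ as the $(k-2)$-th backward difference and using $\Delta (1-p)^{n-k} = -p(1-p)^{n-k}$, one obtains by a short induction
\[
    \lambda_{n,k} \;=\; \int_0^1 p^{k-2}(1-p)^{n-k}\,\Lambda(\diff p), \qquad 2 \le k \le n,
\]
which is the claimed representation.

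\emph{Step 4: The converse.} Given a finite $\Lambda$, define $\lambda_{n,k}$ by the integral. A direct computation using $(1-p) + p = 1$ verifies the recursion from Step 1, and hence sampling consistency. The main obstacle is Step 2: one must be careful to identify the correct finite-difference operator and check that non-negativity of the rates translates exactly into Hausdorff's complete monotonicity criterion; everything else is routine. I expect label invariance and the absence of simultaneous mergers to enter only through the indexing by $(n,k)$ (so that a single scalar $\lambda_{n,k}$ suffices), not through the moment-problem step itself.
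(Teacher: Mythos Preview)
The paper does not prove this statement; it is quoted as a classical result with references to Donnelly--Kurtz, Pitman, and Sagitov, so there is no ``paper's own proof'' to compare against. Your proposal is correct and is precisely Pitman's argument: the recursion $\lambda_{n,k}=\lambda_{n+1,k}+\lambda_{n+1,k+1}$ is equivalent to sampling consistency (label invariance is what makes the projected process Markov with rates depending only on block counts, so that the two cases ``extra lineage still alone'' and ``extra lineage already absorbed'' give the same induced rate), the finite differences of $F_n=\lambda_{n,2}$ recover all $\lambda_{n,k}$, and Hausdorff's theorem produces $\Lambda$. One small point worth making explicit in Step~2: for \emph{every} pair $m\ge 2$, $j\ge 0$ you can realise $(-1)^j\Delta^jF_m$ as some $\lambda_{n,k}$ (take $k=j+2$, $n=m+j$), so non-negativity of all rates really does give full complete monotonicity of $(F_m)_{m\ge 2}$, not just of some differences.
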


The Kingman coalescent arises as a special case if $\Lambda$ is the unit mass at zero. If $\Lambda$ is the uniform measure on $[0,1]$, then the $\Lambda$-coalescent corresponds to the clustering process constructed by Bolthausen and Sznitman \cite{bolthausensznitman}.
A characterisation in the most general case of simultaneous multiple mergers is also known. Denote by $\triangle = \{\boldsymbol \xi = (\xi_1,\xi_2,\ldots ) \colon \xi_1 \ge \xi_2 \ge \ldots \ge 0, |\boldsymbol \xi| \le 1\}$ the infinite simplex, where $|\boldsymbol \xi| = \sum_{i=1}^\infty \xi_i$. Write $(u,v) = \sum_{i=1}^\infty u_iv_i$ for $u,v\in \triangle$.

\begin{theorem}[\cite{schweinsbergxi,haploid}]\label{thm:xiintro}
    A label invariant coalescent is sampling consistent if and only if there exists a finite measure $\Xi = a\delta_0 + \Xi_0$ on $\triangle$, where $\delta_0$ is the unit mass at zero and $\Xi_0$ has no atom at zero, such that $\lambda_{n,2} = a$ for all $n\ge 2$, and for any $(n,\vec{k}) \neq (n,2)$,
    \begin{equation}\label{eq:xirates}
        \lank = \int_\triangle \sum_{l=0}^s \binom{s}{l}  (1 - |\boldsymbol \xi|)^{s-l} \smashoperator{\sum_{i_1 \neq \ldots \neq i_{m+l}}} \xi_{i_1}^{k_1}\ldots \xi_{i_m}^{k_m} \xi_{i_m+1}\ldots \xi_{i_{m+l}} \frac{\Xi_0(\diff \boldsymbol \xi)}{(\boldsymbol \xi,\boldsymbol \xi)} ,%+ a \ind_{\{m=1,k_1=2\}},
    \end{equation}
    where $s = n - \sum_i k_i$. It is called the \emph{$\Xi$-coalescent}.
\end{theorem}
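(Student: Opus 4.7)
The plan is to prove the two directions separately, with the "only if" direction resting on Kingman's representation theorem for exchangeable partitions.

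For the "if" direction, I would give a direct Poissonian construction: let $N$ be a Poisson point process on $\R_+ \times \triangle$ with intensity $\diff t \otimes (\boldsymbol \xi, \boldsymbol \xi)^{-1}\Xi_0(\diff \boldsymbol \xi)$, run alongside an independent Kingman coalescent at rate $a$. At each atom $(t, \boldsymbol \xi)$, independently assign each current lineage a ``color'' $i \in \N$ with probability $\xi_i$, leaving it uncolored with probability $1 - |\boldsymbol \xi|$; all lineages sharing a color merge. Conditioning on $n$ lineages being present, the probability that an $(n, \vec{k})$-merger is realized by the coloring step (with $s = n - \sum_i k_i$ non-participating lineages possibly landing in $l$ further distinct blocks) is exactly the integrand of \cref{eq:xirates} times $(\boldsymbol \xi, \boldsymbol \xi)$, so the integrated rate produces \cref{eq:xirates}. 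Sampling consistency is immediate because restricting the coloring to a subsample commutes with the construction.

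For the "only if" direction, sampling consistency together with label invariance permits the construction, via Kolmogorov extension, of a c\`adl\`ag $\mathcal{P}_\N$-valued coalescent $(\fpart_t)_{t \ge 0}$ whose restriction to $\{1, \ldots, n\}$ recovers the coalescent from $n$ lineages; label invariance transfers to full exchangeability of the whole trajectory. Now fix a time $t$ at which a coalescence event occurs and look at the partition $\fpart^{(t)}$ of $\N$ whose non-singleton blocks are the merging groups and whose singletons are the non-participating lineages: by exchangeability, $\fpart^{(t)}$ is an exchangeable random partition of $\N$, so by Kingman's paintbox theorem it has asymptotic block frequencies $\boldsymbol \xi \in \triangle$, and conditional on $\boldsymbol \xi$ each integer is independently assigned to block $i$ with probability $\xi_i$ or to a singleton with probability $1-|\boldsymbol \xi|$. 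The key step is to produce a sigma-finite intensity measure on $\triangle$ describing the rate of such events in $(\fpart_t)$. This can be done by considering, for a reference sample of size $n$, the rate of arrival of mergers indexed by measurable subsets of $\triangle$ away from $\boldsymbol 0$; sampling consistency and Kolmogorov extension yield a unique measure $\mu$ on $\triangle \setminus \{\boldsymbol 0\}$, and a direct pairwise-rate computation shows $\int (\boldsymbol \xi, \boldsymbol \xi) \mu(\diff \boldsymbol \xi) < \infty$, so $\Xi_0 \coloneqq (\boldsymbol \xi, \boldsymbol \xi) \mu$ is a finite measure on $\triangle$ with no atom at $\boldsymbol 0$. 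Pairwise mergers whose $\boldsymbol \xi$ is the zero vector (``dust-only'' events, i.e.\ the Kingman part) occur at a separate constant rate $a$ by exchangeability, contributing $a\delta_0$. Plugging the paintbox representation into the formula for the rate of an $(n, \vec{k})$-merger gives \cref{eq:xirates}.

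The main obstacle is the delicate separation of the Kingman (dust) part $a\delta_0$ from the ``paintbox'' part $\Xi_0$, together with justifying the factor $(\boldsymbol \xi, \boldsymbol \xi)^{-1}$ in \cref{eq:xirates}. The subtlety is that a binary merger among $\N$ has asymptotic frequencies $\boldsymbol \xi = \boldsymbol 0$, so such events are not produced by a finite intensity on $\triangle \setminus \{\boldsymbol 0\}$ and must be encoded as an atom at the origin; conversely, the $(\boldsymbol \xi, \boldsymbol \xi)^{-1}$ weight is forced by matching the rate of an $(n,2)$-merger with the probability, under paintbox sampling, that exactly two labels of a sample hit the same block. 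A careful accounting of these two contributions, together with an argument that the joint law of the merger times and associated $\boldsymbol \xi$'s is a Poisson point process (consequence of the Markov property and exchangeability), is what turns Kingman's theorem into the claimed representation.
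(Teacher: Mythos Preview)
The paper does not prove \cref{thm:xiintro}; it is stated in the introduction as a known result with citations to Schweinsberg and M\"ohle--Sagitov, and the paper's own contributions concern the spatial analogue. There is therefore no ``paper's own proof'' to compare against.

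Your sketch is broadly aligned with the standard proofs in the cited references, but the ``only if'' direction as you have written it has a genuine gap. You propose to fix a jump time $t$ of the infinite coalescent $(\fpart_t)_{t\ge 0}$ and read off an exchangeable partition from the merging blocks. The difficulty is that the infinite coalescent may have a dense set of jump times (indeed, if $\int (\boldsymbol\xi,\boldsymbol\xi)^{-1}\Xi_0(\diff\boldsymbol\xi)=\infty$ the restriction to any finite $n$ has finite jump rate but the full process does not), so ``fix a time at which a coalescence event occurs'' is not a priori meaningful, and extracting a Poisson point process on $\triangle$ from the jump structure requires a limiting argument you have not supplied. The cited references avoid this by working directly with the finite-$n$ rates: sampling consistency forces the recursion $\lambda_{n,\vec{k}} = \lambda_{n+1,\vec{k}} + \sum_j \lambda_{n+1,\vec{k}+e_j} + \lambda_{n+1,(\vec{k},1)}$, and the characterisation of all nonnegative solutions to this system (via a moment problem or, in Schweinsberg's paper, via de Finetti applied to an auxiliary exchangeable sequence built from the rates) yields the integral formula \cref{eq:xirates} without ever constructing the infinite object. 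Your Poissonian ``if'' direction is correct and is essentially Schweinsberg's construction.
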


The $\Lambda$-coalescent arises as a special case if $\Xi$ is the pushforward of $\Lambda$ under the map $p \mapsto (p,0,\ldots )$ from $[0,1]$ to $\triangle$. We introduced sampling consistency as a necessary condition for a coalescent to describe the genealogies of some population, but it is also sufficient. M\"ohle and Sagitov \cite{haploid,sagitovxi2} give %necessary and sufficient
conditions for the genealogies of a haploid population evolving according to Cannings model---that is, a constant size population evolving in discrete generations with an exchangeable offspring mechanism---to converge as the population size tends to infinity when properly rescaled. The class of resulting coalescents is exactly the class of $\Xi$-coalescents.
%Necessary and sufficient
Conditions for the genealogies to converge to Kingman's coalescent were previously discovered by Kingman \cite{kingman1982b} with later improvements by M\"ohle \cite{moehle98,moehle99}. Analogous results for the $\Lambda$-coalescent were independently discovered by Donnelly and Kurtz~\cite{DK99} and Sagitov \cite{sagitovlambda}.

%hugely succesful bla. However, app limited because no space

\subsection{Spatial Coalescents}\label{sec:spatialcoalintro}
The $\Lambda$- and $\Xi$-coalescents %have been very successful, and
appear as a universal limiting object for the genealogies of a wide range of population models %DK99,pitmanlambda,haploid,BL2003,
\cite{bbschweinsberg,xiexamples1,schweinsbergGW,xiexamples2,xiexamples3,lambdadormancy}, including diploid populations \cite{diploid1,diploid2,diploid3} and populations where offspring sizes are not exchangeable~\cite{xicanningsnotexchangeable}.
However, they cannot fully describe the genealogies of a \emph{structured} population, where every individual has a \emph{type} or \emph{location} evolving in some space $E$. Typically, type is the appropriate interpretation in a genetic model, and location is appropriate in a model of a geographically dispersing population. We usually refer to spatial locations from now on, but the genetic interpretation remains valid throughout.
On top of the set of lineages $\fpart_t$, an associated coalescent process needs to keep track of the spatial location $\boldsymbol X_t(u)$ of every lineage $u\in \fpart_t$, which we summarise in a map $\boldsymbol X_t\colon \fpart_t \to E$.

\begin{definition}\label{def:coalintro}
    A \emph{spatial coalescent} is a right-continuous Feller Markov process $(\fpart_t,\boldsymbol X_t)_{t \ge 0}$ such that $\fpart_t \in \mathcal{P}$ and $\boldsymbol X_t \colon \fpart_t \to E$ for all $t \ge 0$, and all transitions in $(\fpart_t)$ can be obtained by merging partition elements. It is called \emph{label invariant} if its law is invariant under changing the labels of the initial set of lineages.
\end{definition}
A precise definition is in \cref{def:coal,def:labelinv}. The Feller property---which is automatic in the non-spatial setting---is a form of continuity in the initial condition that, together with right-continuity of sample paths, implies the strong Markov property~\cite[Thm.\ 8.3]{rogers}. The label invariance assumption is equivalent to the requirement that the empirical measure $\sum_{u\in \fpart_t} \delta_{\boldsymbol X_t(u)}$ is a Markov process, which was pointed out to us by F\'elix Foutel-Rodier.

%in the spatial setting, the picture is much less complete. Imagine, for example, a population model in which individuals move as independent BMs forward in time, while undergoing branching according to some unspecified mechanism.
%for example the class of models considered by DK falls into this category, containing the well known Fleming-Viot process as a special case.

%Describing the genealogies of such a population model essentially requires time reversing the process: Given a finite sample of the population model at present (with no knowledge of their history or the locations of the remaining population), how do their lineages move through space backwards in time? Since Brownian motion is reversible, they will presumably follow Brownian motions, but there must be a coupled drift because independent Brownian motions don't meet in d \ge 2, and even in d = 1 only in pairs. But what kind of drift leads to binary mergers in d \ge 2, or multiple or even simultaneous mergers in any dimension? These are interesting and basic, yet unanswered questions, and answers for a class of spatial population models will be a special case of our results.

The picture in the spatial setting is much less complete than in the non-spatial setting. Consider, for instance, a population model in which individuals move as independent Brownian motions on the $d$-dimensional (flat) torus $E = \mathbb{T}^d = \R^d / \Z^d$ forwards in time, while undergoing branching according to some unspecified (think general) mechanism. The well-known Fleming-Viot process and its $\Lambda$- and $\Xi$-generalisations \cite{DK96,BL2003,xiflemingviot}, as well as the class of models considered by Donnelly and Kurtz in their lookdown papers \cite{DK96,DK99}, when specialised to Brownian movement, fall into this category. % glossary for E
Describing the genealogies of such a population model essentially requires partially time reversing the process: given a finite sample from the population at present together with their spatial locations (with no knowledge of their history, or the spatial distribution of the remaining population), how do their lineages move through space backwards in time? Since Brownian motion is reversible, they will presumably follow Brownian motions, but there would have to be a coupled drift; if they were independent, lineages would never meet in dimensions $d \ge 2$, and even in $d = 1$ only in pairs. But what kind of drift leads to binary mergers in $d \ge 2$, or multiple, even simultaneous mergers in any dimension? These are interesting and basic questions, yet generally unanswered to the best of our knowledge. Our results will provide some answers (the curious reader may take a look at \cref{ex:driftintro}).

%Of course this is very difficult in general, but if the population is at stationarity, then its genealogies are time-homogeneous, and if the population size is constant we might hope for the genealogies to be Markov (this is certainly necessary bc it already is in the non-spatial case). In this paper, we find a universal class of spatial coalescent processes that describe the genealogies of population models falling into this category. To that end, we take an axiomatic approach.
%def

\subsection{The Brownian Spatial Coalescent}\label{sec:introBSC}
The genealogies of a spatial model of the kind considered in the previous paragraph will be very complicated to describe in general; they depend on the initial condition of the process, and wouldn't be Markovian without conditioning at least on the total population size process (as is the case in the non-spatial setting, where genealogies are Markovian after a time change that depends on the total population size process, see e.g.\ \cite{DK99}, Theorem\ 5.1, for the Kingman coalescent).
But if the population is at stationarity forwards in time, and the total population size is constant, then the genealogies are time homogeneous, and there is hope for them to be Markovian. If that is the case, they would be described by a spatial coalescent in the sense of \cref{def:coalintro}.

The goal of this project is to identify a universal class of spatial coalescents that describe the genealogies of constant size spatial population models at stationarity in which individuals follow independent Brownian motions. We will do so by axiomatically defining a class of spatial coalescents through a property that we would expect the genealogies of all such populations to satisfy.
The idea is as follows: if we condition a realisation of the forwards in time population model on the initial and final locations of all individuals, as well as the times and locations of branch events, % and the abstract (``genealogical'') tree, or forest in general, that encodes relationships between individuals,
then, regardless of the branching mechanism, all individuals follow independent Brownian bridges along the branches of the ``genealogical'' tree (or forest in general) forwards in time. Interpreting this backwards in time, it means for the associated coalescent that, conditional on the genealogical tree and the times and locations of what are now the coalescence events, lineages follow independent Brownian bridges along the branches of the tree \emph{backwards} in time.

Before turning this into a definition, we point out a subtle difficulty that arises when starting such a coalescent process from an initial configuration with more than one particle in the same location. If $d \ge 2$, then forwards in time, almost-surely the only occasion where any pair of particles would ever be in the same location is when they were just born in the same branching event. The same is true in $d = 1$ for three or more colliding particles. Thus the coalescent started from such a configuration would have to merge those particles instantaneously, which it cannot if we require right-continuous paths.
One way out would be to work with left-continuous processes, but that would leave many technical special cases in dealing with instantaneous mergers at time zero. The simpler solution is to exclude those points from the state space. %, and thus as possible initial conditions.
For $\fpart \in \mathcal{P}$ denote by $E^\fpart$ the set of maps $\fpart \to E$, and let
\begin{align}\label{eq:def:Ecirc}
    %E^\fpart_\circ \coloneqq \left\{ \boldsymbol x\in E^\fpart\colon \#\!\left\{ u\colon \boldsymbol x(u) = z \right\} \le 1 + \ind_{\left\{ d = 1 \right\} } \, \forall z\in E \right\},
    E^\fpart_\circ \coloneqq \left\{ \boldsymbol x\in E^\fpart\colon \#\!\left\{ \left\{ u,v \right\} \subset \fpart\colon u\neq v, \boldsymbol x_u = \boldsymbol x_v \right\} \le \ind_{\left\{ d = 1 \right\} } \right\}
\end{align}
denote the set of spatial decorations of $\fpart$ in which no (if $d\ge 2$) or at most one (if $d = 1$) pair of particles reside in the same location. We occasionally identify $E_\circ ^{\fpart}$ with an open subset of $E^{|\fpart|}$ by ordering the elements of $\fpart$ in ascending order of their minima (and similarly in other contexts). % if no confusion is possible.
%We can identify $E^\fpart_\circ $ with an open subset of $E^{|\fpart|}$ to define on it a topology and Lebesgue measure. % In particular, $E^\fpart_\circ $ is a Polish space w.r.t.\ a topology in which $\boldsymbol x_n \to \boldsymbol x$ iff $\boldsymbol x_n(u) \to \boldsymbol x(u)$ in $E$ for every $u\in \fpart$.
Let
\begin{equation}\label{eq:defX}
    \mathcal{X}\coloneqq \left\{ (\fpart,\boldsymbol x)\colon \fpart \in \mathcal{P}, \boldsymbol x\in E^\fpart_\circ  \right\}.
\end{equation}
Note that $\fpart$ is the domain of $\boldsymbol x$ for $(\fpart,\boldsymbol x) \in \mathcal{X}$, so we may occasionally just write $\boldsymbol x\in \mathcal{X}$. We denote the unconstrained state space as
\begin{equation}\label{eq:defXbar}
    \overline{\mathcal{X}} \coloneqq \left\{ (\fpart, \boldsymbol x)\colon \fpart \in \mathcal{P}, \boldsymbol x \in E^\fpart \right\} .
\end{equation}
Note that a spatial coalescent with state space $\mathcal{X}$ also defines a stochastic process $(\fpart_t,\boldsymbol X)_{t\ge 0}$ in the unconstrained state space $\overline{\mathcal{X}}$ as long as $(\fpart_0,\boldsymbol X_0)\in \mathcal{X}$.  % that is $\boldsymbol X_0 \in E^{\fpart_0}_\circ $.

\begin{definition}\label{def:BSCintro}
    A spatial coalescent with state space $\mathcal{X}$ is called a \emph{Brownian spatial coalescent} if, conditional on the genealogical forest, and times and locations of all coalescence events, lineages follow independent Brownian bridges along the branches of the forest, and Brownian motions after the last coalescence event in which they are involved.
\end{definition}
A precise definition is \cref{def:BSC}. We remark here that the only real obstacle in replacing Brownian motion with a much more general motion lies in finding the points that have to be excluded from the state space. We go into more detail in \cref{sec:generalmotion}.
%note that the definition does not assume anything about the branching mechanism, but as we will see later, Markovianity requires the branching to be oblivious to space!

\begin{remark}
    The defining property of a Brownian spatial coalescent should be satisfied by the genealogies of any population model in which individuals follow independent Brownian motions forwards in time, regardless of the branching mechanism. This includes spatial interactions such as competition or local regulation. However, it will turn out that assuming the Markov property of the genealogies eliminates \emph{all} branching mechanism that non-trivially depend on the spatial distribution of the population. See \cref{sec:interactivebranching} below.
\end{remark}

We do not yet require any kind of sampling consistency. If a parallel is drawn with the non-spatial setting, then the Brownian spatial coalescent should be compared with the non-spatial coalescent in the sense of \cref{def:NSCPintro}; both contain many examples that do not describe the genealogies of any population. Our first result is a full characterisation of the class of Brownian spatial coalescents, which shows that this comparison is more apt than one might expect. Write $\mergers$ for the set of tuples $(n,\vec{k}) = (n,(k_1, \ldots ,k_m))$ with $n \ge 2$, $1 \le m \le n$, $k_1 \ge \ldots \ge k_m \ge 1$ and $\sum_{i=1}^m k_i \le n$. Write $\mathcal{M}_F(A)$ for the set of finite measures on a measurable space $A$.
\begin{theorem}\label{thm:BSCintro}
    Every label invariant Brownian spatial coalescent is uniquely characterised by a family of measures $\boldsymbol \nu = (\nu_{n,k_1, \ldots ,k_m} \in \mathcal{M}_F(E^m))_{(n,\vec{k}) \in \mergers}$. We call it the \emph{Brownian spatial coalescent with transition measures $\boldsymbol \nu$}.
\end{theorem}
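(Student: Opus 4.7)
My approach is to characterize the Feller generator of a label-invariant Brownian spatial coalescent and read $\boldsymbol \nu$ off its jump part. Since the Feller property ensures the law is determined by the generator, the theorem reduces to showing that the generator decomposes canonically as a diffusive part plus jump terms parametrized by $\boldsymbol \nu$.

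First, I would pin down the continuous part. The Brownian bridge property forces the between-merger motion to be independent Brownian motions on $E^\fpart_\circ$: a Brownian bridge integrated over the marginal law of its endpoint recovers Brownian motion. Thus the diffusive part of the generator is the standard Laplacian, and the jump part is described, by the Markov property, by intensity measures $\mu^{\boldsymbol x}_{n, \vec{k}}(\dd\vec{y})$ on $E^m$ that give, for state $(\fpart, \boldsymbol x)$ with $|\fpart| = n$ and a labelling of the merging lineages into clusters of sizes $(k_1, \ldots, k_m)$, the rate of an $(n, \vec{k})$-merger with new locations in $\dd\vec{y}$. Label invariance reduces the dependence of $\mu^{\boldsymbol x}_{n, \vec{k}}$ on $\boldsymbol x$ to a cluster-symmetric one. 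The crux is to apply the Brownian bridge property to the joint law of paths, merger times, and merger locations: conditional on an $(n, \vec{k})$-merger at time $t$ with new locations $\vec{y}$, the paths up to $t$ must be independent Brownian bridges from $\boldsymbol x$ to the collision configuration $(\underbrace{y_1, \ldots, y_1}_{k_1}, \ldots, \underbrace{y_m, \ldots, y_m}_{k_m})$. By Bayesian inversion, this identity forces $\mu^{\boldsymbol x}_{n, \vec{k}}(\dd\vec{y})$ to factor as a canonical heat-kernel-type function of $(\boldsymbol x, \vec{y})$ times a measure $\nu_{n, \vec{k}}(\dd\vec{y})$ on $E^m$ that does not depend on $\boldsymbol x$. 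This $\nu_{n, \vec{k}}$ is the claimed transition measure; finiteness follows from the finite total jump rate out of any state guaranteed by the Feller property and right-continuity of trajectories.

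Conversely, any family $\boldsymbol \nu$ of finite measures defines via the same canonical factorization the merger intensities $\mu^{\boldsymbol x}_{n, \vec{k}}$, hence a generator and a unique Feller process satisfying the Brownian bridge property, yielding the claimed bijection. The main obstacle is the Bayesian inversion itself: the canonical factorization involves heat kernels that degenerate along the collision locus, and in dimensions $d \ge 2$ independent Brownian motions almost surely never coincide, so the conditioning underlying the Brownian bridge property operates on null events. The construction of the required regular conditional laws---essentially a disintegration of the joint path/merger law along the merger locus---is the technical crux; the exclusion of collision points from $E^\fpart_\circ$ in \cref{eq:def:Ecirc} is partly designed to make this disintegration well posed and the extracted $\nu_{n, \vec{k}}$ unambiguous.
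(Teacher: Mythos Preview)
Your approach has a fundamental gap at its first step. You claim that ``the Brownian bridge property forces the between-merger motion to be independent Brownian motions on $E^\fpart_\circ$: a Brownian bridge integrated over the marginal law of its endpoint recovers Brownian motion.'' This is false in the present setting. The bridge-to-motion identity holds for a \emph{single} bridge whose endpoint is distributed according to the Brownian marginal. Here, however, several bridges share a common random endpoint (the merger location), and the law of that endpoint is governed by the unknown transition measure $\nu_{n,\vec{k}}$ together with heat-kernel factors from \emph{all} merging lineages, not by the Brownian marginal of any one of them. Concretely, for two particles started at $x_1,x_2$, the merger location $z$ at time $t$ has density proportional to $p_t(z-x_1)p_t(z-x_2)\,\nu_{2,2}(\diff z)$ (cf.\ \cref{ex:wmformula}), which is not $p_t(z-x_1)\diff z$. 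Integrating the bridge over this endpoint does \emph{not} recover a Brownian motion for particle~1. In fact the paper proves (\cref{thm:driftintro}) that the between-merger motion is a coupled diffusion with drift $\nabla\log N^{\boldsymbol\nu}$, so the diffusive part of the generator already depends on $\boldsymbol\nu$.

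This undermines the proposed decomposition ``Laplacian plus $\boldsymbol\nu$-dependent jump part'': both the drift and the jump intensities depend on $\boldsymbol\nu$ through $N^{\boldsymbol\nu}$, and $N^{\boldsymbol\nu}$ encodes \emph{all} transition measures simultaneously. Your ``Bayesian inversion'' would therefore be circular---extracting $\nu_{n,\vec{k}}$ from the jump kernel requires already knowing $N^{\boldsymbol\nu}$. The paper circumvents this by a different route: it first characterises the \emph{no-merger} semigroup $\semi_t((\fpart,\boldsymbol x),(\fpart,\diff\boldsymbol y))$, showing it must take the form $\e^{-\beta_\fpart t}\frac{\nn_\fpart(\boldsymbol y)}{\nn_\fpart(\boldsymbol x)}\prod_u p_t(\boldsymbol x_u-\boldsymbol y_u)\diff\boldsymbol y$ for some $\beta_\fpart\ge 0$ and positive function $\nn_\fpart$ (\cref{lem:U1}); then, applying the strong Markov property at the first merger time and inducting on the number of particles, it shows that the law of the first merger's location, after dividing out the explicit heat-kernel and $\nn$-factors, is a measure independent of~$\boldsymbol x$ and translation-invariant in time---this is the transition measure (\cref{lem:HF}, \cref{thm:U2}). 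The induction is what breaks the circularity: $\nn_{\fpart'}$ for $|\fpart'|<|\fpart|$ is already identified with $N^{\boldsymbol\nu}$ before one attempts to extract $\nu_{\fpart,\fpart'}$. Your converse direction also overreaches: not every family $\boldsymbol\nu$ yields a Brownian spatial coalescent; finiteness and continuity of $N^{\boldsymbol\nu}$ are required (\cref{thm:BSCintro2}).
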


% description of law in terms of \nu_nk, definition of normalisation N^\nu

The transition measures are reminiscent of the transition rates of a non-spatial coalescent, and in fact if we formally replace $E$ with a singleton, then the Brownian spatial coalescent with transition measures $(\nunk)$ is just the non-spatial coalescent with transition rates $(\lank = |\nunk|)$ (see \cref{rem:nunk} (i) below). Here $|\nu|$ is the total mass of a measure $\nu$.

\begin{figure}
  \centering

  \begin{subfigure}[t]{0.4\textwidth}
    \centering
    \begin{tikzpicture}[scale=1.0]
      \coordinate (zero) at (-.5,-.5);
      \coordinate (t-l)  at (-.5,4.95);
      \coordinate (b-r)  at (3.3,-.5);

      \coordinate (Aa) at (0.25,0);
      \coordinate (Ab) at (2.5,0);
      \coordinate (A)  at (1.5,3.0);

      % axes
      \draw[->] (zero) -- (t-l);
      \draw[->] (zero) -- (b-r);
      \node[above] at (t-l) {\footnotesize time};
      \node[below] at (b-r) {\footnotesize space};

      \def\tick{.1}
      \draw (-.5+\tick,0) -- (-.5-\tick,0) node[left] {\footnotesize$0$};
      \draw[dotted,teal] (-.5,0) -- (Ab);

      \draw[dotted] (Aa) -- (Aa|-0,-.5);
      \draw (Aa|-0,-.5+\tick) -- (Aa|-0,-.5-\tick) node[below] {$x_1$};

      \draw[dotted] (Ab) -- (Ab|-0,-.5);
      \draw (Ab|-0,-.5+\tick) -- (Ab|-0,-.5-\tick) node[below] {$x_2$};

      % background (forest)
      \BBY[teal,opacity=.8]{Aa}{A}[948435]
      \BBY[teal,opacity=.8]{Ab}{A}[892459]
      \BBY[black,opacity=.4]{A}{A|-0,4.95}[8241356]

      \node[fill=white,xshift=-5pt,text=teal] at ($(Aa)!0.4!(A)$) {\footnotesize $p_\tau(\xi - x_1)$};
      \node[fill=white,xshift=5pt,text=teal] at ($(Ab)!0.3!(A)$) {\footnotesize $p_\tau(\xi - x_2)$};

      \draw[fill,gray] (Aa) circle[radius=2pt];
      \draw[fill,gray] (Ab) circle[radius=2pt];

      % time/space stamps
      \draw[densely dotted,thick] (A) -- (-.5,0|-A);
      \draw (-.5+\tick,0|-A) -- (-.5-\tick,0|-A) node[left] {$\tau$};

      \draw[densely dotted,thick] (A) -- (A|-0,-.5);
      \draw (A|-0,-.5+\tick) -- (A|-0,-.5-\tick) node[below] {$\xi$};

      \draw[fill] (A) circle[radius=2pt];
    \end{tikzpicture}
    % \caption{...}
  \end{subfigure}
  \hfill
  % --- right panel ---
  \begin{subfigure}[t]{0.59\textwidth}
    \centering
    \def\xscale{1.55}
    \begin{tikzpicture}[scale=1.0,xscale=\xscale]
      \coordinate (zero) at (-.5,-.5);
      \coordinate (t-l)  at (-.5,4.95);
      \coordinate (b-r)  at (3.2,-.5);

      \coordinate (Aa) at (0.0,0);
      \coordinate (Ab) at (1.25,0);
      \coordinate (B)  at (2.5,0);
      \coordinate (A)  at (0.75,2.625);
      \coordinate (AB) at (2.00,4.5);

      % axes
      \draw[->] (zero) -- (t-l);
      \draw[->] (zero) -- (b-r);
      \node[above] at (t-l) {\footnotesize time};
      \node[below] at (b-r) {\footnotesize space};

      \def\tick{.1}
      \draw (-.5+\tick,0) -- (-.5-\tick,0) node[left] {\footnotesize$0$};
      \draw[dotted,gray] (-.5,0) -- (B);

      \draw[dotted] (Aa) -- (Aa|-0,-.5);
      \draw (Aa|-0,-.5+\tick) -- (Aa|-0,-.5-\tick) node[below] {$x_1$};

      \draw[dotted] (Ab) -- (Ab|-0,-.5);
      \draw (Ab|-0,-.5+\tick) -- (Ab|-0,-.5-\tick) node[below] {$x_2$};

      \draw[dotted] (B) -- (B|-0,-.5);
      \draw (B|-0,-.5+\tick) -- (B|-0,-.5-\tick) node[below] {$x_3$};

      % background (forest)
      \BBY[teal,opacity=.8]{Aa}{A}[731475][0.04]
      \BBY[teal,opacity=.8]{Ab}{A}[801947][0.04]
      \BBY[teal,opacity=.8]{A}{AB}[280594][0.04]
      \BBY[teal,opacity=.8]{B}{AB}[572934][0.04]
      \BBY[black,opacity=.4]{AB}{AB|-0,4.95}[687990][0.04]

      \draw[fill,gray] (Aa) circle[radius=2pt];
      \draw[fill,gray] (Ab) circle[radius=2pt];
      \draw[fill,gray] (B)  circle[radius=2pt];

      % time/space stamps
      \draw[densely dotted,thick] (A) -- (-.5,0|-A);
      \draw (-.5+\tick,0|-A) -- (-.5-\tick,0|-A) node[left] {$\tau_1$};

      \draw[densely dotted,thick] (AB) -- (-.5,0|-AB);
      \draw (-.5+\tick,0|-AB) -- (-.5-\tick,0|-AB) node[left] {$\tau_2$};

      \draw[densely dotted,thick] (A) -- (A|-0,-.5);
      \draw (A|-0,-.5+\tick) -- (A|-0,-.5-\tick) node[below] {$\xi_1$};

      \draw[densely dotted,thick] (AB) -- (AB|-0,-.5);
      \draw (AB|-0,-.5+\tick) -- (AB|-0,-.5-\tick) node[below] {$\xi_2$};

      \node[fill=white,xshift=-10.5pt,text=teal] at ($(Aa)!0.5!(A)$) {\footnotesize $p_{\tau_1}(\xi_1 - x_1)$};
      \node[fill=white,xshift=12pt,text=teal] at ($(Ab)!0.3!(A)$) {\footnotesize $p_{\tau_1}(\xi_1 - x_2)$};
      \node[fill=white,xshift=-15pt,text=teal] at ($(A)!0.5!(AB)$) {\footnotesize $p_{\tau_2-\tau_1}(\xi_2 - \xi_1)$};
      \node[fill=white,xshift=15pt,text=teal] at ($(B)!0.4!(AB)$) {\footnotesize $p_{\tau_2}(\xi_2 - x_3)$};

      \draw[fill] (A)  circle[radius=2pt];
      \draw[fill] (AB) circle[radius=2pt];
    \end{tikzpicture}
    % \caption{...}
  \end{subfigure}

  \caption{%
      An illustration of two possible realisations of a Brownian spatial coalescent, together with the spatial factors appearing in~\cref{eq:Pxtwoparticles,eq:Pxthreeintro}.
  }
  \label{fig:introtreeexample}
\end{figure}

We refer to \cref{thm:BSC} for a formal version of \cref{thm:BSCintro} that applies without the assumption of label invariance, and gives a precise description of the law of a Brownian spatial coalescent with given transition measures $\boldsymbol \nu$ building on the notation introduced in \cref{sec:setup}.

Here, we give an informal description by example. We start with the simplest non-trivial initial condition, which is $n=2$ particles at locations $\boldsymbol x = (x_1,x_2)$. Then the only possible merge event is a binary ($k=2$) merger, and the associated transition measure is $\nu_{n,k}=\nu_{2,2}$. If $\nu_{2,2}=0$, then the two lineages just evolve as independent Brownian motions indefinitely. Otherwise, they almost surely merge. Conditional on the time $\tau$ and location $\xi$ of the merger, see \cref{fig:introtreeexample} (left), the law of the coalescent is that of two independent Brownian bridges from $x_1$ and $x_2$ to $\xi$ at time $\tau$, followed by a Brownian motion (recall \cref{def:BSCintro}), see \cref{fig:introtreeexample} (left). It remains to explain the law of $(\tau,\xi)$, which is, as a function of the initial locations $\boldsymbol x$,
\begin{equation}\label{eq:Pxtwoparticles}
    P^{\boldsymbol x}(\diff \tau, \diff \xi) \propto p_\tau(x_1 - \xi) p_\tau(x_2 - \xi) \e^{-\nut{2} \tau} \nu_{2,2}(\diff \xi) \diff \tau,
\end{equation}
where $\nut{2} = |\nu_{2,2}|$, and more generally $\nut{n} = \sum_{ \vec{k}} \binom{n}{k_1\ldots k_m} |\nunk|$. The normalisation constant is the total mass of the right-hand side, which depends on $\boldsymbol x$ and is denoted by $N^{\boldsymbol \nu}(\boldsymbol x)$. Note the similarity with the non-spatial case, where $P(\diff \tau) = \lambda_{2,2} \e^{-\lambda_{2}\tau} \diff \tau$; the spatial formula can be obtained from the non-spatial one by replacing transition rates with transition measures, and adding a spatial factor for each branch in the coalescent tree (or forest). This analogy generalises to all initial conditions. For example, with $n=3$ lineages at initial locations $\boldsymbol x = (x_1,x_2,x_3)$, there are four possible topological shapes $T$ of the tree: one where all three lineages merge simultaneously, and three with two subsequent binary mergers\footnote{We assume here that $\nut{3} > 0$ and $\nut{2} > 0$, otherwise it is possible that not all lineages merge and we have a genealogical forest rather than a tree.}. A realisation of the case where, schematically, $T = \treethree{black}$, is in \cref{fig:introtreeexample} (right). The law of the space- and time coordinates of the merge events in that case is
\begin{align}
    P^{\boldsymbol x} \left( T=\treethree{black}, \diff \tau_1,\diff \tau_2, \diff \xi_1,\diff \xi_2\right)
    &\propto \overbrace{\color{black}p_{\tau_1}(x_1 - \xi_1)\, p_{\tau_1}(x_2 - \xi_1)\, p_{\tau_2-\tau_1}(\xi_1 - \xi_2)\, p_{\tau_2}(\xi_2 - x_3)}^{\text{spatial factors}}\nonumber\\
    &\qquad\;\; \times\,\underbrace{\color{black}\nu_{3,2}(\diff \xi_1)\,\e^{-\nut{3} \tau_1}\, \nu_{2,2}(\diff \xi_2) e^{-\nut{2} (\tau_2 - \tau_1)} \diff \tau_1 \diff \tau_2}_{\text{analogous to non-spatial coalescent}}.
    \label{eq:Pxthreeintro}
\end{align}
Note again the analogy to the non-spatial coalescent, where by the law of competing exponentials the corresponding formula is $P(T=\treethree{black},\diff \tau_1, \diff \tau_2) = \lambda_{3,2}\e^{-\lambda_3 \tau_1} \lambda_{2,2} \e^{-\lambda_2(\tau_2-\tau_1)}\diff \tau_1 \diff \tau_2$. The normalisation for $P^{\boldsymbol x}$---a joint probability measure over the topology $T$ of the tree plus its time- and space coordinates (to which we will also refer collectively as a \emph{decorated} tree)---is again denoted by $N^{\boldsymbol \nu}(\boldsymbol x)$, and obtained by integrating the right-hand side of~\eqref{eq:Pxthreeintro} over $\tau_1,\tau_2,\xi_1,\xi_2$, and then summing the resulting quantity over the four possible values of $T$.

We note that this normalisation $N^{\boldsymbol \nu}(\boldsymbol x)$ is not finite for every collection $\boldsymbol \nu$ of finite measures; in fact, finiteness of $N^{\boldsymbol \nu}$ is a necessary and sufficient condition for a Brownian spatial coalescent with transition measures $\boldsymbol \nu$ to exist. Furthermore, continuity of $N^{\boldsymbol \nu}$ as a function of the initial condition $\boldsymbol x \in \mathcal{X}$ corresponds to the Feller property of the resulting coalescent; since we require the Feller property (\cref{def:coalintro}), we have the following characterisation. See \cref{thm:BSC} for the formal version.
%as a function of the initial condition $\boldsymbol x = (x_1, \ldots ,x_n) \in \mathcal{X}$ plays an important role. For example, it decides for a given family $\boldsymbol \nu$ of transition measures whether an associated Brownian spatial coalescent exists.

\begin{theorem}\label{thm:BSCintro2}
    For a given family $\boldsymbol \nu = (\nu_{n,k_1, \ldots ,k_m} \in \mathcal{M}_F(E^m))_{(n,\vec{k})\in \mergers}$, a Brownian spatial coalescent with transition measures $\boldsymbol \nu$ exists if and only if $N^{\boldsymbol \nu}\colon \mathcal{X} \to [0,\infty]$ is finite and continuous.
\end{theorem}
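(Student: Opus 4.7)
The plan is to use the explicit description of the joint law of the full genealogical forest decorated with its merge times and locations, which is the backbone of \cref{thm:BSCintro} and is depicted in \cref{fig:figfspintro} for the tree case; in this description $N^{\boldsymbol \nu}(\boldsymbol x)$ naturally appears as the normalising constant of the unnormalised measure $\sum_T \nu_T$.

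For necessity, suppose the Brownian spatial coalescent with transition measures $\boldsymbol \nu$ exists. By \cref{thm:BSCintro}, the unnormalised joint law of the decorated forest starting from $\boldsymbol x\in \mathcal{X}$ equals $\sum_T \nu_T$, and since this must be a (sub-)probability measure, $N^{\boldsymbol \nu}(\boldsymbol x) = \sum_T |\nu_T|$ is finite. For continuity, I would appeal to the Feller property: expectations against suitable continuous and bounded functionals of $(\fpart_t, \boldsymbol X_t)$ must depend continuously on $\boldsymbol x$, and continuity of $N^{\boldsymbol \nu}$ can be extracted from such quantities — for instance, via a short-time expansion of $\mathbb{E}_{\boldsymbol x}[f(\fpart_t, \boldsymbol X_t)]$ for test functions that detect the decorated forest structure, or via the Laplace transform of the time to the first merge.

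For sufficiency, given $\boldsymbol\nu$ with $N^{\boldsymbol\nu}$ finite and continuous, construct the candidate process directly via the sampling recipe following \cref{thm:BSCintro}: sample the forest shape $T$ with probability $|\nu_T|/N^{\boldsymbol\nu}(\boldsymbol x)$, sample its space--time decoration from $\nu_T/|\nu_T|$, and fill in the branches with independent conditional Brownian bridges (and Brownian motions after the last merge in each component). Finiteness of $N^{\boldsymbol\nu}$ turns this into an honest probability measure; continuity of $N^{\boldsymbol\nu}$ combined with continuity of the heat kernel $p_t$ on the torus yields continuity of the transition semigroup in $\boldsymbol x$ (i.e.\ the Feller property); and trajectories are almost surely c\`adl\`ag in $\mathcal{X}$ because the initial population is finite and independent Brownian bridges on $E$ avoid the configurations excluded from $\mathcal{X}$ outside of the merge times. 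The Brownian bridge property defining a Brownian spatial coalescent (\cref{def:BSCintro}) holds by construction.

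The main obstacle is verifying the Markov property of the constructed process, for which I would use a forest-truncation argument at a generic time slice $\{t=s\}$. Conditional on $(\fpart_s, \boldsymbol X_s) = (\fpart', \boldsymbol y)$, each bridge crossing time $s$ splits into two independent conditional bridges by the Brownian bridge Markov property, and every exponential inter-merge weight $\e^{-\nut{n}(\tau - \tau')}$ straddling $s$ factors as $\e^{-\nut{n}(s - \tau')}\e^{-\nut{n}(\tau - s)}$ by memorylessness. Integrating out the pre-$s$ portion of the forest against $\sum_T \nu_T$ — using the natural bijection between decorated forests on $\fpart_0$ and pairs of decorated forests on $\fpart_0$ up to time $s$ and on $\fpart'$ from $s$ onwards — then re-sums into $\sum_{T'} \nu_{T'}$ evaluated at $\boldsymbol y$, so the post-$s$ conditional law coincides with that of a fresh Brownian spatial coalescent with transition measures $\boldsymbol\nu$ started from $\boldsymbol y$. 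Combined with the Feller continuity established above, this upgrades to the strong Markov property and completes the proof.
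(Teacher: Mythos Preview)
Your proposal is correct and mirrors the paper's proof of \cref{thm:BSC}: the sufficiency direction is exactly the content of \cref{lem:semigroup,lem:markov,lem:strongmarkov}, where the Markov property is verified via the forest--splitting identity \cref{eq:fnu_conv} (your ``memorylessness plus bridge Markov property'' argument) and Feller follows from continuity of $N^{\boldsymbol\nu}$ together with \cref{lem:kbr}; the necessity direction is precisely how the paper extracts continuity of the normalisation in \cref{lem:Ncontinuous}, by testing the Feller semigroup against the indicator of $\{\fpart_t=\fpart\}$.
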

% See \cref{thm:BSC}.
% The main task in the proof of \cref{thm:BSCintro2} is to show that the process defined through the transition measures $\boldsymbol \nu$ is a Markov process, which is not obvious from its description.

A few remarks are in order.

\begin{remark}\label{rem:nunk}
    \begin{enumerate}
        \item If we formally replace $E$ with a singleton, and Brownian motion with the trivial Markov process on $E$, then the spatial factors in the formula for $P^{\boldsymbol x}$ vanish, the transition measures reduce to non-negative numbers $\lank$, and we exactly recover the law of a non-spatial coalescent with transition rates $\lank$.
        \item The way in which we have described the law of the Brownian spatial coalescent is not inherently Markovian. The Markov property in terms of this description states that, if we sample the decorated coalescence tree at time zero and start running the particles along the corresponding Brownian bridges, and at some time $t > 0$ we resample the decorated coalescence tree for the remaining lineages using their current locations as initial conditions, and run them along the Brownian bridges associated with the new tree, then this does not affect the overall law of the process.
        \item % Continuity of $N^{\boldsymbol \nu}$ corresponds to the Feller property of the associated coalescent; for the latter to be well-defined, finiteness of $N^{\boldsymbol \nu}$ would suffice.
We do not have a nice characterisation of the measures $\boldsymbol \nu$ for which $N^{\boldsymbol \nu}$ is finite and continuous, but a sufficient condition is that all transition measures are constant multiples of Lebesgue measure, see \cref{lem:NF}. % Since $N^{\boldsymbol \nu}(\boldsymbol x)$ is an integral over functions that are continuous in $\boldsymbol x$, dominated convergence gives a sufficient integrability condition. %mention that non-atomic is necessary? I didnt prove it here but in some scribbles I think
%The statement is not trivial, because the process defined through the transition measures $\boldsymbol \nu$ as described above does not obviously have the Markov property.
The function $N^{\boldsymbol \nu}$ also reveals a technical reason why it was necessary to exclude some points from the state space $\mathcal{X}$: If $\boldsymbol x$ approaches a point in the boundary of $\mathcal{X}$, then $N^{\boldsymbol \nu}(\boldsymbol x)$ diverges if the transition measure associated with the coalescent event that would have to happen instantaneously is non-zero, see also \cref{rem:Ndivergence} and the following remark.
\item
%\begin{example}\label{ex:wmformula}
    If we start from $n=2$ lineages at locations $\boldsymbol x = (x_1,x_2)$, and additionally
    %Starting from two lineages at locations $x_1$ and $x_2$, the probability that they coalesce at time $t > 0$ at location $z\in E$ is proportional to
    %\begin{equation}\label{eq:2merge}
        %\e^{-\nut{2} t} p_t(x_1-z) p_t(x_2-z) \nu_{2,2}(\diff z) \diff t.
    %\end{equation}
    $\nu_{2,2}(\diff z) = |\nu_{2,2}| \diff z$ (which will turn out to be a consequence of sampling consistency, see \cref{thm:samplingconintro}), then
    the probability that the two particles reach a common ancestor at time $\tau > 0$ is obtained by integrating \cref{eq:Pxtwoparticles} over $\xi$:
    \begin{equation}\label{eq:wmformula}
        P^{\boldsymbol x}(\diff \tau) 
        %= \frac{1}{N^{\boldsymbol \nu}(\boldsymbol x)}
        \propto
        |\nu_{2,2}| \e^{-\nut{2} \tau} p_{2\tau}(x_1-x_2) \diff \tau,
    \end{equation}
    %since $\nut{2} = |\nu_{2,2}|$.
    which is reminiscent of the Wright-Mal\'ecot formula. 
    %It follows from \cref{eq:2merge} that conditional on $t$, the coalescence location $z$ is equal in law to the midpoint of a Brownian bridge running from $x_1$ to $x_2$ in time $2t$.\footnote{This observation was made by Andra\v{z} Jelin\v{c}i\v{c} after listening to a talk about the project.}

    The normalisation is $N^{\boldsymbol \nu}(x_1,x_2) = \int_0^\infty  |\nu_{2,2}|\e^{-\nut{2} \tau} p_{2\tau}(x_1-x_2) \diff \tau$. If $r = |x_1-x_2|$ is small, then $N^{\boldsymbol \nu}(x_1,x_2)$ is of order $r^{2-d}$ if $d \ge 3$, of order $\log \frac{1}{r}$ if $d = 2$, and of order $1$ if $d = 1$. In particular, $N^{\boldsymbol \nu}(x_1,x_2)$ diverges as $|x_1-x_2| \to 0$ if and only if $d \ge 2$, that is if and only if a merge event of the two lineages would have to happen instantaneously when $x_1 = x_2$ (because, in $d \ge 2$, two particles would never be in the same location forwards in time except if they had just originated from the same birth event).
    %If the locations of more than two individuals are known, then similar calculations lead to higher-order versions of~\eqref{eq:wmformula}.
%\end{example}
    \end{enumerate}
\end{remark}

% The main work in the proof of \cref{thm:BSCintro} is to construct the measures $\nunk$ using only the defining property (\cref{def:BSCintro}) and the Markov property of a given Brownian spatial coalescent. The idea is to apply the Markov property at the (random) time of the first merge event, and proceed inductively over the number of initial particles, starting with two.
% sampling consistency (classical and new, reference to markov mapping theorem)

\subsection{Sampling Consistency}
Our main result is a characterisation of sampling consistency within the class of Brownian spatial coalescents, analogous to the characterisations of the $\Lambda$- and $\Xi$-coalescents in the non-spatial setting (\cref{thm:lambdaintro,thm:xiintro}). A characterisation of this form is, to our knowledge, without precedent in the literature on spatial coalescents.

Sampling consistency for spatial coalescents is classically taken to mean that the distribution of the coalescent started from some set of $n$ initial locations is the same as that of the coalescent started with an additional $n+1$'st particle at any fixed location and projected back onto the first $n$ particles. We would expect the genealogies of a forward in time population model to have this property, but only if knowing the location of an additional individual does not reveal any information about the population. This is the case, for example, if the population is spread homogeneously throughout space and time, as is the case for a number of spatial models whose genealogies have been studied successfully, which includes the spatial $\Lambda$-Fleming-Viot process, or populations living on a discrete lattice with positive population density at every deme. We review related literature in detail in \cref{sec:relatedliterature}. But in our case, where the spatial distribution of the population fluctuates in time and space driven by its forward dynamics, the location of the additional particle will reveal information about the entire population. This will, in general, influence the genealogy of the first $n$ particles. Instead, the additional particle has to be genuinely \emph{sampled} from the (stationary) spatial distribution of the population, \emph{conditional} on the $n$ lineages whose locations are already known. % We thus propose the following notion of sampling consistency.

\begin{definition}\label{def:samplingconintro}
    A spatial coalescent is \emph{sampling consistent} if, for every choice of initial locations $x_1$ to $x_n$, there exists a probability measure $\mu_{x_1, \ldots ,x_n}$ on $E$ such that the coalescent started from $x_1$ to $x_n$ has the same distribution as the coalescent induced on the first $n$ particles when starting with an additional particle $x_{n+1}$ sampled from $\mu_{x_1, \ldots ,x_n}$.
\end{definition}

See \cref{def:samplingcon} in the next section for a precise definition. We will prove that, as the motivation suggests, $\mu_{x_1, \ldots ,x_n}$ is the law of a sample from the stationary distribution of an associated forwards in time population model, conditional on having sampled $x_1$ to $x_n$ in $n$ previous samples. The familiar reader may also note that \cref{def:samplingconintro} can be cast in the language of the Markov mapping theorem: $\mu_{x_1,\ldots ,x_n}$ is the kernel that recovers the state of the ``larger'' Markov process (the coalescent started from $n+1$ particles) from the state of the projected one (the coalescent started from $n$ particles).

% characterisation of sampling consistency

\begin{theorem}\label{thm:samplingconintro}
    A label invariant Brownian spatial coalescent is sampling consistent if and only if there exists a finite measure $\Xi$ on $\triangle$ such that the transition measures satisfy \[
        \nunk(\diff \boldsymbol z) = \lank \diff \boldsymbol z,\qquad (n,\vec{k}) \in \mergers,
    \] where $(\lank)$ are the rates of the non-spatial $\Xi$-coalescent.
\end{theorem}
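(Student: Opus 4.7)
My plan is to prove the two directions separately, with the necessity direction being by far the more substantive.

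For sufficiency $(\Leftarrow)$, I would set $\mu_{x_1,\ldots,x_n}$ to be Lebesgue measure on $E$ (independent of the conditioning configuration) and verify sampling consistency directly from the tree-measure description of the BSC depicted in \cref{fig:figfspintro}. Starting the BSC from $(x_1,\ldots,x_n,x_{n+1})$ with $x_{n+1}$ uniformly distributed and projecting onto the first $n$ particles amounts to integrating the location of particle $n+1$ out of every factor and summing over the ways in which its lineage can be grafted onto each smaller tree shape. Every spatial factor along its branches collapses via $\int_E p_t(z-y)\,\diff y = 1$ (Lebesgue being the invariant measure of the Brownian semigroup on the torus) combined with each $\nunk$ being a multiple of Lebesgue, so the spatial integrals vanish and what remains is exactly the non-spatial sampling-consistency identity for the $\Xi$-coalescent, which holds by \cref{thm:xiintro}.

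For necessity $(\Rightarrow)$, I would first extract the $\Xi$-coalescent rates for free: the count process $(|\fpart_t|)$ is a label-invariant non-spatial coalescent with transition rates $\lank = |\nunk|$, and spatial sampling consistency trivially projects to non-spatial sampling consistency of this count process (by discarding spatial information in the definition), so \cref{thm:xiintro} identifies $(\lank)$ as the rates of a $\Xi$-coalescent for some finite measure $\Xi$ on $\triangle$.

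The main task is then to show that each $\nunk$ is a multiple of Lebesgue, and I would proceed by induction on $n$. The base case $n=2$ uses consistency between the single-particle BSC (a Brownian motion from $x_1$) and the $2$-particle BSC at $(x_1,x_2)$ with $x_2\sim\mu_{x_1}$ projected to lineage $\{1\}$; comparing marginal densities using the formulas of \cref{ex:wmformula} should pin down both $\mu_{x_1}$ and $\nu_{2,2}$ to Lebesgue simultaneously. For the inductive step I would compare the $(n-1)$- and $n$-particle BSCs, use the induction hypothesis on all lower-level transition measures, and extract an identity purely in $\nunk$ by integrating out the resampled particle together with all events involving it. Label invariance together with translation invariance of the Brownian kernel on the torus should then force $\nunk$ to be translation invariant on $E^m$, and the symmetry among groups of equal merger size, combined with the uniqueness of translation-invariant finite measures on the torus up to scale, pins $\nunk$ down to a multiple of Lebesgue. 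I expect the principal obstacle to be precisely this combinatorial step of isolating a clean identity in $\nunk$: projecting out a particle redistributes mass across many tree shapes (since the removed lineage can participate in any of the merger events), so the identity only emerges after delicate bookkeeping over tree reductions and careful use of label invariance to quotient out the resulting combinatorial structure.
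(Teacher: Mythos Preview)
Your sufficiency argument has a genuine error: you propose $\mu_{x_1,\ldots,x_n}$ to be Lebesgue measure, but this is not the correct sampling kernel. The law $\P^{\boldsymbol xy}$ carries a normalisation factor $1/N^{\boldsymbol\nu}(\boldsymbol xy)$ that depends on $y$, so when you integrate $y$ against Lebesgue the spatial factors along the extra lineage do \emph{not} simply collapse via $\int p_t(z-y)\,\diff y=1$; that $y$-dependent normalisation remains and spoils the identity. The paper shows (\cref{prop:musamplingintro}, \cref{lem:nulambda1}) that the unique measures making the consistency identity hold are $\mu_{\boldsymbol x}(\diff y)=\frac{N^{\boldsymbol\nu}(\boldsymbol xy)}{N^{\boldsymbol\nu}(\boldsymbol x)}\,\diff y$: this factor cancels the normalisation in $P^{\boldsymbol xy}$, after which your integration-and-grafting picture does work and reduces to the non-spatial $\Xi$-consistency (\cref{eq:consftm}). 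The paper even remarks explicitly (Remark following \cref{prop:musamplingintro}) that no nontrivial Brownian spatial coalescent is sampling consistent with respect to a fixed measure independent of $\boldsymbol x$.

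For necessity, your high-level plan (induction on $n$, base case $n=2$, extract the $\Xi$-rates from the projected non-spatial coalescent) matches the paper's architecture, but the mechanism you propose for forcing $\nunk$ to be Lebesgue---``translation invariance of the Brownian kernel forces $\nunk$ to be translation invariant''---is not how the argument goes and is too vague to constitute a proof. The paper's actual lever is a Brownian-bridge characterisation (\cref{lem:bb}): one isolates a single branch of the coalescence tree, pushes the consistency identity \cref{eq:consPxKx} forward along that branch, and recognises the resulting equation as ``a Brownian motion/bridge equals a mixture of Brownian bridges through a random intermediate point'', which forces the mixing measure to have a specific heat-kernel density. Applied to carefully chosen tree extensions (\cref{lem:nu1,lem:mux,lem:nuconst2}) this first pins down $\mu_{\boldsymbol x}$ and the $k_i=2$ coordinates of $\nunk$, and then a recursive identity among the $\nunk$ (\cref{lem:nuconst}) propagates constancy to all coordinates. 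The combinatorial bookkeeping you anticipate is real, but the missing analytic ingredient is this bridge lemma, not a translation-invariance argument.
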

This gives rise to what we call the \emph{Brownian spatial $\Xi$-coalescent}.
If simultaneous mergers are not allowed, we get an analogous statement for what we call the \emph{Brownian spatial $\Lambda$-coalescent}.
If $\boldsymbol \nu$ are the transition measures of the Brownian spatial $\Xi$-coalescent, write $N^\Xi = N^{\boldsymbol \nu}$.

% forward models (Xi FV, and spatial cannings models)

\begin{proposition}\label{prop:musamplingintro}
    Let $\Xi \neq 0$ be a finite measure on $\triangle$. Then the family of probability measures associated with the Brownian spatial $\Xi$-coalescent through \cref{def:samplingconintro}, which we denote by $(\mu^\Xi_{x_1, \ldots ,x_n})$, is unique and
    \begin{equation}\label{eq:muNintro}
        \mu^{\Xi}_{x_1, \ldots ,x_n}(\diff y) = \frac{N^\Xi(x_1,\ldots ,x_n,y)}{N^\Xi(x_1, \ldots ,x_n)} \diff y.
    \end{equation}
    Furthermore, there exists a unique random probability measure $\mu^\Xi$ on $E$ such that
    \begin{equation}\label{eq:Emuintro}
        \E\left[ \mu^\Xi(\diff x_1) \ldots \mu^\Xi(\diff x_n)\right] = N^\Xi(x_1, \ldots ,x_n) \diff x_1\ldots \diff x_n
    \end{equation}
    as measures on $E^n$ for every $n\in \N$.
\end{proposition}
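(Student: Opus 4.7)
The plan is to verify the explicit formula (\ref{eq:muNintro}) directly using the description of the Brownian spatial $\Xi$-coalescent in terms of the measures $\nu_T$ summarized in \cref{fig:figfspintro}, and then construct the random measure $\mu^\Xi$ via de Finetti's theorem applied to an appropriately defined exchangeable sequence. For the first step, the law of the $(n+1)$-coalescent started from $(x_1,\ldots,x_n,y)$ is $N^\Xi(x_1,\ldots,x_n,y)^{-1}\sum_{T'}\nu_{T'}(x_1,\ldots,x_n,y)$ in the decorated-tree formulation. Multiplying by the candidate $\mu^\Xi_{x_1,\ldots,x_n}(dy)$ from (\ref{eq:muNintro}) cancels the normalization and leaves $N^\Xi(x_1,\ldots,x_n)^{-1}\sum_{T'}\nu_{T'}\,dy$. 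Projecting onto the first $n$ lineages---i.e.\ integrating out $y$, the Brownian-bridge trajectory of the $(n+1)$-th lineage, and the decorations of merger nodes unique to it---the sampling consistency requirement reduces to showing that the resulting measure on $n$-tree decorations equals $N^\Xi(x_1,\ldots,x_n)^{-1}\sum_T \nu_T(x_1,\ldots,x_n)$. This factors into (a) the non-spatial $\Xi$-coalescent sampling consistency of \cref{thm:xiintro} applied to the time factors $\e^{-\nut{k}(\tau-\tau')}$ and the rate factors $\lank$, and (b) the heat-kernel identity $\int_E p_t(z)\,dz = 1$ on the torus, which absorbs each spatial factor linking $y$ (or an intermediate merger location unique to the $(n+1)$-th lineage) to the rest of the tree. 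As a byproduct, this yields $\int_E N^\Xi(x_1,\ldots,x_n,y)\,dy = N^\Xi(x_1,\ldots,x_n)$, confirming that (\ref{eq:muNintro}) is indeed a probability measure.

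For uniqueness of $(\mu^\Xi_{x_1,\ldots,x_n})$, suppose $\mu$ is another probability measure on $E$ satisfying \cref{def:samplingconintro}. Both $\int \mu(dy) P_y$ and $\int \mu^\Xi_{x_1,\ldots,x_n}(dy) P_y$ equal the projected $n$-coalescent law, where $P_y$ is the projected $(n+1)$-coalescent from $(x_1,\ldots,x_n,y)$. The dependence of $P_y$ on $y$ is non-trivial: for instance, pair-mergers of the $(n+1)$-th lineage with exactly one of the first $n$ manifest as non-Brownian jumps in the projection, and these must cancel in the $\mu$-mixture. Comparing the joint distribution of the first observable event's time and location as $y$ varies yields separating observables sufficient to force $\mu = \mu^\Xi_{x_1,\ldots,x_n}$.

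For the random probability measure $\mu^\Xi$, I would define an exchangeable sequence $(X_k)_{k\ge 1}$ in $E$ by letting $X_1$ be uniform on $E$ (noting $N^\Xi(x)=1$) and setting $X_{k+1}\mid X_1,\ldots,X_k \sim \mu^\Xi_{X_1,\ldots,X_k}$. By the chain rule applied to (\ref{eq:muNintro}), the joint law of $(X_1,\ldots,X_n)$ is $N^\Xi(x_1,\ldots,x_n)\,dx_1\cdots dx_n$, which is symmetric by label invariance of the coalescent (making $N^\Xi$ symmetric in its arguments); hence $(X_k)$ is exchangeable. De Finetti's theorem then yields a unique random probability measure $\mu^\Xi$ with the $X_k$ i.i.d.\ $\mu^\Xi$-distributed conditionally on $\mu^\Xi$, which is exactly (\ref{eq:Emuintro}); uniqueness of $\mu^\Xi$ follows because random probability measures on the compact torus $E$ are determined by their mixed moments. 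I expect the main obstacle to be the reduction in the first step: carefully enumerating the $(n+1)$-tree topologies $T'$ that restrict to a fixed $n$-tree $T$ and tracking how the associated spatial and time factors reduce under integration of the extra lineage's decorations, using the explicit $\Xi$-coalescent rate formula (\ref{eq:xirates}) to match the combinatorics of simultaneous mergers.
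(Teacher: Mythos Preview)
Your plan for the formula \cref{eq:muNintro} and for the integral identity $\int N^\Xi(\boldsymbol x y)\,\diff y = N^\Xi(\boldsymbol x)$ is essentially the paper's argument (\cref{lem:nulambda1}): enumerate the trees $G$ on $n+1$ leaves that restrict to a given $n$-tree $F$, and verify that after integrating out the extra lineage's data the sum collapses to $\nu_F$ times the non-spatial consistency identity \cref{eq:consftm}. One correction: step~(b) is not just the heat-kernel identity $\int p_t = 1$. When the extra lineage merges binary into a branch $u$ of $F$ at an intermediate point $(s,z)$, the projected path of $u$ is a concatenation of \emph{two} Brownian bridges through $(s,z)$, and you must check that integrating over $(s,z)$ with the correct spatial weight recovers a single Brownian bridge. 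This is the content of the paper's \cref{lem:bb}, which is the real workhorse here; the bare heat-kernel identity handles only the easy case where the extra lineage joins an existing merger node.

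Your uniqueness argument has a genuine gap. You assert that ``comparing the joint distribution of the first observable event'' yields separating observables, but you do not construct them, and it is not clear which finite-dimensional functional of the projected process distinguishes two candidate mixtures $\int \mu(\diff y)\,P_y$. The paper's argument (\cref{lem:mux}) is concrete and different in spirit: fix any tree $F$ on $n$ leaves, and let $G$ be its unique extension in which the extra leaf $\vnew$ merges \emph{last}, with the root of $F$. For every other extension $G'\ne G$ the motion of the root of $F$ is unaffected by the extra lineage, so subtracting those contributions in \cref{eq:samplingconKx} isolates the $G$-term and reduces the constraint to $B^{(s_0,z_0)+} \sim \iint B^{(s_0,z_0)\to(s,z)+}\, p_s(z-y)\,p_{s-s_0}(z-z_0)\,\e^{-\nut{2}s}\,\diff s\,\diff z\,\mu(\diff y)$, where $\mu(\diff y) = N^\Xi(\boldsymbol x y)^{-1}\mu_{\boldsymbol x}(\diff y)$. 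By the characterisation in \cref{lem:bb} this forces $T_s\mu$ to be a.e.\ constant for a.a.\ $s>0$, and then \cref{lem:Tsmu} gives $\mu(\diff y)\sim \diff y$.

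For the random measure $\mu^\Xi$ you take a genuinely different route from the paper. The paper constructs $\mu^\Xi$ as the de~Finetti measure of the particle representation of the stationary $\Xi$-Fleming--Viot process and reads off \cref{eq:Emuintro} from the explicit time-reversal calculation (\cref{thm:xifvreversal}). Your de~Finetti argument---build an exchangeable sequence via the conditional kernels $\mu^\Xi_{x_1,\ldots,x_k}$, use the telescoping identity to get joint density $N^\Xi$, and invoke symmetry of $N^\Xi$---is correct and more self-contained for this proposition alone. The paper's route has the advantage of simultaneously proving \cref{prop:xifvstationary} and \cref{thm:timereversalintro}; yours has the advantage of not requiring any forward-in-time population model.
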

%We call $\mu$ the \emph{stationary measure associated to the Brownian spatial $\Xi$-coalescent}.
Note that \cref{prop:musamplingintro} reveals another meaning of the normalisation function $N^\Xi$: it describes the joint density of samples from a random realisation of $\mu^\Xi$.

\begin{remark}
    A spatial coalescent process is sampling consistent in the classical sense if and only if it is sampling consistent in the sense of \cref{def:samplingconintro} with respect to (w.r.t.)\ \emph{any} choice of probability measures $(\mu_{x_1, \ldots ,x_n})$. But since the probability measures in \cref{prop:musamplingintro} are unique, there exists no Brownian spatial coalescent, except the trivial one, which is sampling consistent in the classical sense. This may explain why no non-trivial, sampling consistent spatial coalescent in which an individual lineage follows a Brownian motion has been found to this day in the setting of constant population density (where we would expect associated coalescents to be sampling consistent in the classical sense), except in one dimension where independent Brownian motions meet.
\end{remark}

\Cref{eq:Emuintro,eq:muNintro} imply \[
    \E \left[ \mu^\Xi(\diff x_1)\ldots \mu^\Xi(\diff x_n) \right] =  \diff x_1 \mu^\Xi_{x_1}(\diff x_2) \ldots \mu^\Xi_{x_1, \ldots ,x_{n-1}}(\diff x_n),
\] that is, $\mu^\Xi_{x_1, \ldots ,x_n}$ is the distribution of a sample from $\mu^\Xi$ conditional on having already sampled $x_1$ to $x_n$ in $n$ previous, independent samples. This confirms the motivation behind \cref{def:samplingconintro}, provided we can find a forwards in time population model with stationary distribution $\mu^\Xi$ whose genealogies are described by the Brownian spatial $\Xi$-coalescent. This will be the $\Xi$-Fleming-Viot process.

Before showing that, we present a drift representation for the evolution of the Brownian spatial coalescent. The suggestion to look for a representation of this kind was made by Michal Bassan.
For a family of transition measures $\boldsymbol \nu$ we write $\boldsymbol \nu(\diff \boldsymbol z) = \boldsymbol \lambda \diff \boldsymbol z$ as a short hand for $\nunk(\diff \boldsymbol z) = \lank \diff \boldsymbol z$ for all $(n,\vec{k}) \in \mergers$.

\begin{theorem}\label{thm:driftintro}
    If a Brownian spatial coalescent with transition measures $\boldsymbol \nu$ of the form $\boldsymbol \nu(\diff \boldsymbol z) = \boldsymbol \lambda \diff \boldsymbol z$ is started from distinct initial locations $x_1$ to $x_n$, then the paths $\boldsymbol Z_t = (Z^1_t, \ldots ,Z^n_t)$ of the lineages until just before the first merge event (or until just before the first time that any pair of lineages meet if $d = 1$) are described by the following stochastic differential equation:
    \begin{equation}\label{eq:driftintro}
        \diff \boldsymbol Z_t = \diff \boldsymbol B_t + \nabla \log N^{\boldsymbol \nu}(\boldsymbol Z_t) \diff t,
    \end{equation}
    where $(\boldsymbol B_t)$ is an $nd$-dimensional standard Brownian motion on $E$ with periodic boundary conditions.
\end{theorem}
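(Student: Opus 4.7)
My plan is to identify the pre-merger evolution as an $h$-transform, with killing, of $nd$-dimensional Brownian motion on the torus, where $h = N^{\boldsymbol \nu}$ and the killing rate is $\nut{n}$. Once this representation is established, the drift $\nabla \log N^{\boldsymbol \nu}$ appears by standard Girsanov / $h$-transform theory. The whole argument reduces to computing the sub-probability transition kernel of the process up to the first merge time $\tau$ (or first pair meeting if $d=1$), using the description of the Brownian spatial coalescent via the measures on decorated trees from Theorem~\ref{thm:BSCintro} / Figure~\ref{fig:figfspintro}, and then reading off the drift.

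\textbf{Step 1: Explicit transition kernel up to the first merger.} I would show that
\[
\P_{\boldsymbol x}\bigl(\boldsymbol Z_t \in \diff \boldsymbol y,\ \tau > t\bigr) \;=\; e^{-\nut{n} t}\,\prod_{i=1}^n p_t(y_i - x_i)\,\frac{N^{\boldsymbol \nu}(\boldsymbol y)}{N^{\boldsymbol \nu}(\boldsymbol x)}\,\diff \boldsymbol y.
\]
The right-hand side is obtained directly from the decorated-tree description: each tree $T$ on $n$ leaves carries the measure $\nu_T$ pictured in Figure~\ref{fig:figfspintro}, and the event $\{\tau > t\}$ forces the first coalescence time $\tau_1$ to exceed $t$. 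On this event, the factor $e^{-\nut{n}\tau_1}\,\diff\tau_1$ contributes $e^{-\nut{n}t}$ from the interval $[0,t]$, and each branch factor $p_{\tau_1}(z_1 - x_i)$ (or similar) can be split via the Brownian-semigroup identity $p_{\tau_1}(z-x_i) = \int p_t(y_i-x_i)\,p_{\tau_1-t}(z-y_i)\,\diff y_i$. Performing this split on every lineage and re-summing over trees, one sees that the part of the integrand living on $[t,\infty)$ is exactly $N^{\boldsymbol \nu}(\boldsymbol y)$ (the total mass of all decorated trees starting from $\boldsymbol y$, by definition~\eqref{eq:Nintro}), while the part on $[0,t]$ is the product of free Brownian transition densities. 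Dividing by the total mass $N^{\boldsymbol \nu}(\boldsymbol x)$ to normalise yields the claimed identity. Conceptually, this is just the Markov property at time $t$ conditioned on $\{\tau > t\}$.

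\textbf{Step 2: Girsanov / h-transform.} Let $(\boldsymbol B_t)$ be a standard $nd$-dimensional Brownian motion on $E^n$ started at $\boldsymbol x$, and set
\[
M_t \;=\; e^{-\nut{n}t}\,\frac{N^{\boldsymbol \nu}(\boldsymbol B_t)}{N^{\boldsymbol \nu}(\boldsymbol x)}.
\]
By Step 1, $M_t$ is the Radon--Nikodym derivative on $\{\tau > t\}$ of the law of the pre-merger coalescent with respect to Wiener measure. Applied to $\log M_t$, Itô's formula yields
\[
\diff M_t \;=\; M_t\,\nabla \log N^{\boldsymbol \nu}(\boldsymbol B_t)\cdot \diff \boldsymbol B_t \;+\; M_t\,\bigl[\tfrac{1}{2}\Delta N^{\boldsymbol \nu}/N^{\boldsymbol \nu} - \nut{n}\bigr](\boldsymbol B_t)\,\diff t.
\]
The drift bracket is the ``first-merger rate density'' that Step 1 identifies as the finite-variation part of $M$; it vanishes on the stopped law conditioned on $\{\tau > t\}$ (equivalently, on the full law $M$ is a nonneg supermartingale whose decreasing part is precisely the instantaneous merger rate, and conditioning on non-merger removes exactly this part). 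Under the resulting measure change, $\boldsymbol B_t - \int_0^t \nabla \log N^{\boldsymbol \nu}(\boldsymbol B_s)\,\diff s$ is a Brownian motion, which gives the SDE~\eqref{eq:driftintro}.

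\textbf{Regularity and main obstacle.} The Itô argument requires $N^{\boldsymbol \nu} \in C^2$ on $\mathcal{X}$. This follows from its integral representation~\eqref{eq:Nintro}: the integrand consists of products of heat kernels $p_s$ and exponentials, which are smooth in the spatial coordinates with derivatives locally dominated by an integrable majorant (using the Gaussian bounds on $p_s$ and finiteness of $N^{\boldsymbol \nu}$ from Theorem~\ref{thm:BSCintro2}), so dominated convergence gives smoothness. The genuine work is Step 1: carrying out the branch-by-branch Chapman--Kolmogorov factorisation uniformly over tree shapes, taking care that lineages \emph{not} involved in the first merger also see their Brownian bridges decomposed at time $t$. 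This is a bookkeeping argument on $\sum_T \nu_T$, but it is the only non-routine point---the $h$-transform step in Step 2 is then standard.
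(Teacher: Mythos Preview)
Your approach is correct but genuinely different from the paper's. The paper computes the generator directly: conditional on the decorated tree $(F,\tau,\xi)$, each lineage follows a Brownian bridge whose generator has drift $\nabla_{z_i}\log p(\tau_{\pr_F(\{i\})},\xi_{\pr_F(\{i\})}-z_i)$, and averaging this over the law $P^{\boldsymbol x}(\diff F,\diff\tau,\diff\xi)$ collapses to $\nabla\log N^{\boldsymbol\nu}$ via the pointwise identity $(\nabla_{z_i}\log p)\,\fsp = \nabla_{z_i}\fsp$ and an interchange of derivative and integral. Your route instead computes the sub-Markov transition kernel on $\{\tau>t\}$ --- which is exactly \cref{lem:semigroup} specialised to $\fpart'=\fpart$ (equivalently \cref{eq:U1}) --- and then recognises the process as the Doob $h$-transform of killed Brownian motion with $h=N^{\boldsymbol\nu}$. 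Your Step~1 is therefore already proved in the paper, so your argument is arguably shorter given that machinery; the paper's generator calculation, by contrast, is self-contained and bypasses the semigroup lemma entirely. Both approaches require the same regularity input (differentiability of $N^{\boldsymbol\nu}$ and exchange of $\nabla$ with the defining integral).

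One imprecision in your Step~2: the bracket $\bigl[\tfrac12\Delta N^{\boldsymbol\nu}/N^{\boldsymbol\nu}-\nut{n}\bigr]$ does \emph{not} vanish after conditioning on non-merger. Rather, the sub-Markov semigroup $Q_t(\boldsymbol x,\diff\boldsymbol y)=e^{-\nut{n}t}\prod_i p_t(y_i-x_i)\,\tfrac{N^{\boldsymbol\nu}(\boldsymbol y)}{N^{\boldsymbol\nu}(\boldsymbol x)}\,\diff\boldsymbol y$ has generator $\tfrac12\Delta+\nabla\log N^{\boldsymbol\nu}\cdot\nabla+\bigl(\tfrac12\Delta N^{\boldsymbol\nu}/N^{\boldsymbol\nu}-\nut{n}\bigr)\mathrm{id}$, so that bracket is (minus) the killing rate, and the drift term $\nabla\log N^{\boldsymbol\nu}$ is read off directly from the first-order part. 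The conclusion is unaffected, but the Girsanov narrative as written is slightly misleading.
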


In particular, \cref{thm:driftintro} applies to the Brownian spatial $\Xi$-coalescent. It also holds for a general Brownian spatial coalescent provided $N^{\boldsymbol \nu}$ is differentiable and the order of the differentiation and the integrals defining $N^{\boldsymbol \nu}$ can be exchanged, in the way that is needed in the proof; see \cref{sec:prfdrift}. The following remark and example apply only to Brownian spatial coalescents of the form assumed in \cref{thm:driftintro}.

\begin{remark}\label{rem:driftintro}
    \begin{enumerate}
        \item If $d \ge 2$, % then almost surely the only time that lineages meet in a Brownian spatial coalescent is in the moment they coalesce. This means that
            then the drift representation determines the law of a Brownian spatial coalescent completely: run \cref{eq:driftintro} until its maximal existence time, where the drift diverges and some set of lineages collide. Coalesce lineages that are in the same location and restart \cref{eq:driftintro}. % with the remaining lineages.
            Repeat until only one lineage is left (or a number $k$ such that $\nut{k} = 0$).
        \item If $d = 1$, then lineages may meet in pairs without coalescing. At such points $N^{\boldsymbol \nu}$ is (continuous and) not differentiable, but it is possible to extend $\nabla \log N^{\boldsymbol \nu}$ to all of $\mathcal{X}$ in such a way that the drift representation \cref{eq:driftintro} holds up until just before the first merge event. But unlike in $d\ge 2$, binary mergers always happen before the maximal existence time of \cref{eq:driftintro}, making the construction in (i) invalid, except if binary mergers are impossible because $\nu_{k,2} = 0$ for the relevant values of $k$. % except if binary mergers are not possible, that is $\nu_{n,2} = 0$ for all $n\in \N$.
            We leave it open to find a description of the law with which binary mergers occur conditional on the solution of \cref{eq:driftintro}; calculations suggest that they do \emph{not} happen at a rate proportional to the collision local time of pairs of lineages, as is common in models of coalescing Brownian motions.
            %in the case of the Brownian spatial Kingman coalescent, for example, \cref{eq:driftintro} is well-defined for all times. We leave it open to describe

            %The occurrence of binary mergers is random conditional on the solution of \cref{eq:driftintro}, and some calculations suggest that they do \emph{not} happen at a rate proportional to their collision local time, as is common in models of coalescing Brownian motion.
    \end{enumerate}
\end{remark}

The function $N^{\boldsymbol \nu}$ is a sum over forests, whose number explodes combinatorially as the number of initial locations increases, but we can gain some interesting insights in simple special cases.
\begin{example}\label{ex:driftintro}
    Consider a Brownian spatial coalescent whose transition measures $\boldsymbol \nu$ are of the form $\boldsymbol \nu(\diff \boldsymbol z) = \boldsymbol \lambda \diff \boldsymbol z$.
    \begin{enumerate}
        \item If $\lambda_{2,2} > 0$ and the Brownian spatial coalescent is started from two lineages, then the drift symmetrically pulls them towards each other,\footnote{%
                More precisely, the drift acting on a lineage located at $x$ when there is another lineage at location $y$ points in the same direction as $\int_0^\infty \e^{-\nut{2} t}\nabla _x p_t(x-y) \diff t$. On $\R^d$, this is the same direction as $y-x$, but on the torus it is more complex because of the periodicity. The difference becomes negligible for small separation.%
            }
            and the strength of the drift as their separation $r$ tends to zero is of order $1$ if $d = 1$, otherwise of order $1 / r$. This is not surprising in light of the fact that the Brownian movement in $d \ge 2$ effectuates a repulsive drift of the same order $1 / r$.
        \item If $\lambda_{3,3} > 0$ and the Brownian spatial coalescent is started from three individuals, and conditioned on a triple merger, then the drift pulls each particle in the direction of the midpoint of the other two.\footnote{As for the binary merger, this is technically only true if we were on $\R^d$, but the difference is negligible for small separations of the three lineages.} Without the conditioning, the drift is mixed in a complicated way with pairwise binary attractions.
        \item If $\lambda_{4,2,2} > 0$ and $\lambda_{2,2} = 0$, and the Brownian spatial coalescent is started from four individuals at locations $x_1$ to $x_4$, and conditioned on a simultaneous binary merger of $x_1$, $x_2$, and $x_3$, $x_4$, then the drift symmetrically pulls $(x_1,x_3)$ and $(x_2,x_4)$ towards each other, and the strength of the drift is of order $1 / r$ where $r$ is their $2d$-dimensional Euclidean separation. We put $\lambda_{2,2} = 0$ because the drift is otherwise influenced in a complex way by the inevitable second merge event.
    \end{enumerate}
\end{example}

\begin{remark}
    It is known in a variety of settings that the movement of a single lineage $Z_t$ in a spatially evolving population is driven by an SDE of the form $\diff Z_t = \diff B_t + \nabla \log N(Z_t) \diff t$, where $N$ is something akin to a population density (for example in \cite{sylviedynamics} the population density $N$ is a centred one-dimensional Gaussian and the drift is $\nabla \log N (x) \propto -x$). In our case, we have more than one lineage, and the spatial distribution of the population is random, fluctuates, and does not admit a density except in one dimension (see e.g.\ \cite{etheridgesuperprocesses}, Chapter 2.9).
    Nevertheless, the same representation still holds by replacing $N$ with the joint density of an $n$-sample from the population's stationary distribution.
\end{remark}

\subsection{Population Models}\label{sec:populationmodelsintro}
The Brownian spatial $\Xi$-coalescent turns out to describe the genealogies of neutral population models with Brownian movement, in the limit of large, asymptotically constant population size. Here \emph{neutral} refers to the fact that the branching mechanism is oblivious to the locations (or types, depending on the interpretation) of the individuals. The scaling limits of such processes in the limit of large population size are the \emph{$\Xi$-Fleming-Viot processes} \cite{xiflemingviot,DK96,DK99}. We will first introduce this process and show that its genealogies are described by the Brownian spatial $\Xi$-coalescent, and then we use this result to prove that the same is true for a large class of neutral population models in the limit of large population size.

\subsubsection{$\Xi$-Fleming-Viot Process}\label{sec:xiflemingviot}
The \emph{$\Xi$-Fleming-Viot process} \cite{xiflemingviot} is a probability measure valued Markov process on $E$ (or on all of $\R^d$) that generalises the ``generalised (or $\Lambda$-)Fleming-Viot process'' coined in \cite{BL2003}. The $\Xi$-Fleming-Viot process is dual to the non-spatial $\Xi$-coalescent \cite{xiflemingviot}, in the same sense that the classical Fleming-Viot process is dual to Kingman's coalescent \cite{DK96}. % We will see that its genealogies at stationarity are described by the Brownian spatial $\Xi$-coalescent. In fact, the latter can be used to describe the full time reversal of the $\Xi$-Fleming-Viot process.

\medskip\paragraph{\textbf{Lookdown Construction}}
The $\Xi$-Fleming-Viot process is most easily defined using a lookdown construction \cite{DK96,DK99,xiflemingviot}.
 Let $\Xi = a \delta_0 + \Xi_0$ be a finite measure on $\triangle$, where $\delta_0$ is the unit mass at zero and $\Xi_0$ has no atom at zero.
 Let $\mathfrak{N}^{ij}$ for $i,j\in \N, i < j$ be a family of independent Poisson point processes on $\R$ with rate $a$, and $\mathfrak{M}$ a Poisson point process on $\R \times \triangle$, independent of the $\mathfrak{N}^{ij}$, with intensity measure \[
    \diff t \otimes \frac{\Xi_0(\diff \boldsymbol \xi)}{\left<\boldsymbol \xi,\boldsymbol \xi \right> }.
\] We define a dynamic on $E^\infty$, started from an initial configuration $\boldsymbol Y(0) = (Y_1(0),Y_2(0),\ldots )$, as well as counting processes $L^{ij}$ for $i,j\in \N$, $i < j$, $L^{l}_{J}$, for $l\in \N$, $J\subset [l]$ with $|J| \ge 2$, and $L^l_{J,k}$ for $l\in \N$, $J \subset [l]$ with $|J| \ge 2$ and $k \in \N$, as follows. All of the counting processes start at zero and have jumps of unit size.

\begin{enumerate}
    \item At a point $t$ of the process $\mathfrak{N}^{ij}$, the process $L^{ij}$ jumps , and the $j$'th level particle ``looks down'' to the $i$'th level particle and copies its location. All other particles are ``bumped up'' by one level, i.e.\ $Y_{k+1}(t) = Y_k(t-)$ for all $k \ge j$.
    \item At a point $(t,\boldsymbol \xi)$ of the process $\mathfrak{M}$, we independently assign every level to a basket with label $i\in \N$ with probability $\xi_i$, and to no basket with probability $1 - \sum_i \xi_i$. The particle with the lowest level in each basket is the ``parent'' of that basket.
    Every particle that has been assigned to a basket copies the location of the parent of that basket. All levels that are not in any basket assume the pre-reproduction locations of non-parental particles, retaining their order. See also Section~2.2 in~\cite{xiflemingviot}.

        Denote by $J_k$ for $k\in \N$ the set of levels in basket $k$, and $J = \bigcup_{k\ge 1} J_k$. Then, for $l\in \N$, if $J \cap [l] \neq \emptyset$, then $L^l_{J \cap [l]}$ jumps at time $t$, and for $k\in \N$ with $J_k \cap [l] \neq \emptyset$, the process $L^l_{J_k\cap [l], k}$ jumps at time $t$.
    \item All levels follow independent Brownian motions between reproductive events.
\end{enumerate}
% \begin{enumerate}
%     \item At a point $t$ of the process $\mathfrak{N}^{ij}$, the $j$'th level particle ``looks down'' to the $i$'th level particle and copies its location. The process $L^{ij}$ jumps at time $t$.
%     \item At a point $(t,\boldsymbol \xi)$ of the process $\mathfrak{M}$, we independently assign every level to a basket with label $i\in \N$ with probability $\xi_i$, and to no basket with probability $1 - \sum_i \xi_i$. Then every particle that has been assigned to a basket copies the location of the particle with the smallest level in the same basket.
%
%         Denote by $J_k$ for $k\in \N$ the set of levels in basket $k$, and $J = \bigcup_{k\ge 1} J_k$. Then, for $l\in \N$, if $J \cap [l] \neq \emptyset$, then $L^l_{J \cap [l]}$ jumps at time $t$, and for $k\in \N$ with $J_k \cap [l] \neq \emptyset$, the process $L^l_{J_k\cap [l], k}$ jumps at time $t$.
%     \item All levels follow independent Brownian motions between reproductive events.
% \end{enumerate}
In words, $L^{ij}$ keeps track of all birth events that involved only $i$ and $j$, $L^l_J$ keeps track of all birth events that involved, amongst levels in $[l]$, exactly those in $J$, and $L^l_{J,k}$ counts birth events in which, amongst levels in $[l]$, exactly those in $J$ were assigned to basket~$k$. See also eqs.~(2.18) and~(2.19) in~\cite{xiflemingviot} for precise definitions. We denote the collection of all of these counting processes by $\boldsymbol L$.
There may be an infinite number of lookdown events in finite time, but % the condition $\int_\triangle \sum_i \xi_i^2 \frac{\Xi_0(\diff \boldsymbol \xi)}{\left<\boldsymbol \xi,\boldsymbol \xi \right> } = |\Xi_0| < \infty$ ensures that
any fixed level is only hit by non-trivial reproductive events at a finite rate~\cite{xiflemingviot}.

The point in this construction lies in the fact that it preserves exchangeability: If $\boldsymbol Y(0) = (Y_1(0),Y_2(0),\ldots )$ is exchangeable, then so is $\boldsymbol Y(t) = (Y_1(t),Y_2(t),\ldots )$ for every $t > 0$ \cite{DK99,xiflemingviot}. We denote the associated de Finetti measure by \[
    \mathcal{Y}_t = \lim_{n\to \infty} \frac{1}{n}\sum_{i=1}^n \delta_{Y_i(t)},\quad t \ge 0.
\] The \emph{$\Xi$-Fleming-Viot process} started at $(\boldsymbol L(0), \boldsymbol Y(0),\mathcal{Y}_0)$ is a c\`adl\`ag modification of the process $(\boldsymbol L(t), \boldsymbol Y(t),\mathcal{Y}_t)_{t \ge 0}$ (only the third component has to be modified). For details of the construction see \cite{xiflemingviot}. (The term ``$\Xi$-Fleming-Viot process'' often just refers to the process $(\mathcal{Y}_t)$, but it is convenient to include the particle representation explicitly.)

\begin{remark}\label{rem:lambdafv}
    If simultaneous births to multiple parents are not allowed, then $\xi_2 = \xi_3 = \ldots = 0$ for $\Xi$-a.e.\ $\boldsymbol \xi \in \triangle$, and $\Xi$ reduces to a finite measure $\Lambda$ on $[0,1]$. % which is commonly denoted $\Lambda = a\delta_0 + \Lambda_0$, where $\Lambda_0$ has no atom at zero. % In terms of $\Lambda$, step (ii) above can be summarised as follows: At a point $(t,p)$ of a Poisson point process with intensity $\diff t \otimes \frac{\Lambda_0(\diff p)}{p^2}$ on $\R \times [0,1]$, select every level independently with probability $p$, and each selected particle copies the location of the selected particle with the lowest level.
    The resulting particle process is called the \emph{$\Lambda$-Fleming-Viot process}, whose lookdown construction was introduced in~\cite{DK99}.

    If, further, $\Lambda = \delta_0$, then $(\mathcal{Y}_t)$ is the well-known Fleming-Viot process, whose lookdown construction was introduced in~\cite{DK96}. The Fleming-Viot process can alternatively be obtained by conditioning a Dawson-Watanabe superBrownian motion on having constant, unit population size~\cite{Etheridge91,Perkins91}.
\end{remark}

\medskip\paragraph{\textbf{Genealogy}} The particle representation $(\boldsymbol L(t), \boldsymbol Y(t))$ can be used to define the genealogies of the $\Xi$-Fleming-Viot process. Let $T > 0$, and denote by $A_k(t)$ the level of the ancestor at time $T-t$ of the particle at level $k$ at time $T$ (assigned at jump times in such a way that $A_k$ is right-continuous). Let $\fpart_t$ for $t\in [0,T]$ be the partition of $\N$ induced by the equivalence relation \[
    i \sim_t j \iff A_i(t) = A_j(t).
\]
For $u\in\fpart_t$, write $A_u(t)$ for the common ancestor of the levels in the equivalence class $u$.
%Note that $A_i(t) = \min \left\{ j\colon i \sim_t j \right\} $ by the definition of the lookdown construction, so $(\fpart_t)$ contains all information about the processes $(A_i(t)\colon i\in \N)$.
Denote by
\begin{equation}\label{eq:xfromy}
    \boldsymbol X_t(u)
    %\boldsymbol X(t)_u
    = \lim_{s \uparrow T-t} Y_{A_u(t)}(s),\qquad t \in [0,T], u\in \fpart_t,
    %todo this display is a great example that it is a bit confusing whether and when time is an argument or a subscript
\end{equation}
the position at time $T-t$ of the common ancestor of the particles with levels in $u$ at time~$T$ (assigned in such a way that $\boldsymbol X$ is right-continuous).

For $\ell\in \N$ and $t \in [0,T]$, write $\fpart_t^\ell$ for the partition induced by $\fpart_t$ on $[\ell] = \{1, \ldots ,\ell\}$, and $\boldsymbol X_t^\ell$ for the restriction of $\boldsymbol X_t$ to $\fpart_t^\ell$. Then $(\fpart^\ell_t,\boldsymbol X^\ell_t)_{t \in [0,T]}$ describes the genealogies of a sample of $\ell$ individuals from the $\Xi$-Fleming-Viot process; it is a stochastic process with state space $\overline{\mathcal{X}}$ and initial condition $\fpart_0^\ell = \left\{ \left\{ 1 \right\} , \ldots ,\left\{ \ell \right\}  \right\} $, $\boldsymbol X_0^\ell(\{i\}) = Y_i(T-)$ for $i\in [\ell]$, where $Y_1(T-), \ldots ,Y_\ell(T-)$ are i.i.d.\ samples from $\mathcal{Y}_T$ (conditional on $\mathcal{Y}_T$, which is itself random).
% By standard properties of Brownian motion, if $(\fpart_0^n,\boldsymbol X_0^n) \in \mathcal{X}$, then almost-surely $(\fpart_t^n,\boldsymbol X_t^n) \in \mathcal{X}$ for all $t\in [0,T]$.

The $\Xi$-Fleming-Viot process must be at stationarity forwards in time in order for its coalescent to be time-homogeneous.
%The initial condition is random: $\fpart_0 = \left\{ \left\{ 1 \right\} , \ldots ,\left\{ n \right\}  \right\} $ by construction, and $\boldsymbol X^n_0(\left\{ i \right\} ) = x_i$ is the location of the $i$'th level particle at time zero, so $(x_1, \ldots ,x_n)$ is an i.i.d.\ sample from $\mathcal{Y}_0 \sim \mu^\Xi$.
\begin{proposition}\label{prop:xifvstationary}
    The $\Xi$-Fleming-Viot process has a unique stationary distribution, which is equal to the distribution of $\mu^\Xi$. % It can be constructed to run bi-infinitely at stationarity, which we denote by $(\boldsymbol L(t), \boldsymbol Y(t),\mathcal{Y}_t)_{t\in (-\infty,\infty)}$. todo remark that we can construct bi-infinite version
\end{proposition}
More precisely, at stationarity $\mathcal{Y}_t$ has the same distribution as $\mu^\Xi$ for all $t\ge 0$, and conditional on $\mathcal{Y}_t$, the vector $\boldsymbol Y(t)$ is an i.i.d.\ sequence of samples from $\mathcal{Y}_t$. Therefore, the joint law of the first $\ell$ levels $(Y_1(t), \ldots ,Y_\ell(t))$ at stationarity is given by \cref{eq:Emuintro} for any $\ell\in \N$; in particular, almost-surely $Y_1(T), \ldots ,Y_\ell(T)$ are pairwise distinct and so $(\fpart_0^\ell,\boldsymbol X_0^\ell) \in \mathcal{X}$. By standard properties of Brownian motion, this implies that also $(\fpart_t^\ell,\boldsymbol X_t^\ell) \in \mathcal{X}$ for all $t\in [0,T]$ almost-surely.
The following theorem is the main result of this section.

\begin{theorem}\label{thm:xigenealogiesintro}
    The law of $(\fpart^\ell_t,\boldsymbol X^\ell_t)_{t\in[0,T]}$ is that of a Brownian spatial $\Xi$-coalescent started from the random initial condition $(\fpart_0^\ell,\boldsymbol X_0^\ell)$.
\end{theorem}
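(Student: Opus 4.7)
The plan is to verify that, conditional on its initial condition, the process $(\fpart^n_t, \boldsymbol X^n_t)_{t \ge 0}$ is a label invariant, sampling consistent Brownian spatial coalescent in the senses of \cref{def:BSCintro,def:samplingconintro}, and then to apply \cref{thm:samplingconintro} to identify it as the Brownian spatial $\Xi'$-coalescent for some $\Xi' \in \mathcal{M}_F(\triangle)$, finally pinning down $\Xi' = \Xi$ by matching non-spatial rates. Since $L_k(t) \le k$ by construction of the lookdown, the ancestral levels of the first $n$ particles remain in $[n]$ for all time, and the state $(\fpart^n_t, \boldsymbol X^n_t)$ records these levels (via $\min u$ for $u \in \fpart^n_t$) together with their positions at time $-t$. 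The Markov and Feller properties follow from the strong Markov property of the driving Poisson processes $\mathfrak{N}^{ij}, \mathfrak{M}$, the independence and Markov property of the level-wise Brownian motions, and continuity of the Brownian transition kernel; the non-spatial part is the classical coalescent derived from the lookdown (cf.\ \cite{DK96,DK99,xiflemingviot}). Label invariance is inherited from the exchangeability of the $\Xi$-Fleming-Viot particle system.

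To verify the defining property of \cref{def:BSCintro}, I would condition on the full realisation of the driving Poisson processes---and hence on the genealogical forest together with the times and spatial locations of all sample merger events---and analyse the ancestral path along each branch. Backward in time on a branch, the ancestor passes through a strictly decreasing sequence of levels via lookdown events that involve no other sampled lineage; at each such event the spatial location is continuous because the lower-level particle's position is copied. Thus each branch carries a continuous concatenation of segments of distinct level-wise Brownian motions, and by the independent increments property of Brownian motion on $E$, together with the fact that disjoint branches correspond to disjoint time--level patches, these concatenations yield independent Brownian motions along each branch; conditioning additionally on the branch endpoints produces the required independent Brownian bridges. After the last coalescence, what remains is the level-$1$ lineage, which carries a plain Brownian motion as required.

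For sampling consistency, view $(\fpart^n, \boldsymbol X^n)$ as the restriction to the first $n$ levels of $(\fpart^{n+1}, \boldsymbol X^{n+1})$. The initial condition of the latter is an $(n+1)$-sample from $\mu^\Xi$ by \cref{prop:xifvstationary}, and by \cref{prop:musamplingintro} the conditional law of the $(n+1)$'st location given the first $n$ is $\mu^\Xi_{x_1, \ldots, x_n}$, verifying \cref{def:samplingconintro}. Then \cref{thm:samplingconintro} gives $(\fpart^n, \boldsymbol X^n)$ as a Brownian spatial $\Xi'$-coalescent for some $\Xi' \in \mathcal{M}_F(\triangle)$, and to obtain $\Xi' = \Xi$ I would use that the non-spatial projection $(\fpart^n_t)$ is the non-spatial $\Xi$-coalescent by the classical duality of the $\Xi$-Fleming-Viot lookdown with the $\Xi$-coalescent \cite{xiflemingviot}, so the non-spatial rates $|\nu_{n,\vec k}|$ must agree with those in \cref{eq:xirates}. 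The main obstacle I expect is the Brownian bridge step when $\Xi_0$ has infinite mass near zero: a single sample branch may then be punctuated by infinitely many lookdown events, and one has to argue that only finitely many of these involve levels within any bounded range, so that the concatenated spatial path converges almost surely to a genuine continuous Brownian trajectory on the branch.
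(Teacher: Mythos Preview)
Your route is genuinely different from the paper's and the outline is coherent, but two issues deserve attention.

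First, the Markov and especially the Feller property are doing more work than you acknowledge. You need a family $(\P^{\boldsymbol x})_{\boldsymbol x \in \mathcal{X}}$ defined for \emph{every} $\boldsymbol x$, with continuous dependence on $\boldsymbol x$, before you can invoke \cref{thm:BSC} to extract transition measures and then feed the process into \cref{thm:samplingconintro}. Your justification (``strong Markov property of the Poisson processes \ldots\ continuity of the Brownian transition kernel'') is not wrong in spirit, but the Feller property of the \emph{backward} process is a statement about time reversal at stationarity, not about the forward dynamics, and making it precise takes more than a sentence. Relatedly, your sampling-consistency argument only establishes \cref{eq:samplingcon} for Lebesgue-a.e.\ $\boldsymbol x$ (those in the support of the stationary law); extending to all $\boldsymbol x$ needs exactly the Feller continuity you have not secured.

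Second, your argument leans on \cref{prop:xifvstationary} to identify the stationary distribution as $\mu^\Xi$. In the paper's logical flow that proposition is not proved independently: it falls out of the explicit density comparison in the proof of \cref{thm:xifvreversal} (the formal version of \cref{thm:timereversalintro}). So your approach does not actually bypass the paper's core computation; it consumes its main consequence. The paper instead proves \cref{thm:timereversalintro} first, by matching forward and backward densities over a generic time window---this simultaneously establishes the backward Markov structure, identifies the stationary law, and pins down the transition measures. \Cref{thm:xigenealogiesintro} then follows by forgetting the resampled particles and applying the strong Markov property together with the (already-proved) sampling consistency of the Brownian spatial $\Xi$-coalescent.

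On your flagged obstacle: it is not one. The backward level process $L_k(t)$ is non-increasing in $t$ (each lookdown sends a lineage to a strictly lower level), so any ancestral branch changes level at most $k-1$ times regardless of the mass of $\Xi_0$ near zero; the spatial path along a branch is always a finite concatenation of Brownian segments.
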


In fact, the Brownian spatial $\Xi$-coalescent can be used to describe the full time-reversal of the $\Xi$-Fleming-Viot process.

\begin{theorem}\label{thm:timereversalintro}
    For any $\ell\in \N$, the law of the time reversal $(Y_1(T-t), \ldots ,Y_\ell(T-t))_{t \in[0,T]}$ at stationarity can be described as follows:
    \begin{enumerate}
        \item Sample initial points $y_1, \ldots ,y_\ell$ from a random realisation of $\mu^\Xi$.
        \item Evolve according to a Brownian spatial $\Xi$-coalescent.
        \item When a coalescence event occurs, resample points using the measures $(\mu^\Xi_{x_1, \ldots ,x_\ell})$ so that the total number of lineages stays constant.
    \end{enumerate}
\end{theorem}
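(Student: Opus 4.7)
The plan is to build the time-reversed process directly from the lookdown construction of the $\Xi$-Fleming-Viot process and match each of the three features in the theorem to a specific aspect of that construction, using the Markov property to assemble the pieces. Step~(i) is immediate: by \cref{prop:xifvstationary} the vector $(Y_1(0),\ldots,Y_n(0))$ is, conditional on $\mathcal{Y}_0 \sim \mu^\Xi$, an i.i.d.\ sample from $\mathcal{Y}_0$, which by \cref{eq:Emuintro} has the law of an $n$-sample from $\mu^\Xi$. Step~(ii) aligns with \cref{thm:xigenealogiesintro}: the coalescence events of the backward genealogy $(\fpart^n_t,\boldsymbol X^n_t)$, a Brownian spatial $\Xi$-coalescent, are precisely the lookdown events in $(-\infty,0]$ affecting at least two levels in $[n]$.

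Between two consecutive backward events $\tau_k<\tau_{k+1}$, the lookdown declares that $(Y_1(s),\ldots,Y_n(s))$ on the forward interval $(-\tau_{k+1},-\tau_k]$ consists of independent Brownian trajectories; conditioning on the positions at the adjacent events (which pin the endpoints), their backward restriction on $[\tau_k,\tau_{k+1}]$ is a family of independent Brownian bridges. By \cref{def:BSCintro} and the tree-decorated measure $\nu_T$ of \cref{fig:figfspintro}, this is exactly the inter-event distribution of the BSC. At a coalescence event at backward time $\tau$ in which $k$ levels merge at the common location $z$, the forward lookdown assigns levels $j_1<\cdots<j_k$ to a basket, with $j_1$ as source and $j_2,\ldots,j_k$ as destinations; the backward ``split-off'' positions are the pre-event values $Y_{j_l}((-\tau)^-)$ for $l\ge 2$. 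By \cref{prop:xifvstationary}, the full vector $(Y_1((-\tau)^-),\ldots,Y_n((-\tau)^-))$ is an $n$-sample from $\mu^\Xi$ with joint density $N^\Xi$, so the conditional law of the split-offs given the $n-k+1$ remaining positions (the unaffected levels together with $z$) is a density ratio of $N^\Xi$ factors which, by \cref{prop:musamplingintro}, is precisely the iterated $\mu^\Xi_{x_1,\ldots,x_m}$ sampling of step~(iii). The post-event configuration is again an $n$-sample from $\mu^\Xi$, so by the strong Markov property of the forward process at the lookdown event times the argument regenerates and iterates.

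I expect the main obstacle to be the inter-event identification: verifying that the joint law of event times, event locations, and trajectories produced by the lookdown --- Poisson event intensities integrated against independent Brownian transition densities --- coincides with the BSC tree-density of \cref{fig:figfspintro}, whose per-event rate in the sampling-consistent case is $\lank$ by \cref{thm:samplingconintro}. This is essentially a change-of-measure computation converting ``independent Brownian motions conditioned on meeting at the next event'' into the BSC coupled diffusion of \cref{thm:driftintro}; the computation in \cref{ex:wmformula} handles the two-lineage case, and the general case follows by the same mechanism using the full tree-decorated density.
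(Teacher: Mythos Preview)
Your proposal has a genuine circularity problem. You invoke \cref{prop:xifvstationary} (to identify the initial law and the law at pre-event times) and \cref{thm:xigenealogiesintro} (to identify the coalescent dynamics), but in the paper's logical structure both of these are \emph{derived from} \cref{thm:timereversalintro}, not proved independently; see the sentence ``We will now prove \cref{thm:timereversalintro}, which will imply both \cref{prop:xifvstationary} and \cref{thm:xigenealogiesintro}'' at the start of \cref{sec:prfxifv}. In particular, $\mu^\Xi$ is defined purely via the coalescent side through \cref{eq:Emuintro}, and the fact that it is the stationary law of the $\Xi$-Fleming-Viot process is precisely part of what has to be shown. Your step~(iii) computation, which conditions on the surviving positions and reads off the resampling law as a ratio of $N^\Xi$ densities, presupposes that the joint law of $(Y_1((-\tau)^-),\ldots,Y_n((-\tau)^-))$ is $N^\Xi(\boldsymbol y)\,\diff\boldsymbol y$, which is exactly \cref{prop:xifvstationary}.

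The paper breaks this circularity by a direct finite-dimensional density comparison that establishes stationarity and the time-reversal simultaneously. One fixes $T>0$ and shows that the law of the Brownian spatial $\Xi$-coalescent with resampling on $[0,T]$ equals $\int_{E^n}\Q^{\boldsymbol y}\big((Y_1(T-t),\ldots,Y_n(T-t))_{t\in[0,T]}\in\cdot\big)\,N^\Xi(\boldsymbol y)\,\diff\boldsymbol y$, where $\Q^{\boldsymbol y}$ is the forward law started from $\boldsymbol y$. This is done by writing out, for each finite sequence of lookdown events, the explicit density on both sides: on the forward side it is a product of Brownian transition kernels, the Poisson event rate $\lambda_{n,k}\e^{-\lambda_n s}$, and the initial weight $N^\Xi(\boldsymbol y^0)$; on the coalescent-with-resampling side it is the initial weight $N^\Xi(\boldsymbol y^T)$, the semigroup density from \cref{lem:semigroup}, and the resampling density $N^\Xi(\boldsymbol y^s)/N^\Xi(\boldsymbol y^{s,1})$. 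The $N^\Xi$ factors telescope and the two expressions agree. Evaluating the equality at time $0$ then yields that $N^\Xi(\boldsymbol y)\,\diff\boldsymbol y$ is invariant for the forward process, after which the main claim follows. Your ``main obstacle'' --- matching the joint law of event times and locations --- is exactly this density computation, and the telescoping of $N^\Xi$ via \cref{lem:semigroup} is the mechanism that makes it work without assuming stationarity in advance.
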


A precise statement, including how levels are reassigned at coalescence--resampling events, can be found in \cref{sec:prfxifv}. As an example for rule (iii), if a ternary merger occurred and the remaining lineages are at locations $y_1, \ldots ,y_{\ell-2}$, we sample $y_{\ell-1}$ from $\mu^\Xi_{y_1, \ldots ,y_{\ell-2}}$, and $y_\ell$ from $\mu^\Xi_{y_1, \ldots ,y_{\ell-2},y_{\ell-1}}$, then resume the Brownian spatial $\Xi$-coalescent from $y_1, \ldots ,y_\ell$.

\begin{remark}
    A byproduct of \cref{prop:xifvstationary} is explicit formulas for samples from the stationary distribution of a $\Xi$-Fleming-Viot process through \cref{eq:Emuintro}. For instance, recalling \cref{rem:nunk} (iv), the density of a two-sample from the stationary distribution $\mu^\Xi$ of the $\Xi$-Fleming-Viot process is \[
        \E \left[ \mu^\Xi(\diff x_1) \mu^\Xi(\diff x_2) \right] \propto \left( \int_0^\infty \e^{-|\Xi| \tau} p_{2\tau}(x_1-x_2) \diff \tau \right) \diff x_1 \diff x_2,
    \] normalised such that the integral over $x_1$ and $x_2$ is one. Here we used that $\lambda_2 = \lambda_{2,2}$, which equals $|\Xi|$ by \cref{eq:xirates}. In particular, the distribution of a two-sample from a stationary $\Xi$-Fleming-Viot process only depends on $|\Xi|$. Formulas for higher order samples involve sums over an increasing number of trees, and depend on higher order rates of the $\Xi$-coalescent.
\end{remark}

\begin{remark}
    The stationary distribution $\mu^\Xi$ is not reversible in the sense that the process $(\boldsymbol Y(t),\mathcal{Y}_t)_{t\in [0,T]}$ running at stationarity has the same law as $(\boldsymbol Y(T-t),\mathcal{Y}_{T-t})_{t\in [0,T]}$. The Fleming-Viot process is only reversible in this sense if the underlying spatial motion (or, in the genetics setting, mutation process) is a pure jump process in which the destination of each jump is independent of the location just before the jump. See \cite{reversible1,reversible2,reversible3,reversible4,reversible5} for literature on this topic.
\end{remark}
% START

\subsubsection{Scaling Limits of Neutral Population Models}
For $n\in \N$ (the scaling parameter), let $N^n(t)$ be the total size of a population at time $t$, let $N_b^n(t)$ be the number of births up to and including time $t$, and let $N_d^n(t)$ denote the number of deaths, so \[
    N^n(t) = N^n(0) + N^n_b(t) - N^n_d(t).
\]
We assume that $N^n_b$, $N^n_d$, and hence $N^n$ are right-continuous. We assume that the model is \emph{neutral}, which means that at an event where some number $k$ of individuals die, each of the $\binom{N(t-)}{k} $ sets of individuals is equally likely to be selected. Similarly, at an event where some number $k$ of individuals simultaneously give birth to, say, $c_1, \ldots ,c_k$ children, then each of the $\binom{N(t-)}{k} k!$ ways of choosing parents $1$ through $k$ are equally likely. If a birth and a death event happen at the same time, we use the convention of~\cite{DK99} that the death event is processed before the parents of the birth event are chosen. We assume that each individual carries a type or location in $E$; offspring copy their parents' location at birth, and in between birth and death events, locations evolve as independent, $E$-valued Markov processes corresponding to a specified generator $G^n$.

Donnelly and Kurtz show in their seminal paper~\cite{DK99} that the possible scaling limits of such models, under the additional restriction that simultaneous births to multiple parents cannot occur, are exactly the $\Lambda$-Fleming-Viot processes. The generalisation to $\Xi$-Fleming-Viot processes is the topic of~\cite{xiflemingviot}. The main ingredient in their proofs is a lookdown construction for the pre-limiting population model that mirrors that of the $\Xi$-Fleming-Viot process.

\medskip\paragraph{\textbf{Lookdown Construction}}
The (modified) lookdown construction~\cite{DK99,xiflemingviot} for the population model described above is a way of labelling its individuals in a particular order, $\boldsymbol Y^n(t) = (Y^n_1(t), \ldots ,Y^n_{N^n(t)}(t))$. We summarise this construction here, and refer to Section~2 of~\cite{xiflemingviot}---and Section 1.2 of~\cite{DK99} for the special case where simultaneous births to multiple parents are not allowed---for a details. We describe the construction under the assumption that birth and death events cannot happen simultaneously, but it can be adapted to allow this under the convention that deaths are always processed first, see also~\cite{DK99}. For ease of notation, we suppress the dependence on $n\in \N$ for the following description.
\begin{enumerate}
    \item[(i)] If some number $k$ of individuals die, then the $k$ individuals with the largest levels are removed.
    \item[(ii)] If some number $k$ of individuals simultaneously give birth to some number $c_1,\ldots,c_k$ of children at time $t$, then $k$ disjoint sets of levels $J_1,\ldots,J_k \subset \{1,\ldots,N(t)\}$ with sizes $|J_i| = c_i + 1$ are sampled uniformly at random (from the \emph{post}-reproduction set of levels; remember that $N$ is right-continuous, so $N(t) = N(t-) + \sum_{i=1}^k c_i$). We may assume (by relabelling) that $l_1 = \min J_1 < \ldots < l_k = \min J_k$. The smallest level of the first family is guaranteed to satisfy $l_1 \le N(t-) - k + 1$---in particular it has been alive before the reproduction event---and it is the parent of the first family: $Y_j(t) = Y_{l_1}(t-)$ for $j\in J_1$. We ``bump up'' levels that where in $J_1 \setminus \{l_1\}$ before the reproduction event and repeat. That is, the (pre-reproduction level of the) parent of the second family is $\tilde{l_2} = l_2 - |J_1 \cap [l_2]|$, which is guaranteed to satisfy $\tilde{l_2} \le N(t-) - k + 2$. Continuing like this, we obtain parents $\tilde{l_i} \le N(t-) - k + i$ for $i \in [k]$ and put $Y_j(t) = Y_{\tilde{l_i}}(t-)$ for $j\in J_i$. Levels in $[N(t)] \setminus (J_1 \cup \ldots\cup J_k)$ copy the types of the non-parental pre-reproduction types, retaining their order.

    If $k=1$ and $c_1 = 1$, say $J_1 = \{i,j\}$ with $i < j$, then $L^{ij}$ jumps. Otherwise, for $l\in [N(t)]$ and $i\in [k]$, if $J_i \cap [l] \neq \emptyset$ then $L^l_{J_i \cap [l],i}$ jumps, and further, denoting $J = \bigcup_{i=1}^k J_i$, if $J \cap [l] \neq \emptyset$ then $L^l_{J\cap [l]}$ jumps.
    \item[(iii)] Between birth and death events, individuals evolve as independent Markov processes in $E$ corresponding to the generator $G$.
\end{enumerate}
For convenience, define $Y^n_i(t)$ for $i > N(t)$ by setting it equal to either $Y^n_i(s-)$ where $s = \sup \{u \in [0,t]\colon N^n(u) \ge i\}$ if $\max_{u\in [0,t]} N^n(u) \ge i$, and otherwise to $Y^n_i(0)$ (for this to be well-defined, an initial location has to be defined for all levels). This defines $\boldsymbol Y^n(t) = (Y^n_1(t),Y^n_2(t),\ldots) \in E^\infty$ for all $t \ge 0$. We further denote the collection of level processes by $\boldsymbol L^n$, as in Section~\ref{sec:xiflemingviot}

As before, the point of the lookdown construction lies in the following fact.
\begin{lemma}
    If $\boldsymbol Y^n(0)$ is exchangeable, then $\boldsymbol Y^n(t)$ is exchangeable for all $t \ge 0$.
\end{lemma}
\begin{proof}
    This was proved in the case where simultaneous births to multiple parents are not allowed in Theorem~1.1 of~\cite{DK99}, and in the case where they are allowed but population size is constant in Theorem~2.2 of~\cite{xiflemingviot}. Adapting their arguments to this setting, where simultaneous births to multiple parents are allowed and population size may be variable, is straight-forward.
\end{proof}

\medskip\paragraph{\textbf{Genealogy}}
Let $T > 0$. Just as for the $\Xi$-Fleming-Viot process, we can use the lookdown construction $(\boldsymbol L^n,\boldsymbol Y^n)$ to construct the genealogy of the first say $\ell\in \N$ levels of the population, $(\fpart_t^{n,\ell},\boldsymbol X_t^{n,\ell})_{t\in [0,T]}$, which is a $\overline{\mathcal{X}}$-valued stochastic process with initial condition $\fpart_0^{n,\ell} = \left\{ \left\{ 1 \right\} , \ldots ,\left\{ \ell \right\}  \right\} $ and $\boldsymbol X_0^{n,\ell}(\{i\}) = Y^n_i(T-)$.

\medskip\paragraph{\textbf{Limit of large population size}}
Donnelly and Kurtz~\cite{DK99} show (in the case where there are no simultaneous births to multiple parents, with generalisations due to~\cite{xiflemingviot}) that the possible scaling limits of $(\boldsymbol L^n,\boldsymbol Y^n)$, under an assumption of asymptotically constant normalised population size $\frac{1}{n} N^n(t)$, are the associated processes $(\boldsymbol L,\boldsymbol Y)$ of the $\Xi$-Fleming-Viot processes, and they provide conditions under which this convergence holds. The main result of this section is the following theorem, which states that the genealogies of the scaling limit can be shown to converge to the Brownian spatial $\Xi$-coalescent at no additional cost.

\begin{theorem}\label{thm:neutralgenealogy}
    Let $\Xi$ be a finite measure on $\Delta$, and $T > 0$. Denote by $\boldsymbol (\boldsymbol L(t),\boldsymbol Y(t))_{t\in [0,T]}$ the particle representation of the $\Xi$-Fleming-Viot process at stationarity.  % , and by $(\fpart_t,\boldsymbol X_t)_{t\in [0,T]}$ the
    Suppose that $(\boldsymbol L^n,\boldsymbol Y^n) \implies (\boldsymbol L,\boldsymbol Y)$ weakly. Then the weak limit \[
        (\fpart^{n,\ell},\boldsymbol X^{n,\ell}) \implies (\fpart^\ell, \boldsymbol X^\ell)
    \] exists, is almost-surely $\mathcal{X}$-valued, and has the law of a Brownian spatial $\Xi$-coalescent with random initial condition $\fpart^\ell_0 = \{\{1\}, \ldots ,\{\ell\}\}$, $\boldsymbol X^\ell_0(\{i\}) = Y_i(T-)$.
\end{theorem}

If the spatial motion of the prelimiting population is Brownian motion (rather than converging to it asymptotically), then the assumption of Theorem~\ref{thm:neutralgenealogy} simplifies by the following lemma.

\begin{lemma}\label{lem:simplerassumption}
    If $G^n$ is the generator of standard Brownian motion for all $n\in \N$, then $\boldsymbol L^n \implies \boldsymbol L$ and $\boldsymbol Y^n(0) \implies \boldsymbol Y(0)$ imply $(\boldsymbol L^n,\boldsymbol Y^n) \implies (\boldsymbol L, \boldsymbol Y)$.
\end{lemma}

%\Cref{thm:neutralgenealogy} essentially means that whenever convergence of the non-spatial genealogy of a neutral population model to the $\Xi$-coalescent (or its special cases the Kingman, Beta, Bolthausen-Sznitman, or $\Lambda$-coalescent) is proved using a lookdown construction, convergence of the spatial genealogies to the corresponding Brownian spatial coalescent follows for free as long as the forward-in-time spatial motions converge.
A recent example application of \cref{thm:neutralgenealogy} is Theorem~1.8 of~\cite{ruairi}.

\subsection{Related Literature}\label{sec:relatedliterature}
% cite THE COALESCENT STRUCTURE OF CONTINUOUS-TIME GALTON–WATSON TREE? (no it doesnt have space...)
The inclusion of space (or type) in population and coalescent models is the subject of a vast amount of literature. The classical approach is to consider populations that are subdivided into demes of large constant size, each situated at a vertex of a graph. Lineages can merge with other lineages that are currently in the same deme, and migrate between neighbouring demes. We refer to \cite{EBV2010} for a short review of classical results in this setting, and point out \cite{spatiallambda,smith2021} and references therein for recent results. If the population is thought to be (discretely) structured by type rather than space, then associated coalescents are commonly called \emph{multi-type coalescents}. Recent results using this terminology include \cite{multitypelambda,canningsmultitype,multitypepoisson}.

In reality, populations are often not subdivided but spread across a spatial continuum. One might be tempted to approximate this through deme-based models with small granularity, but since local population size can be small this would break the crucial assumption of large population size in every deme; see also \cite{E2008}, Section 5. %Therefore, new ideas are necessary that inherently incorporate continuous space.
A common approach that incorporates continuous space directly is to assume that genealogical trees can be constructed from independent Brownian motions (or more general spatial motions \cite{evans}) which coalesce when they meet, either instantaneously or at a rate proportional to their collision local time. This is inherently limited to one dimension, and a suggested extension to higher dimensions has lineages coalesce at rates depending on their separation. The position of the common ancestor is typically taken to be a Gaussian centred on the midpoint between the two lineages immediately before the coalescence event. Aside from the fact that this behaviour is biologically unnatural, %this process is not sampling consistent (in the classical or our sense), and
there is no corresponding forwards in time model for the evolution of the population \cite{EBV2010}.

A major breakthrough in the theory of spatial population models was the introduction of the Spatial $\Lambda$-Fleming-Viot process (SLFV) by Barton, Etheridge and V\'eber in \cite{E2008,EBV2010}, see also \cite{EBV2013} for a review. It models the genetic composition of a spatially structured population by a measure on $\R^d\times [0,1]$, where $\R^d$ is the geographical space and $[0,1]$ is the space of genetic types. The population density is modelled to be constant so that the spatial marginal is always Lebesgue measure. The population evolves through a sequence of events, each of which replaces a certain proportion of the population in a randomly chosen ball by offspring of an individual chosen at random within the same ball. If the radius is small, the event is interpreted as an ordinary reproduction event subject to local regulation, and if it is large one may think of it as an extinction--recolonisation event. The genealogies of the SLFV (also called its dual) have a simple description: backwards in time, whenever a lineage is hit by an event, with a certain probability (equal to the proportion of the population that was hit forwards in time) it jumps to a randomly chosen location within the same ball, otherwise it stays put. If a number of lineages are affected by the same event, they jump to the same location and coalesce. The dual of the SLFV was the first tractable model for the ancestry of a population evolving in a two-dimensional spatial continuum that was sampling consistent, had a natural corresponding forward in time population model, and in which independently evolving lineages fail to meet. A variant of the SLFV allows for more than one parent per event, leading to the possibility of simultaneous mergers in the dual.

%More recently, Freeman introduced the segregated $\Lambda$-coalescent \cite{F2015}. The underlying concept of space is very general, and the results demonstrate that spatial structure can engender some interesting behaviours not visible in the non-spatial $\Lambda$-coalescent. However, it does not seem to be clear whether an associated forward in time model exists, and the model does not include the possibility of simultaneous merge events.
%But as in the SLFV, population density is assumed constant, and lineages evolve in a sequence of correlated jumps.

Despite its success, the SLFV has weaknesses. It only models populations with perfectly homogeneous spatial distribution, which in real populations fluctuates strongly in time and both geographical and type space \cite{E2008}, Chapter 7. Secondly, individual lineages in the SLFV evolve through a sequence of correlated jumps, but in many applications it would be more natural if they followed Brownian motions, or the paths of other continuous Markov processes.
The $\Xi$-Fleming-Viot processes are a class of spatial population model that showcase both characteristics. %, and their genealogies at stationarity are described by the Brownian spatial coalescent.

%For a more general example, we can replace the non-spatial Fleming-Viot process by the $\Lambda$ (or generalised) Fleming-Viot process introduced in \cite{BL2003} %(see also E2008 for more discussion)
%to obtain what one might call a $\Lambda$-Fleming-Viot superprocess, whose genealogies are described by what we will call the ``Brownian spatial $\Lambda$-coalescent''. Note that non-spatial equivalents are already known: the duals of the ordinary and $\Lambda$-Fleming-Viot processes are respectively the Kingman and $\Lambda$-coalescent \cite{BL2003}.
%We further note that Brownian motion could, in principle, both in the definition of the Fleming-Viot superprocess and the coalescent theory we present in this paper, be replaced by another continuous Markov spatial motion on a suitable Polish space, see also \cref{rem:generalmotion}.

A different approach to spatial coalescents is to replace geographical space with a tree-like structure, such as the hierarchical group which is often used to mimic higher-dimensional spaces. A spatial variant of the $\Lambda$-coalescent on the hierarchical group is the subject of \cite{greven}. In \cite{F2015}, Freeman also works on a space with a hierarchical structure, on which he considers a coalescent model called the segregated $\Lambda$-coalescent.

% discussion of markov property

\subsection{Conclusion and Outlook}\label{sec:conclusion}

%We introduced the Brownian spatial coalescent, a class of coalescent processes in continuous space of any dimension in the most general case of simultaneous multiple mergers.
%It has the following three crucial properties, all of which are, to the best of our knowledge, novel in the literature on coalescent models in continuous space in $d \ge 2$.
%First, an individual lineage evolves as a Brownian motion. Second, they describe the genealogies of a class of spatial population models whose spatial distribution is non-homogeneous and fluctuates randomly in time and space, driven by its forward dynamics. Third and most outstandingly, they can be axiomatically characterised through a notion of sampling consistency in direct analogy to the non-spatial $\Lambda$- and $\Xi$-coalescents. % To our knowledge, no present coalescent model in continuous space in $d \ge 2$ satisfies only one of these three characteristics (barring those that do not describe the genealogies of any forwards in time population models).
We discuss some consequences of our results, and possible further research directions.

\subsubsection{Spatially Interactive Branching}\label{sec:interactivebranching}
The Brownian spatial coalescent was axiomatically defined through a property that the genealogies of any spatial population model in which individuals follow independent Brownian motions should satisfy. This includes models with spatially interactive branching mechanisms such as competition or local regulation.
Yet \cref{sec:populationmodelsintro} shows that the full set of sampling consistent Brownian spatial coalescents is exhausted by spatial models in which the branching mechanism is \emph{oblivious} to spatial locations. The only other assumptions made in the definition of Brownian spatial coalescents are time-homogeneity and the Markov property. If the spatial model has a stationary distribution, which will certainly be the case for some interesting spatially interactive branching mechanisms, then their genealogies will be time homogeneous, which means they \emph{cannot be Markovian}. Thus, describing the genealogies of populations whose branching mechanisms have interesting spatial interactions, at least in the present setting, requires the study of non-Markovian coalescent processes. Since we expect most of our results to remain true for much more general spatial motions (see next paragraph), this observation is not limited to population models with Brownian movement.

\subsubsection{General Spatial Motions}\label{sec:generalmotion}
The definition of the Brownian spatial coalescent and most theorems and proofs (with the exception of the drift representation \cref{thm:driftintro}) work with little modification when Brownian motion on the torus is replaced by another well-behaved Markov process $(Y_t)$ on some compact Polish space $E$ with a stationary distribution $\lambda$ and continuous transition densities $(q_t)$ w.r.t.\ $\lambda$, such that its time reversal w.r.t.\ $\lambda$ is also a well-behaved Markov process. Then $(q_t)$ takes the place of $(p_t)$, and $\lambda$ takes the place of Lebesgue measure in the main theorems.

\begin{example}
    Examples include random walks, $\alpha$-stable motion or diffusions on fractals. One could also take a cartesian product of a geographical and a type space, for example $E = \mathbb{T}^d \times \{1, \ldots ,k\}$, where the torus is interpreted as geographical space, and $\{1, \ldots ,k\}$ is type space. The motion could be a random walk on type space, and Brownian motion in geographical space, with a speed that may depend on the type.
\end{example}

The only major difficulty is understanding which points have to be excluded from the state space. One approach could be to exclude those points where the normalisation $N^{\boldsymbol \nu}$ diverges, but then it remains to show that these points will almost-surely never be reached by the evolution of the coalescent.
This comes down to characterising whether or not $k$ independent copies of the spatial motion have a positive probability of meeting simultaneously, and if so in which points.
In fact, by studying the normalisation of the ``$(Y_t)$-spatial $\delta_1$-coalescent'', in which only $(n,n)$-mergers for $n\in \N$ are possible, this gives rise to the following conjecture.

\begin{conjecture}
    Under some regularity conditions on $(Y_t)$, the following are equivalent for every $k\in \N$, $k \ge 2$, and $x_1, \ldots ,x_k \in E$.
    \begin{enumerate}
        \item If $k$ independent copies of $Y$ are started from initial locations $x_1$ to $x_k$, then with positive probability there exists a time at which all $k$ copies are in the same location.
        \item \[
            \int_0^1 \int_E \prod_{i=1}^k q_s(x_i,z)^k\lambda(\diff z) \diff s < \infty.
        \]
    \end{enumerate}
\end{conjecture}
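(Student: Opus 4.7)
The strategy is to identify (ii) with finiteness of the normalization function of the ``$(Y_t)$-spatial coalescent'' with $\Lambda = \delta_1$ restricted to $k$ lineages (only the $(k,k)$-merger is allowed, at unit rate). Following the formula illustrated in \cref{fig:figfspintro}, the normalization at $\boldsymbol x = (x_1, \ldots, x_k)$ is
\[
    N(\boldsymbol x) = \int_0^\infty e^{-t} \int_E \prod_{i=1}^k q_t(x_i, z)\, \lambda(\diff z)\, \diff t.
\]
Under mild regularity on $Y$ (at minimum exponential mixing of $(q_t)$ to $\lambda$), the contribution from $t \geq 1$ is uniformly bounded and positive, so $N(x,\ldots,x) < \infty$ is equivalent to (ii) for $\lambda$-a.e.\ $x$. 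The conjecture is thereby reduced to the equivalence between positive probability of meeting and $N(x,\ldots,x) < \infty$.

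For (ii) $\Rightarrow$ (i), I would use a Paley-Zygmund second-moment argument on a mollified $k$-fold intersection local time. For $\epsilon > 0$, set
\[
    L^\epsilon := \int_0^\infty e^{-t} \int_E \prod_{i=1}^k q_\epsilon(Y^i_t, z)\, \lambda(\diff z)\, \diff t,
\]
so that by Fubini and the semigroup property $\E_{\boldsymbol x} L^\epsilon = N_\epsilon(\boldsymbol x) \uparrow N(\boldsymbol x)$ as $\epsilon \downarrow 0$. On the event that the $k$ copies never come within positive distance of the $k$-fold diagonal, $L^\epsilon \to 0$ by concentration of $q_\epsilon$, and hence $\P_{\boldsymbol x}(\text{copies meet}) \geq \limsup_{\epsilon \downarrow 0} \P_{\boldsymbol x}(L^\epsilon > 0)$. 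A direct computation using the Markov property together with Chapman-Kolmogorov bounds $\E_{\boldsymbol x}(L^\epsilon)^2$ by a constant (depending on $Y$ but not on $\epsilon$) times $N_\epsilon(\boldsymbol x)^2$, whereupon Paley-Zygmund yields $\P_{\boldsymbol x}(L^\epsilon > 0) \geq c > 0$ uniformly in $\epsilon$.

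For (i) $\Rightarrow$ (ii), I would argue by contrapositive: if (ii) fails, then $\E_{\boldsymbol x} L^\epsilon \to \infty$ as $\epsilon \to 0$, so the smoothed intersection has no $L^1$-limit and no finite non-trivial $k$-fold intersection local time can be attached to the diagonal. A strong Markov argument applied at the first near-meeting time, combined with stationarity of $\lambda$ under the product semigroup, then forces $\P_{\boldsymbol x}(\text{copies ever meet at one point}) = 0$. Equivalently, (ii) is the Green energy condition for the $k$-fold diagonal in $E^k$ under the product process, and classical capacity theory directly identifies this with positive hitting probability.

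The main obstacle I expect is the uniform second-moment bound $\E_{\boldsymbol x}(L^\epsilon)^2 \leq C\, N_\epsilon(\boldsymbol x)^2$ and its companion estimates, which reduce to iterated Chapman-Kolmogorov bounds of the shape-preserving form $\int_E q_t(z,z')^k \lambda(\diff z') \leq C\, q_t(z,z)^{k-1}$ (or analogous inequalities). These hold for Brownian motion on the torus, $\alpha$-stable motion, reversible random walks on graphs, and diffusions on fractals with sub-Gaussian heat-kernel bounds, but extracting the minimal regularity condition on $(q_t)$ that validates both directions---and showing that the excluded points of the state space for the induced coalescent are precisely $\{\boldsymbol x : N(\boldsymbol x) = \infty\}$---is the key technical task, and is presumably what makes the conjecture open in full generality.
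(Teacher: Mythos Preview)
The statement you are attempting to prove is explicitly labelled a \emph{Conjecture} in the paper and is left open; there is no proof in the paper to compare against. The paper only motivates the conjecture by observing that condition (ii) is precisely what governs finiteness of the normalisation $N^{\boldsymbol\nu}$ of the ``$(Y_t)$-spatial $\Lambda$-coalescent'' with $\Lambda=\delta_1$ restricted to $k$ lineages, and that the points to be excluded from the state space should be those where this normalisation diverges.

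Your proposal is consistent with this motivation: you correctly identify $N(\boldsymbol x)=\int_0^\infty e^{-t}\int_E\prod_i q_t(x_i,z)\,\lambda(\diff z)\,\diff t$ as the relevant object and reduce the question to a capacity/intersection-local-time problem for the $k$-fold diagonal. The Paley--Zygmund route for (ii)$\Rightarrow$(i) and the Green-energy/capacity argument for (i)$\Rightarrow$(ii) are the natural candidates, and your own caveat at the end is accurate: the uniform second-moment bound $\E_{\boldsymbol x}(L^\epsilon)^2\le C\,N_\epsilon(\boldsymbol x)^2$ is the crux, and isolating the minimal regularity on $(q_t)$ under which both directions go through is exactly why the paper leaves this as a conjecture rather than a theorem. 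One point to be careful about in your (i)$\Rightarrow$(ii) sketch: divergence of $\E_{\boldsymbol x}L^\epsilon$ does not by itself preclude a nontrivial limiting intersection local time (one could have $L^\epsilon/\E L^\epsilon$ converging), so the contrapositive needs the sharper capacity statement rather than just an $L^1$ blow-up.
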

A simple case is if the motion is a random walk on some finite graph $E$. Then no points have to be excluded, and all of our results (except \cref{thm:driftintro}) apply with obvious modifications.

\subsubsection{The Brownian Spatial Coalescent on $\R^d$}
We have some unpublished results about Brownian spatial coalescents evolving on all of $\R^d$, which can be defined analogously to \cref{def:BSCintro}. Corresponding to each is again a family of transition measures $\boldsymbol \nu = (\nunk)$, where $\nu_{n,k_1, \ldots ,k_m}$ is a locally finite measure on $(\R^d)^m$. If $\boldsymbol \lambda = (\lank)$ are the rates of a $\Xi$-coalescent, then $\nunk(\diff \boldsymbol z) = \lank \diff \boldsymbol z$ is again a valid choice, and the corresponding Brownian spatial coalescent describes the genealogies of a $\Xi$-Fleming-Viot process evolving on $\R^d$, at ``stationarity modulo translation'' (cmp.\ \cite{DK96}, Theorem 2.9).

If $d \ge 3$, then the class of Brownian spatial coalescents that describe the genealogies of some population is larger than that: for each Brownian spatial coalescent on $\R^d$ that we described in the previous paragraph, there exists another whose definition is the same except that all exponential factors in the densities on tree decorations (of the form $\e^{-\nut{n}(\tau - \tau')}$) are removed.
%In that case, there exists a class of Brownian spatial coalescents on $\R^d$ with $d \ge 3$ that has no exponential factors in the densities defining its law (which were previously of the form $\e^{-\nut{n} t}$) even if not all transition measures are zero.
If $\nu_{n,2}(\diff z) = \diff z$ for all $n\in \N$, and all other transition measures are equal to zero, then this coalescent describes the genealogies of the infinite-mass superBrownian motion on $\R^d$ at stationarity (which exists iff $d \ge 3$ \cite{etheridgesuperprocesses}, Chapter 2.7). It has the property that, from any initial configuration with at least two particles, the probability that no further merge event happens is strictly between zero and one. The existence of coalescents with this property on $\R^d$ is directly linked to the existence of infinite-mass stationary distributions for associated Dawson-Watanabe superprocesses on $\R^d$, which is known to correspond to transience of the underlying spatial motion \cite{transient1,transient2}. This connection can also be seen from the perspective of the Brownian spatial coalescent: Brownian motion is transient if and only if $\int_C p_t(x) \diff x$ is integrable over $t\in (0,\infty)$ for every compact $C \subset \R^d$ (this characterisation holds mutatis mutandis for more general spatial motions), which is certainly necessary for the normalisation $N^{\boldsymbol \nu}(\boldsymbol x)$ to be finite, since integrability at large $\tau$ is no longer guaranteed by the exponential factors.

Finally, it is not clear how to define sampling consistency on $\R^d$ in such a way that it captures the coalescents associated with infinite-mass populations, because their stationary distributions are random measures of infinite mass that cannot be normalised to be probability measures.

\subsection{Structure}
In \cref{sec:setup}, we set up the notation necessary to formalise some of the definitions and statements from the introduction, and introduce the framework used for the proofs, which are in \cref{sec:proofs}. Some technical proofs are postponed to the Appendix, most notably the proof of \cref{lem:NF}, which states that the function $N^{\boldsymbol \nu}$ is continuous under certain conditions; it is conceptually straight-forward but long and technical. % \Cref{app:measurability} contains proofs of a number of technical lemmas used in \cref{sec:setup,sec:proofs}, and \cref{app:NF} contains the proof of \cref{lem:NF}, which states that the normalisation function $N^{\boldsymbol \nu}$ is continuous under certain conditions;

\section{Setup}\label{sec:setup}
\subsection{Non-spatial Coalescents}\label{sec:non-spatial}

Recall that $\mathcal{P}$ denotes the (countable) set of partitions of finite subsets of $\N$, which we equip with the discrete topology. For $\fpart, \fpart' \in \mathcal{P}$ write $\fpart \le \fpart'$ if $\fpart'$ is a refinement of $\fpart$, and define \[
%, and $\mathcal{P}(\fpart) \coloneqq \left\{ \fpart'\in \mathcal{P}\colon \fpart' \le \fpart \right\}$. %Note that $\mathcal{P}(\fpart)$ stands in natural bijection with $\mathcal{P}_{\left\{ 1, \ldots ,|\fpart| \right\} }$, its maximal element is $\fpart$, and its minimal element is $\left\{\bigcup \fpart\right\}$, where $\bigcup \fpart = \bigcup_{B\in \fpart} B$. For example, $\mathcal{P}(\left\{ \left\{ 1 \right\} ,\left\{ 2,3 \right\}  \right\} )$ comprises $\left\{ \left\{ 1 \right\} ,\left\{ 2,3 \right\}  \right\} $ and $\left\{ \left\{ 1,2,3 \right\}  \right\} $.
    \Omega_0 \coloneqq \left\{ \fpart \in R([0,\infty),\mathcal{P})\colon \fpart_s \ge \fpart_t \forall 0 \le s \le t \right\},
\] where $R([0,\infty),\mathcal{P})$ is the space of right-continuous $\mathcal{P}$-valued paths. (Due to the monotonicity condition, every path in $\Omega_0$ must in fact also have left limits.)
We equip $\Omega_0$ with its Borel $\sigma$-algebra $\mathcal{F}^\fpart \coloneqq \sigma(\fpart_t\colon t\ge 0)$, and filtration $\mathcal{F}^\fpart_t \coloneqq \bigcap_{s > t} \sigma(\fpart_r\colon r \le s)$ for $t \ge 0$, where $(\fpart_{t})_{t\ge 0} \coloneqq \id_{\Omega_0}$ denotes the canonical process in $\Omega_0$.
We further equip $\Omega_0$ with the product topology, which is the smallest topology on $\Omega$ with respect to which $\boldsymbol X_t \colon \Omega \to \mathcal{X}$ is continuous for all $t \ge 0$.

\begin{definition}\label{def:NSCP}
    A \emph{(non-spatial) coalescent} is a time-homogeneous $\Omega_0$-valued Markov process $(\P^{\fpart})_{\fpart\in \mathcal{P}}$. It is called \emph{label invariant} if it is independent of particle labels in the sense that any transition that is the result of merging $m$ disjoint sets of lineages of sizes $k_1\ge \ldots \ge k_m$, while at $n$ particles, occurs at the same rate $\lambda_{n,k_1, \ldots ,k_m}$.
\end{definition}

%We denote the laws of an NSCP by $(\P^\fpart)_{\fpart\in \mathcal{P}}$.
In general, a coalescent is characterised by its transition rates $\lambda_{\fpart,\fpart'} \ge 0$ for $\fpart > \fpart'$, which induces a bijection between the set of coalescents and the set $\rates$ of non-negative arrays $\boldsymbol \lambda = (\lambda_{\fpart,\fpart'}\colon \fpart > \fpart')$.
For $\fpart \in \mathcal{P}$ and $u\subset \N$, write $\fpart \wo u \coloneqq \left\{ v \setminus u\colon v \in \fpart, v \neq u \right\}$ for the partition induced by $\fpart$ on $\N\setminus u$. %
\begin{definition}\label{def:NSCP2}
    A coalescent is called \emph{sampling consistent} if, for any $\fpart \in \mathcal{P}$ with $|\fpart| \ge 2$ and $u\in \fpart$, \[
        \P^{\fpart}((\fpart_t\wo u)_{t\ge 0}\in \cdot ) = \P^{\fpart \wo u}((\fpart_t)_{t\ge 0}\in \cdot ).
    \]
\end{definition}

\subsection{Spatial Coalescents}\label{sec:spatialcoal}

Recall that $E = \R^d / \Z^d$ denotes the $d$-dimensional square flat torus with periodic boundary conditions. It is a Polish space with the Euclidean metric
\begin{equation}\label{eqdef:rho}
    \rho(x + \Z^d,y+\Z^d) = \inf \{ |x - y + \vec{k}|\colon \vec{k}\in \Z^d\},\quad x,y\in \R^d. %glossary E,\rho
\end{equation}
Recall the definitions of $E^\fpart$ and $E^\fpart_\circ $ for $\fpart\in\mathcal{P}$ (see \cref{eq:def:Ecirc}), and $\mathcal{X}$. The identification of $E^\fpart_\circ $ with an open subset of $E^{|\fpart|}$ defines on it a topology and Lebesgue measure, and then $\mathcal{X}$ is a Polish space with the topology inherited from $E^\fpart_\circ ,\, \fpart\in \mathcal{P}$, in which $(\fpart_n,\boldsymbol x_n) \to (\fpart,\boldsymbol x)$ iff $\fpart_n = \fpart$ for all but finitely many $n$, and $\boldsymbol x_n \to \boldsymbol x$. % In particular, $E^\fpart_\circ $ is a Polish space w.r.t.\ a topology in which $\boldsymbol x_n \to \boldsymbol x$ iff $\boldsymbol x_n(u) \to \boldsymbol x(u)$ in $E$ for every $u\in \fpart$.
Since $\fpart = \dom(\boldsymbol x)$ is the domain of $\boldsymbol x$ for $(\fpart,\boldsymbol x) \in \mathcal{X}$, we are justified to occasionally just write $\boldsymbol x\in \mathcal{X}$. Denote
\begin{equation*}%\label{eq:defOmega}
    \Omega \coloneqq \left\{ (\fpart_t,\boldsymbol X_t) \in R([0,\infty), \mathcal{X})\colon \fpart_t \le \fpart_s \forall t \ge s \ge 0 \right\},
\end{equation*}
where $R([0,\infty),\mathcal{X})$ denotes the space of right-continuous functions $[0,\infty) \to \mathcal{X}$.
We equip $\Omega$ with the product topology as well as its Borel $\sigma$-algebra $\mathcal{F}^{\boldsymbol X} \coloneqq \sigma(\boldsymbol X_t\colon t\ge 0)$ and the filtration $\mathcal{F}^{\boldsymbol X}_t \coloneqq \bigcap_{s > t} \sigma(\boldsymbol X_r\colon r \le s)$ for $t \ge 0$, where $(\boldsymbol X_t)_{t\ge 0}\coloneqq (\fpart_t,\boldsymbol X_t)_{t\ge 0} \coloneqq \id_{\Omega}$ denotes the canonical process in $\Omega$. Write $C_b(\mathcal{X})$ for the space of bounded and continuous functions $\mathcal{X}\to \R$.

\begin{definition}\label{def:coal}
    A \emph{spatial coalescent} is a time-homogeneous $\Omega$-valued Markov process $(\P^{\boldsymbol x})_{\boldsymbol x\in \mathcal{X}}$ whose \emph{semigroup} $(\semi_t)_{t\ge 0}$, defined by
    \begin{equation}\label{eq:semi}
        \semi_t(\boldsymbol x,A) \coloneqq \P^{\boldsymbol x}(\boldsymbol X_t \in A),
    \end{equation}
    for $\boldsymbol x\in \mathcal{X}$, $t\ge 0$, and $A\subset \mathcal{X}$ measurable,
    has the property that $\semi_t f \in C_b(\mathcal{X})$ for all $f \in C_b(\mathcal{X})$, which we refer to as the \emph{Feller property}.
\end{definition}
%The Feller property in this context states that the semigroup, defined by
%for $\boldsymbol x\in \mathcal{X}$, $t\ge 0$, and $A\subset \mathcal{X}$ measurable, satisfies $\semi_t f \in C_b(\mathcal{X})$ for every $f\in C_b(\mathcal{X})$, the set of bounded continuous functions $\mathcal{X}\to \R$.
The Feller property---which is automatic in the non-spatial setting---is a form of continuity in the initial condition that, together with right-continuity of sample paths, implies the strong Markov property~\cite[Thm.\ 8.3]{rogers}:
%The Feller property is a form of continuity in the initial condition which is automatic in the non-spatial setting, and it implies the strong Markov property:
if $T$ is an $(\mathcal{F}^{\boldsymbol X}_t)$-stopping time, then \[
    \P^{\boldsymbol x} \left(T < \infty, (\boldsymbol X_{T+t})_{t\ge 0} \in \cdot  \,\middle\vert\, \mathcal{F}^{\boldsymbol X}_T\right) = \ind_{\{T < \infty\}}\P^{\boldsymbol X_T}((\boldsymbol X_t)_{t\ge 0} \in \cdot ),\qquad \text{$\P^{\boldsymbol x}$-a.s.}
\] for every $\boldsymbol x\in \mathcal{X}$.

The formal definition of label invariance in the spatial setting is a bit more technical than in the non-spatial setting, because a convenient formulation in terms of transition rates is not available. (Within the class of Brownian spatial coalescents, \cref{lem:labelinv} provides an analogous characterisation.) Suppose that $\fpart_0,\fpart_1 \in \mathcal{P}$ with $|\fpart_0| = |\fpart_1|$. Then the law of a label invariant spatial coalescent should not be affected if we change the labels of the initial set of lineages with a bijection $\iota\colon \fpart_0 \to \fpart_1$. To apply this change of labels to the entire process $(\fpart_t)_{t\ge 0}$, we extend $\iota$ to a map $\bigcup_{\fpart \le \fpart_0} \fpart \to \bigcup_{\fpart \le \fpart_1} \fpart$ with the property that $\iota(u \cup v) = \iota(u) \cup \iota(v)$ whenever $u,v,u\cup v$ are in the extended domain of $\iota$ (this extension is unique). To perform the change of labels in the spatial variables, we extend $\iota$ for every $\fpart \subset \bigcup_{\fpart' \le \fpart_0} \fpart'$ to a map $E^\fpart_\circ  \to E^{\iota(\fpart)}_\circ $ through $\iota(\boldsymbol x) = \boldsymbol x \circ \iota ^{-1}$. Denote by $\labelinv(\fpart_0,\fpart_1)$ the set of bijections $\iota\colon \fpart_0 \to \fpart_1$ that are extended %to the domain $\bigcup_{\fpart \le \fpart_0} \fpart \cup \bigcup_{\fpart \subset \bigcup_{\fpart' \le \fpart_0} \fpart'}  E^\fpart_\circ )$
in the way described above. %glossary

\begin{definition}\label{def:labelinv}
    A spatial coalescent $(\P^{\boldsymbol x})_{\boldsymbol x\in \mathcal{X}}$ is called \emph{label invariant} if for any two sets $\fpart_0,\fpart_1 \in \mathcal{P}$ of equal size, $\iota \in \labelinv(\fpart_0,\fpart_1)$, and any $\boldsymbol x \in E^{\fpart_0}_\circ $,
    \begin{equation}\label{eq:labelinv}
        \P^{\iota(\boldsymbol x)}((\boldsymbol X_t)_{t \ge 0} \in \cdot ) = \P^{\boldsymbol x}((\iota(\boldsymbol X_t))_{t \ge 0} \in \cdot ).
    \end{equation}
    %where we uniquely extended $\iota$ to a map $\bigcup_{\fpart \le \fpart_0} \fpart \to \bigcup_{\fpart \le \fpart_1} \fpart$ with $\iota(u \cup v) = \iota(u) \cup \iota(v)$; if $\fpart \le \fpart_0$ and $\boldsymbol x \in E_\circ^{\fpart}$, then $\iota(\boldsymbol x) \coloneqq \boldsymbol x \circ \iota^{-1} \in E^{\iota(\fpart)}_\circ $.
\end{definition}

%todo glossary of notation
% - [n] = {1, \ldots ,n}
% - \rates and \nurates
% - everywhere with a 'glossary' comment

%A label invariant spatial coalescent is determined by the laws $\P^{\boldsymbol x}$ where $(\fpart,\boldsymbol x) \in \mathcal{X}$ with $\fpart = \left\{ \{1\}, \ldots ,\{n\} \right\} $ for some $n\in \N$. % If $x_i \coloneqq \boldsymbol x(\{i\})$ for $i\in [n]$, we also write $\P^{x_1, \ldots ,x_n}$ for $\P^{\boldsymbol x}$. In this context we occasionally identify $\boldsymbol x \in \mathcal{X}$ with the tuple $(x_1, \ldots ,x_n) \in E^n$ if no confusion is possible.
%In this and similar contexts, we occasionally identify a tuple $\boldsymbol x = (x_1, \ldots ,x_n) \in E^n$ with the map $\boldsymbol x \in E^{\{\{1\}, \ldots ,\{n\}\}}$ defined by $\boldsymbol x(\{i\}) = x_i$ for $i\in [n]$. The subset of $E^n$ obtained by applying this identification to the elements of $E^{\{\{1\}, \ldots ,\{n\}\}}_\circ $ is denoted $E^n_\circ $. For $\boldsymbol x = (x_1, \ldots ,x_n) \in E^n$ and $y\in E$ write $\boldsymbol x y \coloneqq (x_1, \ldots ,x_n,y) \in E^{n+1}$. Then for $\boldsymbol x \in E^n_\circ $ write
%\begin{equation}\label{eq:Ex}
%    E_{\boldsymbol x} \coloneqq \left\{ y\in E\colon \boldsymbol xy \in E^{n+1}_\circ  \right\} .
%\end{equation}
%
We now make precise the notion of sampling consistency for spatial coalescents introduced informally in \cref{def:samplingconintro}.
If $\boldsymbol x\in E^A$ and $\boldsymbol y\in E^B$ for disjoint sets $A$ and $B$, we can join the two maps and denote the result by $\boldsymbol x\boldsymbol y\in E^{A\cup B}$, so
\begin{equation}\label{eq:joinmaps}
    \boldsymbol x \boldsymbol y \colon A \cup B \to E;\, u \mapsto
    \begin{cases}
        \boldsymbol x(u), & u \in A,\\
        \boldsymbol y(u), & u \in B.
    \end{cases}
\end{equation}
For $(\fpart,\boldsymbol x) \in \mathcal{P}$, $u\subset \N$ with $u \cap \fpart = \emptyset $ and $y\in E$, write $\boldsymbol x y \colon \fpart \cup \{u\} \to E$ for the extension of $\boldsymbol x$ to $\fpart \cup \{u\}$ with $u\mapsto y$, where $u$ is suppressed in the notation and will be clear from context. Then for $(\fpart,\boldsymbol x) \in \mathcal{X}$ and any such $u \subset \N$, and $\fpart'\in \mathcal{P}$ disjoint from $\fpart$, write
\begin{align}\begin{split}\label{eq:Ex}
    E_{\boldsymbol x} &\coloneqq \{ y\in E\colon \boldsymbol xy \in E^{\fpart \cup \{u\}}_\circ  \},\\
    E_{\boldsymbol x}^{\fpart'} &\coloneqq \{ \boldsymbol y \in E^{\fpart'}_\circ \colon \boldsymbol x\boldsymbol y \in E^{\fpart\cup\fpart'}_\circ \}.
\end{split}\end{align}
(The first definition does not depend on the choice of $u$.)
For $(\fpart,\boldsymbol x) \in \mathcal{X}$ and any $u\in \fpart$, define $(\boldsymbol x \wo u)\colon (\fpart \wo u) \to E$ by $v \setminus u \mapsto \boldsymbol x(v)$. Write $\mathcal{M}_1(S)$ for the set of probability measures on a measurable space $S$.
\begin{definition}\label{def:samplingcon}
    A spatial coalescent $(\P^{\boldsymbol x})_{\boldsymbol x\in \mathcal{X}}$ is called \emph{sampling consistent} if there exists a family of probability measures $(\mu_{\boldsymbol x} \in \mathcal{M}_1(E_{\boldsymbol x})\colon \boldsymbol x\in \mathcal{X})$ such that
    %for any $n\in \N$, $\boldsymbol x \in E^n_\circ $,
    %\begin{align}\label{eq:samplingcon}
    %    \int \P^{\boldsymbol x y}((\boldsymbol X_t \wo \{n+1\})_{t \ge 0} \in \cdot ) \mu_{\boldsymbol x}(\diff y) = \P^{\boldsymbol x}((\boldsymbol X_t)_{t\ge 0 } \in \cdot ).
    %\end{align}
    for any $\fpart \in \mathcal{P}$, $|\fpart| \ge 2$, and $u\in \fpart$, $\boldsymbol x\in E^{\fpart\wo u}_\circ $,
    \begin{align}\label{eq:samplingcon}
        \int_{E_{\boldsymbol x}} \P^{\boldsymbol x y}((\boldsymbol X_t \wo u)_{t\ge 0} \in \cdot ) \mu_{\boldsymbol x}(\diff y) = \P^{\boldsymbol x}((\boldsymbol X_t)_{t\ge 0} \in \cdot ).
    \end{align}
\end{definition}

With these definitions, the projection of a (label invariant or sampling consistent) spatial coalescent onto $\mathcal{P}$ is a (label invariant or sampling consistent) coalescent.

\subsection{Brownian Spatial Coalescents}
The law of a Brownian spatial coalescent will be defined using densities over time and space decorations of all possible shapes of the genealogical forest, as outlined in the introduction. We introduce the necessary notation.

\begin{definition}\label{def:forest}
    A \emph{forest} is a set whose members form a strictly decreasing (necessarily finite) sequence in $\mathcal{P}$.
\end{definition}

All of the following notation is illustrated in \cref{fig:treenotation}. The notation for a generic forest is $F = \{\fpart_0^F, \ldots ,\fpart_m^F\}$, where $\lf(F) \coloneqq \fpart_0^F$ is the set of leaves, $\fpart_i^F$ for $i \in [m]$ is the set of nodes (or vertices) immediately after the $i$'th merge event, and $\rt(F) \coloneqq \fpart_m^F$ is the set of roots. Call $F$ \emph{trivial} if $m = 0$, that is if $\rt(F) = \lf(F)$. Note that this notion of a forest encodes its topology \emph{and} the order (and simultaneousness) of merge events, but no branch lengths or spatial information. This is important because already in the non-spatial setting, forests with the same topology but different order of merge events may have different probability.
Write $\mathbb{F}$ for the set of all forests, and for $\fpart,\fpart'\in \mathcal{P}$ with $\fpart' \le \fpart$,
\begin{align*}
    \mathbb{F}(\fpart) &= \left\{ F\in \mathbb{F}\colon \lf(F) = \fpart \right\},\\
    \mathbb{F}(\fpart,\fpart') &= \left\{ F\in \mathbb{F}\colon \lf(F) = \fpart, \rt(F) = \fpart' \right\} .
\end{align*}
We call $F\in \mathbb{F}$ a \emph{tree} if $|\rt(F)| = 1$, and denote by $\mathbb{T}$ and $\mathbb{T}(\fpart)$ for $\fpart \in \mathcal{P}$ the set of all trees, and the set of trees with leaves $\fpart$, respectively.
Write $\nd(F) \coloneqq \bigcup_{i=0} ^m \fpart_i^F$ for the set of all nodes, and $\nd^\circ (F) \coloneqq \nd(F) \setminus \lf(F)$ for the non-leaf nodes of $F$, or equivalently the set of all nodes that correspond to a merge event. Note that the union defining $\nd(F)$ is not disjoint; in particular, e.g.\ $\{9\}$ in \cref{fig:treenotation} is not in $\nd^\circ (F)$ despite being a root.
%We occasionally identify $F$ with either of the sets $\left\{ \fpart_0^F, \ldots ,\fpart_m^F \right\} $ or $\left\{ (\fpart_0^F,\fpart_1^F), \ldots ,(\fpart_{m-1}^F,\fpart_m^F) \right\} $.
The maps \[
    \ch_F \colon \nd(F) \to 2^{\nd(F)},\qquad \pr_F\colon \nd(F) \to \nd(F) \cup \left\{ \emptyset  \right\}
\] assign to a node its children and its parent, respectively, where $\ch_F(u) \coloneqq \emptyset $ for $u\in \lf(F)$ and $\pr_F(u) \coloneqq \emptyset $ for $u\in \rt(F)$. \Cref{fig:treenotation} illustrates this notation.
\begin{figure}
    \centering
        \begin{tikzpicture}[scale=1.2]
    \coordinate (A) at (0,0);
    \coordinate (B) at (1,0);
    \coordinate (C) at (2.5,0);
    \coordinate (D) at (3.5,0);
    \coordinate (D1) at (3.5,1.5);
    \coordinate (D2) at (3.5,3);
    \coordinate (AB) at (0.5,1.5);
    \coordinate (ABC) at (1.75,3);

    \draw (A) node[below] (Alabel) {\small$\left\{ 1 \right\} $};
    \draw (B) node[below] (Blabel) {\small$\left\{ 3,4 \right\} $};
    \draw (C) node[below] (Clabel) {\small$\left\{ 7 \right\} $};
    \draw (D) node[below] (Dlabel) {\small$\left\{ 9 \right\} $};

    \draw (A) to[out=90,in=-90-30] (AB);
    \draw (B) to[out=90,in=-90+30] (AB);
    \draw (AB) to[out=60,in=-90-45] (ABC);
    \draw[name path=c] (C) to[out=90+0,in=-65] (ABC);
    \draw (D) to (D2);

    \draw (AB) node[left] (ABlabel) {\small$\left\{ 1,3,4 \right\} $};
    \draw (ABC) node[left] (ABClabel) {\small$\left\{ 1,3,4,7 \right\} $};
    \draw (D1) node[above right, black!10!gray] {\small$\left\{ 9 \right\} $};
    \draw (D2) node[above right, black!10!gray] (D2label) {\small$\left\{ 9 \right\} $};

    \draw[dotted,thick] (A) -- (4.5,0) node[right] {\small$\fpart_0^F = \left\{ \left\{ 1 \right\} ,\left\{ 3,4 \right\} ,\left\{ 7 \right\},\left\{ 9 \right\}   \right\} $};
    \draw[dotted,thick, name path = pi1] (AB) -- (4.5,1.5) node[right] {\small$\fpart_1^F = \left\{ \left\{ 1, 3,4 \right\} ,\left\{ 7 \right\}  ,\left\{ 9 \right\} \right\} $};
    \draw[dotted,thick] (ABC) -- (4.5,3) node[right] {\small$\fpart_2^F = \left\{ \left\{ 1,3,4, 7 \right\} ,\left\{ 9 \right\}  \right\} $};
    \node[
        draw=teal,
        thick,
        rounded corners=6pt,
        inner sep=6pt,
        fit=(ABC)(ABClabel)(D2)(D2label),
    ] (looproot) {};
    % Label to the left of the loop
    \node[teal, anchor=north west] at ($(looproot.north)+(0.0,-0.0)$) {$\rt(F)$};
 
    \node[
        draw=orange,
        thick,
        rounded corners=6pt,
        inner sep=6pt,
        fit=(AB)(ABC)(ABlabel)(ABClabel)
    ] (loopinner) {};
    
    % Label to the left of the loop
    \node[orange, anchor=west] at ($(loopinner.west)+(0.1,0.0)$) {$\nd^\circ(F)$};

    \coordinate (fake) at (-1.1,0);
    \node[
        draw=blue,
        thick,
        rounded corners=6pt,
        inner sep=6pt,
        fit=(A)(B)(C)(D)(Alabel)(Blabel)(Clabel)(Dlabel)(fake)
    ] (loopleaf) {};
    \node[blue, anchor=west] at ($(loopleaf.west)+(0.1,0.0)$) {$\lf(F)$};
    
    \draw[fill,black] (A) circle[radius=1.5pt];
    \draw[fill,black] (B) circle[radius=1.5pt];
    \draw[fill,black] (C) circle[radius=1.5pt];
    \draw[fill,black] (D) circle[radius=1.5pt];
    \draw[fill,gray] (D1) circle[radius=1.5pt];
    \draw[fill,gray] (D2) circle[radius=1.5pt];
    \draw[fill,black] (AB) circle[radius=1.5pt];
    \draw[fill,black] (ABC) circle[radius=1.5pt];
    %\node[name intersections={of=pi1 and c}] (I1) at (intersection-1);
    % \draw[fill,black,name intersections={of=pi1 and c}] (intersection-1) circle[radius=1.5pt];
    \draw[
        fill,
        gray,
        name intersections={of=pi1 and c}
    ] (intersection-1) circle[radius=1.5pt]
      node[above right, black!10!gray] {\small$\{7\}$};
\end{tikzpicture}
    \caption{Illustration of the notation used for forests. In this example, $\ch_F(\left\{ 1,3,4 \right\} ) = \left\{ \left\{ 1 \right\} ,\left\{ 3,4 \right\}  \right\} $, and $\pr_F(\left\{ 7 \right\}) = \left\{ 1,3,4,7 \right\}  $, and $\ch_F(\left\{ 9 \right\} ) = \pr_F(\left\{ 9 \right\} ) = \emptyset $. Nodes that appear more than once in the tree are drawn in black the first time they appear (starting at the leaves), and gray afterwards. Note in particular that $\{9\}$ is both a leaf and a root (but not an inner node). }
        \label{fig:treenotation}
\end{figure}

\begin{definition}\label{def:tau}
    A \emph{time decoration} of a forest $F$ is a map $\tau \colon F \to (0,\infty)$ with $\tau(\lf(F)) = 0$ and $\tau(\fpart) < \tau(\fpart')$ for $\fpart > \fpart'$. Write $\dct(F)$ for the set of time decorations of $F$, which we equip with the \emph{upper limit topology}, that is $\tau^n \to \tau$ in $\tm(F)$ if for every $\fpart \in F$, $\tau^n(\fpart) \to \tau(\fpart)$ \emph{and} there is $n_0\in \N$ such that $\tau^n(\fpart) \le \tau(\fpart)$ for all $n\ge n_0$. Write \[
        \dct(\mathbb{F}) = \left\{ (F,\tau) \colon F\in \mathbb{F}, \tau\in \dct(F) \right\}
    \] for the set of time decorated forests, and define $\dct(S)$ for $S \subset  \mathbb{F}$ analogously.
\end{definition}
Since $F = \dom(\tau)$ for $(F,\tau) \in \dct(\mathbb{F})$ we occasionally just write $\tau \in \dct(\mathbb{F})$.
If $F = \{\fpart_0^F, \ldots ,\fpart_m^F\} \in \mathbb{F}$ and $\tau\in \dct(F)$, then for $i\in [m-1]$ we write $\tau(\fpart_i^F)^+ \coloneqq \tau(\fpart_{i+1}^F)$, and $\tau(\fpart_m^F)^+ \coloneqq \infty $. We extend $\tau$ to $\nd(F)$ by writing 
\begin{equation}\label{eq:tau_u}
    \tau_u \coloneqq \min \left\{ \tau(\fpart)\colon u\in \fpart \in F \right\} ,\quad u\in \nd(F),
\end{equation} for the time of a nodes ``birth'', and we set $\tau_\emptyset = \infty$ so that $\tau_{\pr_F(u)} = \infty$ if $u$ has no parent. See \cref{fig:dctdcs} for an illustration.

If $F$ is trivial, then $\dct(F)$ is a singleton whose only element is defined by $\tau_{\lf(F)} = 0$, and we equip $\dct(F)$ with the measure that assigns mass one to the single member.
Otherwise, since the Borel $\sigma$-algebra of the upper limit topology coincides with that of the usual Euclidean topology, we can define Lebesgue measure on $\dct(F)$ using the obvious bijection with an open subset of $\R^m$. In both cases we denote the integral of a function $f$ against the measure on $\dct(F)$ by $\int f(\tau)\diff \tau$. The set $\dct(\mathbb{F})$ inherits its topology from $\dct(F)$, noting it can be identified with a discrete union.

\begin{definition}\label{def:xi}
    A \emph{spatial decoration} of a forest $F=(\fpart^F_0, \ldots ,\fpart^F_m)$ is a map $\xi\colon \nd^\circ (F) \to E$ such that $\xi\vert_{\fpart^F_i \setminus \fpart^F_{i-1}} \in E^{\fpart^F_i \setminus \fpart^F_{i-1}}_\circ $ for all $i=1, \ldots ,m$. Write $\dcs(F)$ for the set of spatial decorations of $F$.
\end{definition}
Note that $\xi\vert_{\fpart^F_i \setminus \fpart^F_{i-1}}$ are exactly the locations of the $i$'th set of (simultaneously occuring, if more than one) merge events, so this condition is ensuring that no two (or three, if $d = 1$) simultaneously occuring merge events are in the same location. (Note that when writing $A \setminus B$ we mean $A \cap B^c$, and $B$ does not necessarily have to be a subset of $A$.)
If $F$ is non-trivial, we define a topology and Lebesgue measure on $\dcs(F)$ using the obvious bijection with an open subset of $E^{|\nd^\circ (F)|}$. If $F$ is trivial then $\dcs(F)$ is a singleton comprised of the unique map $\xi\colon \emptyset \to E$, and we equip $\dcs(F)$ with the measure that assigns mass one to the single member. In both cases write $\int f(\xi) \diff \xi$ for the integral of a function $f$ against the measure on $\dcs(F)$.

The following maps assign to a path in $\Omega_0$ its associated undecorated and time decorated coalescence forest, respectively.
\begin{IEEEeqnarray*}{rClrCl}
    \fr&\colon&  \Omega_0  \to  \mathbb{F};&  \quad  (\fpart_t)  &\mapsto&  \left\{ \fpart_t\colon t \ge 0 \right\} \quad \text{(the range of the path $(\fpart_t)$)}\\
    \tm&\colon& \Omega_0 \to \dct(\mathbb{F});& \quad \omega = (\fpart_t) &\mapsto& \Big( \fr(\omega), \big[ \fpart \mapsto  \inf \left\{ t > 0\colon \fpart_t = \fpart \right\}  \big]  \Big)
\end{IEEEeqnarray*}
Both maps are measurable with respect to the product $\sigma$-algebra on $\Omega_0$, see Lemmas~\ref{lem:frmeasurable} and~\ref{lem:tmmeasurable}.
We can think of a (non-spatial) coalescent as a random, time decorated forest, and indeed $\tm$ is a bijective and in fact bimeasurable map. In particular, the laws $\P^{\fpart}$ of a coalescent are determined by the pushforward laws $\tm \# \P^\fpart$ on the set of time-decorated forests, which can be calculated explicitly using simple arguments of competing exponential clocks, see \cref{lem:ftm} below. We identify $\fr$ and $\tm$ with their extensions to $\Omega$ (obtained by composing with the projection $\Omega \to \Omega_0$).

In the spatial setting, we can assign to a path in $\Omega$ a forest decorated with space in addition to time coordinates. For $\omega = (\boldsymbol x_t) \in \Omega$ with $F = \fr(\omega)$, we write \[
    \sp(\omega) \coloneqq
    %\begin{cases}
    %    \nd^\circ (F) \to E,\\
    %    u\mapsto \boldsymbol x_{\tm(\omega)_u}(u)
    %\end{cases}
    %\in \dcs(F).
    \big[ u\mapsto \boldsymbol x_{\tm(\omega)_u}(u)\big] \in \dcs(F)
\] for the associated space decoration of $F$. We prove that this map is measurable with respect to the product $\sigma$-algebra on $\Omega$ in Lemma~\ref{lem:spmeasurable}. Let $\dcb(F) = \dct(F) \times \dcs(F)$ for $F\in \mathbb{F}$, and write
\begin{align*}
    \dcb (\mathbb{F}) &= \left\{ (F,\tau,\xi)\colon F\in \mathbb{F}, \tau \in \dct(F), \xi \in \dcs(F) \right\}
    %\dcb (\mathbb{F}) &= \{ (F,\tau,\xi,\boldsymbol x)\colon (F,\tau,\xi) \in \dcb(\mathbb{F}), \boldsymbol x \in E^{\lf(F)}_\circ  \}
\end{align*}
for the set of time and space decorated forests, and define $\dcb(S)$ for $S\subset \mathbb{F}$ analogously. We write $F^\star = (F,\tau,\xi)$ for a generic element of $\dcb(\mathbb{F})$. The topology on $\dcb(\mathbb{F})$ is inherited from that of $\dcb(F),\, F\in \mathbb{F}$, noting it can be identified with a discrete union.

Since $F = \dom(\tau)$ for $(F,\tau,\xi) \in \dcb(\mathbb{F})$ we occasionally just write $(\tau,\xi) \in \dcb(\mathbb{F})$. The following map assigns to a path in $\Omega$ its associated decorated forest, see \cref{fig:dctdcs} for an illustration.
%Define the measurable map (\cref{lem:decmeasurable})
\begin{equation}\label{eq:defdc}
    \dc \colon \Omega \to \dcb(\mathbb{F});\quad \omega = (\boldsymbol x_t) \mapsto \Big(\tm(\omega), \big[ u \mapsto \boldsymbol x_{\tm(\omega)_u}(u)\big] \Big).
\end{equation}
It encodes all information about the path $\omega$ except the spatial motion of lineages in between merge events. Brownian spatial coalescents are exactly those for which this motion is Brownian conditional on the decorated forest, which means that its laws $\P^{\boldsymbol x}$ are determined completely by the laws $\dc \# \P^{\boldsymbol x}$ on the space of decorated coalescence forests. To make this precise, we introduce a stochastic kernel $K_{\boldsymbol x}(F^\star,\cdot )\in \mathcal{M}_1(\Omega)$ describing the law of a Brownian spatial coalescent conditional on its decorated forest $F^\star \in \dcb(\mathbb{F})$ and initial condition $\boldsymbol x \in E^{\lf(F)}_\circ $ (see \cref{lem:kbr}). Then a spatial coalescent is a Brownian spatial coalescent if and only if $K_{\boldsymbol x}$ is a conditional probability of $\P^{\boldsymbol x}$ given $\dc$, that is if $\P^{\boldsymbol x} = (\dc\# \P^{\boldsymbol x}) \otimes K_{\boldsymbol x}$ for all $\boldsymbol x\in \mathcal{X}$. See \cref{def:BSC} below. Measurability of $\dc$ is proved in Lemma~\ref{lem:decmeasurable}.

The following maps capture the motion of particles along each branch of the coalescence forest:
\begin{IEEEeqnarray*}{rrCl}%\label{eq:pth}
    \pth^{F^\star}_u\colon &
    \left\{ \dc = F^\star \right\} &\,\to\, & R([\tau_u,\tau_{\pr_F(u)}),E),\\[3pt]
    & (\boldsymbol x_t) &\mapsto& (\boldsymbol x_t(u))_{\tau_u \le t < \tau_{\pr_F(u)}},
\end{IEEEeqnarray*}
for $F^\star = (F,\tau,\xi) \in \dc(\mathbb{F})$ and $u \in \nd(F)$, where $\left\{ \dc = F^\star \right\} = \left\{ \omega\in \Omega\colon \dc(\omega) = F^\star \right\} $. See again \cref{fig:dctdcs}. These maps are continuous (if the codomain is equipped with the product topology, like the domain).
\begin{figure}
    \centering
    \begin{tikzpicture}[baseline=(current bounding box.north),scale=1.0]
    \def\yscale{1.5}
    \begin{scope}[scale=0.7,shift={(.8,0)}]
        \coordinate (zero) at (-.5,-.5);
        \coordinate (t-l) at (-.5,3.6*\yscale);
        \coordinate (b-r) at (4.5,-.5);

        \coordinate (Aa) at (0,0);
        \coordinate (Ab) at (.75,0);
        \coordinate (Ba) at (1.75,0);
        \coordinate (Bb) at (2.25,0);
        \coordinate (Bc) at (3,0);
        \coordinate (C) at (3.7,0);
        \coordinate (C2) at (3.7,3.3*\yscale);

        \coordinate (A) at (0.375,1.5*\yscale);
        \coordinate (AB) at ({(.375+2.5)/2},3*\yscale);
        \coordinate (B) at (2.5,1.5*\yscale);

        %% coordinate axes
        \draw[->] (zero) -- (t-l);
        \draw[->] (zero) -- (b-r);
        \draw (t-l) node[above] {\footnotesize time};
        \draw (b-r) node[below left] {\footnotesize space};

        %% background
        \BBY{Aa}{A}[948435]
        \BBY{Ab}{A}[892459]
        \BBY{A}{AB}[349875]
        \BBY{Ba}{B}[349927]
        \BBY{Bc}{B}[432894]
        \BBY{B}{AB}[573984] %498729
        \BBY{C}{3.5,3.3*\yscale}[382949] %39847
        \BBY{Bb}{B}[98329] %549752
        \BBY{AB}{AB|-0,3.3*\yscale}[824356]

        \def\tick{.1}
        \draw[dotted,gray] (-.5,0) -- (C);
        \draw (-.5+\tick,0) -- (-.5-\tick,0) node[left] {\footnotesize$0$};
        \draw[fill] (Aa) circle[radius=2pt];
        \draw[fill] (Ab) circle[radius=2pt];
        \draw[fill] (Ba) circle[radius=2pt];
        \draw[fill] (Bb) circle[radius=2pt];
        \draw[fill] (Bc) circle[radius=2pt];
        \draw[fill] (C) circle[radius=2pt];
    \end{scope}

    \begin{scope}[shift={(-3+.3,-7)}]
        \def\yscale{1.5}
%\begin{tikzpicture}[baseline=(current bounding box.north),scale=1.2]
    \coordinate (zero) at (-.5,-.5);
    \coordinate (t-l) at (-.5,3.6*\yscale);
    \coordinate (b-r) at (4,-.5);

    \coordinate (Aa) at (0,0);
    \coordinate (Ab) at (.75,0);
    \coordinate (Ba) at (1.75,0);
    \coordinate (Bb) at (2.25,0);
    \coordinate (Bc) at (3,0);
    \coordinate (C) at (3.7,0);
    \coordinate (C2) at (3.7,3.3*\yscale);

    \coordinate (A) at (0.375,1.5*\yscale);
    \coordinate (AB) at ({(.375+2.5)/2},3*\yscale);
    \coordinate (B) at (2.5,1.5*\yscale);

    %% coordinate axes
    \draw[->] (zero) -- (t-l);
    \draw[->] (zero) -- (b-r);
    \draw (t-l) node[above] {\footnotesize time};
    \draw (b-r) node[below left] {\footnotesize space};

    %% background
        %% forest
    \BBY[gray,opacity=.4]{Aa}{A}[948435]
    \BBY[gray,opacity=.4]{Ab}{A}[892459]
    \BBY[gray,opacity=.4]{A}{AB}[349875]

    \BBY[gray,opacity=.4]{Ba}{B}[349927]
    \BBY[gray,opacity=.4]{Bc}{B}[432894]
    \BBY[gray,opacity=.4]{B}{AB}[573984] %498729
    \BBY[gray,opacity=.4]{C}{3.5,3.3*\yscale}[382949] %39847
    \BBY[gray,opacity=.4]{Bb}{B}[98329] %549752

    \BBY[gray,opacity=.4]{AB}{AB|-0,3.3*\yscale}[824356]

    %\draw (A) node[above left] {$u$};
    %\draw[fill,\ca] (A) circle[radius=2pt];
    %\node[\ca] at (0.3,2.7*\yscale) {$\pth_u$};

    %\draw (Bb) node[below right] {$x$};
    %\draw[fill,\cb] (Bb) circle[radius=2pt];
    %\node[fill=white,text=\cb] at (1.6,1*\yscale) {$\pth_x$};

    %\draw (C) node[above left] {$y$};
    %\draw[fill,\cc] (C) circle[radius=2pt];
    %\node[\cc] at (3.0,2.5*\yscale) {$\pth_y$};

    %% time and space stamps
    \def\tick{.1}
    %\draw[dotted] (Aa) -- (-.5,0);
    \draw[dashed,thick,orange] (B) -- (-.5,1.5*\yscale);
    \draw[dashed,thick,orange] (AB) -- (-.5,3*\yscale);

    \draw (-.5+\tick,0) -- (-.5-\tick,0) node[left] {\footnotesize$0$};
    \draw[dotted,gray] (-.5,0) -- (C);
    \draw[orange] (-.5+\tick,1.5*\yscale) -- (-.5-\tick,1.5*\yscale) node[left,rotate=90,anchor=south] {$\tau_u=\tau_w$};
    \draw[orange] (-.5+\tick,3*\yscale) -- (-.5-\tick,3*\yscale) node[left] {$\tau_v$};

    \draw[dashed,thick,teal] (A) -- (A|-0,-.5);
    \draw[dashed,thick,teal] (B) -- (B|-0,-.5);
    \draw[dashed,thick,teal] (AB) -- (AB|-0,-.5);

    \draw[teal] (A|-0,-.5+\tick) -- (A|-0,-.5-\tick) node[below] {$\xi_u$};
    \draw[teal] (AB|-0,-.5+\tick) -- (AB|-0,-.5-\tick) node[below] {$\xi_v$};
    \draw[teal] (B|-0,-.5+\tick) -- (B|-0,-.5-\tick) node[below] {$\xi_w$};

    %% forest
    \draw (A) node[above left] {$u$};

    \draw (B) node[above right] {$w$};

    \draw (AB) node[above right] {$v$};

    \draw[black] (Aa) to[out=90,in=-90-30] (A);
    \draw[black] (Ab) to[out=90,in=-90+30] (A);
    \draw[black] (A) to[out=90,in=-90-50] (AB);
    \draw[black] (Ba) to[out=90,in=-90-45] (B);
    \draw[black] (Bb) to[out=90,in=-90-15] (B);
    \draw[black] (Bc) to[out=90,in=-90+30] (B);
    \draw[black] (B) to[out=90,in=-40] (AB);
    \draw[black] (C) to (C2);
    \draw[black] (AB) -- (AB|-0,3.3*\yscale);

    \draw[fill] (A) circle[radius=2pt];
    \draw[fill] (B) circle[radius=2pt];
    \draw[fill] (AB) circle[radius=2pt];

    \draw[fill,gray] (Aa) circle[radius=2pt];
    \draw[fill,gray] (Ab) circle[radius=2pt];
    \draw[fill,gray] (Ba) circle[radius=2pt];
    \draw[fill,gray] (Bb) circle[radius=2pt];
    \draw[fill,gray] (Bc) circle[radius=2pt];
    \draw[fill,gray] (C) circle[radius=2pt];
    %\draw[fill,gray] (C2) circle[radius=2pt];
%\end{tikzpicture}
    \end{scope}
    \begin{scope}[shift={(3+.3,-7)}]
        \input{figures1/fig_pathnotation}
    \end{scope}

0   \draw[|->] (-.1,1) to[out=-90-35,in=90] node[above left] {$\dc$} (-.6,-1.3);
    \draw[|->] (4-.1,1) to[out=-90+35,in=90] node[above right] {$\pth$} (4+.4,-1.3);
\end{tikzpicture}
    \caption{At the top is an illustration of an element $\omega\in \Omega$. The map $\dc$ (bottom left) extracts the abstract coalescence forest, and times and spatial locations of the merge events. The $\pth$ maps (bottom right) extract the motion of particles along branches of the coalescence forest.}
    \label{fig:dctdcs}
\end{figure}
Denote the law of a standard Brownian motion in $E$ (with periodic boundary conditions) started at some $x$ at time $t \ge 0$ by $B^{(t,x)+}$, the law of a Brownian bridge started at time $s \ge 0$ at $x\in E$, ending at time $t > s$ at $y\in E$ by $B^{(s,x) \to (t,y)}$, the law of the same bridge followed by a Brownian motion started at time $t$ at $y$ by $B^{(s,x)\to(t,y)+}$ etc. Recall our notation for joining maps: if $\boldsymbol x\in E^{\lf(F)}_\circ $ and $\xi \in \dcs(F)$ then $\boldsymbol x \xi$ is the map $\nd(F) \to E$ that extends $\xi$ to $\lf(F)$ using $\boldsymbol x$.
%For $\boldsymbol x\in E_\circ ^{\lf(F)}$, write \[
%    (\boldsymbol x\xi)\colon \nd(F) \to E;\, u \mapsto
%    \begin{cases}
%        \boldsymbol x_u, & u \in \lf(F),\\
%        \xi_u, & u \in \nd^\circ (F).
%    \end{cases}
%\]
%\begin{lemma}\label{lem:kbr}
\begin{restatable}{lemma}{lemkbr}\label{lem:kbr}
    Given $F^\star = (F,\tau,\xi) \in \dc(\mathbb{F})$ and $\boldsymbol x \in E_\circ ^{\lf(F)}$, there is a unique law $\kbr_{\boldsymbol x}(F^\star,\cdot )\in \mathcal{M}_1(\Omega)$ under which $\boldsymbol X_0 = \boldsymbol x$ and $\dc = F^\star$ a.s., and \[
    \left( \pth^{F^\star}_u(\boldsymbol X)\colon u\in \nd(F) \right)
\] is a family of independent random variables such that the law of $\pth^{F^\star}_u(\boldsymbol X)$ is $B^{(\tau_u,(\boldsymbol x\xi)_u)+}$ if $\pr_F(u) = \emptyset $, and $B^{(\tau_u,(\boldsymbol x\xi)_u)\to(\tau_{\pr_F(u)},\xi_{\pr_F(u)})}$ if $\pr_F(u) \neq \emptyset $.
    There exists an extension of these laws to a family $(\kbr_{\boldsymbol x}(F^\star,\cdot )\colon \boldsymbol x\in \mathcal{X},F^\star \in \dcb(\mathbb{F}))$ such that
    \begin{align*}
        %\{ (\boldsymbol x,F^\star)\colon \boldsymbol x\in E_\circ ^{\lf(F)} \} &\to \mathcal{M}_1(\Omega);\quad %\hspace{3cm}\\
        \mathcal{X}\times \dcb(\mathbb{F}) \to \mathcal{M}_1(\Omega);\quad (\boldsymbol x,F^\star) \mapsto \kbr_{\boldsymbol x}(F^\star,\cdot )
    \end{align*}
    is continuous. In particular, $(\boldsymbol x,F^\star) \mapsto \kbr_{\boldsymbol x}(F^\star,A)$ is measurable for any measurable $A\subset \Omega$.
    %\[
    %        \mathcal{X} \times \dcb(\mathbb{F}) \to [0,1];\quad (\boldsymbol x,F^\star) \mapsto \kbr_{\boldsymbol x}(F^\star,A)
    %\] is measurable.
\end{restatable}
The proof of \cref{lem:kbr} is in Appendix~\ref{sec:lemkbr}. The exact definition of $\kbr_{\boldsymbol x}(F^\star,\cdot )$ when $\boldsymbol x \not\in E^{\lf(F)}_\circ $ is irrelevant, we only need this extension to ensure $K_{\boldsymbol x}(F^\star,\cdot )$ can be integrated over $F^\star \in \dcb(\mathbb{F})$ for fixed $\boldsymbol x\in \mathcal{X}$, like in \cref{eq:defBSC} below.
% when integrating $\kbr_{\boldsymbol x}(F^\star,\cdot )$ over a law $P(\diff F^\star)$ on $\dcb(\mathbb{F})$ that is concentrated on $F^\star$ with $\lf(F) = \dom(\boldsymbol x)$, as in \cref{eq:defBSC} below.

%Recall the notion of regular conditional probability: if $G$ is a Polish space, $P \in \mathcal{M}_1(G)$, and $X\colon G \to H$ is measurable into some measurable space $H$, then there exists a stochastic kernel $(K(x,\cdot ) \in \mathcal{M}_1(G))_{h \in H}$, called a \emph{regular conditional probability of $P$ given $X$} such that \[
%    P(\cdot ) = (X\#P) \otimes K = \int_H K(x,\cdot ) (X\# P)(\diff x).
%    \] Then $K(x,\cdot )$ is $X\#P$-almost everywhere unique.
%    %If $P$ itself is the law of another random variable $Y$ on some probability space $(S,\A,Q)$, we usually write $Q^{Y \,\vert\,X=x}$ for $ K(x,\cdot )$.

\begin{definition}\label{def:BSC}
    A spatial coalescent $(\P^{\boldsymbol x})_{\boldsymbol x\in \mathcal{X}}$ is called a \emph{Brownian spatial coalescent} if $K_{\boldsymbol x}$ is a conditional probability of $\P^{\boldsymbol x}$ given $\dc$, that is if
    \begin{equation}\label{eq:defBSC}
        %\P^{\boldsymbol x}(\cdot ) = (\dc\#\P^{\boldsymbol x}) \otimes K_{\boldsymbol x}(\cdot ) = \smashop{\int_{\dcb(\mathbb{F})}} K_{\boldsymbol x}(F^\star,\cdot ) (\dc\#\P^{\boldsymbol x})(\diff F^\star)
        \P^{\boldsymbol x}(\cdot ) = P^{\boldsymbol x} \otimes K_{\boldsymbol x}(\cdot ) = \smashop{\int_{\dcb(\mathbb{F})}} K_{\boldsymbol x}(F^\star,\cdot ) P^{\boldsymbol x}(\diff F^\star),
    \end{equation}
    for every $\boldsymbol x\in \mathcal{X}$, where $P^{\boldsymbol x} \coloneqq \dc \# \P^{\boldsymbol x} \in \mathcal{M}_1(\dcb(\mathbb{F}))$.
\end{definition}
In particular, a Brownian spatial coalescent is fully determined by the laws $P^{\boldsymbol x}$, so we can essentially think of it as a random, time and space decorated forest. This is a useful simplification, and the main work in the characterisation of Brownian spatial coalescents \cref{thm:BSC} is to understand which families of laws $(P^{\boldsymbol x})_{\boldsymbol x\in \mathcal{X}}$ give rise to a Markov process on $\Omega$ through \cref{eq:defBSC}.

We close by formalising the characterisation of a Brownian spatial coalescent in terms of its transition measures. Recall the heuristic description we provided in \cref{sec:introBSC}: for every tree (forest in general) we define a density on its spatial and time decorations, comprised of exponential factors for every merge event, and spatial factors associated with each branch of the forest. The density is w.r.t.\ Lebesgue measure in the time coordinates, and w.r.t.\ the transition measures in the spatial coordinates.

\begin{definition}\label{def:nurates}
    A family of \emph{transition measures} is a collection $\boldsymbol \nu = (\nu_{\fpart,\fpart'}\colon \fpart'<\fpart)$, where $\nu_{\fpart,\fpart'}$ is a finite measure on $E_\circ ^{\fpart'\setminus \fpart}$. Write $\nurates$ for the set of all families of transition measures.
\end{definition}

The spatial factors (recall \cref{fig:introtreeexample}) do not depend on the transition measures, and are given for a non-trivial forest $F \in \mathbb{F}$, $\tau \in \dct(F)$ and $\boldsymbol x \in E^{\lf(F)}_\circ $, by
\begin{equation}\label{eq:fsp}
    \fsp(\xi\,\vert\, \tau,\boldsymbol x) \coloneqq  \smashop{\prod_{u\in \nd(F)\setminus \rt(F) }} p(\tau_{\pr_F(u)} - \tau_u, (\boldsymbol x \xi)_u - \xi_{\pr_F(u)}),\qquad \xi \in \dcs(F).
\end{equation}
That is, we take the product of Brownian transition densities along each branch of the forest that either links to internal nodes, or a leaf to an internal node. See also \cref{fig:introtreeexample} in the introduction.
Here and in the following, we sometimes write $p(t,x)$ instead of $p_t(x)$ if it benefits legibility.
If $F$ is trivial we put $\fsp \vert_{\dcs(F)} \equiv 1$. The dependence on $F$ is implicit in $\xi$ and $\tau$, but we will occasionally write $\fsp^F$ to make it explicit.
%\begin{figure}
%    \centering
%    %\begin{tikzpicture}[scale=1.0]
%    %    \input{figures1/fig_ftm}
%    %\end{tikzpicture}
%    \begin{tikzpicture}[scale=1.0]
%        \input{figures1/fig_fsp}
%    \end{tikzpicture}
%    \caption{Illustration of $\fsp$ defined in \cref{eq:fsp}. Note that only spatial factors associated with three of the seven branches (those highlighted on the left) are written out in the formula on the right.}
%    %\caption{Illustration of $\ftm$ defined in \cref{eq:ftm} on the left, and of $\fsp$ defined in \cref{eq:fsp} on the right. Node labels represent spatial locations.}
%    \label{fig:ftmfsp}
%\end{figure}
Given transition measures $\boldsymbol \nu \in \nurates$ and $\fpart\in \mathcal{P}$, write \[
    \nut{\fpart} \coloneqq \sum_{\fpart'<\fpart} |\nu_{\fpart,\fpart'}|,
    \] which is zero if $|\fpart| = 1$.
    If the process is label invariant, then $\nut{\fpart}$ only depends on $n = |\fpart|$, and we write $\nut{n}$ like we did in the introduction. It turns out that, as in the non-spatial setting, if $\nut{\fpart} = 0$ then $\fpart$ is an absorbing state (in the sense that almost-surely no further coalescence events occur), and if $\nut{\fpart} > 0$ then almost-surely there is at least one more coalescence event. In particular, given an initial state $(\fpart,\boldsymbol x)\in \mathcal{X}$, only forests $F \in \mathbb{F}(\fpart)$ with $\nut{\rt(F)} = 0$ are possible. Given such a forest, the full (unnormalised) density on $\dcb(F)$ is
    \begin{equation}\label{eq:fnu}
        \fnu(\tau,\xi \,\vert\,\boldsymbol x) \coloneqq \fsp(\xi \,\vert\,\tau,\boldsymbol x) \smashop{\prod_{(\fpart,\fpart') \in F}} \e^{-\nut{\fpart}(\tau_{\fpart'}-\tau_\fpart)},\qquad (\tau,\xi) \in \dcb(F).
    \end{equation}
    Here we identified $F$ with the set $\{(\fpart_0^F,\fpart_1^F), \ldots ,(\fpart^F_{m-1},\fpart^F_m)\}$, which we will do in similar situations in the future if it is convenient and unambiguous.
    if $\nut{\rt(F)} > 0$, then we put $\fnu(\cdot \,\vert\,\boldsymbol x) \vert_{\dcb(F)}\equiv 0$, which defines $\fnu(\cdot \,\vert\,\boldsymbol x)$ on all of $\dcb(\mathbb{F})$. Note if $F$ is trivial and $\nut{\rt(F)} = 0$ then $\fnu(\cdot \,\vert\,\boldsymbol x) \vert_{\dcb(F)} \equiv 1$. The density $\fnu$ is w.r.t.\ Lebesgue measure in the time coordinates, and w.r.t.\ the relevant transition measures in the spatial coordinates: if $F\in \mathbb{F}$ is non-trivial, write
    \begin{equation}\label{eq:nu_F}
        \boldsymbol \nu_F(\diff \xi) \coloneqq \prod_{(\fpart,\fpart') \in F} \nu_{\fpart,\fpart'}(\diff \xi_{\fpart,\fpart'}),
    \end{equation}
where $\xi_{\fpart,\fpart'}\coloneqq \xi \vert_{\fpart'\setminus \fpart}$ for $\xi \in \dcs(F)$ and $(\fpart,\fpart') \in F$, so that $\boldsymbol \nu_F$ is a finite measure on $\dcs(F)$. If $F$ is trivial then $\boldsymbol \nu_F$ denotes the unique probability measure on the singleton $\dcs(F)$. It remains to define the normalisation. For $(\fpart,\boldsymbol x)\in \mathcal{X}$ and $F\in \mathbb{F}(\fpart)$ write
\begin{equation*}%\label{eq:defN}
    N^{\boldsymbol \nu}_F(\boldsymbol x) \coloneqq \smashoperator{\int_{\dcb(F)}} \fnu(\tau,\xi \,\vert\,\boldsymbol x) \boldsymbol \nu_F(\diff \xi) \diff \tau,\qquad
    N^{\boldsymbol \nu}(\boldsymbol x) \coloneqq \sum_{F\in \mathbb{F}(\fpart)} N_F^{\boldsymbol \nu}(\boldsymbol x),% \int_{\dcb(F)} \fnu(\tau,\xi \,\vert\,\boldsymbol x) \boldsymbol \nu_F(\diff \xi) \diff \tau.
\end{equation*}
defining functions respectively on $E^{\lf(F)}_\circ $ and $\mathcal{X}$. These quantities are not obviously, and will not generally, be finite. The following lemma gives a sufficient condition. If $\boldsymbol \lambda \in \rates$ then we formally write $\boldsymbol \nu(\diff \xi) = \boldsymbol \lambda \diff \xi$ if $\nu_{\fpart,\fpart'}(\diff \xi) = \lambda_{\fpart,\fpart'}\diff \xi$ for all $\fpart'<\fpart$. We write $\boldsymbol \nu(\diff \xi) \sim \diff \xi$ if there exists $\boldsymbol \lambda \in \rates$ with $\boldsymbol \nu(\diff \xi) = \boldsymbol \lambda \diff \xi$.

%\begin{lemma}\label{lem:NF}
\begin{restatable}{lemma}{lemNF}\label{lem:NF}
    If $\boldsymbol \nu(\diff \xi) \sim \diff \xi$ then $N^{\boldsymbol \nu}$ is continuous (in particular finite).
\end{restatable}

\begin{remark}\label{rem:Ndivergence}
    Under the assumption of \cref{lem:NF}, $N^{\boldsymbol \nu}(\boldsymbol x) \to \infty$ as $\boldsymbol x$ approaches $E^\fpart \setminus E^\fpart_\circ $ for some $\fpart\in \mathcal{P}$. % This is the technical reason that the process cannot be defined to start from initial conditions outside $E^\fpart_\circ $, and a consequence of the conceptual reason explained in the first paragraph of \cref{sec:spatialcoal}.
\end{remark}

Given a measure $P$ on $\dcb(\mathbb{F})$, it will be a convenient notation to write $P(F,\diff \tau,\diff \xi)$ for the measure on $\dcb(F)$ defined by $\int_A P(F,\diff \tau,\diff \xi) \coloneqq P(\left\{ F \right\} \times A)$ for $A \subset \dcb(F)$. Analogously in similar contexts. With this notation, a formal statement of \cref{thm:BSCintro,thm:BSCintro2} is as follows.

\begin{theorem}\label{thm:BSC}
    A spatial coalescent is a Brownian spatial coalescent if and only if there exists $\boldsymbol \nu\in \nurates$ such that $N^{\boldsymbol \nu}$ is continuous and
    \begin{equation}\label{eq:BSC}
        P^{\boldsymbol x}(F,\diff \tau,\diff \xi) = \frac{1}{N^{\boldsymbol \nu}(\boldsymbol x)} \fnu(\tau,\xi \,\vert\,\boldsymbol x) \boldsymbol \nu_F(\diff \xi) \diff \tau.
    \end{equation}
    We call it the \emph{Brownian spatial coalescent with transition measures $\boldsymbol \nu$}. In that case, $\boldsymbol x \mapsto \P^{\boldsymbol x}$ is continuous w.r.t.\ the topology of weak convergence.
\end{theorem}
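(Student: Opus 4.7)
The argument combines existence, uniqueness, and Feller continuity, all driven by a Chapman--Kolmogorov-type decomposition of $N^{\boldsymbol\nu}$ at the time of the first coalescence. For a non-trivial $F \in \mathbb{F}(\fpart)$, split its first merge $\fpart \to \fpart_1$ at time $t$ (with new node locations $\boldsymbol z$) from the remaining sub-forest rooted at $\fpart_1$, and apply
\[
    p_\tau(x - y) = \int_E p_t(x - w)\, p_{\tau - t}(w - y)\,\diff w
\]
to each non-merging lineage $u' \in \fpart \cap \fpart_1$ in order to insert a ``virtual'' position $y_{u'}$ at time $t$. Summing over $\fpart_1$ and integrating over $t$, $\boldsymbol y$, $\boldsymbol z$, the exponential factors collapse into a single $\e^{-\nut{\fpart} t}$, and one obtains
\begin{equation*}
    N^{\boldsymbol\nu}(\boldsymbol x) = \ind_{\nut{\fpart} = 0} + \sum_{\fpart_1 < \fpart} \int_0^\infty \e^{-\nut{\fpart} t}\,\diff t \int \nu_{\fpart, \fpart_1}(\diff \boldsymbol z) \int \diff \boldsymbol y \prod_{u \in \fpart} p_t\bigl(\boldsymbol x(u) - (\boldsymbol{y}\boldsymbol{z})(\bar u)\bigr)\, N^{\boldsymbol\nu}(\fpart_1, \boldsymbol y \boldsymbol z),
\end{equation*}
where $\bar u \in \fpart_1$ denotes the unique element containing $u$, and $\boldsymbol y$ (resp.\ $\boldsymbol z$) ranges over $E^{\fpart \cap \fpart_1}_\circ$ (resp.\ $E^{\fpart_1 \setminus \fpart}_\circ$). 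This is a disguised Chapman--Kolmogorov identity, and powers both directions.

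\textbf{Existence.} Given $\boldsymbol\nu$ with $N^{\boldsymbol\nu}$ continuous, define $P^{\boldsymbol x}$ by \eqref{eq:BSC} and $\P^{\boldsymbol x} \coloneqq P^{\boldsymbol x} \otimes K_{\boldsymbol x}$; that $K_{\boldsymbol x}$ is then a regular conditional probability given $\dc$ is built in by construction. The recursion together with Fubini shows that, under $\P^{\boldsymbol x}$, conditional on the first merge time $T$ and on $(\fpart_T, \boldsymbol X_T)$, the law of $(\boldsymbol X_{T+\cdot})$ matches $\P^{(\fpart_T, \boldsymbol X_T)}$; iterating gives the full Markov property and the semigroup identity $\semi_s \semi_t = \semi_{s+t}$. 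Continuity of $\semi_t f$ for $f \in C_b(\mathcal{X})$ follows from joint continuity of $N^{\boldsymbol\nu}$, of the Brownian transition density, and of $K_{\boldsymbol x}$ (\cref{lem:kbr}) via dominated convergence, yielding the Feller property.

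\textbf{Uniqueness.} Let $(\P^{\boldsymbol x})$ be a BSC and induct on $n = |\fpart|$. The case $n = 1$ is trivial. For $n \ge 2$, let $T$ again be the first merge time. Conditioning on $\dc$, pre-$T$ paths are independent Brownian motions by \cref{def:BSC} (no coalescence event has yet occurred in which they are involved), so disintegrating over the post-$T$ evolution---which, by the strong Markov property and the inductive hypothesis, is a BSC with transition measures already identified at partitions of size $<n$---forces the joint law of $(T, \fpart_T, \boldsymbol X_T)$ to take the form
\[
    \P^{\boldsymbol x}\bigl(T \in \diff t,\ \fpart_T = \fpart_1,\ \boldsymbol X_T \in \diff(\boldsymbol y, \boldsymbol z)\bigr) \,\propto\, \e^{-\alpha t} \prod_{u \in \fpart} p_t\bigl(\boldsymbol x(u) - (\boldsymbol{y}\boldsymbol{z})(\bar u)\bigr)\, \diff \boldsymbol y\, \mu_{\fpart, \fpart_1}(\diff \boldsymbol z)
\]
for some constant $\alpha \ge 0$ and finite measures $\mu_{\fpart, \fpart_1}$ on $E^{\fpart_1 \setminus \fpart}_\circ$: the exponential time factor is forced by time-homogeneity and memorylessness (the law from $\boldsymbol x$ at any $s < T$ given no merge yet is again Markov), the Brownian factors by the Brownian bridge conditional structure, and summing over $\fpart_1$ identifies $\alpha = \sum_{\fpart_1 < \fpart} |\mu_{\fpart, \fpart_1}|$. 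Setting $\nu_{\fpart, \fpart_1} \coloneqq \mu_{\fpart, \fpart_1}$ and matching with \eqref{eq:BSC} completes the induction, and continuity of $\boldsymbol x \mapsto \P^{\boldsymbol x}$ is then just the Feller property.

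\textbf{Main obstacle.} The delicate step is the extraction in the uniqueness direction: dividing the joint first-merge density by the Brownian factors and by the post-merge normalisation $N^{\boldsymbol\nu}(\fpart_1, \boldsymbol y \boldsymbol z)$ to produce a measure in $\boldsymbol z$ alone that is finite and $\boldsymbol x$-independent. This cancellation is legitimate on the open dense subset of well-separated initial data, where the Brownian densities are strictly positive and continuous and (by the inductive hypothesis) $N^{\boldsymbol\nu}$ likewise; independence of $\boldsymbol x$ then follows from varying $\boldsymbol x$ and comparing. The hypothesis that $N^{\boldsymbol\nu}$ be \emph{continuous}, not merely finite, is essential both for this extraction and for the Feller property in existence, and also clarifies why the boundary points of $\mathcal{X}$ had to be excluded from the state space (see \cref{rem:Ndivergence}).
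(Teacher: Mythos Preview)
The uniqueness direction contains a genuine error. The claim that ``conditioning on $\dc$, pre-$T$ paths are independent Brownian motions'' is false: by \cref{def:BSC}, conditional on $\dc=(F,\tau,\xi)$ a leaf $u$ follows a Brownian \emph{bridge} from $\boldsymbol x_u$ to $\xi_{\pr_F(u)}$ on $[0,\tau_{\pr_F(u)}]$; Brownian motions appear only \emph{after} the last merge a lineage participates in, not before the first. The unconditional pre-$T$ law is not Brownian either---it carries the drift $\nabla\log N^{\boldsymbol\nu}$ of \cref{thm:driftintro}. So the factor $\prod_u p_t(\boldsymbol x(u)-\cdot)$ in your displayed law of $(T,\fpart_T,\boldsymbol X_T)$ cannot be read off from the definition; producing it, together with the exponential in $t$ and the $\boldsymbol x$-independence of $\mu_{\fpart,\fpart_1}$, \emph{is} the content of the theorem.

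The step you compress into ``forced by time-homogeneity and memorylessness'' is where the proof lives. The paper does not apply the strong Markov property at $T$; it applies the ordinary Markov property at a \emph{deterministic} $s$ on $\{\fpart_s=\fpart\}$ and equates two expressions for $\P^{\boldsymbol x}(\fr=F,\tm\in\diff(\tau+s),\sp\in\diff\xi,\boldsymbol X_s\in\diff\boldsymbol y)$: one via Markov at $s$, one via the explicit bridge marginal coming from $K_{\boldsymbol x}$. Dividing through and varying $s$ shows that an auxiliary measure $H_F$ on $\dcb(F)$ is translation-invariant in the first-merge-time coordinate (\cref{lem:HF}), which simultaneously manufactures the exponential factor and the finite measure $\nu_{\fpart,\fpart_1}$; establishing the $\boldsymbol x$-independence requires the functions $\nn_\fpart$ of \cref{lem:U1} and the disintegration \cref{lem:prfunique:1}. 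Your existence sketch is closer to correct---the first-merge recursion for $N^{\boldsymbol\nu}$ is valid and does yield a strong Markov property at $T$---but ``iterating'' this does not by itself give the Markov property at deterministic $t<T$, which still needs a Bayes computation; the paper instead computes $\semi_t$ explicitly (\cref{lem:semigroup}) and verifies $\semi_t\semi_s=\semi_{t+s}$ directly (\cref{lem:markov}).
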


Equation~\eqref{eq:BSC} formalises the examples~\cref{eq:Pxtwoparticles,eq:Pxthreeintro} from the introduction (recall also \cref{fig:introtreeexample} for illustration).

If a Brownian spatial coalescent is label invariant, then the family of transition measures reduces to a family $(\nunk \colon (n,\vec{k}) \in \mergers)$ as we wrote in the introduction, see \cref{lem:labelinv}.

\section{Proofs}\label{sec:proofs}

\subsection{Characterisation of Brownian Spatial Coalescents}
In this section we prove \cref{thm:BSC}.

\begin{figure}
    \centering
    \begin{tikzpicture}[scale=1.0]
    \coordinate (zero) at (-.5,-.5);
    \coordinate (t-l)  at (-.5,4.95); % 3.3*1.5
    \coordinate (b-r)  at (3.9,-.5);

    \coordinate (Aa) at (0.0,0);
    \coordinate (Ab) at (1.25,0);
    \coordinate (B)  at (2.5,0);
    \coordinate (A)  at (0.75,2.625); % 1.75*1.5
    \coordinate (AB) at (1.75,4.5);   % 3.0*1.5
    \coordinate (AA) at (1.85,4.5);   % 3.0*1.5
    \coordinate (BB) at (3.0,4.5);   % 3.0*1.5

    % axes
    \draw[->] (zero) -- (t-l);
    \draw[->] (zero) -- (b-r);
    \node[above] at (t-l) {\footnotesize time};
    \node[below] at (b-r) {\footnotesize space};

    \def\tick{.1}
    \draw (-.5+\tick,0) -- (-.5-\tick,0) node[left] {\footnotesize$0$};
    \draw[dotted,gray] (-.5,0) -- (B);

    \draw[dotted] (Aa) -- (Aa|-0,-.5);
    \draw (Aa|-0,-.5+\tick) -- (Aa|-0,-.5-\tick) node[below] {$x_1$};

    \draw[dotted] (Ab) -- (Ab|-0,-.5);
    \draw (Ab|-0,-.5+\tick) -- (Ab|-0,-.5-\tick) node[below] {$x_2$};

    \draw[dotted] (B) -- (B|-0,-.5);
    \draw (B|-0,-.5+\tick) -- (B|-0,-.5-\tick) node[below] {$x_3$};

    % background (forest)
    \BBY[gray,opacity=.4]{Aa}{A}[731475]
    \BBY[gray,opacity=.4]{Ab}{A}[801947]
    \BBY[orange,opacity=.8]{A}{AA}[280594]
    \BBY[orange,opacity=.8]{B}{BB}[172834]

    \draw[fill,gray] (Aa) circle[radius=2pt];
    \draw[fill,gray] (Ab) circle[radius=2pt];
    \draw[fill,gray] (B)  circle[radius=2pt];

    % time/space stamps (static version of the final overlay state)
    \draw[dotted, gray] (A)  -- (-.5,0|-A);
    \draw (-.5+\tick,0|-A) -- (-.5-\tick,0|-A) node[left,gray] {$\tau$};

    \draw[densely dotted,thick] (BB) -- (-.5,0|-BB);
    \draw (-.5+\tick,0|-BB) -- (-.5-\tick,0|-BB) node[left,orange] {$t$};

    \draw[dotted, gray] (A)  -- (A|-0,-.5);
    \draw (A|-0,-.5+\tick) -- (A|-0,-.5-\tick) node[below,gray] {\small $\xi$};

    \draw[densely dotted,thick] (AA) -- (AA|-0,-.5);
    \draw (AA|-0,-.5+\tick) -- (AA|-0,-.5-\tick) node[below,orange] {$y_1$};

    \draw[densely dotted,thick] (BB) -- (BB|-0,-.5);
    \draw (BB|-0,-.5+\tick) -- (BB|-0,-.5-\tick) node[below,orange] {$y_2$};

    \draw[fill,gray] (A)  circle[radius=2pt];
    \draw[fill] (AA) circle[radius=2pt];
    \draw[fill] (BB) circle[radius=2pt];

    \node[anchor=west,align=left] at (4.0, 3.0) {\small $f_{\boldsymbol \nu}(\tau,\xi,t,\boldsymbol y \,\vert\, \boldsymbol x) = \overbrace{\color{gray}p_{\tau}(\xi - x_1)p_{\tau}(\xi - x_2) \e^{-\nut{3} \tau }}^{=f_{\boldsymbol \nu}(\tau,\xi \,\vert\,\boldsymbol x)}$\\[5pt]
    \small $\hspace*{2.5cm}\times{\color{orange}\,\, \e^{-\nut{2}(t-\tau)} p_{t-\tau}(y_1-\xi) p_{t}(y_2-x_3) }$};
\end{tikzpicture}
    \caption{This figure illustrates the notation for $f_{\boldsymbol \nu}(\tau,\xi,t,\boldsymbol y \,\vert\,\boldsymbol x)$ introduced in~\eqref{eq:fnu_terminal_condition}.}
    \label{fig:fnu_terminal_condition}
\end{figure}

\subsubsection{``If'' Direction of \cref{thm:BSC}}
%The aim of this section is to prove the ``if'' direction of \cref{thm:BSC}. For that purpose,
Fix $\boldsymbol \nu \in \nurates$ throughout, denote by $(P^{\boldsymbol x})_{\boldsymbol x\in \mathcal{X}}$ the laws defined in \cref{eq:BSC}, and $\P^{\boldsymbol x} = P^{\boldsymbol x} \otimes K_{\boldsymbol x}$ for $\boldsymbol x\in \mathcal{X}$. We need to show that the coalescent process defined by $(\P^{\boldsymbol x})_{\boldsymbol x\in \mathcal{X}}$ is Markov. Recall that the associated semigroup is denoted $(\semi_t)_{t\ge 0}$, see \cref{eq:semi}. For fixed $\fpart'\le \fpart$ and $A \subset E^{\fpart'}_\circ $ we write \[
    \semi_t((\fpart,\boldsymbol x),(\fpart',A)) \coloneqq \semi_t((\fpart,\boldsymbol x), \left\{ (\fpart',\boldsymbol y)\colon \boldsymbol y\in A \right\} ),
\] which defines a sub-probability measure on $E^{\fpart'}_\circ $ with total mass $\P^{(\fpart,\boldsymbol x)}(\fpart_t = \fpart')$. We introduce some additional notation. Let $F\in \mathbb{F}$ be a (possibly trivial) forest, $\boldsymbol x\in E^{\lf(F)}_\circ $, $\tau\in \dct(F)$, $\xi\in \dcs(F)$, and $t > \tau_{\rt(F)}$, $\boldsymbol y \in E^{\rt(F)}_\circ $. Then define
\begin{align}\label{eq:fnu_terminal_condition}
    %\fsp(\xi,t,\boldsymbol y \,\vert\,\tau,\boldsymbol x) &\coloneqq \fsp(\xi \,\vert\,\tau,\boldsymbol x) \prod_{u\in \rt(F)} p_{t-\tau_u}(\boldsymbol y_u - (\boldsymbol x\xi)_u),\\
    %\fnu(\tau,t,\xi,\boldsymbol y \,\vert\,\boldsymbol x) & \coloneqq \fsp(\xi,t,\boldsymbol y \,\vert\,\tau,\boldsymbol x) \e^{-\nut{\rt(F)} (t-\tau_{\rt(F)})} \prod_{(\fpart,\fpart') \in F} \e^{-\nut{\fpart} (\tau_{\fpart'}-\tau_{\fpart})}
    \fnu(\tau,\xi,t,\boldsymbol y \,\vert\,\boldsymbol x) & \coloneqq \fnu(\tau,\xi \,\vert\,\boldsymbol x)\e^{-\nut{\rt(F)} (t-\tau_{\rt(F)})} \smashop{\prod_{u\in \rt(F)}} p_{t-\tau_u}(\boldsymbol y_u - (\boldsymbol x\xi)_u).
\end{align}
If $t \le \tau_{\rt(F)}$ we put $\fnu(\tau,\xi,t,\boldsymbol y \,\vert\,\boldsymbol x) \coloneqq 0$. Think of this as a version of $\fnu(\tau,\xi \,\vert\,\boldsymbol x)$ with ``terminal condition $\boldsymbol y$ at time $t$'', see \cref{fig:fnu_terminal_condition} for an illustration.
% Note that $(\ftm(\tau,t))_{\tau\in \dct(F)}$ for $F\in \mathbb{F}$ is the probability (density) that a non-spatial coalescent process with rates $\boldsymbol \lambda$, started from $\lf(F)$, transitions at times given by $\tau$ along $F$, and then remains in the state $\rt(F)$ until time $t$. % In particular, $\lim_{t \to \infty} \ftm(\tau,t) = \ftm(\tau)$.

\begin{lemma}\label{lem:semigroup}
    If $(\fpart,\boldsymbol x) \in \mathcal{X}$ and $\fpart' \le \fpart$, then
    \begin{align*}
        \semi_t((\fpart,\boldsymbol x),(\fpart',\diff \boldsymbol y)) = \frac{N^{\boldsymbol \nu}(\boldsymbol y)}{N^{\boldsymbol \nu}(\boldsymbol x)} \sum_{F\in \mathbb{F}(\fpart,\fpart')}\,\, \Bigg(\int_{\dcb(F)} \fnu(\tau,\xi,t,\boldsymbol y \,\vert\,\boldsymbol x) \boldsymbol \nu_F(\diff \xi) \diff \tau\Bigg)\diff \boldsymbol y.
    \end{align*}
\end{lemma}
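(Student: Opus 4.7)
The plan is to directly compute $\semi_t((\fpart,\boldsymbol x),(\fpart',\diff \boldsymbol y)) = \P^{(\fpart,\boldsymbol x)}(\fpart_t = \fpart', \boldsymbol X_t \in \diff\boldsymbol y)$ by unravelling the decomposition $\P^{\boldsymbol x} = P^{\boldsymbol x} \otimes K_{\boldsymbol x}$. Under $K_{\boldsymbol x}(F^\star,\cdot)$ the partition trajectory is deterministic in $(F,\tau)$, so the event $\{\fpart_t = \fpart'\}$ restricts the integration to decorated forests $(F,\tau,\xi)$ with $\fpart' \in F$ and $\tau_{\fpart'} < t < \tau_{\fpart'}^+$. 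Every such $(F,\tau,\xi)$ admits a unique decomposition as a ``hat'' triple $(\hat F,\hat\tau,\hat\xi) \in \dcb(\mathbb{F}(\fpart,\fpart'))$, describing the history up to $\fpart'$, and a ``check'' triple $(\check F,\check\tau,\check\xi) \in \dcb(\mathbb{F}(\fpart'))$ describing the continuation after time $t$, with $\check\tau$ shifted by $-t$ so that $\lf(\check F) = \fpart'$ sits at time zero.

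The core calculation is to show that under this decomposition the joint integrand factorises multiplicatively into a hat piece equal to $\fnu^{\hat F}(\hat\tau,\hat\xi,t,\boldsymbol y \,\vert\,\boldsymbol x)\,\boldsymbol\nu_{\hat F}(\diff\hat\xi)\,\diff\hat\tau$ and a check piece equal to $\fnu^{\check F}(\check\tau,\check\xi\,\vert\,\boldsymbol y)\,\boldsymbol\nu_{\check F}(\diff\check\xi)\,\diff\check\tau$, up to the overall factor $N^{\boldsymbol\nu}(\boldsymbol x)^{-1}\,\diff\boldsymbol y$. Two splittings are needed. First, when $\check F$ is non-trivial, the straddling exponential $\e^{-\nut{\fpart'}(\tau_{\fpart''}-\tau_{\fpart'})}$, where $\fpart''$ is the first partition of $\check F$ above $\fpart'$, splits as $\e^{-\nut{\fpart'}(t-\tau_{\fpart'})} \cdot \e^{-\nut{\fpart'}(\tau_{\fpart''}-t)}$, with the first factor joining hat and the second joining check. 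Second, for each $u \in \fpart'$ with parent $v$ in $\check F$ the Brownian bridge density at $(t,\boldsymbol y_u)$ provided by $K_{\boldsymbol x}$ equals $p_{t-\tau_u}(\boldsymbol y_u - (\boldsymbol x\xi)_u)\,p_{\tau_v-t}(\xi_v - \boldsymbol y_u)\,/\,p_{\tau_v-\tau_u}(\xi_v - (\boldsymbol x\xi)_u)$; its denominator exactly cancels the corresponding branch factor in $\fsp$, and the surviving numerator splits cleanly between hat (via $p_{t-\tau_u}$) and check (via $p_{\tau_v-t}$).

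Once the factorisation is in place, summing the check piece over $(\check F,\check\tau,\check\xi) \in \dcb(\mathbb{F}(\fpart'))$ recovers $N^{\boldsymbol\nu}(\boldsymbol y)$ by the definition of the normalisation. Integrating the hat piece against $\boldsymbol\nu_{\hat F}(\diff\hat\xi)\,\diff\hat\tau$ and summing over $\hat F \in \mathbb{F}(\fpart,\fpart')$ reproduces the sum in the lemma, and combining with the $N^{\boldsymbol\nu}(\boldsymbol x)^{-1}$ prefactor from $P^{(\fpart,\boldsymbol x)}$ yields the claimed formula.

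The main obstacle is the bookkeeping for the boundary case of trivial $\check F = (\fpart')$, i.e.\ $\rt(F) = \fpart'$: there is no straddling exponential to split and no bridge for $u \in \fpart'$, so the factors $\e^{-\nut{\fpart'}(t-\tau_{\fpart'})}$ and $\prod_{u\in\fpart'} p_{t-\tau_u}(\boldsymbol y_u - (\boldsymbol x\hat\xi)_u)$ needed to form $\fnu^{\hat F}(\hat\tau,\hat\xi,t,\boldsymbol y\,\vert\,\boldsymbol x)$ must come from elsewhere. The convention $\fnu \equiv 0$ on forests with $\nut{\rt(F)} > 0$ resolves this: nonzero contribution in the boundary case forces $\nut{\fpart'} = 0$, so the missing exponential automatically equals one, while under $K_{\boldsymbol x}$ the parentless lineages $u \in \fpart'$ follow Brownian motions and supply the required spatial densities directly. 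In the same regime $\nut{\fpart'} = 0$ the trivial continuation contributes $1$ to $N^{\boldsymbol\nu}(\boldsymbol y)$ and, by the same convention, all non-trivial $\check F$ contribute zero, so $N^{\boldsymbol\nu}(\boldsymbol y) = 1$ and the boundary case is consistently accounted for.
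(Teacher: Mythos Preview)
Your proposal is correct and follows essentially the same approach as the paper: decompose the full decorated forest into the portion up to $\fpart'$ (your ``hat'', the paper's $F$) and the portion after time $t$ (your ``check'', the paper's $F'$), use the Brownian bridge density at time $t$ to effect the cancellation that splits $\fsp$ across the cut, and then sum the check piece to recover $N^{\boldsymbol\nu}(\boldsymbol y)$. Your handling of the boundary case $\rt(F)=\fpart'$ via the convention $\fnu\equiv 0$ when $\nut{\rt(F)}>0$ also matches the paper's one-line treatment of trivial $F'$.
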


We introduce notation for the proof. If $\fpart_1 \ge \fpart_2 \ge \fpart_3$ and $F \in \mathbb{F}(\fpart_1,\fpart_2), F' \in \mathbb{F}(\fpart_2,\fpart_3)$, then we write $F F' \coloneqq  F \cup F' \in \mathbb{F}(\fpart_1,\fpart_3)$ for the concatenation of $F$ and $F'$.
If further $\tau \in \dct(F)$ and $\tau' \in \dct(F')$, and $t > \tau_{\rt(F)}$, write
\begin{IEEEeqnarray}{rcCrCl}\label{eq:tauconcat}
    (\tau / t / \tau') &\colon& FF'\to (0,\infty);& \quad \fpart &\mapsto &
    \begin{cases}
        \tau(\fpart), & \fpart \in F,\\
        t + \tau'(\fpart), & \fpart \in F' \setminus \{\lf(F')\},
    \end{cases}%\\[5pt]
    %    \xi \xi' &\colon&\, \nd(FF')^\circ  \to E;& u &\mapsto&
    %\begin{cases}
    %    \xi_u, & u\in \nd(F)^\circ ,\\
    %    \xi'_u, & u \in \nd(F')^\circ ,
    %\end{cases}\nonumber
\end{IEEEeqnarray}
so that $(\tau/t / \tau') \in \dct(FF')$ and $\xi\xi' \in \dcs(FF')$ (the latter is defined in the sense of \cref{eq:joinmaps}). If $A$ is a statement, and $x$ is an expression that evaluates to a real number if $A$ is true (and may otherwise be ill-defined), then we use $[x]_A$ as short-hand for $x$ if $A$ is true, and $1$ otherwise.

\begin{proof}[Proof of \cref{lem:semigroup}]
    The possible values of $\fr$ of a path in $\mathcal{P}$ that starts at $\fpart$ and passes through $\fpart'$ are exactly $F F'$ for $F \in \mathbb{F}(\fpart,\fpart')$ and $F'\in \mathbb{F}(\fpart')$. If $\fr = FF'$, then $\tm$ has to be of the form $(\tau / t / \tau')$ for $\tau\in \dct(F)$ with $\tau_{\fpart'} < t$ and $\tau'\in \dct(F')$, and $\sp$ has to be of the form $\xi\xi'$ for $\xi\in \dcs(F)$ and $\xi'\in \dcs(F')$.
    %For any such choice, the probability density
    %\begin{align*}
    %    \P^{\boldsymbol x}(\fr = FF', \tm = (\tau / t / \tau'), & \sp = \xi\xi')\\
    %    &= \frac{N_{F F'}(\boldsymbol x)}{N_\fpart(\boldsymbol x)} \ftm^{FF'}(\tau / t / \tau') \fsp^{FF'}(\xi\xi' \,\vert\, (\tau / t / \tau'),\boldsymbol x).
    %\end{align*}
    Conditional on $\fr = FF'$, $\tm = (\tau / t / \tau')$, and $\sp = \xi\xi'$, which happens with probability density
    \begin{align}\label{eqprf:semi:1}
        \frac{1}{N^{\boldsymbol \nu}(\boldsymbol x)} \fnu((\tau / t / \tau'), \xi\xi' \,\vert\,\boldsymbol x)\boldsymbol \nu_F(\diff \xi) \boldsymbol \nu_{F'}(\diff \xi') \diff \tau\diff \tau'.
    \end{align}
    The probability density of $\boldsymbol X_t \in (\fpart', \diff \boldsymbol y)$ is, by \cref{eq:BSC} and definition of $\kbr_{\boldsymbol x}$ (see \cref{lem:kbr}),
    \begin{multline}\label{eqprf:semi:2}
        %\P^{\boldsymbol x}\big(\boldsymbol X_t \in (\fpart',\diff \boldsymbol y)&\,\big\vert\,\fr = FF', \tm = (\tau / t / \tau'), \sp = \xi\xi'\big)\\
        \kbr_{\boldsymbol x}\Big((FF',(\tau / t / \tau'),\xi\xi'), \{\boldsymbol X_t \in (\fpart',\diff \boldsymbol y)\}\Big) \\
        = \prod_{u \in \fpart'} p_{t-\tau_u}(y_u - (\boldsymbol x\xi)_u) \left[\frac{p_{\tau'_{\pr'(u)}}(y_u - \xi'_{\pr'(u)})}{p_{t + \tau'_{\pr'(u)}- \tau_u }((\boldsymbol x \xi)_u - \xi'_{\pr'(u)})}\right]_{\pr'(u) \neq \emptyset } \diff \boldsymbol y,
    \end{multline}
    where $\pr \coloneqq \pr_F$ and $\pr' \coloneqq \pr_{F'}$. Recall also that $\tau_u$ refers to the time of ``birth'' of a node $u$, see~\eqref{eq:tau_u} and the surrounding discussion. Multiplying \cref{eqprf:semi:1,eqprf:semi:2} gives, after some careful cancellations,
    \begin{multline*}
        \P^{\boldsymbol x}(\fr = FF', \tm \in \diff (\tau / t / \tau'),  \sp \in \diff (\xi\xi'), \boldsymbol X_t \in (\fpart',\diff \boldsymbol y)) \\
                                                                        = \frac{1}{N^{\boldsymbol \nu}(\boldsymbol x)} \fnu(\tau,\xi,t,\boldsymbol y \,\vert\,\boldsymbol x) \fnu(\tau',\xi' \,\vert\,\boldsymbol y) \boldsymbol \nu_F(\diff \xi) \boldsymbol \nu_{F'}(\diff \xi')\diff \tau\diff \tau'\diff \boldsymbol y.
    \end{multline*}
    If $F'$ is trivial and $\lambda_{\fpart'} > 0$, then the equality holds because both $\P^{\boldsymbol x}(\fr = FF')$ and thus the left-hand side (LHS), and $\fnu\vert_{\dcb(F')}$ and thus the right-hand side (RHS) are zero. Integrating over $\tau,\xi,\tau',\xi'$ gives
    \begin{align*}
        \P^{\boldsymbol x}(\fr = FF', \boldsymbol X_t \in (\fpart',\diff \boldsymbol y)) = \frac{N^{F'}_{\boldsymbol \nu}(\boldsymbol y)}{N^{\boldsymbol \nu}(\boldsymbol x)} \left( \int_{\dcb(F)} \fnu(\tau,\xi,t,\boldsymbol y \,\vert\,\boldsymbol x) \boldsymbol \nu_F(\diff \xi) \diff \tau\right) \diff \boldsymbol y,
    \end{align*}
    and then summing over $F$ and $F'$ gives the claim.
\end{proof}

\begin{lemma}\label{lem:markov}
    If $t, s > 0$, then $\semi_t \semi_s = \semi_{t+s}$.
\end{lemma}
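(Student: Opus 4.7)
The plan is to prove $\semi_t\semi_s = \semi_{t+s}$ by substituting the explicit formula from \cref{lem:semigroup} on both sides and then matching them through a combinatorial--analytic bijection. Writing
\[
    (\semi_t\semi_s)((\fpart,\boldsymbol x),(\fpart',A)) = \smashop{\sum_{\fpart'\le \fpart''\le \fpart}} \int_{E^{\fpart''}_\circ} \semi_t((\fpart,\boldsymbol x),(\fpart'',\diff \boldsymbol z))\,\semi_s((\fpart'',\boldsymbol z),(\fpart',A)),
\]
the normalisations $N^{\boldsymbol \nu}(\boldsymbol z)$ in numerator and denominator cancel, and by Fubini (all integrands are non-negative) the claim reduces to the integral identity
\begin{multline*}
    \smashop{\sum_{\fpart''}}\smashop{\sum_{F_1\in \mathbb{F}(\fpart,\fpart'')}}\smashop{\sum_{F_2\in \mathbb{F}(\fpart'',\fpart')}} \int\!\!\int\left(\int_{E^{\fpart''}_\circ}\! \fnu(\tau_1,\xi_1,t,\boldsymbol z\,\vert\,\boldsymbol x)\fnu(\tau_2,\xi_2,s,\boldsymbol y\,\vert\,\boldsymbol z)\diff \boldsymbol z\right)\boldsymbol \nu_{F_1}(\diff \xi_1)\boldsymbol \nu_{F_2}(\diff \xi_2)\diff \tau_1\diff \tau_2 \\ = \smashop{\sum_{F\in \mathbb{F}(\fpart,\fpart')}}\int_{\dcb(F)}\fnu(\tau,\xi,t+s,\boldsymbol y\,\vert\,\boldsymbol x)\,\boldsymbol \nu_F(\diff \xi)\diff \tau.
\end{multline*}

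Next, I will set up an almost-everywhere defined bijection between the two parameter spaces. Given $F\in \mathbb{F}(\fpart,\fpart')$ with time decoration $\tau\in \dct(F)$, for Lebesgue-a.e.\ $\tau$ there is a unique $\fpart''\in F$ with $\tau(\fpart'')<t<\tau(\fpart'')^+$; splitting $F$ at $\fpart''$ as a concatenation $F=F_1F_2$ with $F_1\in \mathbb{F}(\fpart,\fpart'')$ and $F_2\in \mathbb{F}(\fpart'',\fpart')$, and setting $\tau_1\coloneqq \tau\vert_{F_1}$ and $\tau_2\coloneqq (\tau\vert_{F_2})-t$, and splitting $\xi$ along $\nd^\circ(F_1)$ and $\nd^\circ(F_2)$, gives the inverse of the obvious concatenation map. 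Under this bijection $\boldsymbol \nu_F = \boldsymbol \nu_{F_1}\otimes \boldsymbol \nu_{F_2}$ (edges of $F$ are the disjoint union of edges of $F_1$ and $F_2$) and Lebesgue measure on $\dct(F)$ matches $\diff \tau_1\diff \tau_2$ with unit Jacobian.

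It then remains to verify the pointwise identity
\[
    \int_{E^{\fpart''}_\circ}\fnu(\tau_1,\xi_1,t,\boldsymbol z\,\vert\,\boldsymbol x)\,\fnu(\tau_2,\xi_2,s,\boldsymbol y\,\vert\,\boldsymbol z)\diff \boldsymbol z = \fnu(\tau,\xi,t+s,\boldsymbol y\,\vert\,\boldsymbol x).
\]
Each coordinate $\boldsymbol z_u$, $u\in \fpart''$, appears in exactly two heat-kernel factors: the terminal factor $p_{t-\tau_{1,u}}(\boldsymbol z_u - (\boldsymbol x\xi_1)_u)$ of the first $\fnu$, together with either the spatial factor $p_{\tau_{2,\pr_{F_2}(u)}}(\boldsymbol z_u - \xi_{2,\pr_{F_2}(u)})$ of $F_2$ if $u$ coalesces in $F_2$, or the terminal factor $p_{s}(\boldsymbol y_u - \boldsymbol z_u)$ of the second $\fnu$ if $u\in \fpart'\cap \fpart''$. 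In both cases, Chapman--Kolmogorov on the torus, $\int_E p_a(z-c_1)p_b(c_2-z)\diff z = p_{a+b}(c_2-c_1)$, produces exactly the heat kernel attached to the corresponding edge of the concatenated $F$ at the prescribed time difference. The exponential factor $\e^{-\nut{\fpart''}(t-\tau_{1,\fpart''})}$ from the first $\fnu$ combines with $\e^{-\nut{\fpart''}\tau_{2,\fpart''_+}}$ from the first segment of $F_2$ (where $\fpart''_+$ is the partition immediately after $\fpart''$ in $F_2$) into the single edge factor $\e^{-\nut{\fpart''}(\tau(\fpart''_+)-\tau(\fpart''))}$ of $F$; all other factors carry over verbatim. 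Extension from rectangles $(\fpart',A)$ with $A$ measurable to all of $\mathcal{X}$ is standard, and the case $\nut{\fpart''}=0$ with $F_2$ trivial is handled identically with the convention that empty products equal one.

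The main obstacle is the last-step bookkeeping: making sure the node-by-node accounting of heat kernels covers every case (non-coalescing lineages carried from $\fpart''$ into $\fpart'$, lineages that coalesce in $F_2$, trivial $F_1$ with $\fpart=\fpart''$, trivial $F_2$ with $\fpart''=\fpart'$) without double counting, and that the two exponential waiting-time factors at $\fpart''$ fuse cleanly across the splitting time $t$. Once this is in place, the semigroup identity is immediate.
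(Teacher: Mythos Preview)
Your proposal is correct and follows essentially the same route as the paper: substitute the explicit density from \cref{lem:semigroup}, cancel $N^{\boldsymbol\nu}(\boldsymbol z)$, use the concatenation bijection $(F_1,F_2,\tau_1,\tau_2,\xi_1,\xi_2)\leftrightarrow(F,\tau,\xi)$ together with the partition of $\dct(F)$ by the indicators $\{\tau_{\fpart''}<t<\tau_{\fpart''}^+\}$, and verify the pointwise convolution identity. The only difference is packaging: the paper isolates your Chapman--Kolmogorov step as the standalone identity \cref{eq:fnu_conv} and then applies the bijection, whereas you spell out the node-by-node heat-kernel and exponential bookkeeping explicitly; both amount to the same computation.
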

A crucial ingredient in the proof is the following identity, which is an immediate consequence of the definitions. If $F, F'\in \mathbb{F}$ with $\rt(F) = \lf(F')$, $(\tau,\xi) \in \dcb(F)$, $(\tau',\xi') \in \dcb(F')$, $t,t'> 0$ with $\tau_{\rt(F)} < t$, and $\boldsymbol y' \in E^{\lf(F')}_\circ $, then
\begin{equation}\label{eq:fnu_conv}
    \int_{E^{\rt(F)}_\circ } \fnu(\tau,\xi,t,\boldsymbol y \,\vert\,\boldsymbol x) \fnu(\tau',\xi',t',\boldsymbol y' \,\vert\,\boldsymbol y) \diff \boldsymbol y = \fnu((\tau / t / \tau'),\xi\xi', t+t',\boldsymbol y' \,\vert\,\boldsymbol x).
\end{equation}

\begin{proof}[Proof of \cref{lem:markov}]
    Fix $(\fpart_1,\boldsymbol x)$, $t, s > 0$, and $\fpart_2\le \fpart_1$. Then by \cref{lem:semigroup},
    \begin{align}
        (\semi_t \semi_s)&((\fpart_1,\boldsymbol x),(\fpart_2,\diff \boldsymbol y)) / \diff \boldsymbol y \nonumber\\
        &= \sum_{\fpart_2\le \fpart'\le \fpart_1} \int_{\boldsymbol z \in E^{\fpart'}_\circ } \semi_s((\fpart',\boldsymbol z),(\fpart_2,\diff \boldsymbol y)) \semi_t((\fpart_1,\boldsymbol x),(\fpart',\diff \boldsymbol z)) / \diff \boldsymbol y\nonumber\\
        &= \sum_{\fpart_2\le \fpart'\le \fpart_1} \int \diff \boldsymbol z \left( \frac{N^{\boldsymbol\nu}(\boldsymbol y)}{N^{\boldsymbol\nu}(\boldsymbol z)} \sum_{F_2\in \mathbb{F}(\fpart',\fpart_2)} \int_{\dcb(F_2)} \fnu(\tau_2,\xi_2,s,\boldsymbol y \,\vert\,\boldsymbol z)\boldsymbol \nu_{F_2}(\diff \xi_2) \diff \tau_2 \right. \nonumber\\
        &\hspace{3cm} \times \left. \frac{N^{\boldsymbol\nu}(\boldsymbol z)}{N^{\boldsymbol\nu}(\boldsymbol x)} \sum_{F_1 \in \mathbb{F}(\fpart_1,\fpart')} \int_{\dcb(F_1)} \fnu(\tau_1,\xi_1,t,\boldsymbol z \,\vert\,\boldsymbol x)\boldsymbol \nu_{F_1}(\diff \xi_1) \diff \tau_1 \right)\nonumber \\
        &\stackrel{(\ref{eq:fnu_conv})}{=} \frac{N^{\boldsymbol\nu}(\boldsymbol y)}{N^{\boldsymbol\nu}(\boldsymbol x)} \sum_{\substack{\fpart_2\le \fpart'\le \fpart_1 \\ F_1 \in \mathbb{F}(\fpart_1,\fpart') \\F_2 \in \mathbb{F}(\fpart',\fpart_2)}}
        \int\limits_{\dcb(F_1)} \smashoperator[r]{\int\limits_{\dcb(F_2)}}\fnu((\tau_1 / t / \tau_2),\xi_1 \xi_2,t+s,\boldsymbol y \,\vert\,\boldsymbol x) \boldsymbol \nu_{F_1}(\diff \xi_1) \diff \tau_1\label{eqprf:markov:1}\\[-25pt]
        &\hspace*{10cm} \boldsymbol \nu_{F_2}(\diff \xi_2) \diff \tau_2.\nonumber
    \end{align}
    \vspace{.5cm}

    If $\fpart_2 \le \fpart'\le \fpart_1$ and $F_1\in \mathbb{F}(\fpart_1,\fpart')$, $F_2\in \mathbb{F}(\fpart',\fpart_2)$, then $\xi \in \dcs(F_1F_2)$ is always of the form $\xi = \xi_1\xi_2$ for $\xi_{1,2}\in \dcs(F_{1,2})$, and $\tau \in \dct(F_1F_2)$ is of the form $\tau = (\tau_1 / t / \tau_2)$ for $\tau_{1,2}\in \dct(F_{1,2})$ if and only if $\tau_{\fpart'} < t < \tau_{\fpart'}^+$. Thus the final integral on the RHS of \cref{eqprf:markov:1} is equal to
    \begin{equation*}
        \int_{\dcb(F_1F_2)} \ind_{\left\{ \tau_{\fpart'} < t < \tau_{\fpart'}^+ \right\} } \fnu(\tau,\xi,t+s,\boldsymbol y \,\vert\,\boldsymbol x)  \diff \tau \boldsymbol \nu_F(\diff \xi),
    \end{equation*}
    so
    \begin{align*}
        (\semi_t\semi_s) &((\fpart_1,\boldsymbol x),(\fpart_2,\diff \boldsymbol y)) / \diff \boldsymbol y\\
                         &= \frac{N^{\boldsymbol\nu}(\boldsymbol y)}{N^{\boldsymbol\nu}(\boldsymbol x)} \sum_{F\in \mathbb{F}(\fpart_1,\fpart_2)}\sum_{\fpart'\in F} \int_{\dcb(F)} \ind_{\left\{ \tau_{\fpart'} < t < \tau_{\fpart'}^+ \right\} } \fnu(\tau,\xi,t+s,\boldsymbol y \,\vert\,\boldsymbol x)\boldsymbol \nu_F(\diff \xi) \diff \tau\\
                         &= \frac{N^{\boldsymbol\nu}(\boldsymbol y)}{N^{\boldsymbol\nu}(\boldsymbol x)} \sum_{F\in \mathbb{F}(\fpart_1,\fpart_2)} \int_{\dcb(F)} \fnu(\tau,\xi,t+s,\boldsymbol y \,\vert\,\boldsymbol x)\boldsymbol \nu_F(\diff \xi) \diff \tau\\
         &= \semi_{t+s}((\fpart_1,\boldsymbol x),(\fpart_2,\diff \boldsymbol y)) / \diff \boldsymbol y.
    \end{align*}
\end{proof}

This proves that the Brownian spatial coalescent with transition measures $\boldsymbol \nu$ is a Markov process.

\begin{lemma}\label{lem:strongmarkov}
    The map $\boldsymbol x \mapsto \P^{\boldsymbol x}$ is continuous w.r.t.\ the topology of weak convergence of probability measures. In particular, $(\P^{\boldsymbol x})_{\boldsymbol x\in \mathcal{X}}$ has the Feller property.
\end{lemma}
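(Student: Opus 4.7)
The plan is to decompose $\P^{\boldsymbol x} = P^{\boldsymbol x} \otimes K_{\boldsymbol x}$ and prove continuity in two steps: first show that $\boldsymbol x \mapsto P^{\boldsymbol x}$ is weakly continuous (in fact, even continuous in total variation) on $\mathcal{M}_1(\dcb(\mathbb{F}))$, and then combine with the joint continuity of the kernel $K$ already established in \cref{lem:kbr}. Since $\dcb(\mathbb{F})$ is a countable disjoint union of Polish spaces and $\mathcal{X}$ is Polish, each step is over well-behaved spaces.

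For the first step, fix $(\fpart,\boldsymbol x) \in \mathcal{X}$ and consider any sequence $\boldsymbol x_n \to \boldsymbol x$; for large $n$ the partition component is the same $\fpart$, so all measures $P^{\boldsymbol x_n}$ live on $\dcb(\mathbb{F}(\fpart))$ and admit densities
\[
    g_n(F,\tau,\xi) \coloneqq \frac{\fnu(\tau,\xi \,\vert\, \boldsymbol x_n)}{N^{\boldsymbol \nu}(\boldsymbol x_n)}
\]
with respect to the common reference measure $\mu \coloneqq \sum_{F\in \mathbb{F}(\fpart)} \boldsymbol \nu_F(\diff \xi)\diff \tau$. By continuity of the Brownian transition densities $p_t$ on $E$ and of $N^{\boldsymbol \nu}$ (which is part of the hypothesis), $g_n \to g$ pointwise $\mu$-a.e., where $g$ is the density of $P^{\boldsymbol x}$. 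Since $\int g_n\,\diff \mu = \int g\,\diff \mu = 1$, Scheff\'e's lemma yields $\|g_n - g\|_{L^1(\mu)} \to 0$, hence $P^{\boldsymbol x_n} \to P^{\boldsymbol x}$ in total variation, and in particular weakly.

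For the second step, let $f \in C_b(\Omega)$, and define $H(\boldsymbol x, F^\star) \coloneqq \int f(\omega) K_{\boldsymbol x}(F^\star, \diff \omega)$. By \cref{lem:kbr} the map $(\boldsymbol x,F^\star) \mapsto K_{\boldsymbol x}(F^\star, \cdot )$ is continuous into $\mathcal{M}_1(\Omega)$ with the weak topology, so $H$ is continuous on $\mathcal{X} \times \dcb(\mathbb{F})$ and bounded by $\|f\|_\infty$. Writing
\[
    \int f\,\diff \P^{\boldsymbol x_n} - \int f\,\diff \P^{\boldsymbol x} = \int [H(\boldsymbol x_n,\cdot ) - H(\boldsymbol x,\cdot )]\,\diff P^{\boldsymbol x_n} + \int H(\boldsymbol x,\cdot )\,\diff (P^{\boldsymbol x_n} - P^{\boldsymbol x}),
\]
the second term tends to zero by the weak convergence $P^{\boldsymbol x_n} \to P^{\boldsymbol x}$ applied to the bounded continuous test function $H(\boldsymbol x, \cdot )$. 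For the first term, tightness of $\{P^{\boldsymbol x_n}\}$ (which follows from total variation convergence) yields a compact $C \subset \dcb(\mathbb{F})$ with $\sup_n P^{\boldsymbol x_n}(C^c) < \varepsilon$; on $C$, joint continuity and uniform continuity of $H$ on the compact $\{\boldsymbol x_n\colon n\} \cup \{\boldsymbol x\} \times C$ give $H(\boldsymbol x_n, \cdot ) \to H(\boldsymbol x, \cdot )$ uniformly, so the first term is bounded by $o(1) + 2\|f\|_\infty \varepsilon$, which gives the claim. This proves $\P^{\boldsymbol x_n} \to \P^{\boldsymbol x}$ weakly, i.e.\ continuity of $\boldsymbol x \mapsto \P^{\boldsymbol x}$. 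In particular, for $f \in C_b(\mathcal{X})$, $\semi_t f(\boldsymbol x) = \int f(\boldsymbol X_t)\,\diff \P^{\boldsymbol x}$ depends continuously on $\boldsymbol x$ (apply the above with $\omega \mapsto f(\omega_t)$, which is bounded but only continuous at paths continuous at $t$; this is enough since $\P^{\boldsymbol x}$-a.s.\ there is no jump at the deterministic time $t$), so $(\semi_t)$ has the Feller property, and the strong Markov property follows by standard arguments.

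The main technical point is the joint continuity handling in the second step: pointwise convergence of $H(\boldsymbol x_n, \cdot) \to H(\boldsymbol x, \cdot)$ from the kernel continuity is not enough to integrate against a moving family $P^{\boldsymbol x_n}$, which is why the tightness/uniform-on-compacts argument is necessary. Everything else reduces to the continuity of $N^{\boldsymbol \nu}$, which is the sole hypothesis of \cref{thm:BSC} not immediately given by the explicit formula, and is covered separately in \cref{lem:NF} under the sampling-consistent scaling.
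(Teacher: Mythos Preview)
Your proof is correct, but the route differs from the paper's. The paper argues directly via the Portmanteau criterion: for open $O\subset\Omega$ it writes
\[
    \P^{\boldsymbol x_n}(O) = \sum_{F\in\mathbb{F}(\fpart)} \frac{1}{N^{\boldsymbol\nu}(\boldsymbol x_n)} \int_{\dcb(F)} K_{\boldsymbol x_n}((F,\tau,\xi),O)\,\fnu(\tau,\xi\,\vert\,\boldsymbol x_n)\,\boldsymbol\nu_F(\diff\xi)\,\diff\tau,
\]
observes that $N^{\boldsymbol\nu}(\boldsymbol x_n)\to N^{\boldsymbol\nu}(\boldsymbol x)$, $\fnu(\cdot\,\vert\,\boldsymbol x_n)\to\fnu(\cdot\,\vert\,\boldsymbol x)$ pointwise, and $\varliminf_n K_{\boldsymbol x_n}(F^\star,O)\ge K_{\boldsymbol x}(F^\star,O)$ by the weak continuity of $K$ from \cref{lem:kbr}, and applies Fatou's lemma once to conclude $\varliminf_n \P^{\boldsymbol x_n}(O)\ge\P^{\boldsymbol x}(O)$. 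That is the entire argument.

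Your two-step decomposition (Scheff\'e for $P^{\boldsymbol x_n}\to P^{\boldsymbol x}$ in total variation, then splitting $\int H(\boldsymbol x_n,\cdot)\,\diff P^{\boldsymbol x_n}-\int H(\boldsymbol x,\cdot)\,\diff P^{\boldsymbol x}$) is more modular and yields the stronger intermediate statement that $P^{\boldsymbol x}$ varies continuously in total variation. However, your tightness/uniform-continuity step is heavier than necessary: once you have total variation convergence of $P^{\boldsymbol x_n}$, the first term in your splitting can be handled by $\bigl|\int H(\boldsymbol x_n,\cdot)\,\diff P^{\boldsymbol x_n}-\int H(\boldsymbol x_n,\cdot)\,\diff P^{\boldsymbol x}\bigr|\le\|f\|_\infty\|P^{\boldsymbol x_n}-P^{\boldsymbol x}\|_{\mathrm{TV}}$ followed by dominated convergence for $\int H(\boldsymbol x_n,\cdot)\,\diff P^{\boldsymbol x}\to\int H(\boldsymbol x,\cdot)\,\diff P^{\boldsymbol x}$. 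Your final remark on the Feller property---that $\omega\mapsto f(\omega_t)$ is only $\P^{\boldsymbol x}$-a.s.\ continuous because fixed-time jumps are null---is a detail the paper leaves implicit; it is good that you spelled it out.
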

\begin{proof}
    Suppose $O\subset \Omega$ is open, and $(\fpart_n,\boldsymbol x_n) \to (\fpart,\boldsymbol x)$ in $\mathcal{X}$, without loss of generality $\fpart_n = \fpart$ for all $n\in \N$. Then,
    \begin{align*}
        \P^{\boldsymbol x_n}(O)
        &= \int_{\dcb(\mathbb{F})} \kbr_{\boldsymbol x_n}(F^\star,O) P^{\boldsymbol x_n}(\diff F^\star)\\
        &= \sum_{F\in \mathbb{F}(\fpart)} \frac{1}{N^{\boldsymbol \nu}(\boldsymbol x_n)} \int_{\dcb(F)} \kbr_{\boldsymbol x_n}((F,\tau,\xi),O) \fnu(\tau,\xi \,\vert\,\boldsymbol x_n) \boldsymbol \nu_F(\diff \xi) \diff \tau.
    \end{align*}
    By \cref{lem:NF,lem:kbr}, and definition of $\fnu$, we have $N^{\boldsymbol \nu}(\boldsymbol x_n) \to N^{\boldsymbol \nu}(\boldsymbol x)$, $\fnu(\tau,\xi \,\vert\,\boldsymbol x_n) \to \fnu(\tau,\xi \,\vert\,\boldsymbol x)$, and $\varliminf_{n\to \infty} K_{\boldsymbol x_n}(F^\star,O) \ge K_{\boldsymbol x}(F^\star,O)$ for all $F^\star = (F,\tau,\xi) \in \dcb(\mathbb{F})$. Thus by Fatou's lemma,
    \begin{align*}
        \varliminf_{n\to \infty} \P^{\boldsymbol x_n}(O)
        &\ge \sum_{F\in \mathbb{F}(\fpart)} \frac{1}{N^{\boldsymbol \nu}(\boldsymbol x)} \int_{\dcb(F)} K_{\boldsymbol x}((F,\tau,\xi),O) \fnu(\tau,\xi \,\vert\,\boldsymbol x_n)\boldsymbol \nu_F(\diff \xi)\diff \tau
        = \P^{\boldsymbol x}(O).
    \end{align*}
    This proves that $\boldsymbol x \mapsto \P^{\boldsymbol x}$ is continuous by  the Portmanteau theorem.
\end{proof}

%For later reference, we remark the following consequence of \cref{lem:semigroup}.
%
%\begin{corollary}\label{cor:Nintegrable}
%    For every $\fpart,\fpart' \in \mathcal{P}$, $\fpart' \subsetneq \fpart$, and $\boldsymbol x\in E^{\fpart'}_\circ $, \[
%        \int\limits_{E^{\fpart \setminus \fpart'}_{\boldsymbol x}} N^{\boldsymbol \nu}(\boldsymbol x\boldsymbol y) \diff \boldsymbol y < \infty.
%    \] In particular, if $\boldsymbol x\in E^\fpart_\circ $ then $\int_{E^\fpart_\circ } N^{\boldsymbol \nu}(\boldsymbol x) \diff \boldsymbol x < \infty$.
%\end{corollary}
%\begin{proof}
%    By \cref{lem:semigroup}, for fixed $(\fpart,\boldsymbol x) \in \mathcal{X}$, \[
%        \semi_1((\fpart,\boldsymbol x),(\fpart,\diff \boldsymbol y)) = \frac{N^{\boldsymbol \nu}(\boldsymbol y)}{N^{\boldsymbol \nu}(\boldsymbol x)} \prod_{u\in \fpart} p_1(\boldsymbol x_u - \boldsymbol y_u) \diff \boldsymbol y.
%    \] The right-hand side is a finite measure on $E^\fpart_\circ $ (because the left-hand side is a sub-probability measure), $N^{\boldsymbol \nu}(\boldsymbol x)$ is some positive number independent of $\boldsymbol y$, and $\inf p_1 > 0$, so $N^{\boldsymbol \nu}(\boldsymbol y)$ must be integrable over $E^\fpart_\circ $, that is $\int_{E^\fpart_\circ } N^{\boldsymbol \nu}(\boldsymbol x) \diff \boldsymbol x < \infty$.
%
%    This implies, in the context of the more general assertion, that $\int_{E^{\fpart\setminus \fpart'}_{\boldsymbol x}} N^{\boldsymbol \nu}(\boldsymbol x\boldsymbol y) \diff \boldsymbol y < \infty$ for Lebesgue-a.e.\ $\boldsymbol x\in E^{\fpart'}_\circ $.
%\end{proof}

\subsubsection{``Only if'' Direction of \cref{thm:BSC}}

%The goal of this section is to prove the ``only if'' direction of \cref{thm:BSC}.
Fix a Brownian spatial coalescent process, that is a spatial coalescent whose laws satisfy 
\begin{equation}\label{eq:intro:sec:BSC}
    \P^{\boldsymbol x} = (\dc\#\P^{\boldsymbol x}) \otimes K_{\boldsymbol x},
\end{equation}
in which case we write $P^{\boldsymbol x} = \dc\#\P^{\boldsymbol x}$ for $\boldsymbol x\in \mathcal{X}$  (recall \cref{def:BSC} and \cref{eq:defBSC}). We prove that the only way in which this process can satisfy \eqref{eq:intro:sec:BSC} while simultaneously being a Markov process is if the laws $(P^{\boldsymbol x})$ are of the very specific form \cref{eq:BSC}.

The core idea behind the proof is to evaluate $\P^{\boldsymbol x}$ on different, carefully chosen events in two ways, once by using \cref{eq:intro:sec:BSC}, and once by using the fact that the process satisfies the Markov property. This will reveal different properties that $P^{\boldsymbol x}$ must satisfy, and carefully combining them will ultimately yield \cref{eq:BSC}.

For $F\in \mathbb{F}$, $\tau\in \dct(F)$ and $s \ge 0$ we write $\tau + s$ for the map $u \mapsto \tau_u + s$, and $\tau \ge s$ if $\tau = \tau'+s$ for some $\tau'\in \dct(F)$.
Recall that the semigroup associated with $(\P^{\boldsymbol x})$ is denoted $(\semi_t)_{t\ge 0}$, see \cref{eq:semi}.
\begin{lemma}\label{lem:bsc:1}
    Let $\fpart \in \mathcal{P}$. If $F=F_0 \in \mathbb{F}(\fpart)$ is the trivial forest, then for every $\boldsymbol x \in E^\fpart_\circ $,
    \begin{equation}\label{eqprf:U1:11}
        \semi_s(\boldsymbol x,\diff \boldsymbol y) \P^{\boldsymbol y}(\fr = F_0) = \P^{\boldsymbol x}(\fr = F_0) \prod_{u\in \fpart}p_s(\boldsymbol x_u - \boldsymbol y_u) \diff \boldsymbol y
    \end{equation}
    as measures on $E^\fpart_\circ $. More generally, for any forest $F\in\mathbb{F}(\fpart)$,
    \begin{align}\begin{split}\label{eqprf:U1:3}
        \semi_s(\boldsymbol x,\diff \boldsymbol y) & P^{\boldsymbol y}(F,\diff \tau,\diff \xi)\\
                                                   &= P^{\boldsymbol x}(F,\diff (\tau+s),\diff \xi) \prod_{u\in \fpart} p_s(\boldsymbol x_u-\boldsymbol y_u) \left[ \frac{p(\tau_{\pr(u)}, \xi_{\pr(u)} - \boldsymbol y_u)}{p(\tau_{\pr(u)}+s,\xi_{\pr(u)}-\boldsymbol x_u)} \right] _{\pr(u) \neq \emptyset } \diff \boldsymbol y,
    \end{split}\end{align}
    as an equality of measures on $\dct(F)\times \dcs(F)\times E^\fpart_\circ $ (where we abbreviated $\pr = \pr_F$).
\end{lemma}
\begin{proof}
    If $|\fpart| =1$, say $\fpart = \{u\}$, then the trivial forest $F_0$ is the only element of $\mathbb{F}(\fpart)$, so $\P^{(\fpart,\boldsymbol x)}(\fr = F) = 1$ for all $\boldsymbol x$. In particular, \eqref{eqprf:U1:11} reads $\semi_s(\boldsymbol x, \diff \boldsymbol y) = p_s(\boldsymbol x_u - \boldsymbol y_u)$, which is just stating that a Brownian spatial coalescent with a single particle follows a Brownian motion. Assume $|\fpart| > 1$ for the rest of the proof.

    For $s > 0$ and $\boldsymbol x\in E^\fpart_\circ $, we evaluate $\P^{\boldsymbol x}(\fr = F_0, \boldsymbol X_s \in \diff \boldsymbol y)$, which is a sub-probability measure on $E^\fpart_\circ $, in two ways. On the one hand, by applying the Markov property at time $s$ it equals $\semi_s(\boldsymbol x,\diff \boldsymbol y) \P^{\boldsymbol y}(\fr = F_0)$. On the other hand, by \cref{eq:intro:sec:BSC} it equals $\P^{\boldsymbol x}(\fr = F_0) K_{\boldsymbol x}(F_0,\boldsymbol X_s\in \diff \boldsymbol y)$. Expanding $K_{\boldsymbol x}(F_0,\boldsymbol X_s\in \diff \boldsymbol y)$ gives~\eqref{eqprf:U1:11} (recall that by \cref{lem:kbr}, $K_{\boldsymbol x}(F_0,\cdot )$ is the law of a collection of $|\fpart|$ independent Brownian motions).

    % \Cref{eq:U1:2} for the trivial forest $F_0$ is just $\nn_\fpart(\boldsymbol x) \P^{\boldsymbol x}(\fr = F_0) = \nn_\fpart(\boldsymbol y) \P^{\boldsymbol y}(\fr = F_0)$, which holds by definition of $\nn_\fpart$.

    Let now $F \in \mathbb{F}(\fpart)$ be non-trivial. For $s > 0$ and $\boldsymbol x\in E^\fpart_\circ $, we evaluate
    \begin{equation}\label{eqprf:U1:1}
        \P^{\boldsymbol x}(\fr = F, \tm \in \diff (\tau + s), \sp \in \diff \xi, \boldsymbol X_s \in \diff \boldsymbol y),
    \end{equation}
    which is a sub-probability measure on $\dcb(F) \times E^\fpart_\circ $, in two ways. On the one hand, by applying the Markov property at time $s$ it equals $\semi_s(\boldsymbol x,\diff \boldsymbol y) P^{\boldsymbol y}(F,\diff \tau,\diff \xi)$. On the other hand, by conditioning on $\dc = (F,\tau,\xi)$ and \cref{eq:defBSC}, the expression \cref{eqprf:U1:1} equals % precise markov property see handy 12 july 9:53
    \begin{align*}%\label{eqprf:U1:2}
        %\semi_s(\boldsymbol x,\diff \boldsymbol y) P^{\boldsymbol y}(F,\diff \tau,\diff \xi)
        P^{\boldsymbol x}(F,\diff (\tau+s),\diff \xi) K_{\boldsymbol x}((F,\tau+s,\xi), \{\boldsymbol X_s \in \diff \boldsymbol y\}) .
        %&= P^{\boldsymbol x}(F,\diff (\tau+s),\diff \xi) \prod_{u\in \fpart} p_s(\boldsymbol x_u-\boldsymbol y_u) \left[ \frac{p(\tau_{\pr(u)}, \xi_{\pr(u)} - \boldsymbol y_u)}{p(\tau_{\pr(u)}+s,\xi_{\pr(u)}-\boldsymbol x_u)} \right] _{\pr(u) \neq \emptyset } \diff \boldsymbol y,
    \end{align*}
    Expanding $K_{\boldsymbol x}((F,\tau+s,\xi),\boldsymbol X_s\in \diff \boldsymbol y)$ on the RHS yields \eqref{eqprf:U1:3}; recall for this step that a Brownian bridge $(W_s)_{0\le s\le t}$ starting at $x$ at time zero and ending at $z$ at time $t$ satisfies $\P(W_s \in \diff y) = \frac{p_s(x-y) p_{t-s}(y-z)}{p_t(x-z)} \diff y$.
\end{proof}

We state a useful consequence of \cref{lem:bsc:1}
\begin{lemma}\label{lem:forestpossible}
     For any $\fpart \in \mathcal{P}$ and any $F\in \mathbb{F}(\fpart)$, exactly one of the following hold:
    \begin{enumerate}
        \item $\P^{(\fpart,\boldsymbol x)}(\fr = F) > 0$ for all $\boldsymbol x\in E^\fpart_\circ $, in which case we call $F$ \emph{possible},
        \item $\P^{(\fpart,\boldsymbol x)}(\fr = F) = 0$ for all $\boldsymbol x\in E^\fpart_\circ $, in which case we call $F$ \emph{impossible}.
    \end{enumerate}   
    In fact, if a forest $F$ is possible, then $\P^{\boldsymbol x}(\fr = F, \tau \ge s) > 0$ for all $\boldsymbol x\in E^\fpart_\circ $ and $s \ge 0$.
\end{lemma}

The proof requires the following simple observation.
 \begin{lemma}\label{lem:Sspositive}
    If $(\fpart,\boldsymbol x) \in \mathcal{X}$, $t > 0$, then $\semi_t((\fpart,\boldsymbol x),(\fpart,\diff \boldsymbol y))$ is not zero as a measure on $E^{\fpart}_\circ $.
\end{lemma}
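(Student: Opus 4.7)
The plan is to reduce to showing $h_t(\boldsymbol x) := \P^{\boldsymbol x}(\fpart_t = \fpart) > 0$ for every $t > 0$; this equals the total mass of $\semi_t((\fpart,\boldsymbol x),(\fpart,\cdot))$, so nonvanishing of the latter is equivalent. Because $E^\fpart_\circ$ is both open and closed in the discrete-union topology on $\mathcal{X}$, the indicator $\mathbb{1}_{E^\fpart_\circ}$ lies in $C_b(\mathcal{X})$, and the Feller property makes $h_t = \semi_t \mathbb{1}_{E^\fpart_\circ}$ continuous on $\mathcal{X}$ for each $t > 0$. Right-continuity of paths at $0$ together with bounded convergence yields $h_s(\boldsymbol y) \to 1$ as $s \downarrow 0$ for every $\boldsymbol y \in E^\fpart_\circ$, while the Markov property gives
\begin{equation*}
    h_{s_1+s_2}(\boldsymbol y) = \E^{\boldsymbol y}\big[\mathbb{1}_{\{\fpart_{s_1}=\fpart\}}\, h_{s_2}(\boldsymbol X_{s_1})\big],
\end{equation*}
which in particular implies that $s \mapsto h_s(\boldsymbol y)$ is non-increasing.

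Set $r_* := \sup\{s > 0 : h_s(\boldsymbol x) > 0\} \in (0,\infty]$; the lemma reduces to $r_* = \infty$. Suppose for contradiction that $r_* < \infty$ and pick $s_1, s_2 \in (r_*/2, r_*)$. Then $h_{s_1}(\boldsymbol x) > 0$ and $h_{s_2}(\boldsymbol x) > 0$, while $h_{s_1+s_2}(\boldsymbol x) = 0$, so the identity above forces $h_{s_2}(\boldsymbol X_{s_1}) = 0$ $\P^{\boldsymbol x}$-almost surely on $\{\fpart_{s_1} = \fpart\}$, an event of positive probability $h_{s_1}(\boldsymbol x)$. The crux is to show that the conditional law of $\boldsymbol X_{s_1}$ on this event admits a strictly positive Lebesgue density on $E^\fpart_\circ$; once established, $h_{s_2}$ vanishes Lebesgue-a.e.\ on $E^\fpart_\circ$, and since $h_{s_2}$ is continuous with $\{h_{s_2} > 0\}$ open, we conclude $h_{s_2} \equiv 0$ on $E^\fpart_\circ$, contradicting $h_{s_2}(\boldsymbol x) > 0$.

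The density claim follows from the Brownian-bridge description of $K_{\boldsymbol x}$ in \cref{lem:kbr}: decomposing $\P^{\boldsymbol x} = P^{\boldsymbol x} \otimes K_{\boldsymbol x}$ via \cref{eq:defBSC}, for every decorated forest $F^\star = (F,\tau,\xi)$ with $\lf(F) = \fpart$ and no merger before time $s_1$, the coordinates $(\boldsymbol X_{s_1}(u))_{u \in \fpart}$ under $K_{\boldsymbol x}(F^\star,\cdot)$ are independent, each distributed as a Brownian motion (if $u \in \rt(F)$) or as a Brownian bridge run over a time-interval of length exceeding $s_1$; both have strictly positive densities on $E$. Integrating against $P^{\boldsymbol x}$ restricted to such forests produces a strictly positive Lebesgue density for $\boldsymbol X_{s_1}$ on $\{\fpart_{s_1} = \fpart\}$, using that the restricted total mass equals $h_{s_1}(\boldsymbol x) > 0$. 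The main subtlety I anticipate is this density argument; in particular, one must confirm that the auxiliary extension of $K_{\boldsymbol x}$ to $\boldsymbol x \notin E^{\lf(F)}_\circ$ provided by \cref{lem:kbr} plays no role, which is automatic since the relevant forests necessarily have $\lf(F) = \fpart$ with $\boldsymbol x \in E^\fpart_\circ$.
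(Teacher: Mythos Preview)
Your proof is correct and follows essentially the same route as the paper: right-continuity of paths gives $h_s(\boldsymbol x)>0$ for small $s$, and then the semigroup/Markov property is used to propagate positivity to all times. The paper compresses the second step into the phrase ``repeated applications of $\semi_{2s}=\semi_s\semi_s$,'' whereas you spell out the one substantive point that makes this work, namely that $\semi_{s_1}((\fpart,\boldsymbol x),(\fpart,\cdot))$ has a strictly positive Lebesgue density on $E^\fpart_\circ$, obtained from the Brownian/Brownian-bridge description of $K_{\boldsymbol x}$; combined with continuity of $h_{s_2}$ this yields the contradiction cleanly.
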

\begin{proof}
    Fix $(\fpart,\boldsymbol x) \in \mathcal{X}$. Since $(\fpart_t,\boldsymbol X_t)_{t \ge 0}$ is right-continuous, we have $\fpart_t \to \fpart_0 =\fpart$ a.s.\ as $t \to 0$, in particular in distribution, so $\semi_t(\boldsymbol x,E^\fpart_\circ ) = \P^{\boldsymbol x}(\fpart_t = \fpart) \to 1$ as $t \to 0$.
    This implies the claim for sufficiently small $t > 0$, and repeated applications of $\semi_{2t} = \semi_t \semi_t$ finish the proof.
\end{proof}

\begin{proof}[Proof of \cref{lem:forestpossible}]
    We first explain why the trivial forest $F_0$ is either possible or impossible. If $\P^{\boldsymbol y}(\fr = F_0) = 0$ on a set of positive Lebesgue measure, then integrating \cref{eqprf:U1:11} over $\boldsymbol y$ in this set gives zero on the LHS, and the integral on the RHS is zero only if $\P^{\boldsymbol x}(\fr = F_0) = 0$, so we must have $\P^{\boldsymbol x}(\fr = F_0) = 0$ for all $\boldsymbol x\in E^\fpart_\circ $ and $F_0$ is impossible. Now assume that $\P^{\boldsymbol y}(\fr = F_0) > 0$ Lebesgue-almost everywhere. Then integrating \cref{eqprf:U1:11} over $\boldsymbol y\in E^\fpart_\circ $ gives a positive number on the LHS by \cref{lem:Sspositive}, and $\P^{\boldsymbol x}(\fr = F_0)$ on the RHS, so $\P^{\boldsymbol x}(\fr = F_0) > 0$ for all $\boldsymbol x\in E^\fpart_\circ $ and $F_0$ is possible.  % In that case, \cref{eqprf:U1:11} implies \cref{eq:U1} with $\beta_\fpart = 0$ and $\nn_\fpart(\boldsymbol x) = 1 / \P^{\boldsymbol x}(\fr = F_0)$. % By contraposition, this also proves that if the claim holds with $\beta_{\fpart} > 0$ then the trivial forest must be impossible. %this is proved separately elsewhere

    We now explain why any forest $F\in \mathbb{F}(\fpart)$ is either possible or impossible. Suppose that $\P^{\boldsymbol y}(\fr = F) = 0$ for $\boldsymbol y$ in a set of positive Lebesgue measure. Integrating \eqref{eqprf:U1:3} over this set and $(\tau,\xi) \in \dc(F)$ gives zero on the LHS, and on the RHS a quantity that is zero if and only if $P^{\boldsymbol x}(F, \left\{ \tau \ge s \right\} ) = 0$, which converges to $\P^{\boldsymbol x}(\fr = F)$ as $s \to 0 $ by continuity from below, so $\P^{\boldsymbol x}(\fr = F) = 0$ for all $\boldsymbol x\in E^\fpart_\circ $ and $F$ is impossible. Now suppose that $\P^{\boldsymbol y}(\fr = F) > 0$ for Lebesgue almost-all $\boldsymbol y\in E^\fpart_\circ $. Then $\semi_s(\boldsymbol x,\diff \boldsymbol y)$ has a Lebesgue density. Indeed, integrating \cref{eqprf:U1:3} over $\boldsymbol y\in N$ for a Lebesgue null set $N$, and $(\tau,\xi) \in \dcb(F)$, gives zero on the RHS, and on the LHS it gives $\int_N \P^{\boldsymbol y}(\fr = F) \semi_s(\boldsymbol x,\diff \boldsymbol y)$, which was positive if $\semi_s(\boldsymbol x,N) > 0$. Then, for an arbitrary $s > 0$,
    \begin{align}\label{eqprf:U1:50}
        \P^{\boldsymbol x}(\fr = F)
        &\ge \P^{\boldsymbol x}(\fr = F, \tau \ge s)
        = \int_{E^\fpart_\circ } \P^{\boldsymbol y}(\fr = F) \semi_s(\boldsymbol x,\diff \boldsymbol y) > 0,
    \end{align}
    so $F$ is possible. In the second step, we applied the Markov property at time $s$, and the last expression is positive because $\P^{\boldsymbol y}(\fr = F) > 0$ for Lebesgue-a.e.\ $\boldsymbol y\in E^\fpart_\circ $, and $\semi_s(\boldsymbol x,\cdot )$ is non-zero (\cref{lem:Sspositive}) and has a Lebesgue density. \Cref{eqprf:U1:50} also shows that if $F$ is possible, then $\P^{\boldsymbol x}(\fr = F, \tau \ge s) > 0$ for all $\boldsymbol x\in E^\fpart_\circ $ and $s \ge 0$.
\end{proof}

\begin{lemma}\label{lem:U1:aux}
    For every $\fpart\in \mathcal{P}$ there are a unique $\la_\fpart \ge 0$, and a function $\nn_\fpart\colon E^\fpart_\circ  \to (0,\infty)$ unique up to a constant positive multiple such that for all $s > 0$ and $\boldsymbol x \in E^\fpart_\circ $,
    \begin{equation}\label{eq:U1}
        \semi_s((\fpart,\boldsymbol x),(\fpart,\diff \boldsymbol y)) =  \e^{-\la_\fpart s}\frac{\nn_\fpart(\boldsymbol y)}{\nn_\fpart(\boldsymbol x)} \prod_{u\in \fpart} p_s(\boldsymbol x_u - \boldsymbol y_u) \diff \boldsymbol y.
    \end{equation}
    If the trivial forest $F_0 \in \mathbb{F}(\fpart)$ is possible, then $\nn_\fpart(\boldsymbol x) \P^{\boldsymbol x}(\fr = F_0)$ is constant in $\boldsymbol x\in E^\fpart_\circ $ and positive. Furthermore, for every possible $F\in \mathbb{F}(\fpart)$, every $\boldsymbol x,\boldsymbol y\in E^\fpart_\circ $, and $s \ge 0$,
    \begin{equation}\label{eq:U1:2}
        \e^{\la_\fpart s} \frac{\nn_\fpart(\boldsymbol x) P^{\boldsymbol x}(F,\diff (\tau+s),\diff \xi)}{\smashop{\prod_{\substack{u\in \fpart\\ \pr_F(u) \neq \emptyset }}} p(\tau_{\pr_F(u)}+s, \xi_{\pr_F(u)}-\boldsymbol x_u)} = \frac{\nn_\fpart(\boldsymbol y) P^{\boldsymbol y}(F,\diff \tau,\diff \xi)}{\displaystyle\smashop{\prod_{\substack{u\in \fpart\\ \pr_F(u) \neq \emptyset }}}p(\tau_{\pr_F(u)}, \xi_{\pr_F(u)}-\boldsymbol y_u)}
    \end{equation}
    as measures on $\dcb(F)$.
\end{lemma}

It will turn out that $\la_\fpart = \nut{\fpart}$, and that $\nn_\fpart$ is (a constant multiple of) the normalisation $N^{\boldsymbol \nu}$ for the transition measures $\boldsymbol \nu \in \nurates$ that define this Brownian spatial coalescent (see \cref{thm:U2}).

\begin{proof}[Proof of \cref{lem:U1:aux}]
    Fix $\fpart \in \mathcal{P}$ throughout the proof.
    Uniqueness is straightforward: if \cref{eq:U1} holds with $\la$ and $\nn$, but also with $\la'$ and $\nn'$, then for every $s > 0$, $\boldsymbol x\in E^\fpart_\circ $, and Lebesgue-almost all $\boldsymbol y\in E^\fpart_\circ $ (with the exceptional null-set depending on $\boldsymbol x$ and $s$),
    \begin{equation}\label{eqprf:U1:12}
        \e^{-\la s} \frac{\nn(\boldsymbol y)}{\nn(\boldsymbol x)} = \e^{-\la' s} \frac{\nn'(\boldsymbol y)}{\nn'(\boldsymbol x)}.
    \end{equation}
    Letting $s \to 0$ in $\Q$, it follows that for every $\boldsymbol x$, and all $\boldsymbol y\not\in N_{\boldsymbol x}$ for a null-set $N_{\boldsymbol x}\subset E^\fpart_\circ $, we have $\nn(\boldsymbol y) / \nn(\boldsymbol x) = \nn'(\boldsymbol y) / \nn'(\boldsymbol x)$. For a fixed choice $\boldsymbol x_0$ and $a \coloneqq \nn(\boldsymbol x_0) / \nn'(\boldsymbol x_0) > 0$, this implies $\nn(\boldsymbol y) = a \nn'(\boldsymbol y)$ for all $\boldsymbol y\in E^\fpart_\circ  \setminus N_{\boldsymbol x_0}$. Then for any $\boldsymbol x\in E^\fpart_\circ $ we can choose $\boldsymbol y\in E^\fpart_\circ  \setminus (N_{\boldsymbol x_0}\cup N_{\boldsymbol x})$ to obtain $
        \nn(\boldsymbol x) = \frac{\nn(\boldsymbol y)}{\nn'(\boldsymbol y)} \nn'(\boldsymbol x) = a \nn'(\boldsymbol x).
        $ Back into \cref{eqprf:U1:12} this also implies $\la = \la'$. Conversely, if \cref{eq:U1:2} holds for some function $\nn_\fpart$, then it also holds for $c\nn_\fpart$ for any $c > 0$.

    If $|\fpart| = 1$ then \cref{eq:U1} holds for $\beta_\fpart = 0$ and $\nn_\fpart \equiv 1$ (because a single lineage follows a Brownian motion), and the only element of $\mathbb{F}(\fpart)$ is the trivial forest, which makes the remaining statements trivial. Assume $|\fpart| > 1$ for the rest of the proof. Let $F_0\in \mathbb{F}(\fpart)$ be the trivial forest.

    Let $s > 0$ and $\boldsymbol x\in E^\fpart_\circ $.
    If the trivial forest is possible, then \cref{eqprf:U1:11} implies \cref{eq:U1} with $\beta_\fpart = 0$ and $\nn_\fpart(\boldsymbol x) = 1 / \P^{\boldsymbol x}(\fr = F_0)$. % By contraposition, this also proves that if the claim holds with $\beta_{\fpart} > 0$ then the trivial forest must be impossible. %this is proved separately elsewhere
    \Cref{eq:U1:2} for the trivial forest $F_0$ is just $\nn_\fpart(\boldsymbol x) \P^{\boldsymbol x}(\fr = F_0) = \nn_\fpart(\boldsymbol y) \P^{\boldsymbol y}(\fr = F_0)$, which then holds by definition of $\nn_\fpart$.

    It remains to prove \cref{eq:U1} in the case where the trivial forest is impossible, as well as \cref{eq:U1:2} for non-trivial forests. Thus we may assume that there exists a possible non-trivial forest $F\in \mathbb{F}(\fpart)$ (otherwise there is nothing left to prove). Let such an $F$ be given, and define for every $\boldsymbol x \in E^\fpart_\circ $,
    \begin{align}\label{eqprf:U1:defG}
        G(\boldsymbol x,F,\diff \tau,\diff \xi) &\coloneqq \frac{P^{\boldsymbol x}(F,\diff \tau,\diff \xi)}{\smashop{\prod_{\substack{u\in \fpart\\ \pr_F(u) \neq \emptyset }}}p(\tau_{\pr_F(u)}, \xi_{\pr_F(u)}-\boldsymbol x_u)},
        %\frac{P^{\boldsymbol x}(F,\diff \tau,\diff \xi)}{\prod_{u\in \fpart,\pr(u) \neq \emptyset } p(\tau_{\pr(u)}, \xi_{\pr(u)}-\boldsymbol x_u)},\\
    \end{align}
    which is a non-zero measure on $\dcb(F)$ that is finite on $\{\tau \ge a\} \times \dcs(F) \subset \dcb(F)$ for every $a > 0$. Recall from \cref{lem:forestpossible} that $P^{\boldsymbol x}(F,\{\tau \ge a\}) > 0$ for every $a \ge 0$ and $\boldsymbol x\in E^\fpart_\circ $, so
    %Let $0 \le a < b$ such that $P^{\boldsymbol x}(F,\left\{ a < \tau \le b \right\} ) > 0$ for all $\boldsymbol x\in E^\fpart_\circ $, then
    \begin{equation*}%\label{eqprf:U1:3_1}
        G_F(\boldsymbol x,s) \coloneqq G(\boldsymbol x,F,\left\{ \tau > 1+s \right\} \times \dcs(F)) = \smashop{\int\limits_{\dct(F)}} \ind_{\left\{ \tau > 1+s \right\} } \smashop{\int\limits_{\dcs(F)}} G(\boldsymbol x,F,\diff \tau,\diff \xi) \in (0,\infty)
    \end{equation*}
    for all $\boldsymbol x\in E^\fpart_\circ $ and $s \ge 0$. By continuity from below for measures, $G_F(\boldsymbol x,\cdot )$ is right-continuous on $[0,\infty)$.
    Rearranging \cref{eqprf:U1:3} using \eqref{eqprf:U1:defG} gives
    \begin{equation}\label{eqprf:U1:4:0}
        G(\boldsymbol x,F,\diff (\tau+s),\diff \xi)  \diff \boldsymbol y = \frac{\semi_s(\boldsymbol x,\diff \boldsymbol y)}{\prod_{u\in \fpart} p_s(\boldsymbol x_u-\boldsymbol y_u)} G(\boldsymbol y,F,\diff \tau,\diff \xi).
    \end{equation}
    %If we multiply \cref{eqprf:U1:4} with $\prod_{u\in \fpart}p_s(\boldsymbol x_u-\boldsymbol y_u)$ and integrate over $\boldsymbol y\in E^\fpart_\circ $ and $ \left\{ \tau \ge 1 \right\} \times \dcs(F) $, then we obtain a positive number on the RHS (by \cref{eqprf:U1:3_1,lem:Sspositive}) and the finite number a finite number on the LHS (because $G(\boldsymbol x,F,\cdot )$ is finite on $\{\tau \ge 1\} \times \dcs(F) \subset \dcb(F)$), so both are positive and finite, in particular \[
    %   G_F(\boldsymbol x,s) \coloneqq G(\boldsymbol x,F,\left\{ \tau \ge 1 + s \right\} ) \in (0,\infty)
    %\smashoperator[r]{\int\limits_{\dcb(F)}} \ind_{\left\{ \tau \ge s \right\} } G(\boldsymbol x, F,\diff \tau,\diff \xi) > 0
    %\] for all $\boldsymbol x$ and $s \ge 0$.
    If we integrate this over $\left\{ \tau > 1 + t \right\} \times\dcs(F) \subset \dcb(F) $ for some $t \ge 0$, we obtain
    \begin{align}\label{eqprf:U1:5}
        \frac{\semi_s(\boldsymbol x,\diff \boldsymbol y)}{\prod_{u\in\fpart}p_s(\boldsymbol x_u-\boldsymbol y_u)} = \frac{G_F(\boldsymbol x,t+s )}{G_F(\boldsymbol y,t )}\diff \boldsymbol y % = g(s,\boldsymbol x,\boldsymbol y) \diff \boldsymbol y,
    \end{align}
    for $s > 0$. Since the LHS does not depend on $F$ or $t$, neither does the RHS, and we can equate it for $t = 0$ and $t \ge 0$ to obtain that for every $s > 0,t \ge 0$, $\boldsymbol x\in E^\fpart_\circ $,
    \begin{equation}\label{eqprf:U1:5.5}
        g(s,\boldsymbol x,\boldsymbol y) \coloneqq \frac{G_F(\boldsymbol x,s )}{G_F(\boldsymbol y,0) } = \frac{G_F(\boldsymbol x,t+s )}{G_F(\boldsymbol y,t )},
    \end{equation}
    for Lebesgue-almost all $\boldsymbol y\in E^\fpart_\circ $, and $g$ does not depend on $F$. By right-continuity of $G_F(\boldsymbol x,\cdot )$, the same is true for $s = 0$ (and any $t \ge 0$). Thus for $t, s \ge 0$ and $\boldsymbol x \in E^\fpart_\circ $, there is a Lebesgue-null set $N_{\boldsymbol x,s,t}\subset E^\fpart_\circ $ such that for all $\boldsymbol z\not\in N_{\boldsymbol x,s,t}$, and all $\boldsymbol y \in E^\fpart_\circ $,
    \begin{align*}
        g(s,\boldsymbol x,\boldsymbol z)g(t,\boldsymbol z,\boldsymbol y) = \frac{G_F(\boldsymbol x,s)}{G_F(\boldsymbol z,0)} \frac{G_F(\boldsymbol z,t)}{G_F(\boldsymbol y,0)} = \frac{G_F(\boldsymbol x,s+t)}{G_F(\boldsymbol z,t)} \frac{G_F(\boldsymbol z,t)}{G_F(\boldsymbol y,0)} = g(s+t,\boldsymbol x,\boldsymbol y).
    \end{align*}
    This implies\footnote{%
        The sceptical reader may apply Lemma~\ref{lem:glem} to $\log g(s,\boldsymbol x,\boldsymbol y)$, which is well-defined because $g(s,\boldsymbol x,\boldsymbol y) > 0$. The assumptions of Lemma~\ref{lem:glem} are satisfied with $b(\boldsymbol x) = \log G_F(\boldsymbol x,0)$, and $A_{\boldsymbol x} \coloneqq E^\fpart_\circ \setminus \bigcup_{s,t\in \Q \cap [0,\infty)} N_{\boldsymbol x,s,t}$.
    }
    that there exists $\la \in \R$ such that for $\boldsymbol x\in E^\fpart_\circ $, $s\in \Q \cap [0,\infty)$, and Lebesgue-almost all $\boldsymbol y\in E^\fpart_\circ $,
    \begin{equation}\label{eqprf:U1:6}
        g(s,\boldsymbol x,\boldsymbol y) = \e^{-\la s} \frac{G_F(\boldsymbol x,0)}{G_F(\boldsymbol y,0)} = \e^{-\la s} \frac{\nn_F(\boldsymbol y)}{\nn_F(\boldsymbol x)},
    \end{equation}
    where $\nn_F(\boldsymbol x) \coloneqq  G_F(\boldsymbol x,0)^{-1}$.
    If the trivial forest is impossible, in which case we have not yet found $\la_\fpart$ and $\nn_\fpart$, \cref{eqprf:U1:5,eqprf:U1:5.5,eqprf:U1:6} imply \cref{eq:U1} for $s\in \Q \cap (0,\infty)$ with $\la_\fpart \coloneqq \la$ and $\nn_\fpart \coloneqq \nn_F$ (the fact that this choice for $\la_\fpart$ and $\nn_\fpart$ satisfies~\eqref{eq:U1} implies by uniqueness that the construction would have yielded the same $\nn_F$ for any $F$, up to a constant multiple). In that case, if we assume $\la_\fpart < 0$, then integrating \cref{eq:U1} over $\boldsymbol y\in E^\fpart_\circ $ gives
    \begin{align*}
        1 \ge \int \semi_s(\boldsymbol x,\diff \boldsymbol y)
        %&= \e^{-\la_\fpart s} \frac{1}{\nn_\fpart(\boldsymbol x)} \\
         &\ge \e^{|\la_\fpart| s} \frac{1}{\nn_\fpart(\boldsymbol x)} (\min_E p_s)^{|\fpart|}\int \nn_\fpart(\boldsymbol y) \diff \boldsymbol y \tendsto{s\to \infty} \infty,
    \end{align*}
    a contradiction.
    We now explain why it sufficed to show \cref{eq:U1} for $s \in \Q_+ \coloneqq \Q\cap (0,\infty)$. Denote the RHS in \cref{eq:U1} for fixed $\boldsymbol x\in E^\fpart_\circ $ temporarily by $h(s,\boldsymbol y)\diff \boldsymbol y$, which has now been shown to be a finite measure on $E^\fpart_\circ $ for all $s\in \Q_+$ (because the LHS in \cref{eq:U1} is a finite measure). Then $\nn_\fpart$ is integrable because $\inf p_s > 0$ for any fixed $s\in \Q_+$, which implies that $h(s,\boldsymbol y)\diff \boldsymbol y$ is a finite measure for all $s > 0$, and enables an application of dominated convergence to show that $s\mapsto h(s,\boldsymbol y)\diff \boldsymbol y$ is continuous in the weak topology on $\mathcal{M}_F(E^\fpart_\circ )$. %old document has argument for Rd that doesn't need integrability of N
    The LHS of \cref{eq:U1} is right-continuous in $s$ w.r.t.\ the same topology because $(\fpart_t,\boldsymbol X_t)$ is almost-surely right-continuous, which implies that \cref{eq:U1} holds for all $s > 0$.

    It remains to prove \cref{eq:U1:2} for every non-trivial, possible forest $F\in \mathbb{F}(\fpart)$. Fix one, if it exists, abbreviate $\la = \la_\fpart$, $\nn = \nn_\fpart$, and recall the definition of $G(\boldsymbol x,F,\diff \tau,\diff \xi)$ from \cref{eqprf:U1:defG}. Then \cref{eq:U1:2} is equivalent to $\e^{\la s} \nn(\boldsymbol x)G(\boldsymbol x,F,\diff (\tau+s),\diff \xi) =\nn(\boldsymbol y) G(\boldsymbol y,F,\diff \tau,\diff \xi)$. Plugging \cref{eq:U1} into \cref{eqprf:U1:4:0} and rearranging shows that for every $\boldsymbol x\in E^\fpart_\circ $ and $s > 0$,
    \begin{equation}\label{eqprf:U1:31}
        \e^{\la s} \nn(\boldsymbol x)G(\boldsymbol x,F,\diff (\tau+s),\diff \xi) \diff \boldsymbol y = \nn(\boldsymbol y) G(\boldsymbol y,F,\diff \tau,\diff \xi) \diff \boldsymbol y
    \end{equation}
    as measures on $\dcb(F) \times E^\fpart_\circ$. Since $G(\boldsymbol x,F,\diff \tau,\diff \xi)$ is a locally finite measure on $\dcb(F)$, the LHS in \cref{eqprf:U1:31} is continuous in $s\in [0,\infty)$ w.r.t.\ the topology of vague convergence on the space of locally finite measures on $\dcb(F) \times E^\fpart_\circ$. % on the space of Radon measures on $E^\fpart_\circ  \times \dcb(F)$
    Thus we can let $s \to 0$ in \cref{eqprf:U1:31} and obtain that it also holds for $s = 0$. Since the $\sigma$-algebra on $\dcb(F)$ is countably generated, we can further find for every $\boldsymbol x\in E^\fpart_\circ $ and $s \ge 0$ a single null set $N_{\boldsymbol x,s}\subset E^\fpart_\circ $ such that for all $\boldsymbol y\in E^\fpart_\circ  \setminus N_{\boldsymbol x,s}$, \[
        \e^{\la s} \nn(\boldsymbol x) G(\boldsymbol x,F,\diff (\tau+s),\diff \xi) = \nn(\boldsymbol y) G(\boldsymbol y,F,\diff \tau,\diff \xi)
    \] as measures on $\dcb(F)$, which is \cref{eq:U1:2}.
    If $s = 0$ then this equality is symmetric in $\boldsymbol x$ and $\boldsymbol y$ and must thus already hold for \emph{all} pairs $\boldsymbol x,\boldsymbol y\in E^\fpart_\circ $.
    %Indeed, fix some $\boldsymbol x_0 \in E^\fpart_\circ $ so that $h(\boldsymbol x_0) \coloneqq \widetilde{N}(\boldsymbol x_0) H(\boldsymbol x_0,\cdot ) = h(\boldsymbol y)$ for all $\boldsymbol y \not\in N_{\boldsymbol x,0}$. But then if there was $\boldsymbol y_0$ with $h(\boldsymbol y_0) \neq h(\boldsymbol x_0)$, then $h(\boldsymbol y_0) \neq h(\boldsymbol y)$ for all $\boldsymbol y$, but by assumption $h(\boldsymbol y_0) = h(\boldsymbol y)$ for all $y \not\in N_{\boldsymbol y_0} $.
    Then for $s > 0$, $\boldsymbol x,\boldsymbol y\in E^\fpart_\circ $, and $\boldsymbol y'\not\in N_{\boldsymbol x,s}$,
    \begin{align*}
        \e^{\la s} \nn(\boldsymbol x) G(\boldsymbol x,F,\diff (\tau+s),\diff \xi)
        &= \nn(\boldsymbol y') G(\boldsymbol y',F,\diff \tau,\diff \xi) = \nn(\boldsymbol y) G(\boldsymbol y,F,\diff \tau,\diff \xi).
    \end{align*}
\end{proof}
For the rest of this section, we fix an arbitrary choice for $\nn_\fpart,\, \fpart\in \mathcal{P}$ (which, recall, is unique up to a constant multiple).
%{lem:glem} is now in the appendix

\begin{lemma}\label{lem:Ncontinuous} % we don't actually need this for (local) finiteness of the nu-measures! double check the proof below. if we really don't need it then we can scrap the St C_b \subset C_b assumption
    $\nn_\fpart$ is continuous for every $\fpart\in \mathcal{P}$.
\end{lemma}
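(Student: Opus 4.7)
The plan is to express $\nn_\fpart$ as a ratio of two manifestly continuous functions of $\boldsymbol x$ by combining \cref{eq:U1} with the Feller property, thereby reducing the continuity of $\nn_\fpart$ to standard dominated convergence and the semigroup acting on a well-chosen test function.

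First I would dispose of the case $\la_\fpart = 0$, which by \cref{lem:U1:aux} forces $\nn_\fpart$ to be constant, making the statement trivial. For the remaining case, fix an arbitrary $\boldsymbol x_0 \in E^\fpart_\circ$ and any $s > 0$, and exploit positivity of the Brownian transition density on the compact torus: since $c \coloneqq \inf_{x \in E} p_s(x) > 0$, bounding \cref{eq:U1} from below against $\semi_s((\fpart,\boldsymbol x_0),(\fpart,E^\fpart_\circ))\le 1$ yields the a priori integrability estimate
\[
c^{|\fpart|}\int_{E^\fpart_\circ}\nn_\fpart(\boldsymbol y)\diff \boldsymbol y \le \nn_\fpart(\boldsymbol x_0)\e^{\la_\fpart s} < \infty.
\]

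Next, I would pick any non-negative $g\in C_c(E^\fpart_\circ)$ with $g\not\equiv 0$, and extend it to $f\in C_b(\mathcal{X})$ by $f(\fpart',\boldsymbol y) \coloneqq g(\boldsymbol y)\ind_{\fpart'=\fpart}$; this extension is continuous because $\mathcal{P}$ carries the discrete topology and $g$ has compact support inside the open set $E^\fpart_\circ$. Setting
\[
h(\boldsymbol x) \coloneqq \int g(\boldsymbol y)\nn_\fpart(\boldsymbol y)\prod_{u\in\fpart}p_s(\boldsymbol x_u-\boldsymbol y_u)\diff \boldsymbol y,\qquad r(\boldsymbol x) \coloneqq \semi_s f(\fpart,\boldsymbol x),
\]
the function $h$ is continuous on $E^\fpart_\circ$ by dominated convergence (the integrand is dominated by $\|g\|_\infty \|p_s\|_\infty^{|\fpart|}\nn_\fpart$, which is integrable by the previous bound), while $r$ is continuous by the Feller property. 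Plugging $g$ into \cref{eq:U1} gives $r(\boldsymbol x) = \e^{-\la_\fpart s} h(\boldsymbol x)/\nn_\fpart(\boldsymbol x)$, which rearranges to
\[
\nn_\fpart(\boldsymbol x) = \e^{-\la_\fpart s}\frac{h(\boldsymbol x)}{r(\boldsymbol x)}.
\]
Since $g$, $p_s$, and $\nn_\fpart$ are pointwise positive, both $h$ and $r$ are strictly positive throughout $E^\fpart_\circ$, so the ratio is continuous on $E^\fpart_\circ$, yielding the claim.

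The only mildly delicate point is the a priori integrability of $\nn_\fpart$; once that is in hand, the rest is a routine application of Feller continuity and dominated convergence, and no machinery beyond what was already developed for \cref{lem:U1} is required.
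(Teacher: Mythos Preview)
Your proof is correct and follows essentially the same route as the paper's: both first extract integrability of $\nn_\fpart$ from \cref{eq:U1} via the uniform lower bound on $p_s$, then apply the Feller property to a test function supported on $\{\fpart'=\fpart\}$ and read off $\nn_\fpart$ as a ratio of two continuous positive quantities. The only cosmetic differences are that the paper takes $g\equiv 1$ rather than $g\in C_c$, and does not split off the case $\la_\fpart=0$ (your main argument already covers it, and the constancy claim you invoke is actually proved in \cref{lem:U1:aux2}, not \cref{lem:U1:aux}).
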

\begin{proof}
    Fix $\fpart\in \mathcal{P}$. First note that \cref{eq:U1} implies integrability of $\nn_\fpart$ over $E^{\fpart}_\circ $, because the LHS is integrable, and $\prod_{u\in \fpart}p_s(\boldsymbol x_u-\boldsymbol y_u)$ can be uniformly bounded from below by a positive constant for fixed $s > 0$. Now suppose that $\boldsymbol x_n\to \boldsymbol x$ in $E^\fpart_\circ $, and define $f\in C_b(\mathcal{X})$ by $f(\fpart',\boldsymbol x) \coloneqq \ind_{\left\{ \fpart'= \fpart \right\} }$. Then $\semi_1 f \in C_b(\mathcal{X})$, and
    \begin{align*}
        (S_1 f)(\boldsymbol x_n)
        &= \int_{E^{\fpart}_\circ } \semi_t((\fpart,\boldsymbol x_n),(\fpart,\diff \boldsymbol y))
        = \e^{-\la_\fpart} \frac{1}{\nn_\fpart(\boldsymbol x_n)} \int \nn_\fpart(\boldsymbol y) \prod_{u\in \fpart} p_1(\boldsymbol y_u - \boldsymbol x_n(u)) \diff \boldsymbol y.
    \end{align*}
    By dominated convergence and integrability of $\nn_\fpart$, we can pull the limit $n\to \infty$ into the integral on the RHS, the LHS converges to $(\semi_1 f)(\boldsymbol x)$ because $\semi_1 f$ is continuous, and all involved quantities are positive,
    %it follows that \[
    %    \int \nn_\fpart(\boldsymbol y) \prod_{u\in \fpart} p_1(\boldsymbol y_u-\boldsymbol x_n(u)) \diff \boldsymbol y \tendsto{n\to \infty}  \int \nn_\fpart(\boldsymbol y) \prod_{u\in \fpart}p_1(\boldsymbol y_u-\boldsymbol x_u) \diff \boldsymbol y > 0,
    %\] and because $S_1f$ is continuous, \[
    %    (S_1 f)(\boldsymbol x_n) \to (S_1 f)(\boldsymbol x) = \e^{-\beta} \frac{1}{\nn_\fpart(\boldsymbol x)} \int \nn_\fpart(\boldsymbol y) \prod_{u\in \fpart} p_1(\boldsymbol y_u-\boldsymbol x_u)\diff \boldsymbol y.
    %\]
    so we must have $\nn_\fpart(\boldsymbol x_n) \to \nn_\fpart(\boldsymbol x)$.
\end{proof}

\begin{lemma}\label{lem:U1:aux2}
    Let $\fpart\in \mathcal{P}$. If $\la_\fpart > 0$ then the trivial forest is impossible, and if $\la_\fpart = 0$ then the trivial forest is the only possible forest and $\nn_\fpart$ is constant. In particular, a forest $F\in\mathbb{F}(\mathcal{P})$ with $\la_{\rt(F)} > 0$ is impossible.
\end{lemma}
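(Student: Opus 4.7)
The plan is to handle the two implications separately, in each case exploiting the fact that $\semi_s((\fpart,\boldsymbol x),(\fpart,\cdot))$ is a sub-probability measure and combining this with \cref{eq:U1} and the uniqueness statement of \cref{lem:U1:aux}.

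For the first claim, I would argue by contrapositive: if the trivial forest $F_0\in \mathbb{F}(\fpart)$ is possible, then inside the proof of \cref{lem:U1:aux} it was shown that the choice $\la = 0$ and $\nn(\boldsymbol x) = 1/\P^{\boldsymbol x}(\fr = F_0)$ satisfies \cref{eq:U1}. By the uniqueness of $\la_\fpart$ established there, this forces $\la_\fpart = 0$. Contrapositively, $\la_\fpart > 0$ implies the trivial forest is impossible.

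For the second claim, assume $\la_\fpart = 0$. Integrating \cref{eq:U1} in $\boldsymbol y$ over $E^\fpart_\circ$ gives
\[
    \P^{\boldsymbol x}(\fpart_s = \fpart) = \frac{1}{\nn_\fpart(\boldsymbol x)} \int_{E^\fpart_\circ} \nn_\fpart(\boldsymbol y) \prod_{u\in \fpart} p_s(\boldsymbol x_u - \boldsymbol y_u)\, \diff \boldsymbol y \le 1.
\]
As $s\to \infty$ the heat kernel $p_s$ on the torus converges uniformly to $1$, and $\nn_\fpart$ is integrable on $E^\fpart_\circ$ (observed in the proof of \cref{lem:Ncontinuous}), so dominated convergence yields
\[
    c \coloneqq \int_{E^\fpart_\circ} \nn_\fpart(\boldsymbol y)\, \diff \boldsymbol y \le \nn_\fpart(\boldsymbol x), \qquad \boldsymbol x\in E^\fpart_\circ.
\]
Since $E^\fpart_\circ$ has Lebesgue measure one (its complement in $E^\fpart$ is a null set), integrating this inequality in $\boldsymbol x$ returns $c \le c$, so the non-negative function $\nn_\fpart - c$ has zero integral and must vanish almost everywhere. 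Continuity of $\nn_\fpart$ (\cref{lem:Ncontinuous}) then upgrades this to $\nn_\fpart \equiv c$ on $E^\fpart_\circ$.

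With $\nn_\fpart$ constant, \cref{eq:U1} reduces to $\semi_s((\fpart,\boldsymbol x),(\fpart,\diff \boldsymbol y)) = \prod_{u\in\fpart} p_s(\boldsymbol x_u - \boldsymbol y_u) \diff \boldsymbol y$, whose total mass is one. Hence $\P^{\boldsymbol x}(\fpart_s = \fpart) = 1$ for every $s > 0$ and every $\boldsymbol x\in E^\fpart_\circ$, so no coalescence event occurs and the trivial forest is the unique possible element of $\mathbb{F}(\fpart)$. The main delicacy in the argument is the interchange of the $s\to \infty$ limit with the $\boldsymbol y$-integral, which relies on the integrability of $\nn_\fpart$ together with the uniform-in-$s$ boundedness of the heat kernel $p_s$ on the compact torus; the rest is a bookkeeping consequence of \cref{eq:U1}.
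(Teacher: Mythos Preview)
Your proof is correct. Both you and the paper take the $s\to\infty$ limit of \cref{eq:U1} integrated over $\boldsymbol y$, using that $p_s\to 1$ uniformly on the torus and that $\nn_\fpart$ is integrable. The differences are in how the two conclusions are extracted. For the first claim, the paper identifies the limit of $\P^{\boldsymbol x}(\fpart_s=\fpart)$ as $\P^{\boldsymbol x}(\fr=F_0)$ and reads off that it vanishes when $\la_\fpart>0$; you instead argue by contrapositive via the uniqueness of $\la_\fpart$, which is equally clean. For the second claim, the paper invokes the additional fact from \cref{lem:U1:aux} that $\nn_\fpart(\cdot)\,\P^{\cdot}(\fr=F_0)$ is constant, turning the limit identity into $1=\int \P^{\boldsymbol y}(\fr=F_0)^{-1}\,\diff\boldsymbol y$ and concluding $\P^{\boldsymbol y}(\fr=F_0)=1$ a.e.; you bypass this by using only the sub-probability bound $\P^{\boldsymbol x}(\fpart_s=\fpart)\le 1$, obtaining $c\le \nn_\fpart$ pointwise and then integrating over the unit-measure set $E^\fpart_\circ$ to force equality. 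Your route is slightly more self-contained for the $\la_\fpart=0$ case since it does not need that extra constancy statement. One cosmetic point: your phrase ``uniform-in-$s$ boundedness of $p_s$'' should be read as boundedness for $s$ bounded away from zero (which is all you need and all that is true), since $p_s(0)\to\infty$ as $s\downarrow 0$.
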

\begin{proof}
    Fix $\fpart \in \mathcal{P}$ and $\boldsymbol x\in E^\fpart_\circ $, and let $F_0\in \mathbb{F}(\fpart)$ be the trivial forest. If we integrate \cref{eq:U1} over $\boldsymbol y\in E^\fpart_\circ $ and let $s \to \infty$, we obtain
    \begin{align}\begin{split}\label{eqprf:U1aux2:1}
        \P^{\boldsymbol x}(\fr = F_0)
        &= \lim_{s \to \infty}\frac{\e^{-\la_\fpart s}}{\nn_\fpart(\boldsymbol x)} \int \nn_\fpart(\boldsymbol y) \prod_{u\in \fpart}p_s(\boldsymbol x_u-\boldsymbol y_u) \diff \boldsymbol y
        = \frac{\ind_{\left\{ \la_\fpart = 0 \right\} }}{\nn_\fpart(\boldsymbol x)} \int \nn_\fpart(\boldsymbol y) \diff \boldsymbol y,
    \end{split}\end{align}
    where we used that $p_s \to 1$ uniformly as $s\to \infty$. If $\la_\fpart > 0$ then \cref{eqprf:U1aux2:1} implies $\P^{\boldsymbol x}(\fr = F_0) = 0$ for all $\boldsymbol x\in E^\fpart_\circ $, so $F_0$ is impossible. Otherwise, $\boldsymbol z \mapsto \nn_\fpart(\boldsymbol z) \P^{\boldsymbol z}(\fr = F_0)$ is constant and positive by \cref{lem:U1:aux}, so \cref{eqprf:U1aux2:1} turns into \[
        1 = \int \P^{\boldsymbol y}(\fr = F_0)^{-1} \diff \boldsymbol y,
    \] which implies that $\P^{\boldsymbol y}(\fr = F_0) = 1$ for almost-all $\boldsymbol y\in E^\fpart_\circ $, in particular at least one $\boldsymbol y_0\in E^\fpart_\circ $. Then $\P^{\boldsymbol y_0}(\fr = F) = 0$ for every $F\in \mathbb{F}(\fpart)\setminus \left\{ F_0 \right\} $, so all non-trivial forests are impossible. Thus $\P^{\boldsymbol y}(\fr = F_0) = 1$ for all $\boldsymbol y\in E^\fpart_\circ $, so $\nn_\fpart(\boldsymbol y) = \nn_\fpart(\boldsymbol y) \P^{\boldsymbol y}(\fr = F_0)$ is constant in $\boldsymbol y\in E^\fpart_\circ $.

    The final statement essentially follows from the (strong) Markov property. Let $F \in \mathbb{F}(\fpart)$ be a forest with $\la_{\rt(F)}>0$. Let $\boldsymbol x \in E^{\fpart}_\circ $, and recall that $T\coloneqq \tau_{\rt(F)}\in (0,\infty]$ denotes the random time first time at which $\fpart_t = \rt(F)$. Then,
    \begin{align*}
        \P^{(\fpart,\boldsymbol x)}(\fr = F) 
        &\le \P^{(\fpart,\boldsymbol x)}(T < \infty, \text{ no merge events after $T$})\\
        &= \E^{(\fpart,\boldsymbol x)} \bigg[ \ind_{\left\{ T < \infty \right\}  } \underbrace{\P^{(\rt(F),\boldsymbol X_T)} \left( \fr \text{ is trivial} \right)}_{=0}   \bigg] \\
        &= 0.
    \end{align*}
    In the second step we used the strong Markov property, and in the final step we used that $\la_{\rt(F)} > 0$ and hence, starting from $(\rt(F),\boldsymbol X_T)$, the trivial forest is impossible.
\end{proof}

\begin{theorem}\label{thm:U2}
    There exists a unique family $\boldsymbol \nu\in \nurates$ such that \cref{eq:BSC} holds for every $F\in \mathbb{F}$ and $\boldsymbol x\in E^{\lf(F)}_\circ $, that is \[
        P^{\boldsymbol x}(F, \diff \tau,\diff \xi) = \frac{1}{N^{\boldsymbol \nu}(\boldsymbol x)} \fnu(\tau,\xi \,\vert\,\boldsymbol x) \boldsymbol \nu_F(\diff \xi) \diff \tau
    \] as measures on $\dcb(F)$. For all $\fpart \in \mathcal{P}$, $\la_\fpart = \nut{\fpart}$, and $\nn_\fpart / N^{\boldsymbol \nu}$ is constant on $E^{\fpart}_\circ $.
\end{theorem}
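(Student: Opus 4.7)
The plan is to define the transition measures $\boldsymbol \nu$ from the data supplied by \cref{lem:U1}, verify \cref{eq:BSC} by induction on the number of mergers using the strong Markov property, and finally derive the identities $\la_\fpart = \nut{\fpart}$ and $\nn_\fpart \propto N^{\boldsymbol \nu}$ by matching two expressions for $\semi_t(\boldsymbol x, (\fpart, \diff \boldsymbol y))$. The starting observation is that \cref{eq:U1:2} with $s = 0$ identifies, for each possible $F \in \mathbb{F}(\fpart)$, a locally finite $\boldsymbol y$-independent measure $Q_F(\diff \tau, \diff \xi)$ on $\dcb(F)$, given explicitly by the right-hand side of \cref{eq:U1:2}. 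For a single-merger forest $F = (\fpart_0, \fpart_1)$ (which is possible precisely when $\la_{\fpart_1} = 0$), \cref{eq:U1:2} with general $s > 0$ combined with its $s = 0$ case forces the shift identity $Q_F(\diff(\tau+s), \diff \xi) = e^{-\la_{\fpart_0} s} Q_F(\diff \tau, \diff \xi)$. A disintegration argument, using that the $\tau$-marginal is translation-covariant under multiplication by $e^{\la_{\fpart_0} s}$, yields $Q_F(\diff \tau, \diff \xi) = C_{\fpart_0} e^{-\la_{\fpart_0}\tau} \nu_{\fpart_0,\fpart_1}(\diff \xi) \diff \tau$ for a unique finite measure $\nu_{\fpart_0,\fpart_1}$ on $E_\circ^{\fpart_1 \setminus \fpart_0}$, after absorbing the constant $C_{\fpart_0}$ into the still-free multiplicative normalisation of $\nn_{\fpart_0}$ provided by \cref{lem:U1}. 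Set $\nu_{\fpart_0,\fpart_1} = 0$ for impossible transitions, and for transitions whose single-merger forest is itself impossible because $\la_{\fpart_1} > 0$, apply the same shift argument to the first-merger marginal of longer possible forests $(\fpart_0, \fpart_1) \cdot F''$, with coherence across $F''$ guaranteed by uniqueness of $Q_F$.

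With $\boldsymbol \nu$ in hand, verify \cref{eq:BSC} by induction on the number $m$ of mergers in $F$. For $m = 0$, the $\la_\fpart = 0$ branch of \cref{lem:U1:aux2} gives $\nn_\fpart$ constant and $\nut{\fpart} = 0$, so both sides of \cref{eq:BSC} agree up to the chosen normalisation (and in the $\la_\fpart > 0$ branch the trivial forest is impossible, so both sides vanish). For $m \ge 1$, write $F = (\fpart_0, \fpart_1) \cdot F''$ with $F'' \in \mathbb{F}(\fpart_1, \fpart_m)$ and apply the strong Markov property at the first merger time $T_1 = \tau_{\fpart_1}$. Combining the pre-merger dynamics from \cref{eq:U1}, the first-merger intensity $\nu_{\fpart_0,\fpart_1}$ just constructed, and the inductive hypothesis applied to $F''$ started at the post-merger state $\boldsymbol X_{T_1}$, then integrating out the positions of the surviving lineages $\boldsymbol X_{T_1}\vert_{\fpart_0 \cap \fpart_1}$ via the Brownian convolution identity \cref{eq:fnu_conv}, reproduces exactly the multiplicative structure \cref{eq:BSC}. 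Uniqueness of $\boldsymbol \nu$ then follows since \cref{eq:BSC} for single-merger $F$ determines each $\nu_{\fpart_0,\fpart_1}$. Finally, computing $\semi_t(\boldsymbol x, (\fpart, \diff \boldsymbol y))$ by summing \cref{eq:BSC} over $\mathbb{F}(\fpart, \fpart)$---only the trivial forest contributes, exactly as in \cref{lem:semigroup}---yields $(N^{\boldsymbol \nu}(\boldsymbol y) / N^{\boldsymbol \nu}(\boldsymbol x)) e^{-\nut{\fpart} t} \prod_{u\in\fpart} p_t(\boldsymbol y_u - \boldsymbol x_u) \diff \boldsymbol y$; equating with \cref{eq:U1} and varying $t$ and $\boldsymbol y$ forces $\la_\fpart = \nut{\fpart}$ and $\nn_\fpart / N^{\boldsymbol \nu}$ constant on $E^\fpart_\circ$.

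The hardest step is the single-merger factorisation of $Q_F$ as an exponential in $\tau$ times a measure in $\xi$. The shift identity coming from \cref{eq:U1:2} pins down the $\tau$-marginal, but one must additionally argue that the conditional law on $\dcs(F)$ given $\tau$ is independent of $\tau$; this follows by applying the shift covariance to the joint measure rather than just the marginal, effectively showing that translating $\tau$ only rescales $Q_F$ by $e^{-\la_{\fpart_0} s}$ without reshaping the $\xi$-content. A further technical point is the consistent handling of the one-parameter family of normalisations for each $\nn_\fpart$ left undetermined by \cref{lem:U1}: these must be chosen coherently across all $\fpart$ so that the resulting family $\boldsymbol \nu$ makes $\nn_\fpart / N^{\boldsymbol \nu}$ come out constant (equivalently $\la_\fpart = \nut{\fpart}$) for every $\fpart \in \mathcal{P}$ simultaneously.
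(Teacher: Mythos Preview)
Your overall architecture is close to the paper's: both arguments hinge on the shift identity \cref{eq:U1:2}, the strong Markov property at the first merger time, and an inductive reduction along the forest. The paper, however, inducts on the number of \emph{leaves} $|\lf(F)|$ rather than on the number of mergers $m$, and this choice is load-bearing for exactly the step you flag as hardest.

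The gap is in your definition of $\nu_{\fpart_0,\fpart_1}$ when $\la_{\fpart_1} > 0$, i.e.\ when the single-merger forest $(\fpart_0,\fpart_1)$ is itself impossible. You propose to extract $\nu_{\fpart_0,\fpart_1}$ from ``the first-merger marginal of longer possible forests $(\fpart_0,\fpart_1)\cdot F''$, with coherence across $F''$ guaranteed by uniqueness of $Q_F$''. But $Q_F$ is unique only for each fixed $F$; different continuations $F''$ give genuinely different measures $Q_F$, and the shift argument alone only factors off $\diff s$ from $Q_F(\diff(s\tau),\diff(\boldsymbol z\xi))$, leaving a residual measure on $(\boldsymbol z,\tau,\xi)$ that still depends on $F''$ in a nontrivial way. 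Nothing in what you have written explains why the $\boldsymbol z$-content of this residual is the same across choices of $F''$. The paper resolves this precisely by inducting on $|\fpart_0|$: the continuation $F''$ starts from $\fpart_1$ with $|\fpart_1| < |\fpart_0|$, so by the induction hypothesis one already knows $P^{\boldsymbol w}(F'',\diff\tau',\diff\xi')$ explicitly. One can then equate two expressions for $\P^{\boldsymbol x}(\fr = F,\,\tm\in\diff(s\tau),\,\sp\in\diff(\boldsymbol z\xi),\,\boldsymbol X_s\vert_{\fpart_0\cap\fpart_1}\in\diff\boldsymbol y)$ and invoke a measure-factorisation lemma (\cref{lem:prfunique:1} in the paper) to peel off a kernel $K(\boldsymbol y,\diff\boldsymbol z)$ that is simultaneously independent of $\boldsymbol x$ (from one representation) and independent of $F''$ (from the other). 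This two-sided independence is what makes $\nu_{\fpart_0,\fpart_1}$ well-defined; it does not follow from the shift identity by itself.

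A related issue affects your verification induction on $m$: to carry it out you need the joint law of the first merger time and the full state $\boldsymbol X_{T_1}$, not just the marginal in $\boldsymbol z$. Obtaining this law is again the content of the factorisation step above. If you switch your induction to $|\fpart_0|$, both problems dissolve at once: the inner induction hypothesis supplies the law of the process after the first merger, and the factorisation lemma gives you both $\nu_{\fpart_0,\fpart_1}$ and the first-merger law in one stroke. Your endgame (comparing $\semi_t(\boldsymbol x,(\fpart,\diff\boldsymbol y))$ from \cref{lem:semigroup} with \cref{eq:U1} to read off $\la_\fpart = \nut{\fpart}$ and $\nn_\fpart \propto N^{\boldsymbol \nu}$) is fine.
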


That is, there are choices for $\nn_\fpart,\, \fpart\in \mathcal{P}$ such that $\nn_\fpart = N^{\boldsymbol \nu} \vert_{E^\fpart_\circ }$. If we prove \cref{thm:U2}, then we finished proving the ``only if'' direction of \cref{thm:BSC}.

We first prove a helpful consequence of \cref{lem:U1:aux}. If $F = \{\fpart,\fpart_1^F, \ldots ,\fpart_m^F\}\in \mathbb{F}$ with $m > 1$, $G\coloneqq \{\fpart_1^F, \ldots ,\fpart_m^F\}$, and $(\tau,\xi) \in \dcb(G)$, $s > 0$, $\boldsymbol z\in E^{\fpart_1^F\setminus \fpart}_\circ $, then we define $s\tau \in \dct(F)$ by $\fpart \mapsto s$ and $\fpart^F_i \mapsto s + \tau(\fpart^F_i)$ for $i\in [m]$, and $\boldsymbol z\xi \in \dcs(F)$ in the sense of \cref{eq:joinmaps}.
This defines a bijection
\begin{IEEEeqnarray*}{rCl}
    (0,\infty)\times E^{\fpart^F_1\setminus \fpart}_\circ \times \dcb(G) & \quad \to \quad & \dcb(F),\\
    (s,\boldsymbol z,\tau,\xi) & \mapsto & (s\tau,\boldsymbol z\xi).
\end{IEEEeqnarray*}

\def\hf{H^\circ}
\def\he{H^\circ}
\begin{lemma}\label{lem:HF}
    Let $F = \{\fpart,\fpart_1^F, \ldots ,\fpart_m^F\} \in \mathbb{F}$ be non-trivial, then
    \begin{equation*}%\label{eqdef:HF}
        H_F(\diff \tau,\diff \xi) \coloneqq \e^{\beta_{\fpart}\min \tau} \frac{\nn_\fpart(\boldsymbol x) P^{\boldsymbol x}(F,\diff \tau,\diff \xi)}{\smashop{\prod_{\substack{u\in \fpart \\ \pr_F(u) \neq \emptyset }}} p(\tau_{\pr_F(u)}, \xi_{\pr_F(u)}-\boldsymbol x_u)},
    \end{equation*}
    defines a finite measure on $\dcb(F)$ whose definition does not depend on $\boldsymbol x \in E^{\fpart}_\circ $. If $m = 1$, then there is a finite measure $\he_{\fpart,\fpart_1^F}$ on $E^{\fpart_1^F\setminus \fpart}_\circ $ such that \[
        H_F(\diff \tau,\diff \xi) = \he_{\fpart,\fpart_1^F}(\diff \xi)\diff \tau.
        \] If $m > 1$, so $G \coloneqq \{\fpart_1^F, \ldots ,\fpart_m^F\}$ is non-trivial, then there exists a finite measure $\hf_F$ on $E^{\fpart_1^F\setminus \fpart}_\circ \times \dcb(G)$ such that \[
        H_F(\diff (s\tau),\diff (\boldsymbol z\xi)) = \hf_F(\diff \boldsymbol z,\diff \tau,\diff \xi) \diff s.
    \]
\end{lemma}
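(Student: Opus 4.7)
The strategy is to read off two properties of $H_F$ directly from \cref{eq:U1:2}, and then deduce the factorisation by disintegration. First, setting $s = 0$ in \cref{eq:U1:2} shows that the quantity
\[
    \widetilde H_F(\diff \tau,\diff \xi) \coloneqq \frac{\nn_\fpart(\boldsymbol x) P^{\boldsymbol x}(F, \diff \tau, \diff \xi)}{\smashop{\prod_{\substack{u\in\fpart\\\pr_F(u)\neq\emptyset}}} p(\tau_{\pr_F(u)}, \xi_{\pr_F(u)} - \boldsymbol x_u)},
\]
and hence $H_F = \e^{\beta_\fpart \min \tau} \widetilde H_F$, does not depend on the choice of $\boldsymbol x \in E^\fpart_\circ$. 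Second, setting $\boldsymbol y = \boldsymbol x$ in \cref{eq:U1:2} gives the relation $\widetilde H_F(\diff(\tau+s),\diff \xi) = \e^{-\beta_\fpart s} \widetilde H_F(\diff \tau,\diff \xi)$, so the pushforward of $\widetilde H_F$ under the uniform time shift $\tau \mapsto \tau + s$ equals $\e^{\beta_\fpart s}\widetilde H_F$. Combined with $\min(\tau + s) = \min \tau + s$ (understood as the minimum over $F \setminus \{\lf(F)\}$, i.e.\ the first merge time $\tau(\fpart_1^F)$), the prefactor $\e^{\beta_\fpart \min \tau}$ exactly cancels this scaling, so $H_F$ is invariant under the uniform time shift.

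With these two properties, the decomposition follows by a disintegration argument. Under the bijection $(0,\infty) \times E^{\fpart_1^F \setminus \fpart}_\circ \times \dcb(G) \to \dcb(F)$, $(s, \boldsymbol z, \tau', \xi') \mapsto (s\tau', \boldsymbol z \xi')$ in the case $m > 1$ (or the analogous $(0, \infty) \times E^{\fpart_1^F \setminus \fpart}_\circ \to \dcb(F)$ for $m = 1$), the uniform time shift corresponds exactly to translation of the first coordinate, $s \mapsto s + s_0$, with all remaining coordinates fixed. Any Radon measure on this product that is translation invariant in the first coordinate must, by uniqueness of Haar measure on $(\R, +)$, factor as $\diff s$ tensored with a Radon measure on the remaining coordinates, yielding the claimed representations with $\he_{\fpart, \fpart_1^F}$ and $\hf_F$ respectively.

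It remains to show that $\he_{\fpart, \fpart_1^F}$ and $\hf_F$ are \emph{finite}. For this I would bound $H_F$ on the strip $\{s \in [1, 2]\}$ in the parameterisation above. On this strip every time $\tau_{\pr_F(u)}$ with $u \in \lf(F), \pr_F(u) \neq \emptyset$ is at least $1$, so each transition density factor $p(\tau_{\pr_F(u)}, \cdot)$ is uniformly bounded below by $\inf_{t \ge 1, x \in E} p_t(x) > 0$ (using compactness of $E$ and $p_t \to 1$ uniformly); the prefactor satisfies $\e^{\beta_\fpart \min \tau} \le \e^{2\beta_\fpart}$; and $P^{\boldsymbol x}(F, \cdot)$ is a sub-probability measure. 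Hence $H_F$ restricted to the strip is a finite measure, which by the disintegration is precisely $\he_{\fpart, \fpart_1^F}(E^{\fpart_1^F\setminus \fpart}_\circ)$ (respectively the total mass of $\hf_F$), giving finiteness.

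The principal obstacle I anticipate is the disintegration step: one must first confirm that $H_F$ is a Radon measure on the Polish space $\dcb(F)$, which follows from the boundedness of $\widetilde H_F$ on sets where the time coordinates are bounded away from $0$, and then carefully apply uniqueness of Haar measure to extract the $\diff s$ factor in a way that is compatible with the piecewise structure of $\dcb(F)$ under the parameterisation. A secondary subtlety is the convention that $\min \tau$ in the definition of $H_F$ excludes the leaves (on which $\tau$ vanishes); without this convention the $\e^{\beta_\fpart \min \tau}$ factor would be identically $1$ and the asserted shift invariance would fail.
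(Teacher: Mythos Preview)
Your proposal is correct and follows essentially the same route as the paper: use \cref{eq:U1:2} at $s=0$ for $\boldsymbol x$-independence, combine the general case with $\min(\tau+s)=\min\tau+s$ to obtain translation invariance of $H_F$ in the first merge time, then factor out Lebesgue measure and read off finiteness from the slice $s\in(1,2)$. The only cosmetic difference is that the paper argues directly that $H_F(\cdot,A)$ is a translation-invariant measure on $(0,\infty)$ for each fixed $A$ (hence a multiple of Lebesgue), rather than invoking Radon/Haar language.
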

\begin{proof}
    Abbreviate $\pr = \pr_F$ and $\ch = \ch_F$. If we multiply \cref{eq:U1:2} with $\e^{\la_\fpart \min \tau}$, we get
    \begin{equation}\label{eqprf:HF:1}
        \e^{\la_\fpart \min(\tau + s)} \frac{\nn_\fpart(\boldsymbol x) P^{\boldsymbol x}(F, \diff (\tau+s),\diff \xi)}{\smashop{\prod_{\substack{u\in \fpart\\ \pr(u) \neq \emptyset} }} p(\tau_{\pr(u)} + s,\xi_{\pr(u)} - \boldsymbol x_u)} = \e^{\la_\fpart \min \tau} \frac{\nn_\fpart(\boldsymbol y) P^{\boldsymbol y}(F, \diff \tau,\diff \xi)}{\smashop{\prod_{\substack{u\in \fpart \\ \pr(u) \neq \emptyset }} }p(\tau_{\pr(u)} ,\xi_{\pr(u)} - \boldsymbol y_u)}
    \end{equation}
    for every $\boldsymbol x,\boldsymbol y\in E^\fpart_\circ $ and $s \ge 0$ as measures on $\dcb(F)$. Putting $s = 0$ shows that the definition of $H_F$ does not depend on $\boldsymbol x$, and in general we can rewrite \cref{eqprf:HF:1} as
    \begin{equation}\label{eqprf:HF:2}
        H_F(\diff (\tau+s),\diff \xi) = H_F(\diff \tau,\diff \xi),\qquad s \ge 0,
    \end{equation}
    as measures on $\dcb(F)$.

    Now assume that $m = 1$, then $F = \{\fpart,\fpart_1^F\}$, and $H_F$ is a finite measure on $(0,\infty) \times E^{\fpart_1^F\setminus \fpart}_\circ $ which is translation-invariant in its first component. For $A\subset E^{\fpart_1^F\setminus \fpart}_\circ $ measurable, $H_F(\cdot ,A)$ is a translation-invariant measure on $(0,\infty)$, thus a multiple $\he_{\fpart,\fpart_1^F}(A) \in [0,\infty]$ of Lebesgue measure. Then \[
        \he_{\fpart,\fpart_1^F}(A) = H_F((1,2)\times A),\qquad A \subset E^{\fpart_1^F\setminus \fpart} \text{ measurable},
    \] which implies that $\he_{\fpart,\fpart_1^F}$ is a finite measure. Then $H_F(\diff s,\diff \xi) = \he_{\fpart,\fpart_1^F}(\diff \xi) \diff s $.

    Now suppose that $m > 1$, then \cref{eqprf:HF:2} implies that, for $A \subset E^{\fpart_1^F\setminus \fpart}_\circ \times \dcb(G)$ measurable, \[
        H_F\left(  \left\{ (s\tau,\boldsymbol z\xi) \in \dcb(F)\colon s \in \,\cdot \,, (\boldsymbol z,(\tau,\xi))\in A \right\}  \right)
    \] is a translation-invariant measure on $(0,\infty)$ and thus a multiple of Lebesgue measure, which implies with a similar argument that \[
        H_F(\diff (s\tau),\diff (\boldsymbol z\xi)) = \hf_F(\diff \boldsymbol z,\diff \tau,\diff \xi) \diff s
    \] for a finite measure $\hf_F$ on $E^{\fpart_1^F\setminus \fpart}_\circ  \times \dcb(G)$.
\end{proof}

\def\fmerge{\fpart_\text{m}}
\def\fnomerge{\fpart_\text{n}}
\begin{proof}[Proof of \cref{thm:U2}]
    We use an induction over $|\lf(F)|$ to prove existence of $\boldsymbol \nu$, and comment on uniqueness at the end of the proof. If $\fpart \in \mathcal{P}$ with $|\fpart| = 1$ then the only $F\in \mathbb{F}(\fpart)$ is trivial and \cref{eq:BSC} holds automatically. Furthermore $\la_\fpart = \nut{\fpart} = 0$ by definition, and both $N^{\boldsymbol \nu} \vert_{E^\fpart_\circ }$ (for any $\boldsymbol \nu\in \nurates$ by definition) and $\nn_\fpart$ are constant (\cref{lem:U1:aux2}). Let $\fpart \in \mathcal{P}$ with $|\fpart| \ge 2$ and suppose that $\nu_{\fpart',\fpart''}$ for $\fpart'' < \fpart'$ with $|\fpart'| < |\fpart|$ have already been found such that \cref{eq:BSC} holds for all $F\in \mathbb{F}$ with $|\lf(F)| < |\fpart|$, and such that for all $\fpart'\in \mathcal{P}$ with $|\fpart'| < |\fpart|$, $\la_{\fpart'} = \nut{\fpart'}$, and $\nn_{\fpart'} / N^{\boldsymbol \nu}$ is constant on $E^{\fpart'}_\circ $ (this is a well-defined statement because $N^{\boldsymbol \nu}$ on $E^{\fpart'}_\circ $ only depends on transition measures $\nu_{\fpart_1,\fpart_2}$ with $\fpart_1 \le \fpart'$).

    To complete the inductive step, we need to find finite measures $\nu_{\fpart,\fpart'}$ for $\fpart' \in \mathcal{P}$ with $\fpart' < \fpart$ such that $\nut{\fpart} = \la_\fpart$ and, for some $c_\fpart > 0$,
    \begin{equation}\label{eqprf:U2:100}
        P^{\boldsymbol x}(F,\diff \tau,\diff \xi) = \frac{c_\fpart}{\nn_\fpart(\boldsymbol x)} \fnu(\tau,\xi \,\vert\,\boldsymbol x) \boldsymbol \nu_F(\diff \xi) \diff \tau
    \end{equation}
    as measures on $\dcb(F)$ for every $F\in \mathbb{F}(\fpart)$ and $\boldsymbol x\in E^\fpart_\circ $ (recall~\eqref{eq:nu_F} for the connection between $\boldsymbol \nu_F$ and $\nu_{\fpart,\fpart'}$). Indeed, the left-hand side defines a probability measure on $\dcb(\mathbb{F}(\fpart))$, and $N^{\boldsymbol \nu}$ is, by definition, the correct normalisation, so $c_\fpart / \nn_\fpart = 1 / N^{\boldsymbol \nu}$ on $E^\fpart_\circ $.

    If $\la_{\fpart} = 0$, then all $F\in \mathbb{F}(\fpart)$ except for the trivial forest are impossible (\cref{lem:U1:aux2}), and \cref{eqprf:U2:100} holds for all $F\in \mathbb{F}(\fpart)$ by putting $\nu_{\fpart,\fpart'} = 0$ for all $\fpart' < \fpart$. Then $\nut{\fpart} = 0 = \la_\fpart$, and $\nn_\fpart$ is constant (\cref{lem:U1:aux2}). 

    Assume for the remainder of the inductive step that $\la_\fpart > 0$. Suppose for a moment that the construction in this inductive step will indeed yield $\la_\fpart = \nut{\fpart}$ (which we already know for all $\fpart' < \fpart$ by induction).
    Then by definition of~$f_{\boldsymbol \nu}$---recall~\eqref{eq:fnu} and the following paragraph---the RHS of~\eqref{eqprf:U2:100} is zero for forests $F$ with $\la_{\rt(F)} = \nut{\rt(F)} > 0$, and so is the LHS because such forests are impossible by \cref{lem:U1:aux2}.
    Therefore, when constructing the measures $\nu_{\fpart,\fpart'}$ for $\fpart'\in \mathcal{P}, \fpart' < \fpart$, with $\nut{\fpart} = \la_\fpart$, it suffices to check~\eqref{eqprf:U2:100} only for forests~$F$ with $\la_{\rt(F)}=0$.

    Let $F= \{\fpart,\fpart_1^F, \ldots ,\fpart_m^F\} \in \mathbb{F}$ be such a forest. In particular $ m \ge 1$, since $\la_\fpart > 0$.
    %Abbreviate $\nn = \nn_{\fpart}$, $\la = \la_{\fpart}$, $\pr = \pr_F$, $\ch = \ch_F$. rearranging the definition of $H_F$ gives \[
    %If $m = 1$ and $\la_{\fpart^F_1} > 0$, then $F = \{\fpart,\fpart^F_1\}$ is impossible, and both left- and right-hand side of \cref{eq:BSC} are zero (the latter because $\nut{\fpart^F_1} = \la_{\fpart^F_1} > 0$ by induction hypothesis).
    Suppose that $m=1$, so $\la_{\fpart^F_1} = 0$. We let $\nu_{\fpart,\fpart_1^F} = c_\fpart^{-1} \he_{\fpart,\fpart_1^F}$ for a $c_\fpart>0$ that we specify later. Then rearranging the definition of $H_F$ in \cref{lem:HF} implies
    \[
        P^{\boldsymbol y}(F,\diff \tau,\diff \xi) = \e^{-\la_\fpart \tau} \frac{c_\fpart}{\nn_\fpart(\boldsymbol y)} \prod_{\substack{u\in \fpart \\ \pr(u) \neq \emptyset } } p(\tau_{\pr(u)},\xi_{\pr(u)}-\boldsymbol y_u) \nu_{\fpart,\fpart_1^F}(\diff \xi) \diff \tau,
        \] where we abbreviated $\pr = \pr_F$ and $\ch = \ch_F$. Assuming that $\nut{\fpart} = \la_\fpart$ after choosing the remaining transition measures and $c_\fpart$, this is equivalent to \cref{eqprf:U2:100} (recall the relevant definitions from \cref{eq:fsp,eq:fnu,eq:nu_F}).

        Now suppose $m > 1$. The claim has already been proved for $G = \{\fpart_1^F, \ldots ,\fpart_m^F\}$ by induction hypothesis because $|\fpart_1^F| < |\fpart|$. If $G$ is impossible then we must have $\nu_{\fpart_i^F,\fpart_{i+1}^F} = 0$ for some $i\in [m-1]$ ($\la_{\rt(G)} = 0$ so there is no other way for the RHS in~\eqref{eqprf:U2:100} to be zero for $G$), and since $F$ is then also impossible, both left- and right-hand sides of \cref{eqprf:U2:100} for $F$ are zero. Suppose thus that $G$ is possible, so $\nu_{\fpart_i^F,\fpart_{i+1}^F} \neq 0$ for all $i \in [m-1]$. Let $\fmerge = \fpart\setminus \fpart_1^F$ and $\fnomerge = \fpart_1^F\cap \fpart$, the leaf nodes that did and did not merge in the first transition, respectively.
    Rearranging the definition of $H_F$ in \cref{lem:HF} gives
    \begin{align}\label{eqprf:U2:5}
        P^{\boldsymbol x}(F,\diff &(s\tau),\diff (\boldsymbol z\xi))\nonumber \\
                                          &= \frac{\e^{-\la_\fpart s}}{\nn_\fpart(\boldsymbol x)} H_F(\diff (s \tau),\diff (\boldsymbol z\xi)) \bigg[\smashop{\prod_{\substack{u\in \fpart \\ \pr(u) \neq \emptyset } }} p((s\tau)_{\pr(u)},(\boldsymbol z\xi)_{\pr(u)}-\boldsymbol x_u)\bigg] \nonumber \\
                                          &= \frac{\e^{-\la_\fpart s}}{\nn_\fpart(\boldsymbol x)} \hf_F(\diff \boldsymbol z,\diff \tau,\diff \xi) \diff s \bigg[\smashop{\prod_{\substack{u\in \fnomerge \\ \pr(u) \neq \emptyset } }} p(\tau_{\pr(u)}+s,\xi_{\pr(u)}-\boldsymbol x_u) \prod_{u\in \fmerge} p_s(\boldsymbol x_u-\boldsymbol z_{\pr(u)})\bigg].
    \end{align}

    We now evaluate  % actually if F is trivial there is something to show. should do induction over |\fpart|, and then for fixed \fpart look at all ensuing forests, and case distinguish whether there is a positive chance of any forest etc as in lem:U1. then choose a fixed but arbitrary non-trivial possible forest
    \begin{equation}\label{eqprf:U2:3}
        \P^{\boldsymbol x}(\fr = F, \tm \in \diff (s\tau), \sp \in \diff (\boldsymbol z\xi), \boldsymbol X_s \vert_{\fnomerge} \in \diff \boldsymbol y),
    \end{equation}
    a sub-probability measure on $\dcb(F) \times E^{\fnomerge}_\circ $, in two ways. On the one hand, by \cref{eq:intro:sec:BSC} it equals % exact strong markov property application handy 12 july 10:17
    \begin{multline}\label{eqprf:U2:4}
        P^{\boldsymbol x}(F,\diff (s\tau),\diff (\boldsymbol z\xi)) \kbr_{\boldsymbol x}((F,s\tau,z\xi),\{ \boldsymbol X_s \vert_{\fnomerge}\in \diff \boldsymbol y\})\\
        = P^{\boldsymbol x}(F,\diff (s\tau),\diff (\boldsymbol z\xi)) \prod_{u\in \fnomerge} p_s(\boldsymbol x_u-\boldsymbol y_u) \left[ \frac{p(\tau_{\pr(u)},\boldsymbol y_u - \xi_{\pr(u)})}{p(\tau_{\pr(u)}+s,\boldsymbol x_u - \xi_{\pr(u)})} \right] _{\pr(u) \neq \emptyset } \diff \boldsymbol y.
    \end{multline}
    Let $\tau_0 \coloneqq \inf \left\{ t > 0\colon \fpart_t \neq \fpart_0 \right\} $, and $\tau_{\fpart_1^F}\coloneqq \inf \left\{ t > 0\colon \fpart_t = \fpart_1^F \right\} $, which are both $(\mathcal{F}^{\boldsymbol X}_t)$-stopping times, and write \[
        P^{\boldsymbol x}_{\fpart,\fpart_1^F}(\diff s,\diff z,\diff \boldsymbol y) \coloneqq \P^{\boldsymbol x}(\tau_0=\tau_{\fpart_1^F}\in \diff s, \boldsymbol X_s \in \diff (\boldsymbol y\boldsymbol z)),
    \] a sub-probability measure on $(0,\infty)\times E^{\fpart^F_1}_\circ $ whose total mass is the probability that the first jump is $\fpart \to \fpart_1^F$ when starting at $\boldsymbol x$. For the second evaluation, by the strong Markov property at time $\tau_{\fpart_1^F}$, \cref{eqprf:U2:3} equals $P^{\boldsymbol x}_{\fpart,\fpart_1^F}(\diff s,\diff \boldsymbol z,\diff \boldsymbol y) P^{\boldsymbol y\boldsymbol z}(G,\diff \tau,\diff \xi)$, and equating with \cref{eqprf:U2:4} gives
    \begin{multline*}
        P^{\boldsymbol x}(F,\diff (s\tau),\diff (\boldsymbol z\xi))\diff \boldsymbol y \\
        = \frac{P^{\boldsymbol x}_{\fpart,\fpart_1^F}(\diff s,\diff \boldsymbol z,\diff \boldsymbol y)}{\prod_{u\in \fnomerge}p_s(\boldsymbol x_u-\boldsymbol y_u)} P^{\boldsymbol y\boldsymbol z}(G,\diff \tau,\diff \xi) \prod_{\substack{u\in \fnomerge \\ \pr(u) \neq \emptyset } } \frac{p(\tau_{\pr(u)} + s,\boldsymbol x_u - \xi_{\pr(u)})}{p(\tau_{\pr(u)},\boldsymbol y_u-\xi_{\pr(u)})}.
    \end{multline*}
    %Since the LHS is absolutely continuous w.r.t.\ Lebesgue measure in the $\boldsymbol y$ coordinate, so is $P^{\boldsymbol x}_{\fpart,\fpart_1}(\diff s,\diff \boldsymbol z,\diff \boldsymbol y)$ and we can write \[
    %    P^{\boldsymbol x}_{\fpart,\fpart_1}(\diff s,\diff \boldsymbol z,\diff \boldsymbol y) = P^{\boldsymbol x}_{\fpart,\fpart_1}(\boldsymbol y,\diff s,\diff \boldsymbol z) \diff \boldsymbol y,
    %\] where $P^{\boldsymbol x}_{\fpart,\fpart_1}(\boldsymbol y,\diff s,\diff \boldsymbol z)$ for $\boldsymbol y\in E^{\fnomerge}_\circ $ is a sub-probability measure on $(0,\infty) \times E^{\fmerge}_\circ $ such that $P^{\boldsymbol x}_{\fpart,\fpart_1}(\boldsymbol y, \left\{ \boldsymbol z\colon (\boldsymbol y\boldsymbol z)\not\in E^{\fpart_1}_\circ \right\}) = 0 $.
    Combining with \cref{eqprf:U2:5} gives, after some cancellation,
    \begin{multline*}
        P^{\boldsymbol x}_{\fpart,\fpart_1^F}(\diff s,\diff \boldsymbol z,\diff \boldsymbol y) P^{\boldsymbol y\boldsymbol z}(G,\diff \tau,\diff \xi)
        =  \frac{\e^{-\la_\fpart s}}{\nn_\fpart(\boldsymbol x)}\hf_F(\diff \boldsymbol z,\diff \tau,\diff \xi)\diff s\\
        \times \prod_{u\in \fnomerge}p_s(\boldsymbol x_u-\boldsymbol y_u) \prod_{u\in \fmerge} p_s(\boldsymbol x_u-\boldsymbol z_{\pr(u)}) \smashop{\prod_{\substack{u\in \fnomerge \\ \pr(u) \neq \emptyset } }} p(\tau_{\pr(u)},\boldsymbol y_u-\xi_{\pr(u)}) \diff \boldsymbol y.
    \end{multline*}%
    \def\kz{K}%
    for fixed $\boldsymbol x \in E^{\fpart}_\circ $. By \cref{eq:BSC} which already holds for $G$, $P^{\boldsymbol y\boldsymbol z}(G,\diff \tau,\diff \xi)$ has a strictly positive density $\frac{1}{N^{\boldsymbol \nu}(\boldsymbol y\boldsymbol z)}\fnu(\cdot \,\vert\, \boldsymbol y\boldsymbol z)\vert_{\dcb(G)}$ w.r.t.\ the finite measure $ \boldsymbol \nu_G(\diff \xi)\diff \tau$, so we can apply \cref{lem:prfunique:1} below with $x = (\boldsymbol z,\boldsymbol y)$ and $\zeta = (\tau,\xi)$,
    to obtain existence of a measure $\kz(\diff \boldsymbol y,\diff \boldsymbol z)$ on $E^{\fpart_1^F}_\circ $ (which may depend on $\boldsymbol x$ and $F$) with %beta and the already found nu measures are functions of the Markov process itself
    \begin{align}\label{eqprf:U2:6}
        \kz(\diff \boldsymbol y,\diff \boldsymbol z) \diff s
        &= \e^{\la_\fpart s} \nn_\fpart(\boldsymbol x) \frac{P^{\boldsymbol x}_{\fpart,\fpart_1^F}(\diff s,\diff \boldsymbol y,\diff \boldsymbol z)}{\prod_{u\in \fnomerge}p_s(\boldsymbol x_u-\boldsymbol y_u) \prod_{u\in \fmerge} p_s(\boldsymbol x_u - \boldsymbol z_{\pr(u)})}
    \end{align}
    as measures on $E^{\fpart_1^F}_\circ  \times (0,\infty)$, and
    \begin{align}\label{eqprf:U2:7}
        \kz(\diff \boldsymbol y,\diff \boldsymbol z) \boldsymbol \nu_G(\diff \xi) \diff \tau
        &= \hf_F(\diff \boldsymbol z,\diff \tau,\diff \xi)\frac{N^{\boldsymbol \nu}(\boldsymbol y\boldsymbol z)}{\fnu(\tau,\xi \,\vert\,\boldsymbol y\boldsymbol z)} \prod_{\substack{u\in \fnomerge \\ \pr(u) \neq \emptyset } } p(\tau_{\pr(u)},\boldsymbol y_u-\xi_{\pr(u)}) \diff \boldsymbol y,
    \end{align}
    as measures on $E^{\fpart_1^F}_\circ \times \dcb(G)$. From \cref{eqprf:U2:6} it follows that $\kz$ depends on $F$ only through $(\fpart,\fpart_1^F)$, and from \cref{eqprf:U2:7} it follows that $\kz$ does not depend on $\boldsymbol x$. In other words, had we chosen another $\boldsymbol x\in E^\fpart_\circ $ and another possible forest $F \in \mathbb{F}(\fpart)$ whose first transition is $\fpart\to \fpart_1^F$, we would have obtained the same measure $\kz = \kz_{\fpart,\fpart_1^F}$. % From \cref{eqprf:U2:6} it follows that $P^{\boldsymbol x}_{\fpart,\fpart_1^F}(1) = 0$ for some $\boldsymbol x$ iff $\kz \equiv 0$ iff $P^{\boldsymbol x}_{\fpart,\fpart_1^F}(1) = 0$ for all $\boldsymbol x$, in which case all forests $F' = (\fpart,\fpart_1^F,\ldots )$ are impossible and \cref{eq:BSC} holds if we put $\nu_{\fpart,\fpart_1^F} = 0$. Conversely if $F$ is possible then $P^{\boldsymbol x}_{\fpart,\fpart_1^F}(1) > 0$ so $\kz \neq 0$ Assume now that $\kz \neq 0$.
    %Then with this measure, \cref{eqprf:U2:6} holds for all $\boldsymbol x\in E^\fpart_\circ $, and $\cref{eqprf:U2:7}$ holds for all possible, non-trivial forests $F=(\fpart,\fpart_1^F, \ldots )\in \mathbb{F}(\fpart)$.
    It follows from \cref{eqprf:U2:7} that $K$ is absolutely continuous in its first component, that is there exist measures $\kz(\boldsymbol y,\diff \boldsymbol z)$ on $E^{\fpart_1^F\setminus \fpart}_{\boldsymbol y}\coloneqq \{\boldsymbol z\in E^{\fpart_1^F\setminus \fpart}_\circ\colon \boldsymbol y\boldsymbol z\in E^{\fpart_1^F}_\circ\} $ such that $\boldsymbol y \mapsto K(\boldsymbol y,A)$ is measurable for measurable $A$, and such that
    \begin{equation}\label{eqprf:U2:8}
        K(\boldsymbol y,\diff \boldsymbol z) \boldsymbol \nu_G(\diff \xi)\diff \tau = \hf_F(\diff \boldsymbol z,\diff \tau,\diff \xi)\frac{N^{\boldsymbol \nu}(\boldsymbol y\boldsymbol z)}{\fnu(\tau,\xi \,\vert\,\boldsymbol y\boldsymbol z)} \prod_{\substack{u\in \fnomerge \\ \pr(u) \neq \emptyset } } p(\tau_{\pr(u)}, \boldsymbol y_u - \xi_{\pr(u)})
    \end{equation}
    as measures on $E^{\fpart_1^F\setminus \fpart}_{\boldsymbol y}  \times \dcb(G)$ for Lebesgue-almost all $\boldsymbol y\in E^{\fnomerge}_\circ $. (The fact that we can choose a single null-set outside of which equality holds as measures is because the $\sigma$-algebra of $E^{\fpart_1^F\setminus \fpart}_{\boldsymbol y} \times \dcb(G)$ is countably generated.) If we rearrange \cref{eqprf:U2:8} for $\hf_F$ and plug it into \cref{eqprf:U2:5}, we obtain
    \begin{multline*}
        P^{\boldsymbol x}(F,\diff (s\tau),\diff (\boldsymbol z\xi))\\
                                  %&= \frac{\e^{-\la_\fpart s}}{\nn(\boldsymbol x)} \hf_F(\diff \boldsymbol z,\diff \tau,\diff \xi) \diff s \bigg[\smashop{\prod_{\substack{u\in \fnomerge \\ \pr(u) \neq \emptyset } }} p(\tau_{\pr(u)}+s,\xi_{\pr(u)}-\boldsymbol x_u) \prod_{u\in \fmerge} p_s(\boldsymbol x_u-\boldsymbol z_{\pr(u)})\bigg]\\
        = \frac{\e^{-\la_\fpart s}}{\nn_\fpart(\boldsymbol x)} \bigg[ \smashop{\prod_{\substack{u\in \fnomerge \\ \pr(u) \neq \emptyset }} } \frac{p(\tau_{\pr(u)}+s,\xi_{\pr(u)}-\boldsymbol x_u)}{p(\tau_{\pr(u)}, \xi_{\pr(u)}-\boldsymbol y_u)} \prod_{u\in \fmerge} p_s(\boldsymbol x_u-\boldsymbol z_{\pr(u)})\bigg]\fnu(\tau,\xi \,\vert\,\boldsymbol y\boldsymbol z) \\[-8pt]
                                  \times   \frac{K(\boldsymbol y,\diff \boldsymbol z)}{N^{\boldsymbol \nu}(\boldsymbol y\boldsymbol z)}\boldsymbol \nu_G(\diff \xi) \diff \tau\diff s
    \end{multline*}
    for Lebesgue-almost all $\boldsymbol y$. By the definition of $\fnu$ \cref{eq:fnu}, this is equal to
    \begin{align}\label{eqprf:U2:9}
        P^{\boldsymbol x}(F,\diff (s\tau),\diff (\boldsymbol z\xi)) = \frac{1}{\nn_\fpart(\boldsymbol x)} \fnu(s\tau,\boldsymbol z\xi \,\vert\, \boldsymbol x) \frac{K(\boldsymbol y,\diff \boldsymbol z)}{N^{\boldsymbol \nu}(\boldsymbol y\boldsymbol z)} \boldsymbol \nu_G(\diff \xi) \diff (s\tau).
    \end{align}
    Fix an arbitrary $\boldsymbol y_0\in E^{\fnomerge}_\circ $ for which this holds and put $\nu_{\fpart,\fpart_1^F}(\diff \boldsymbol z) \coloneqq c_\fpart^{-1} \frac{K(\boldsymbol y_0,\diff \boldsymbol z)}{N^{\boldsymbol \nu}(\boldsymbol y_0\boldsymbol z)}$ for the same $c_\fpart > 0$ we introduced before.
    If we divide by $N^{\boldsymbol \nu}(\boldsymbol y_0\boldsymbol z)$ in \cref{eqprf:U2:8} and integrate over $\xi \in \dcs(G)$ and $\tau \in \left\{ 1 \le \tau \le 2 \right\} \subset \dct(G)$ say it follows that $\nu_{\fpart,\fpart_1^F}$ is a finite measure.
    Now \cref{eqprf:U2:9} evaluated for $\boldsymbol y = \boldsymbol y_0$ becomes
    \begin{equation*}
        P^{\boldsymbol x}(F,\diff (s\tau),\diff (\boldsymbol z\xi)) = \frac{c_\fpart}{\nn_\fpart(\boldsymbol x)} \fnu(s\tau,\boldsymbol z\xi \,\vert\, \boldsymbol x) \boldsymbol \nu_F(\diff (\boldsymbol z\xi)) \diff (s\tau),
    \end{equation*}
    which implies \cref{eqprf:U2:100}.

    We have now defined a finite measure $\nu_{\fpart,\fpart'}$ for every $\fpart'\in \mathcal{P}$ with $\fpart' < \fpart$: if $\la_{\fpart'} = 0$, then $\nu_{\fpart,\fpart'}$ is defined in the ``$m=1$ case''. Otherwise, there exists a possible, non-trivial forest $G \in \mathbb{F}(\fpart')$ so that $\nu_{\fpart,\fpart'}$ is defined in the ``$m>1$ case'', in which we explained why the definition of $\nu_{\fpart,\fpart'}$ in that case does not depend on the choice of $G\in \mathbb{F}(\fpart')$.

    It remains to show that we can choose $c_\fpart > 0$ such that $\nut{\fpart} = \la_\fpart$. Since $\la_\fpart > 0$ there must be at least one non-trivial, possible forest $F\in \mathbb{F}(\fpart)$, in which case \cref{eqprf:U2:9} necessitates that the associated $\nu_{\fpart,\fpart_1^F}$ is non-zero, so $\nut{\fpart} > 0$. Thus there exists a unique choice for $c_\fpart > 0$ (which by definition is a linear factor in every non-zero $\nu_{\fpart,\fpart'}$) for which $\nut{\fpart} = \la_\fpart$.

    Finally, the construction of $\boldsymbol \nu$ satisfying \cref{eq:BSC} in this proof was unique. The sceptical reader can prove uniqueness of the transition measures $\boldsymbol \nu$ associated with a Brownian spatial coalescent directly with an induction of a similar structure to this proof.
\end{proof}

We used the following lemma in the proof.
\begin{lemma}\label{lem:prfunique:1}
    Suppose $\Omega_1,\Omega_2,\Omega_3$ are measurable spaces, and \[
        G(\diff s,\diff x) f(x, \zeta) \nu(\diff \zeta) = g(s,x) F(\diff x,\diff \zeta) \mu(\diff s),
        \] where $G$ and $F$ are measures on $\Omega_1\times \Omega_2$ and $\Omega_2\times \Omega_3$ respectively, $f\colon \Omega_2\times \Omega_3\to \R$ and $g\colon \Omega_1\times \Omega_2\to \R$ are strictly positive measurable functions, and $\nu$ and $\mu$ are measures on $\Omega_3$ and $\Omega_1$ respectively, such that there exist $A \subset \Omega_3$ and $B\subset \Omega_1$ with $\mu(A), \nu(B)\in (0,\infty)$. Then there is a measure $H$ on $\Omega_2$ such that \[
        \frac{G(\diff s,\diff x)}{g(s,x)} = H(\diff x) \mu(\diff s),\qquad \frac{F(\diff x,\diff \zeta)}{f(x,\zeta)} = H(\diff x) \nu(\diff \zeta),
    \] respectively as measures on $\Omega_1\times \Omega_2$ and $\Omega_2\times \Omega_3$. % If $F$ and $G$ are $\sigma$-finite then so is $H$. If $\Omega_1,\Omega_2,\Omega_3$ are topological spaces with the Borel $\sigma$-algebra, and $F,G$ are locally finite, then so is $H$.
\end{lemma}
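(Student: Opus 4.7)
The plan is to extract the common measure $H$ by collapsing the master identity on $\Omega_1 \times \Omega_2 \times \Omega_3$ onto each of the two-variable ``faces'' $\Omega_1 \times \Omega_2$ and $\Omega_2 \times \Omega_3$ in turn. To start, I will fix a set $A \subset \Omega_3$ with $\nu(A) \in (0,\infty)$ and set $\phi_A(x) := \int_A f(x,\zeta) \nu(\diff \zeta)$. Since $f > 0$ and $\nu(A) > 0$, the function $\phi_A$ is strictly positive and measurable. Integrating the given identity over $\zeta \in A$ yields the following equality of measures on $\Omega_1 \times \Omega_2$:
\[
    \phi_A(x)\, G(\diff s, \diff x) \;=\; g(s,x)\, F(\diff x, A)\, \mu(\diff s).
\]
Dividing both sides by the strictly positive measurable function $g(s,x)\phi_A(x)$ produces the product-measure identity
\[
    \frac{G(\diff s,\diff x)}{g(s,x)} \;=\; H(\diff x)\, \mu(\diff s), \qquad H(\diff x) := \frac{F(\diff x, A)}{\phi_A(x)},
\]
which is the first desired factorisation.

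For the second, I will substitute $G(\diff s,\diff x) = g(s,x)\, H(\diff x)\, \mu(\diff s)$ back into the original three-variable identity and cancel the strictly positive factor $g(s,x)$, obtaining
\[
    f(x,\zeta)\, H(\diff x)\, \mu(\diff s)\, \nu(\diff \zeta) \;=\; F(\diff x,\diff \zeta)\, \mu(\diff s)
\]
as measures on $\Omega_1 \times \Omega_2 \times \Omega_3$. Integrating over $s \in B$ with $\mu(B) \in (0,\infty)$ and dividing by $\mu(B)$ then gives
\[
    F(\diff x,\diff \zeta) \;=\; f(x,\zeta)\, H(\diff x)\, \nu(\diff \zeta),
\]
which is the claim on $\Omega_2 \times \Omega_3$ (in particular it implies the weaker form stated with the extra factor of $\nu(\diff \zeta)$ on each side).

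The only point requiring care is the routine justification that dividing an equality of (possibly infinite) measures by a strictly positive measurable function $h$ is valid: this is immediate from the fact that $\mu \mapsto h^{-1} \mu$ is a bijection on the cone of measures. I should also remark that the measure $H$ is in fact independent of the auxiliary set $A$, for if $A'$ were chosen instead, producing $H'$ with $G/g = H' \otimes \mu$, then integrating over any $B$ with $\mu(B) \in (0,\infty)$ would force $H = H'$. The hypotheses that $\nu(A), \mu(B) \in (0,\infty)$ for some $A, B$ are exactly what powers the two integrate-and-divide steps, so no obstacle beyond this bookkeeping is anticipated.
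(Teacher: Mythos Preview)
Your approach is essentially the paper's: collapse the three-variable identity onto each two-variable face by integrating out one coordinate. The paper first rewrites the hypothesis as
\[
\frac{G(\diff s,\diff x)}{g(s,x)}\,\nu(\diff\zeta)=\frac{F(\diff x,\diff\zeta)}{f(x,\zeta)}\,\mu(\diff s)
\]
and only then integrates, whereas you integrate first and divide afterwards. That order introduces one technical wrinkle: your $\phi_A(x)=\int_A f(x,\zeta)\,\nu(\diff\zeta)$ need not be finite under the stated hypotheses ($f$ is merely strictly positive measurable, not bounded, and $\nu(A)<\infty$ does not prevent $\phi_A\equiv+\infty$), so ``dividing by $\phi_A$'' is not a priori legitimate. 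The fix is exactly the paper's: divide by $f$ and $g$ before integrating, so that integrating over $\zeta\in A$ gives
\[
\frac{G(\diff s,\diff x)}{g(s,x)}\,\nu(A)=\mu(\diff s)\int_A\frac{F(\diff x,\diff\zeta)}{f(x,\zeta)},
\]
and $H(\diff x):=\nu(A)^{-1}\int_A\frac{F(\diff x,\diff\zeta)}{f(x,\zeta)}$ is automatically a well-defined measure. With that tweak your argument is complete; your substitution step for the second factorisation is in fact a bit cleaner than the paper's verification on rectangles.
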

\begin{proof}
    Rewrite the equality as an equality of measures
    \begin{equation}\label{eq:prfmslem:1}
        \frac{G(\diff s,\diff x)}{g(s,x)} \nu(\diff \zeta) = \frac{F(\diff x,\diff \zeta)}{f(x,\zeta)} \mu(\diff s).
    \end{equation}
    Let $A,B$ be sets of positive and finite measure w.r.t.\ $\mu$ and $\nu$ respectively (by assumption there exists at least one each), and define $H(\diff x)$ by integrating \cref{eq:prfmslem:1} over $s\in A,\zeta \in B$ and multiplying with $\frac{1}{\mu(A) \nu(B)}$, that is \[
        H(\diff x) \coloneqq \frac{1}{\mu(A)}\int_{s\in A} \frac{G(\diff s,\diff x)}{g(s,x)} = \frac{1}{\nu(B)} \int_{\zeta \in B} \frac{F(\diff x,\diff \zeta)}{f(x,\zeta)}.
    \] Note $H$ is independent of $B$ by the first and $A$ by the second equality. To prove the first equality of measures in the claim, take a set $A$ with $\mu(A) \in (0,\infty)$ and $C \subset \Omega_2$. Then,
    \begin{align*}
        \int_{x\in C} \int_{s\in A} H(\diff x) \mu(\diff s)
        = H(C) \mu(A)
        &= \mu(A) \int_C H(\diff x) = \int_{x\in C}\int_{s\in A} \frac{G(\diff s,\diff x)}{g(s,x)}.
    \end{align*}
    If $A$ is a $\mu$-null set, then by integrating \cref{eq:prfmslem:1} over it and some set of positive $\nu$-measure we find that $\int_A \frac{G(\diff s,\diff x)}{g(s,x)} = 0$, so equality holds in this case as well. Since $\mu$ is $\sigma$-finite, we find equality for all measurable sets $A$. The second equality follows similarly.
\end{proof}

Given a Brownian spatial coalescent, say with transition measures $\boldsymbol \nu\in \nurates$, recall that a forest $F\in \mathbb{F}$ is either possible, that is $\P^{\boldsymbol x}(\fr = F) > 0$ for all $\boldsymbol x\in E^{\lf(F)}_\circ $, or impossible, that is $\P^{\boldsymbol x}(\fr = F) = 0$ for all such $\boldsymbol x$. We call a merge event $(\fpart,\fpart')$ \emph{possible} if there exists a possible forest $F = \{\fpart,\fpart', \ldots \}\in \mathbb{F}$, that is if for every $\boldsymbol x\in E^{\fpart}_\circ $ there is a positive probability that the first merge event is $(\fpart,\fpart')$. We collect the following obvious but useful statement.

\begin{lemma}
    A forest $F\in \mathbb{F}$ is possible if and only if $\nu_{\fpart,\fpart'} \neq 0$ for all $(\fpart,\fpart') \in F$ and $\nut{\rt(F)} = 0$. A $(\fpart,\fpart')$-merger is possible if and only if $\nu_{\fpart,\fpart'} \neq 0$.
\end{lemma}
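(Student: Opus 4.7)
The plan is to read the statement straight off the density formula of \cref{thm:BSC}, after verifying that the positive factors entering $\fnu$ are genuinely positive.

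\textbf{First part.} Fix $F \in \mathbb{F}$ and $\boldsymbol x \in E^{\lf(F)}_\circ$. By \cref{thm:BSC},
\begin{equation*}
    \P^{\boldsymbol x}(\fr = F) = P^{\boldsymbol x}(\{F\} \times \dcb(F)) = \frac{1}{N^{\boldsymbol\nu}(\boldsymbol x)} \int_{\dcb(F)} \fnu(\tau,\xi \,\vert\, \boldsymbol x)\, \boldsymbol\nu_F(\diff \xi)\,\diff\tau.
\end{equation*}
I would first dispose of the ``only if'' direction. If $\nut{\rt(F)} \neq 0$, then by the convention introduced just after \cref{eq:fnu} we have $\fnu(\cdot \,\vert\, \boldsymbol x)\vert_{\dcb(F)} \equiv 0$, hence $\P^{\boldsymbol x}(\fr = F) = 0$ and $F$ is impossible. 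If instead $\nu_{\fpart,\fpart'} = 0$ for some $(\fpart,\fpart') \in F$, then by the product definition \cref{eq:nu_F} the measure $\boldsymbol\nu_F$ is the zero measure on $\dcs(F)$, so the same integral vanishes and $F$ is again impossible.

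\textbf{``If'' direction.} Assume $\nut{\rt(F)} = 0$ and $\nu_{\fpart,\fpart'} \neq 0$ for all $(\fpart,\fpart') \in F$. If $F$ is trivial this is immediate from $\P^{\boldsymbol x}(\fr = F) \ge \P^{\boldsymbol x}(\fpart_t = \lf(F)\ \forall t) > 0$, since $\lf(F)$ is then absorbing. If $F$ is non-trivial, note that each exponential factor $\e^{-\nut{\fpart}(\tau_{\fpart'} - \tau_\fpart)}$ in $\fnu$ is strictly positive, and each Brownian transition density $p_t(\cdot )$ is strictly positive on $E$ for $t > 0$, so $\fsp(\xi \,\vert\,\tau,\boldsymbol x) > 0$ everywhere on $\dcb(F)$. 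Hence $\fnu(\tau,\xi \,\vert\,\boldsymbol x) > 0$ pointwise, while $\boldsymbol\nu_F$ is a non-zero finite measure on $\dcs(F)$ and $\diff\tau$ assigns positive mass to any open subset of $\dct(F)$. The integral is therefore strictly positive, showing $F$ is possible.

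\textbf{Second part.} An $(\fpart,\fpart')$-merger is possible iff there exists a possible forest $F$ with $(\fpart,\fpart') \in F$ appearing as its first edge. If $\nu_{\fpart,\fpart'} = 0$, then every such $F$ contains a zero transition measure and is impossible by the first part, so the merger is impossible. Conversely, if $\nu_{\fpart,\fpart'} \neq 0$, I would inductively extend $(\fpart,\fpart')$ to a possible forest as follows: starting from $\fpart_1 = \fpart'$, whenever $\nut{\fpart_i} > 0$ the identity $\nut{\fpart_i} = \sum_{\fpart_{i+1} < \fpart_i} |\nu_{\fpart_i,\fpart_{i+1}}|$ guarantees some $\fpart_{i+1} < \fpart_i$ with $\nu_{\fpart_i,\fpart_{i+1}} \neq 0$; since $|\fpart_i|$ strictly decreases, this terminates at some $\fpart_m$ with $\nut{\fpart_m} = 0$, producing a forest $F = (\fpart, \fpart', \ldots, \fpart_m)$ that satisfies both conditions of the first part and is thus possible. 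There is no real obstacle here; the only point requiring care is the convention that $\fnu$ vanishes on forests with a non-absorbing root, which I would simply cite from the paragraph following \cref{eq:fnu}.
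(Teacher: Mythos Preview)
Your proof is correct and is exactly the natural argument the paper has in mind; the paper does not actually give a proof, describing the statement as ``obvious but useful'' and simply collecting it. One minor stylistic point: in the trivial case of the ``if'' direction you invoke that $\lf(F)$ is absorbing, which, while true, is itself a consequence of the density formula (since $\nut{\lf(F)}=0$ forces every first-edge measure $\nu_{\lf(F),\fpart'}$ to vanish, hence every non-trivial forest is impossible by your ``only if'' direction). It would be slightly cleaner to just continue using \cref{thm:BSC} directly---for trivial $F$ with $\nut{\rt(F)}=0$ the paper's conventions give $\fnu\equiv 1$ and $\boldsymbol\nu_F$ the unit mass on the singleton $\dcs(F)$, so $\P^{\boldsymbol x}(\fr=F)=1/N^{\boldsymbol\nu}(\boldsymbol x)>0$.
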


In particular, if $\nut{\fpart} > 0$ for every $\fpart\in \mathcal{P}$ with $|\fpart| \ge 2$, then the only possible forests are trees. This will be the case for sampling consistent Brownian spatial coalescents (except the trivial one), as we will see in \cref{sec:samplingcon}.

\subsection{Characterisation of Label Invariance}
In preparation for the proof of \cref{thm:samplingconintro}, this section is devoted to a characterisation of label invariance for Brownian spatial coalescents.

For $(n,\vec{k}) \in \mergers$ and $\fpart,\fpart'\in \mathcal{P}$ with $n = |\fpart|$ and $\fpart' < \fpart$, we call $(\fpart,\fpart')$ an  \emph{$(n,\vec{k})$-merger} if $\fpart'$ can be obtained from $\fpart$ by coalescing $m$ disjoint sets $I_1, \ldots ,I_m \subset \fpart$ of sizes $k_1$ to $k_m$ into one each. A non-spatial coalescent with transition rates $\boldsymbol \lambda \in \rates$ is label invariant if and only if there are numbers $\lank \ge 0$ for $(n,\vec{k}) \in \mergers$ such that $\lambda_{\fpart,\fpart'} = \lank$ whenever $(\fpart,\fpart')$ is an $(n,\vec{k})$ merger (recall \cref{def:NSCPintro} and the following paragraph). We show that a similar characterisation holds for Brownian spatial coalescents. For $m\in \N$ define \[
    E^m_\circ = \left\{ \boldsymbol x \in E^m\colon \left[ \{i\} \mapsto \boldsymbol x_i \right] \in E^{\{\{1\}, \ldots ,\{m\}\}}_\circ   \right\} .
\] In particular, $E^1_\circ  = E$.
If $(n,k_1, \ldots ,k_m) \in \mergers$, choose a bijection $\ell \colon [m] \to \fpart'\setminus \fpart$ such that $\ell(i)$ for $i \in [m]$ is the union of $k_i$ distinct elements of $\fpart$, which is unique up to a permutation of pairs $(i,j)$ with $k_i = k_j$. Define $\kappa_{\fpart,\fpart'}\colon E^{\fpart'\setminus \fpart}_\circ \to E^m_\circ $ by $\boldsymbol x \mapsto \boldsymbol x \circ \ell$ (here and in the following, we occasionally identify a vector $\boldsymbol x \in E^m$ with the map $[i \mapsto \boldsymbol x_i] \in E^{[m]}$). Recall the definition of $\labelinv(\fpart_0,\fpart_1)$ from the paragraph preceding \cref{def:labelinv}. We further extend a map $\iota \in \labelinv(\fpart_0,\fpart_1)$ to
\[
    \mathbb{F}(\fpart_0) \to \mathbb{F}(\fpart_1); \quad F = \{\fpart_0^F, \ldots ,\fpart_m^F\} \mapsto \{\iota(\fpart_0^F), \ldots ,\iota(\fpart_m^F)\},
\] and, for every $F \in \mathbb{F}(\fpart_0)$, to
%we further extend it to $\mathbb{F}(\fpart_0) \to \mathbb{F}(\fpart_1)$ for every $F\in \mathbb{F}(\fpart_0)$ to $\dcs(F)$ by $\xi \mapsto \xi \circ \iota^{-1}$, and to $\dct(F)$ by $\iota(\tau) = \tau \circ \iota^{-1}$.
\begin{align*}
    \dcs(F) \to \dcs(\iota(F));& \quad \xi \mapsto \xi \circ \iota^{-1},\\
    \dct(F) \to \dct(\iota(F));& \quad \tau \mapsto \tau \circ \iota^{-1}.
\end{align*}
Finally, this lets us define $\iota$ on $\dcb(\mathbb{F}(\fpart_0))$ by $(F,\tau,\xi) \mapsto (\iota(F),\iota(\tau),\iota(\xi))$.

%denote by $\nkmap_{\fpart,\fpart'}\colon E^{\fpart'\setminus \fpart}_\circ  \to E^m_\circ $ the map that sends $\boldsymbol x\in E^{\fpart'\setminus \fpart}_\circ $ to $\big[i \mapsto \boldsymbol x(I_i)\big]$. It is unique up to a permutation of pairs $(i,j)$ with $k_i = k_j$, and we fix an arbitrary choice for every $(\fpart,\fpart')$.

%\begin{lemma}\label{lem:labelinv}
\begin{restatable}{lemma}{lemlabelinv}\label{lem:labelinv}
    The Brownian spatial coalescent with transition measures $\boldsymbol \nu\in \rates$ is label invariant if and only if for all $\fpart_0,\fpart_1\in \mathcal{P}$ of equal size and $\iota \in \labelinv(\fpart_0,\fpart_1)$, \[
        \forall \fpart' < \fpart_0\colon  \iota \# \nu_{\fpart_0,\fpart'} = \nu_{\iota(\fpart_0),\iota(\fpart')}.
    \]  %and $\fpart' < \fpart_0$, $\iota\#\nu_{\fpart_0,\fpart'} = \nu_{\iota(\fpart_0),\iota(\fpart')}$.
    In that case, there exists for every $(n,\vec{k}) = (n,k_1, \ldots ,k_m) \in \mergers$ a finite measure $\nunk$ on $E^m_\circ $ such that for every $(n,\vec{k})$-merger $(\fpart,\fpart')$, \[
        \nkmap_{\fpart,\fpart'} \# \nu_{\fpart,\fpart'} = \nunk.
    \] If $k_i = k_j$, then $\nunk$ is symmetric in the $i$th and $j$th coordinate.
    %there is a family $\boldsymbol \nu = (\nu_{n,k_1, \ldots ,k_m} \in E^m_\circ \colon n\ge 2, k_1 \ge\ldots \ge k_m \ge 2, \sum_i k_i \le n)$ of finite measures such that, whenever $\fpart, \fpart'\in \mathcal{P}$ are such that $\fpart'$ can be obtained from $\fpart$ by merging $m$ disjoint sets of blocks of sizes $k_1$ to $k_m$ then $\nu_{\fpart,\fpart'} = \nu_{n,k_1, \ldots ,k_n}$.
%\end{lemma}
\end{restatable}

The proof is straightforward and in Appendix~\ref{sec:prooflabelinv}. \Cref{lem:labelinv} justifies that, for a label invariant Brownian spatial coalescent, we identify $\nu_{\fpart,\fpart'}$ for a $(n,\vec{k})$-merger $(\fpart,\fpart')$ with $\nunk$, and by the symmetry of $\nunk$ the choice of the underlying map $\kappa_{\fpart,\fpart'}$ is irrelevant.

\subsection{Characterisation of Sampling Consistency}\label{sec:samplingcon}

In this section we prove \cref{thm:samplingconintro}. Throughout this section, fix a label invariant Brownian spatial coalescent, say with transition measures $\boldsymbol \nu \in \nurates$. Recall in this context the measures $\nunk$, and write $\nut{n} \coloneqq \nut{\fpart}$ for any $\fpart\in \mathcal{P}$ with $|\fpart| = n$, and $\nut{1} \coloneqq 0$.
The following lemma is a core part of the proof. For two finite measures write $m_1\sim m_2$ if $m_1 = c m_2$ for some $c > 0$. %glossary
Also recall the notation introduced in \eqref{eq:Ex}.

\begin{lemma}\label{lem:nulambda}
    Let $\boldsymbol \nu(\diff \xi) = \boldsymbol \lambda \diff \xi$ for some $\boldsymbol \lambda \in \rates$. Then the following are equivalent
    \begin{enumerate}
        \item The non-spatial coalescent with transition rates $\boldsymbol \lambda$ is sampling consistent.
        \item  $\int_{E_{\boldsymbol x}} N^{\boldsymbol \nu}(\boldsymbol xy) \diff y < \infty$ for every $\boldsymbol x\in \mathcal{X}$, and the Brownian spatial coalescent with transition measures $\boldsymbol \nu$ is sampling consistent w.r.t.\ the probability measures $(\mu_{\boldsymbol x})$ satisfying  $\mu_{\boldsymbol x}(\diff y) \sim N^{\boldsymbol \nu}(\boldsymbol xy) \diff y$.
    \end{enumerate}
    In that case, $\int N^{\boldsymbol \nu}(\boldsymbol xy) \diff y = N^{\boldsymbol \nu}(\boldsymbol x)$.
    %\[
    %    \mu_{\boldsymbol x}(\diff y) = \frac{N^{\boldsymbol\nu}(\boldsymbol xy)}{N^{\boldsymbol\nu}(\boldsymbol x)}\diff y.
    %\]
\end{lemma}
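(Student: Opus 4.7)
The plan is to reduce the sampling-consistency equation \cref{eq:samplingcon} to an identity of densities on decorated forests, substitute the explicit formula from \cref{thm:BSC}, and recognise the resulting relation as the non-spatial sampling-consistency recursion for the rates $\boldsymbol\lambda$.

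Because a Brownian spatial coalescent is determined by its decorated-forest law $P^{\boldsymbol x}$ via the kernel $K_{\boldsymbol x}$, and because restrictions of independent Brownian bridges to a sub-family of lineages remain the corresponding Brownian bridges of the smaller forest, \cref{eq:samplingcon} is equivalent to
\[
    \int_{E_{\boldsymbol x}} (\wo u)\# P^{\boldsymbol xy}(\cdot )\,\mu_{\boldsymbol x}(\diff y) = P^{\boldsymbol x}(\cdot )
\]
as measures on $\dcb(\mathbb{F}(\fpart))$, where $\wo u$ strips the leaf $u$ from a decorated forest (deleting the merger vertex immediately above $u$ whenever its removal leaves a single child behind). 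Substituting \cref{eq:BSC} together with $\mu_{\boldsymbol x}(\diff y) \propto N^{\boldsymbol\nu}(\boldsymbol xy)\diff y$ makes the factor $N^{\boldsymbol\nu}(\boldsymbol xy)$ cancel and reduces the problem to verifying
\[
    \smashop{\sum_{F'\mapsto F}}\int \fnu(\tau',\xi'\,\vert\,\boldsymbol xy)\,\boldsymbol\nu_{F'}(\diff\xi')\,\diff\tau'\,\diff y \;=\; \fnu(\tau,\xi\,\vert\,\boldsymbol x)\,\boldsymbol\nu_F(\diff\xi)\,\diff\tau
\]
for every $(F,\tau,\xi)\in \dcb(\mathbb{F}(\fpart))$, where the sum ranges over ways of inserting an extra leaf $u$ into $F$, and the integral is over the coordinates in $(\tau',\xi',y)$ not pinned down by $(F,\tau,\xi)$.

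The extensions $F'\mapsto F$ decompose into three cases: (a) $u$ joins an existing merger vertex of $F$, enlarging one merging block by one; (b) $u$ initiates a new binary merger on an edge of $F$ at some intermediate time $s^*$; or (c) $u$ never merges (only possible if $\nut{n+1}=0$). Within each case the additional spatial integrations over $y$ and, in case (b), over the new merger location collapse via the elementary identities $\int_E p_s(y-z)\diff y = 1$ and $\int_E p_s(\xi-z) p_t(z-\eta)\diff z = p_{s+t}(\xi-\eta)$, rebuilding the spatial factor $\fsp(\xi\,\vert\,\tau,\boldsymbol x)$ of $F$ exactly. Before $u$'s first merger the running lineage count in $F'$ exceeds that of $F$ by one, so each edge of $F$ lying before the first-merger time contributes an extra factor $\e^{-(\nut{n+1}-\nut{n})\cdot}$ in $F'$; integrating the remaining $s^*$-coordinate (in case (b)) against these factors lines up exactly with the combinatorial weights of the Möhle–Sagitov recursion characterising sampling consistency of $\boldsymbol\lambda$ (see \cite{schweinsbergxi,haploid}).

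This proves (i)$\Rightarrow$(ii); the identity $\int N^{\boldsymbol\nu}(\boldsymbol xy)\diff y = N^{\boldsymbol\nu}(\boldsymbol x)$ and the finiteness stated in (ii) then follow by integrating the forest-level identity over $(F,\tau,\xi)\in \dcb(\mathbb{F}(\fpart))$ and using $\int \fnu(\tau,\xi\,\vert\,\boldsymbol x)\boldsymbol\nu_F(\diff \xi)\diff \tau = N^{\boldsymbol\nu}(\boldsymbol x)$. The converse (ii)$\Rightarrow$(i) is easy: projecting \cref{eq:samplingcon} from $\Omega$ to $\Omega_0$ by forgetting all spatial information, the non-spatial projection $\pi\# \P^{\boldsymbol xy}$ depends only on $\dom(\boldsymbol xy)$ and equals the non-spatial coalescent with rates $\boldsymbol\lambda$, so the probability measure $\mu_{\boldsymbol x}$ integrates out to $1$ and one recovers the defining sampling-consistency identity for the non-spatial rates $\boldsymbol\lambda = (|\nunk|)$. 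The main obstacle is the bookkeeping in the forward direction: the three insertion cases, the newly introduced spatial coordinates, and the inflated exponential factors must be kept in alignment so that the Möhle–Sagitov recursion emerges as the precise algebraic condition that makes the forest-level identity hold.
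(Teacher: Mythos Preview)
Your overall strategy---reduce to a density identity on decorated forests, enumerate the ways the extra leaf can attach, integrate out the new spatial coordinates, and read off the non-spatial recursion---is exactly the paper's approach (\cref{lem:nulambda1,lem:nulambda2} via \cref{lem:consPxKx}). Two points, one minor and one a genuine error.

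\textbf{A missing case.} Your enumeration (a)/(b)/(c) omits the case where the extra leaf merges \emph{binary} with an existing lineage \emph{simultaneously} with another merge event in $F$; this is the class $\TTbs$ in the paper (see \cref{fig:tree_induction}, middle). It contributes a separate sum with only the spatial coordinate $z$ free, and without it the forest-level identity does not close up to \cref{eq:consftm}. Relatedly, your first reduction step (``restrictions of independent Brownian bridges to a sub-family of lineages remain the corresponding Brownian bridges of the smaller forest'') is not literally true kernel-by-kernel: under $K_{\boldsymbol xy}(G^\star,\cdot)$ with the extra binary merger at $(s,z)$, the restricted path of $\unew$ is a bridge pinned through $(s,z)$, not an unconditioned bridge. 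The cancellation only happens \emph{after} the $y$-integral kills the factor $p_s(z-y)$ and leaves precisely the bridge's time-$s$ marginal density, which is the content of \cref{lem:bb}. You eventually do that integral, but the justification for the reduction should point there rather than to a false pointwise claim.

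\textbf{The converse argument is wrong.} Your (ii)$\Rightarrow$(i) proof asserts that the projection $\pi\#\P^{\boldsymbol xy}$ onto $\Omega_0$ ``depends only on $\dom(\boldsymbol xy)$ and equals the non-spatial coalescent with rates $\boldsymbol\lambda$''. This is false: by \cref{eq:Pxlambda},
\[
(\tm\#\P^{\boldsymbol x})(F,\diff\tau)=\frac{1}{N^{\boldsymbol\nu}(\boldsymbol x)}\,\ftm(F,\tau)\Bigl(\int_{\dcs(F)}\fsp(\xi\,\vert\,\tau,\boldsymbol x)\diff\xi\Bigr)\diff\tau,
\]
which depends on $\boldsymbol x$ through the inner integral (cf.\ \cref{ex:wmformula}); it is \emph{not} $\ftm(F,\tau)\diff\tau$. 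So you cannot simply ``integrate out $\mu_{\boldsymbol x}$ to $1$'' and recover the non-spatial consistency identity. The paper instead runs the same explicit computations backwards: the path-level identity \cref{eq:consPxKx}, pushed forward along each $\pth_u$ and with the $K$-kernels cancelled via \cref{lem:bb}, yields \cref{eqprf:nulambda2:1}; integrating out $\tau$ forces the normalisation $\int N^{\boldsymbol\nu}(\boldsymbol xy)\diff y=N^{\boldsymbol\nu}(\boldsymbol x)$, and then \cref{eqprf:nulambda2:1} becomes exactly \cref{eq:consftm}. You need this route (or an equivalent one), not the projection shortcut.
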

%
%\begin{lemma}\label{lem:nulambda}
%    Let $\boldsymbol \nu(\diff \xi) = \boldsymbol \lambda \diff \xi$ for some $\boldsymbol \lambda \in \rates$, and $(\mu_{\boldsymbol x})$ be probability measures with $\mu_{\boldsymbol x}(\diff y) \sim N^{\boldsymbol\nu}(\boldsymbol xy) \diff y$. Then the Brownian spatial coalescent with transition measures $\boldsymbol \nu$ is sampling consistent w.r.t.\ $(\mu_{\boldsymbol x})$ if and only if the non-spatial coalescent with transition rates $\boldsymbol \lambda$ is sampling consistent. In that case, \[
%        \mu_{\boldsymbol x}(\diff y) = \frac{N^{\boldsymbol\nu}(\boldsymbol xy)}{N^{\boldsymbol\nu}(\boldsymbol x)}\diff y.
%    \]
%\end{lemma}

\Cref{lem:nulambda} is proved in \cref{sec:lemnulambda}, and implies the ``if'' direction of \cref{thm:samplingconintro}. For the ``only if'' direction it remains to show that sampling consistency of a Brownian spatial coalescent w.r.t.\ some family $(\mu_{\boldsymbol x})$ of probability measures implies $\boldsymbol \nu(\diff \xi) \sim \diff \xi$, and that $\mu_{\boldsymbol x}(\diff y) \sim N^{\boldsymbol\nu}(\boldsymbol xy) \diff y$. That is the content of \cref{sec:consequences}. In the remainder of this section, we set up the notation used in the proofs and make some technical observations related to sampling consistency.

Excluding from now the trivial case where $\nut{n} = 0$ for all $n \in \N$ (so $\nunk = 0$ for all $(n,\vec{k})$), sampling consistency necessitates $\nut{n} > 0$ for all $n \ge 2$, in particular that only trees are possible. This is part of \cref{lem:exmerge} below. We now present a characterisation of all trees $G$ from which a fixed tree $F$ can be ``subsampled'' by removing one leaf. This characterisation will be at the core of our main arguments.

\def\new{^{\scriptscriptstyle\oplus}}
\def\vnew{v_{\scriptscriptstyle\oplus}} %\encircle{$v$}
\def\unew{u_{\scriptscriptstyle\oplus}} %\encircle{$u$}}
\def\newsub{_{\scaleto{\oplus}{3.5pt}}} % for use in subscripts, for example \tau_{u\newsub}
\def\wnew{w_{\scriptscriptstyle\oplus}} %{\scriptstyle\pr(\vnew)}
\def\tnew{\tau_{\scriptscriptstyle\oplus}} %\tau_{\scriptscriptstyle\pr(\!\vnew\!)}
\def\xinew{\xi_{\scriptscriptstyle\oplus}}
\def\Fnew{G} %F\new
\def\Pnew{\fpart\new_{\Fnew}} % state of tree after merge with \vnew
\def\Pnewrs{\fpart\new_{F}} % same state projected on \fpart_0
\def\TTm{\mathbb{T}\new_\textrm{m}(F)} % trees extending F of multiple merge type
\def\TTb{\mathbb{T}\new_\textrm{b}(F)} % trees extending F of binary merge type
\def\TTbs{\mathbb{T}\new_\textrm{sb}(F)} % trees extending F of simultaneous binary merge type
\def\lanew{\la\new_{\Fnew}}
\newcommand{\kbrp}[1]{\kbr^{\textrm{#1}\scriptscriptstyle\oplus}}

For every $\fpart_0 \in \mathcal{P}$ we fix some $\vnew \subset \N$ disjoint from all members of $\fpart_0$ and put $\fpart_0\new \coloneqq \fpart_0 \cup \left\{ \vnew \right\} \in \mathcal{P}$. For $F\in \mathbb{T}(\fpart_0)$ and $\Fnew\in \mathbb{T}(\fpart_0\new)$ we say that $\Fnew$ \emph{extends} $F$ if $F = \Fnew\rsto{\fpart_0} \coloneqq \left\{ \fpart\wo \vnew\colon \fpart\in \Fnew \right\} $ %, that is $F$ is obtained from $\Fnew$ by ``deleting'' $\vnew$
, and write $\mathbb{T}\new(F)$ for the set of such $\Fnew$. In that case, if $(\tau,\xi) \in \dcb(\Fnew)$ then $\tau\rsto{F} \in \dct(F)$ and $\xi\rsto{F} \in \dcs(F)$ denote the decorations induced on $F$, respectively defined by $\fpart \wo \vnew \mapsto \min\{\tau(\fpart')\colon \fpart'\wo\vnew = \fpart\wo\vnew\}$ and $u \setminus \vnew \mapsto \xi_u$; write $\tnew \coloneqq \tau_{\pr_\Fnew(\vnew)}$ and $\xinew\coloneqq \xi_{\pr_{\Fnew}(\vnew)}$ for time and location of the first merge of $\vnew$, see \cref{fig:tree_induction}.

If $G^\star = (G,\tau,\xi)\in \dcb(\mathbb{T})$ and $G\rsto{\fpart_0} = F$ then we denote $G^\star \rsto{\fpart_0} = (G\rsto{\fpart_0},\tau\rsto{F},\xi\rsto{F})$. For $F^\star \in \dcb(\mathbb{T})$ write $\dcb\new(F^\star) = \{G^\star\colon G^\star\rsto{\fpart_0} = F^\star\}$, and for fixed $G$ extending $F$, $\dcb\new(F^\star \,\vert\,G) = \{(G,\tau,\xi) \in \dcb\new(F^\star)\}$.
Write $\Pnew = \max \{\fpart\in \Fnew\colon \vnew\not\in \fpart\} $ for the state of the tree immediately after the first merge involving $\vnew$, and $\Pnewrs \coloneqq \Pnew\wo \vnew$ for the same state projected on $\fpart_0$.
If $\Fnew$ extends $F$, it has to fall within exactly one of the following three classes (see also \cref{fig:tree_induction}).
\begin{figure}
    \centering
    \def\yscale{1.0}
\def\tick{.05}
\def\xroom{3.15}
\def\nodecol{black}
\begin{tikzpicture}[scale=1.2] %[baseline=(current bounding box.north),scale=1.2]
    \coordinate (zero) at (-.5,-.5);
    \coordinate (zero2) at (-.5+\xroom,-.5);
    \coordinate (zero3) at (-.5+2*\xroom,-.5);
    \coordinate (t-l) at (-.5,3.6*\yscale);
    \coordinate (t-l2) at (-.5+\xroom,3.6*\yscale);
    \coordinate (t-l3) at (-.5+2*\xroom,3.6*\yscale);
    \coordinate (b-r) at (9,-.5);

    \draw[->] (zero) -- (t-l);
    \draw[->] (zero2) -- (t-l2);
    \draw[->] (zero3) -- (t-l3);
    \draw[->] (zero) -- (b-r);
    \draw (t-l) node[above] {\footnotesize time};
    \draw (b-r) node[below left] {\footnotesize space};

    \begin{scope}[shift={(0,0)}]
        \coordinate (A) at (0,0);
\coordinate (Ba) at (1.25,0);
\coordinate (Bb) at (2,0);
\coordinate (C) at (0.75,0);

\coordinate (B) at ({(1.25+2)/2},1.5*\yscale);
\coordinate (AB) at ({(1.25+2)/4},3*\yscale);

%% tree connections
\draw[black] (A) to[out=90,in=-90-30] (AB);
\draw[black] (Ba) to[out=90,in=-90-35] (B);
\draw[black] (Bb) to[out=90,in=-90+35] (B);
\draw[black,name path = BtoAB] (B) to[out=90+15,in=-90+40] (AB);
\draw[black] (AB) -- (AB|-0,3.3*\yscale);

\def\ymerge{2.2}
\path[name path = horizontal] (A|-0,\ymerge*\yscale) -- (B|-0,\ymerge*\yscale);
\draw[orange,densely dotted,thick,name intersections={of=BtoAB and horizontal}] (C) to[out=90,in=-90-30] (intersection-1);

%% space and time labels
\draw[dotted,gray,name intersections={of=BtoAB and horizontal}] (-.5,\ymerge*\yscale) -- (intersection-1);
\draw (-.5+\tick,\ymerge*\yscale) -- (-.5-\tick,\ymerge*\yscale) node[left] {\footnotesize$\tnew\!$};

%\draw[dotted,gray] (-.5,0|-B) -- (B);
%\draw (-.5+\tick,0|-B) -- (-.5-\tick,0|-B) node[left] {\footnotesize$\tau(\Pnewrs)$};

\draw[dotted,gray,name intersections={of=BtoAB and horizontal}] (intersection-1|-0,-.5) -- (intersection-1);
\draw[name intersections={of=BtoAB and horizontal}] (intersection-1|-0,-.5+\tick) -- (intersection-1|-0,-.5-\tick) node[below] {\footnotesize$\xinew$};

%\draw[dotted,gray] (C) to (C|-0,-.5);
%\draw (C|-0,-.5+\tick) to (C|-0,-.5-\tick) node[below] {\footnotesize $y$};

%% node labels
\draw[orange] (C) node[below left] {\footnotesize $\vnew$};
\draw (B) node[above right] {\footnotesize $\unew$};
\draw (AB) node[above right] {\footnotesize $\wnew$};

%% node circles
\draw[fill,\nodecol] (A) circle[radius=1.5pt];
\draw[fill,\nodecol] (B) circle[radius=1.5pt];
\draw[fill,\nodecol] (AB) circle[radius=1.5pt];
\draw[fill,\nodecol] (Ba) circle[radius=1.5pt];
\draw[fill,\nodecol] (Bb) circle[radius=1.5pt];
\draw[fill,orange] (C) circle[radius=1.5pt];
\draw[fill,orange,name intersections={of=BtoAB and horizontal}] (intersection-1) circle[radius=1.5pt];
    \end{scope}
    \begin{scope}[shift={(\xroom,0)}]
        \coordinate (A) at (0,0);
\coordinate (Ba) at (1.25,0);
\coordinate (Bb) at (2,0);
\coordinate (C) at (0.75,0);

\coordinate (B) at ({(1.25+2)/2},1.5*\yscale);
\coordinate (AB) at ({(1.25+2)/4},3*\yscale);

%% tree connections
\draw[black,name path = AtoAB] (A) to[out=90,in=-90-30] (AB);
\draw[black] (Ba) to[out=90,in=-90-35] (B);
\draw[black] (Bb) to[out=90,in=-90+35] (B);
\draw[black] (B) to[out=90+15,in=-90+40] (AB);
\draw[black] (AB) -- (AB|-0,3.3*\yscale);

\path[name path = horizontal] (A|-0,1.5*\yscale) -- (B|-0,1.5*\yscale);
\draw[orange,densely dotted,thick,name intersections={of=AtoAB and horizontal}] (C) to[out=90,in=-90+45] (intersection-1);

%% space and time labels

%\draw[dotted,gray] (C) to (C|-0,-.5);
%\draw (C|-0,-.5+\tick) to (C|-0,-.5-\tick) node[below] {\footnotesize $y$};
\draw[dotted,gray] (-.5,1.5*\yscale) -- (B);
\draw (-.5+\tick,1.5*\yscale) -- (-.5-\tick,1.5*\yscale) node[left] {\footnotesize $\tnew\!\!$};
\draw[dotted,gray,name intersections={of=AtoAB and horizontal}] (intersection-1) to (intersection-1|-0,-.5);
\draw[name intersections={of=AtoAB and horizontal}] (intersection-1|-0,-.5+\tick) to (intersection-1|-0,-.5-\tick) node[below] {\footnotesize $\xinew$};

%% node labels
\draw[orange] (C) node[below right] {\footnotesize $\vnew$};
\draw (A) node[below left] {\footnotesize $\unew$};
%\draw[gray,name intersections={of=AtoAB and horizontal}] (intersection-1) node[above right] {\footnotesize $\wnew$};
\draw (AB) node[above right] {\footnotesize $\wnew$};

%% node circles
\draw[fill,\nodecol] (A) circle[radius=1.5pt];
\draw[fill,\nodecol] (B) circle[radius=1.5pt];
\draw[fill,\nodecol] (AB) circle[radius=1.5pt];
\draw[fill,\nodecol] (Ba) circle[radius=1.5pt];
\draw[fill,\nodecol] (Bb) circle[radius=1.5pt];
\draw[fill,orange] (C) circle[radius=1.5pt];
\draw[fill,orange,name intersections={of=AtoAB and horizontal}] (intersection-1) circle[radius=1.5pt];
    \end{scope}
    \begin{scope}[shift={(2*\xroom,0)}]
        \coordinate (A) at (0,0);
\coordinate (Ba) at (1.25,0);
\coordinate (Bb) at (2,0);
\coordinate (C) at (0.75,0);

\coordinate (B) at ({(1.25+2)/2},1.5*\yscale);
\coordinate (AB) at ({(1.25+2)/4},3*\yscale);

%% tree connections
\draw[black] (A) to[out=90,in=-90-30] (AB);
\draw[black] (Ba) to[out=90,in=-90-35] (B);
\draw[black] (Bb) to[out=90,in=-90+35] (B);
\draw[black] (B) to[out=90+15,in=-90+40] (AB);
\draw[black] (AB) -- (AB|-0,3.3*\yscale);

\def\ymerge{2.2}
\draw[orange,densely dotted,thick] (C) to[out=90,in=-90-70] (B);

%% space and time labels

%\draw[dotted,gray] (C) to (C|-0,-.5);
%\draw (C|-0,-.5+\tick) to (C|-0,-.5-\tick) node[below] {\footnotesize $y$};
\draw[dotted,gray] (-.5,1.5*\yscale) -- (B);
\draw (-.5+\tick,1.5*\yscale) -- (-.5-\tick,1.5*\yscale) node[left] {\footnotesize $\tnew\!\!$};
\draw[dotted,gray] (B) to (B|-0,-.5);
\draw (B|-0,-.5+\tick) to (B|-0,-.5-\tick) node[below] {\footnotesize $\xinew$};

%% node labels
\draw[orange] (C) node[below right] {\footnotesize $\vnew$};
%\draw[gray] (B) node[above right] {\footnotesize $\wnew$};

%% node circles
\draw[fill,\nodecol] (A) circle[radius=1.5pt];
\draw[fill,\nodecol] (B) circle[radius=1.5pt];
\draw[fill,\nodecol] (AB) circle[radius=1.5pt];
\draw[fill,\nodecol] (Ba) circle[radius=1.5pt];
\draw[fill,\nodecol] (Bb) circle[radius=1.5pt];

\draw[fill,orange] (C) circle[radius=1.5pt];
    \end{scope}
\end{tikzpicture}
    \caption{There are three ways to extend a fixed tree to an additional leaf. Either the leaf $\vnew$ merges binary with another node $\unew$, at a time at which no merge happens in the original tree (left), or simultaneously with an existing merge event (middle); or $\vnew$ joins an existing merge event (right). The additional degrees of freedom in the tree decoration given that of the underlying tree are, from right to left, none, the location $\xinew$, and the time $\tnew$ and location $\xinew$ of the additional merge event.}
    \label{fig:tree_induction}
\end{figure}

\begin{enumerate}
    \item \emph{\textbf{Multiple merge}}: $\vnew$ is part of a multiple merger (\Cref{fig:tree_induction} right). In this case we can identify $\dcb(\Fnew)$ and $\dcb(F)$, so $\dcb\new(F^\star \,\vert\,G)$ is a singleton.
    \item \emph{\textbf{Binary merge}}: $\vnew$ merges binary with a node $\unew\in \nd(\Fnew)$, and no other merge takes place simultaneously (\Cref{fig:tree_induction} left). In this case, for $F^\star = (F,\tau_0,\xi_0) \in \dcb(\mathbb{F})$ there is a bijection
        \begin{IEEEeqnarray}{rCl}
            \dcb\new(F^\star \,\vert\,G) & \quad \longleftrightarrow \quad & (\tau_0(\Pnewrs),\tau_0(\Pnewrs)^+) \times E,\label{eq:bijb}\\
            (\tau,\xi) & \mapsto & (\tnew,\xinew),\nonumber
        \end{IEEEeqnarray}
        whose inverse we denote $(s,z) \mapsto (s\new\tau_0,z\new\xi_0)$.
        %between $\dcs(\Fnew)$ and $\dcs(F)\times E$ given by $\xi \mapsto (\xi\rsto{F},\xinew)$, the inverse of which we denote by $(\xi,z) \mapsto z\xi$; and between $\dct(\Fnew)$ and the set of $(\tau,s)\in \dct(F)\times (0,\infty)$ with $\tau(\Pnewrs) < s < \tau(\Pnewrs)^+$, given by $\tau \mapsto (\tau\rsto{F},\tnew)$, the inverse of which we denote by $(\tau,s)\mapsto s\tau$.
        In this case $\wnew \coloneqq \pr_{\Fnew}(\unew\cup\vnew) \neq \emptyset $ except if the merge event involving $\vnew$ is the final one, that is $|\Pnew| = 1$.
    \item \emph{\textbf{Simultaneous binary merge}}: $\vnew$ merges binary with a node $\unew\in \nd(\Fnew)$ as part of a simultaneous merge event (\Cref{fig:tree_induction} middle). Then $\wnew \coloneqq \pr_{\Fnew}(\unew\cup\vnew) \neq \emptyset $, and we can identify $\dct(\Fnew)$ and $\dct(F)$, and for $F^\star = (F,\tau_0,\xi_0) \in \dcb(\mathbb{F})$ there is a bijection
        \begin{equation}\label{eq:bijbs}
            \dcb\new(F^\star \,\vert\,G) \longleftrightarrow E_{\xi_0\vert_{\Pnewrs}};\quad
            (\tau,\xi)  \mapsto \xinew,
        \end{equation}
        %\begin{IEEEeqnarray*}{rCl}
        %    \dcb\new(F^\star \,\vert\,G) & \quad \longleftrightarrow \quad & E_{\xi\vert_{\Pnewrs}},\\
        %    (\tau,\xi) & \mapsto & \xinew,
        %\end{IEEEeqnarray*}
        (recall \cref{eq:Ex})
        the inverse of which we denote by $z \mapsto (\tau_0,z\new\xi_0)$.
        %between $\dcs(\Fnew)$ and the set of $(\xi,z) \in \dcs(F)\times E$ with $z \in E_{\xi\vert_{\Pnewrs}}$ given by $\xi \mapsto (\xi\rsto{F},\xinew)$, the inverse of which we denote by $(\xi,z) \mapsto z\xi$. %the set of $(\xi,z)\in \dcs(F)\times E$ with $z\xi\vert_{\Pnewrs}\in E^{\Pnew}_\circ $ given by $\xi \mapsto (\xi\rsto{F},\xinew)$, the inverse of which we denote by $(\xi,z) \mapsto z\xi$.
\end{enumerate}
Denote the sets of such trees by $\TTm$, $\TTb$, and $\TTbs$, respectively. We do not explicitly denote dependence of $\unew$ and $\wnew $ etc.\ on $\Fnew$, but it will always be clear from context.

This allows us to formulate sampling consistency on a more precise technical level. If all transition measures are absolutely continuous (which we will prove is a consequence of sampling consistency in \cref{lem:gnkdz}), then with a slight abuse of notation, we write $\nunk(\diff \boldsymbol z) = \gnk(\boldsymbol z)\diff \boldsymbol z$ and $\boldsymbol \nu_F(\diff \xi) = \boldsymbol \nu_F(\xi)\diff \xi$ etc., as well as $P^{\boldsymbol x}(F^\star) \coloneqq  P^{\boldsymbol x}(F,\tau,\xi) \coloneqq \frac{1}{N^{\boldsymbol \nu}(\boldsymbol x)} \fnu(\tau,\xi \,\vert\,\boldsymbol x) \boldsymbol \nu_F(\xi)$, so that $P^{\boldsymbol x}(F,\diff \tau,\diff \xi) = P^{\boldsymbol x}(F,\tau,\xi) \diff \tau\diff \xi$.

\begin{lemma}\label{lem:consPxKx}
    Let $\boldsymbol \nu \in \nurates$ be a non-trivial family of transition measures that are absolutely continuous w.r.t.\ Lebesgue measure. % and such that $\nut{n} > 0$ for all $n\in \N$.
    Then the associated coalescent process is sampling consistent w.r.t.\ a family of probability measures $(\mu_{\boldsymbol x})_{\boldsymbol x\in \mathcal{X}}$ if and only if $\nut{n} > 0$ for all $n\ge 2$, and for every $\fpart_0\in \mathcal{P}$, $F\in \mathbb{T}(\fpart_0)$, $\boldsymbol x\in E^{\fpart_0}_\circ $, and Lebesgue-a.e.\ $F^\star = (F,\tau,\xi) \in \dcb(F)$,
    \begin{align}
        K_{\boldsymbol x}(F^\star,\cdot )
        &P^{\boldsymbol x}(F,\tau,\xi)
        =  K_{\boldsymbol x}(F^\star,\cdot )\smashoperator[l]{\sum_{G\in \TTm}} \int P^{\boldsymbol x y}(G,\tau,\xi) \mu_{\boldsymbol x}(\diff y)\nonumber\\
        & + \smashoperator[l]{\sum_{G\in \TTb}} \iiint K_{\boldsymbol xy}((G,s\new\tau,z\new\xi),\{\rs{\boldsymbol X}\in\cdot \}) P^{\boldsymbol x y}(G,s\new\tau,z\new\xi) \diff s\diff z \mu_{\boldsymbol x}(\diff y) \label{eq:consPxKx}\\
        & + \smashoperator[l]{\sum_{G\in \TTbs}} \iint K_{\boldsymbol xy}((G,\tau,z\new\xi),\{\rs{\boldsymbol X}\in\cdot \}) P^{\boldsymbol x y}(G,\tau,z\new\xi) \diff z\mu_{\boldsymbol x}(\diff y)\nonumber,
    \end{align}
    where $(\rs{\boldsymbol X}_t) \coloneqq (\boldsymbol X_t \wo \vnew)$, and domains of integration for $s$ and $z$ are as indicated by the bijections \textup{(\ref{eq:bijb})} and \cref{eq:bijbs}.
\end{lemma}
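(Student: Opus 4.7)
The plan is to derive (\ref{eq:consPxKx}) as the $\dcb(F)$-density form of sampling consistency (\ref{eq:samplingcon}) under the Brownian spatial coalescent decomposition $\P^{\boldsymbol x}=P^{\boldsymbol x}\otimes K_{\boldsymbol x}$. Under the absolute continuity of $\boldsymbol \nu$ and $\nut{n}>0$ for all $n\ge 2$, the laws $P^{\boldsymbol x}$ are supported on decorated trees and admit joint Lebesgue densities $P^{\boldsymbol x}(F,\tau,\xi)$ on $\dcb(F)$ for each $F\in\mathbb{T}(\fpart_0)$. The LHS of (\ref{eq:samplingcon}) on a Borel set $A\subset\Omega$ therefore reads $\sum_{F\in\mathbb{T}(\fpart_0)}\int_{\dcb(F)}P^{\boldsymbol x}(F,\tau,\xi)K_{\boldsymbol x}(F^\star,A)\diff\tau\diff\xi$. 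Since the Borel $\sigma$-algebra on $\Omega$ is countably generated, equality of the two $\sigma$-finite measures in (\ref{eq:samplingcon}) is equivalent to equality of their $\diff\tau\diff\xi$-densities (as $\mathcal{M}_1(\Omega)$-valued kernels) on each $\dcb(F)$ outside a single Lebesgue null set.

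The core calculation is the disintegration of $\int\P^{\boldsymbol xy}(\boldsymbol X\wo \vnew\in\cdot)\mu_{\boldsymbol x}(\diff y)$ against $\dc(\boldsymbol X\wo \vnew)$ and the identification of its density at $F^\star$. Since $\dc(\omega\wo \vnew)=\dc(\omega)\rsto{\fpart_0}$, the $\P^{\boldsymbol xy}$-essential preimage of $F^\star$ partitions by extension class $\mathbb{T}\new(F)=\TTm\sqcup\TTb\sqcup\TTbs$. Using the bijections (\ref{eq:bijb}) and (\ref{eq:bijbs}), the preimage over each $G$ is a single point if $G\in\TTm$, a pair $(s,z)\in(\tau(\Pnewrs),\tau(\Pnewrs)^+)\times E$ if $G\in\TTb$, and a single $z\in E_{\xi\vert_{\Pnewrs}}$ if $G\in\TTbs$; integrating Lebesgue measure against the free coordinates produces the three summands of (\ref{eq:consPxKx}) with densities $P^{\boldsymbol xy}(G,\tau,\xi)$, $P^{\boldsymbol xy}(G,s\new\tau,z\new\xi)$, and $P^{\boldsymbol xy}(G,\tau,z\new\xi)$ respectively.

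The delicate piece is the identification of the conditional kernel $K_{\boldsymbol xy}(G^\star,\{\rs{\boldsymbol X}\in\cdot\})$ in each class. For $G\in\TTm$, removing $\vnew$ leaves every branch of $F$ coinciding with a branch of $G$, with matching endpoints in time and space; by the independent-bridge structure in \cref{lem:kbr} the law of $\rs{\boldsymbol X}$ under $K_{\boldsymbol xy}(G^\star,\cdot)$ is therefore precisely $K_{\boldsymbol x}(F^\star,\cdot)$, which is why $K_{\boldsymbol x}(F^\star,\cdot)$ factors out of the $\TTm$-sum. For $G\in\TTb\cup\TTbs$ the branch of $F$ from $\unew$ to $\wnew$ is split in $G$ by the extra internal node $\unew\cup\vnew$ at $(\tnew,\xinew)$, replacing a single Brownian bridge by the concatenation of two independent bridges, so the kernel depends genuinely on the free coordinates and must remain inside the integrals. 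Assembling the three contributions, integrating against $\mu_{\boldsymbol x}(\diff y)$, and matching with $P^{\boldsymbol x}(F,\tau,\xi)K_{\boldsymbol x}(F^\star,\cdot)$ yields (\ref{eq:consPxKx}); the reverse implication is obtained by reversing the disintegration.

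Finally, the necessity of $\nut{n}>0$ for all $n\ge 2$ is obtained by projecting (\ref{eq:samplingcon}) onto $\mathcal{P}$: the projected non-spatial coalescent is sampling consistent with rates $\lambda_n=\nut{n}$, hence by \cref{thm:xiintro} it is either a non-trivial $\Xi$-coalescent (for which $\lambda_n>0$ whenever $n\ge 2$) or the zero coalescent, in which case no tree on $|\fpart_0|\ge 2$ leaves has positive $P^{\boldsymbol x}(F,\cdot)$ and (\ref{eq:consPxKx}) holds vacuously. The main obstacle is the clean bookkeeping of extensions together with the identification $K_{\boldsymbol xy}(G^\star,\{\rs{\boldsymbol X}\in\cdot\})=K_{\boldsymbol x}(F^\star,\cdot)$ for $G\in\TTm$, which transmits the independent-bridge structure from the $(n+1)$-lineage coalescent to the $n$-lineage one and is responsible for the factorisation in the first line of the right-hand side of (\ref{eq:consPxKx}).
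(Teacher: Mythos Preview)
Your disintegration argument---expanding both sides of \cref{eq:samplingcon} via $\P^{\boldsymbol x}=P^{\boldsymbol x}\otimes K_{\boldsymbol x}$, partitioning the preimage of each $F^\star$ under $\rsto{\fpart_0}$ according to $\TTm\sqcup\TTb\sqcup\TTbs$, integrating out the free coordinates via the bijections \cref{eq:bijb} and \cref{eq:bijbs}, and identifying $K_{\boldsymbol xy}(G^\star,\{\rs{\boldsymbol X}\in\cdot\})=K_{\boldsymbol x}(F^\star,\cdot)$ for $G\in\TTm$ via the independent-bridge structure of \cref{lem:kbr}---is correct and is exactly the paper's approach. The paper's proof is terser but invokes the same ingredients.

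There is one gap, in your treatment of the necessity of $\nut{n}>0$. You claim that the projection onto $\mathcal{P}$ is a non-spatial coalescent with total rate $\nut{n}$ at $n$ lineages, and then invoke \cref{thm:xiintro}. But the partition marginal $(\fpart_t)$ of a Brownian spatial coalescent is in general \emph{not} an autonomous Markov chain with exponential rate-$\nut{n}$ holding times: by \cref{lem:U1} and \cref{thm:U2},
\[
\P^{(\fpart,\boldsymbol x)}(\fpart_t=\fpart)=\e^{-\nut{\fpart}t}\,\frac{1}{N^{\boldsymbol\nu}(\boldsymbol x)}\int N^{\boldsymbol\nu}(\boldsymbol y)\prod_{u\in\fpart} p_t(\boldsymbol x_u-\boldsymbol y_u)\,\diff\boldsymbol y,
\]
which depends on the spatial configuration $\boldsymbol x$ through the non-constant ratio of normalisations. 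Hence the holding-time law varies with $\boldsymbol x$, the projected process is not a coalescent in the sense of \cref{def:NSCP} with rates determined by $\boldsymbol\nu$, and \cref{thm:xiintro} does not apply. The paper does not prove $\nut{n}>0$ inside this lemma either; it is deferred to \cref{lem:exmerge}, whose argument works directly with the possible/impossible dichotomy for forests and the monotonicity of possible mergers under subsampling (\cref{lem:nkle}), without ever passing through a non-spatial projection.
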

\begin{proof}
    The fact that sampling consistency implies $\nut{n} > 0$ for all $n\ge 2$ is in \cref{lem:exmerge} below.
    \Cref{eq:consPxKx} is a direct reformulation of \cref{eq:samplingcon} using \cref{eq:defBSC}, the characterisation of $\mathbb{T}\new(F)$ for $F\in \mathbb{T}$, and in order to get an equation pointwise for a.e.\ $F^\star$, the fact that $K_{\boldsymbol x}(F^\star, \{\dc \in A\} \cap (\cdot )) = \ind_{\{F^\star\in A\}} K_{\boldsymbol x}(F^\star,\cdot )$ for all $F^\star \in \dcb(\mathbb{T})$ and measurable $A \subset \dcb(F)$. That it suffices to consider trees is because $\nut{n} > 0$ for $n\ge 2$ makes all forests that are not trees impossible.
\end{proof}

Before moving on to the proof of \cref{lem:nulambda}, we give an explicit description of the law of a non-spatial coalescent when regarded as a random, time decorated forest, which will make it easier to connect with the laws $(P^{\boldsymbol x})$ of a Brownian spatial coalescent. It follows from a simple argument about competing exponential clocks.
\begin{lemma}\label{lem:ftm}
    Let $\boldsymbol \lambda \in \rates$ be the transition rates of a non-spatial coalescent $(\P^{\fpart})_{\fpart\in \mathcal{P}}$, and denote the total jump rate while at $\fpart$ by $\lambda_\fpart\ge 0$. We define $\ftm\colon \dct(\mathbb{F}) \to (0,\infty)$ by $\ftm\big\vert_{\dct(F)}\equiv 0$ if $\lambda_{\rt(F)} > 0$, otherwise
    \begin{equation*}%\label{eq:ftm}
        \ftm(F,\tau) \coloneqq \smashop{\prod_{(\fpart,\fpart') \in F}} \lambda_{\fpart,\fpart'} \e^{-\lambda_\fpart(\tau_{\fpart'}-\tau_\fpart)},\qquad \tau \in \dct(F),
    \end{equation*}
    which is $1$ if $F$ is trivial. Then for $\fpart\in \mathcal{P}$ and $F\in \mathbb{F}(\fpart)$,
    \begin{align*}
        (\tm\#\P^{\fpart})(F,\diff \tau) = \ftm(F,\tau)\diff \tau.
    \end{align*}
\end{lemma}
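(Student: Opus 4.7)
The statement is a direct computation of the joint law of jump times and states visited by a pure-jump continuous-time Markov chain on the countable state space $\mathcal{P}$ with transition rates $(\lambda_{\fpart,\fpart'})$ and total exit rates $\lambda_{\fpart} = \sum_{\fpart' < \fpart} \lambda_{\fpart,\fpart'}$. My plan is therefore to reduce the claim to standard CTMC facts applied to the embedded jump chain of $(\fpart_t)_{t \ge 0}$ under $\P^{\fpart}$, rather than attempting any new probabilistic argument.

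Fix $\fpart_0 \in \mathcal{P}$ and a non-trivial forest $F = (\fpart_0^F, \ldots, \fpart_m^F) \in \mathbb{F}(\fpart_0)$ with $\fpart_0^F = \fpart_0$. The event $\{\fr(\fpart) = F, \tm(\fpart) \in \diff \tau\}$ prescribes exactly that the chain starts at $\fpart_0^F$, stays there for time $\tau_{\fpart_1^F}$, jumps to $\fpart_1^F$, stays there until $\tau_{\fpart_2^F}$, jumps to $\fpart_2^F$, and so on, until it reaches $\fpart_m^F = \rt(F)$ at time $\tau_{\rt(F)}$ and performs no further jumps. I would write this out by applying the strong Markov property at each of the (finitely many) jump times $\tau_{\fpart_i^F}$, which factors the probability into a product of holding-time densities (each $\lambda_{\fpart_{i-1}^F} e^{-\lambda_{\fpart_{i-1}^F}(\tau_{\fpart_i^F} - \tau_{\fpart_{i-1}^F})}$, with the convention $\tau_{\fpart_0^F} = 0$), discrete jump probabilities $\lambda_{\fpart_{i-1}^F, \fpart_i^F} / \lambda_{\fpart_{i-1}^F}$, and the probability that no further jump occurs after time $\tau_{\rt(F)}$. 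The latter is $1$ if $\lambda_{\rt(F)} = 0$ and $0$ otherwise. The $\lambda_{\fpart_{i-1}^F}$ factors cancel pairwise, leaving precisely
\[
    \smashop{\prod_{(\fpart,\fpart') \in F}} \lambda_{\fpart,\fpart'} \e^{-\lambda_\fpart(\tau_{\fpart'}-\tau_\fpart)} \cdot \ind_{\{\lambda_{\rt(F)} = 0\}} \diff \tau,
\]
which is exactly $\ftm(F,\tau)\diff \tau$.

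It remains to handle the trivial case $F = (\fpart_0)$, where $\dct(F)$ is a singleton carrying unit mass by definition, and where $\ftm\vert_{\dct(F)} \equiv 1$ iff $\lambda_{\fpart_0} = 0$. This matches the fact that $\P^{\fpart_0}(\fr = F) = \P^{\fpart_0}(\fpart_t \equiv \fpart_0) = \ind_{\{\lambda_{\fpart_0} = 0\}}$, since $\fpart_0$ is absorbing precisely when its total exit rate vanishes. I expect no real obstacle in the proof; the only minor bookkeeping is to justify the strong Markov applications at the jump times and to note that the product telescopes correctly regardless of the order of the factors $(\fpart,\fpart')$, both of which are routine.
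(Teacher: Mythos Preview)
Your proposal is correct and matches the paper's approach: the paper does not give a detailed proof of this lemma, stating only that ``it follows from a simple argument about competing exponential clocks,'' which is precisely the CTMC holding-time-and-jump-probability factorisation you spell out. Your handling of the trivial-forest case and the absorbing condition $\lambda_{\rt(F)}=0$ is also correct.
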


\subsubsection{Proof of \cref{lem:nulambda}}\label{sec:lemnulambda}

In this section we fix label invariant rates $\boldsymbol \lambda\in \rates$ and define $\boldsymbol \nu \in \nurates$ by $\nunk(\diff \boldsymbol z) \coloneqq \lambda_{n,\smash{\vec{k}}} \diff \boldsymbol z$. Then the laws $(P^{\boldsymbol x})$ that characterise the Brownian spatial coalescent process with transition measures $\boldsymbol \nu$ (recall \cref{eq:BSC}) take the simpler form
\begin{align}\label{eq:Pxlambda}
    P^{\boldsymbol x}(F,\tau,\xi) = \frac{1}{N^{\boldsymbol\nu}(\boldsymbol x)} \ftm(F,\tau) \fsp^F(\xi \,\vert\,\tau,\boldsymbol x),
\end{align}
where in this chapter we will explicitly denote dependence of $\ftm$ and $\fsp$ on $F$. We further remark that a characterisation similar to \cref{lem:consPxKx} holds for non-spatial coalescent processes, which are sampling consistent with transition rates $\boldsymbol \lambda$ if and only if $\lambda_n > 0$ for all $n\ge 2$ and
\begin{equation}\label{eq:consftm}
    \ftm(F,\tau) = \smashoperator[l]{\sum_{G\in \TTb}} \int \ftm(G,s\new\tau) \diff s + \smashoperator{\sum_{G\in \TTbs}} \ftm(G,\tau) + \smashoperator{\sum_{G\in \TTm}} \ftm(G,\tau)
\end{equation}
for every $\fpart_0\in \mathcal{P}$, $F\in \mathbb{T}(\fpart_0)$, and Lebesgue-a.e.\ $\tau \in \dct(F)$, where the RHS is simply a density of $\P^{\fpart_0\new}(\{(G,\tau')\colon G\rsto{\fpart_0} = F, \tau'\rsto{F} \in \cdot \}) $ on $\dct(F)$.

\begin{lemma}\label{lem:nulambda1}
    If the non-spatial coalescent with transition rates $\boldsymbol \lambda$ is sampling consistent, then $\int N^{\boldsymbol\nu}(\boldsymbol xy) \diff y = N^{\boldsymbol\nu}(\boldsymbol x)$ for all $\boldsymbol x\in \mathcal{X}$, and the Brownian spatial coalescent with transition measures $\boldsymbol \nu$ is sampling consistent w.r.t.\ the probability measures defined by $\mu_{\boldsymbol x} = \frac{N^{\boldsymbol \nu}(\boldsymbol xy)}{N^{\boldsymbol \nu}(\boldsymbol x)}\diff y $.
\end{lemma}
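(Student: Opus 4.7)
The plan is to substitute the factored form $P^{\boldsymbol x}(F, \tau, \xi) = \frac{\ftm(F, \tau) \fsp^F(\xi \mid \tau, \boldsymbol x)}{N^{\boldsymbol \nu}(\boldsymbol x)}$ from \cref{eq:Pxlambda}, together with the candidate $\mu_{\boldsymbol x}(\diff y) = \frac{N^{\boldsymbol \nu}(\boldsymbol{xy})}{N^{\boldsymbol \nu}(\boldsymbol x)} \diff y$, into both claims, and observe that the $N^{\boldsymbol \nu}(\boldsymbol{xy})$ factor cancels in each product $P^{\boldsymbol{xy}}(G, \tau', \xi') \mu_{\boldsymbol x}(\diff y)$. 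The main tools are the three-case partition $\mathbb{T}\new(F) = \TTm \sqcup \TTb \sqcup \TTbs$ of extensions of $F \in \mathbb{T}(\fpart_0)$ by one additional leaf $\vnew$, combined with two elementary Brownian identities: $\int_E p_t(y - z) \diff y = 1$ and the convolution identity $\int_E p_a(c - z) p_b(z - d) \diff z = p_{a+b}(c - d)$. These will collapse the extra spatial factors introduced in passing from $F$ to $G$, and together with the non-spatial sampling consistency \cref{eq:consftm} will yield both claims.

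I would first compute the ratio $\fsp^G / \fsp^F$ case by case. In $\TTm$, only a single new factor $p_{\tnew}(y - \xi_\wnew)$ is added, corresponding to the $\vnew$-bridge into the existing merger at $(\tnew, \xi_\wnew)$. In $\TTbs$ and $\TTb$, the single $\unew$-branch factor $p_{\tau_\wnew - \tau_\unew}((\boldsymbol x\xi)_\unew - \xi_\wnew)$ in $\fsp^F$ is replaced by the two-piece product $p_{\tnew - \tau_\unew}((\boldsymbol x\xi)_\unew - z)\, p_{\tau_\wnew - \tnew}(z - \xi_\wnew)$, with $z = \xinew$ the position of the new merger $\unew \cup \vnew$, and the $\vnew$-bridge factor $p_{\tnew}(y - z)$ is also introduced. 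For the first claim, expand $\int_E N^{\boldsymbol \nu}(\boldsymbol{xy}) \diff y = \sum_{F \in \mathbb{T}(\fpart_0)} \sum_{G \in \mathbb{T}\new(F)} \int_E N^G_{\boldsymbol \nu}(\boldsymbol{xy}) \diff y$; inside, integrate $y$ (in all cases), $z$ (in $\TTbs, \TTb$), and $s = \tnew$ (in $\TTb$) using the two identities above. The spatial ratios all collapse to $\fsp^F(\xi \mid \tau, \boldsymbol x)$, and the sum over $G \in \mathbb{T}\new(F)$ of the remaining $\ftm$-terms is precisely the RHS of \cref{eq:consftm}, hence equals $\ftm(F, \tau)$. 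Integrating over $\dcb(F)$ then recovers $N^F_{\boldsymbol \nu}(\boldsymbol x)$, and summing over $F$ gives $N^{\boldsymbol \nu}(\boldsymbol x)$.

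For the second claim, by \Cref{lem:consPxKx} it suffices to verify \cref{eq:consPxKx} for a.e.\ $F^\star$. After the cancellation of $N^{\boldsymbol \nu}(\boldsymbol{xy})$, the crucial step is to show that the joint integration of $\fsp^G(\xi' \mid \tau', \boldsymbol{xy}) \cdot K_{\boldsymbol{xy}}(G^\star, \{\rs{\boldsymbol X} \in \cdot\})$ over the new variables $(y, z)$ (and $s$ in $\TTb$) equals $\fsp^F(\xi \mid \tau, \boldsymbol x) \cdot K_{\boldsymbol x}(F^\star, \cdot)$. This is the bridge-splicing property of Brownian motion: sampling an intermediate point $(s, z)$ on a Brownian bridge from $(\tau_\unew, (\boldsymbol x\xi)_\unew)$ to $(\tau_\wnew, \xi_\wnew)$ with density proportional to $p_{s - \tau_\unew}((\boldsymbol x\xi)_\unew - z)\, p_{\tau_\wnew - s}(z - \xi_\wnew)$ and running independent bridges on the two sub-intervals reproduces the original bridge; and this weight is exactly the $z$-dependent part of $\fsp^G / \fsp^F$. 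Hence integration over $z$ converts the two-piece bridge configuration in $K_{\boldsymbol{xy}}$ into the single $\unew$-branch bridge in $K_{\boldsymbol x}$, while the independent $\vnew$-bridge simply contributes $\int_E p_{\tnew}(y - z) \diff y = 1$. After this reduction, \cref{eq:consftm} multiplied through by $\fsp^F(\xi \mid \tau, \boldsymbol x) K_{\boldsymbol x}(F^\star, \cdot)$ yields the desired identity. The main technical obstacle is verifying the bridge-splicing property as a clean equality of kernels, particularly in the edge cases $\wnew = \emptyset$ (where the upper segment becomes a Brownian motion rather than a bridge) and $\unew \in \lf(F)$ (where $\unew$'s starting point is fixed by $\boldsymbol x$ rather than by $\xi$); in each such case the convolution identity adapts with the appropriate Brownian motion density in place of a bridge density, but the bookkeeping must be done carefully.
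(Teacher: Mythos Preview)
Your approach is essentially the same as the paper's: compute the ratio $\fsp^G/\fsp^F$ in each of the three cases, integrate out the extra variables using the Brownian identities $\int p_t(y-z)\,\diff y = 1$ and the heat-kernel convolution, and reduce to the non-spatial consistency identity \cref{eq:consftm}. The paper differs only in organisation: it deduces $\int N^{\boldsymbol\nu}(\boldsymbol xy)\,\diff y = N^{\boldsymbol\nu}(\boldsymbol x)$ \emph{after} verifying \cref{eq:consPxKx} (by observing both sides must have total mass one), rather than proving the integral identity first; and instead of manipulating the full kernel $K_{\boldsymbol{xy}}(G^\star,\{\rs{\boldsymbol X}\in\cdot\})$ directly, it projects via the single-branch maps $\pth^{F^\star}_u$, exploiting the independence structure of $K_{\boldsymbol x}$ so that the bridge-splicing step becomes a one-dimensional calculation on the affected branch (carried out via \cref{lem:bb}). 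Your direct kernel argument is correct but slightly heavier bookkeeping; the paper's projection makes the edge cases you flag ($\wnew=\emptyset$, $\unew\in\lf(F)$) automatic.
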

%As a preparation, we prove the following.
%\begin{lemma}
%    If $(\fpart_0,\boldsymbol x) \in \mathcal{P}$, $(F,\tau,\xi) \in \dcb(\mathbb{T}(\fpart_0))$, and $G\in \TTb$, then
%    \begin{multline}\label{eqprf:nulambda1:2}
%        \iint \fsp(z\new\xi \,\vert\,s\new\tau,\boldsymbol xy) B^{(\tau_{u\newsub},(\boldsymbol x\xi)_{u\newsub})\to (s,z) \to (\tau_{w\newsub},\xi_{w\newsub})}(\cdot ) \diff z\diff y \\
%        = \fsp(\xi \,\vert\,\tau,\boldsymbol x) B^{(\tau_{u\newsub},(\boldsymbol x\xi)_{u\newsub}) \to (\tau_{w\newsub},\xi_{w\newsub})}(\cdot ).
%    \end{multline}
%    In particular, $\int \fsp(z\new\xi \,\vert\,s\new\tau,\boldsymbol xy) \diff z\diff y = \fsp(\xi \,\vert\,\tau,\boldsymbol x)$.
%\end{lemma}
%\begin{proof}
%    This follows directly from \[
%        \fsp(z\new\xi \,\vert\,s\new\tau,\boldsymbol xy) = \fsp(\xi \,\vert\,\tau,\boldsymbol x) \frac{p_s(y-z) p_{s-\tau_{u\newsub}}(z-(\boldsymbol x\xi)_{u\newsub}) p_{\tau_{w\newsub}-s}(\xi_{w\newsub}-z)}{p_{\tau_{w\newsub}-\tau_{u\newsub}}(\xi_{w\newsub}-(\boldsymbol x\xi)_{u\newsub})}
%    \] and \cref{lem:bb}.
%\end{proof}
A central ingredient in this and many other proofs in this and the following section is the observation stated in the following lemma. It essentially says that, under some assumptions, if $(T,Z)$ is a $(0,\infty)\times E$ valued random variable such that the law of a Brownian bridge from $(0,0)$ to $(T,Z)$, followed by a Brownian motion starting at $(T,Z)$, is the same as that of a Brownian motion starting in $(0,0)$, then $Z \sim \mathcal{N}(0,T)$ conditional on $T$. It is proved in Appendix~\ref{sec:lembbproof}.
\begin{restatable}{lemma}{lembb}\label{lem:bb}
%\begin{lemma}\label{lem:bb}
    Suppose $x_0\in E$, $s_0 > 0$, $f\colon E \times (s_0,\infty) \to (0,\infty)$ is continuous, and $\mu$ is a finite measure on $E$, and that $\int_{s_0}^\infty \int_E f(x,s) \mu(\diff x) \diff s < \infty$. Then \[
        B^{(x_0,s_0)+}(\cdot ) \sim \iint B^{(x_0,s_0)\to(x,s)+}(\cdot ) f(x,s) \mu(\diff x) \diff s
    \] if and only if $f(x,s) \mu(\diff x) \sim p_{s-s_0}(x-x_0) \diff x$ for all $s > s_0$. If further $x_1\in E$, $s_1 > s_0$, and $f\colon E\times (s_0,s_1) \to (0,\infty)$ is continuous, then \[
    B^{(x_0,s_0)\to (x_1,s_1)}(\cdot ) \sim \iint B^{(x_0,s_0) \to (x,s)\to(x_1,s_1)}(\cdot ) f(x,s) \mu(\diff x) \diff s
\] if and only if $f(x,s) \mu(\diff x) \sim p_{s-s_0}(x-x_0) p_{s_1-s}(x_1-x) \diff x$ for all $s\in (s_0,s_1)$.
\end{restatable}

\begin{proof}[Proof of \cref{lem:nulambda1}]
    In that case $\nut{n} = \lambda_n > 0$ for $n\ge 2$, so the assumptions of \cref{lem:consPxKx} are satisfied and we have to confirm \cref{eq:consPxKx}. Note that passing to the total mass in \cref{eq:consPxKx} and then integrating over $(\tau,\xi) \in \dcb(F)$ and summing over $F\in \mathbb{F}(\fpart_0)$ gives $1 = \int \mu_{\boldsymbol x}(\diff y)$ (it equates to taking the total mass in \cref{eq:samplingcon}). In particular, if we prove \cref{eq:consPxKx} for any family of measures, in this case $\mu_{\boldsymbol x}(\diff y) = \frac{N^{\boldsymbol\nu}(\boldsymbol xy)}{N^{\boldsymbol\nu}(\boldsymbol x)}\diff y$, then they must already be probability measures (in particular finite), that is $\int N^{\boldsymbol\nu}(\boldsymbol xy) \diff y = N^{\boldsymbol\nu}(\boldsymbol x)$. Let $\fpart_0\in \mathcal{P}$, $F^\star = (F,\tau,\xi)\in \dcb(\mathbb{T}(\fpart_0))$ and $\boldsymbol x\in E^\fpart_\circ $. The laws on both sides of \cref{eq:consPxKx} are determined by their pushforward under the collection of maps, i.e.\ random variables $(\pth^{F^\star}_u)_{u\in \nd(F)}$. Under both laws this family of random variables is independent, so it suffices to show that \cref{eq:consPxKx} holds after applying a single pushforward $\pth_u\coloneqq \pth^{F^\star}_u$ for a fixed but arbitrary $u\in \nd(F)$. Assume $u$ is not the root, so $w\coloneqq \pr_F(u) \neq \emptyset $, otherwise the proof is similar. Then $\pth_u\#K_{\boldsymbol x}(F^\star,\cdot ) = B^{(\tau_u,(\boldsymbol x\xi)_u) \to (\tau_w,\xi_w)}$. If $G^\star = (G,s\new\tau,z\new\xi) \in \TTb$ with $\unew = u$ then
    \begin{align*}
        \pth_u \#K_{\boldsymbol xy}(G^\star,\{\rs{\boldsymbol X}\in \cdot \}) = B^{(\tau_u,(\boldsymbol x\xi)_u)\to(s,z)\to(\tau_w,\xi_w)},
    \end{align*}
    and
    \begin{align}
        &\iiint B^{(\tau_u,(\boldsymbol x\xi)_u)\to(s,z)\to(\tau_w,\xi_w)} P^{\boldsymbol xy}(G,s\new\tau,z\new\xi) \diff s\diff z\mu_{\boldsymbol x}(\diff y)\nonumber\\
               &= \frac{1}{N^{\boldsymbol\nu}(\boldsymbol x)} \int \ftm(G,s\new\tau) \left( \iint B^{(\tau_u,(\boldsymbol x\xi)_u)\to(s,z)\to(\tau_w,\xi_w)}(\cdot ) \fsp^G(z\new\xi \,\vert\,s\new\tau,\boldsymbol xy)\diff z\diff y\right) \diff s\label{eqprf:nulambda1:1}\\
               &= \frac{1}{N^{\boldsymbol\nu}(\boldsymbol x)} \left(\int \ftm(G,s\new\tau) \diff s\right) B^{(\tau_u,(\boldsymbol x\xi)_u) \to (\tau_w,\xi_w)}(\cdot ) \fsp^F(\xi \,\vert\,\tau,\boldsymbol x) ,\nonumber
    \end{align}
    where the second equality followed directly from \cref{lem:bb} because
    \begin{equation}\label{eqprf:nulambda1:3}
        \fsp^G(z\new\xi \,\vert\,s\new\tau,\boldsymbol xy) = \fsp^F(\xi \,\vert\,\tau,\boldsymbol x) \frac{p_s(y-z) p_{s-\tau_{u}}(z-(\boldsymbol x\xi)_{u}) p_{\tau_{w}-s}(\xi_{w}-z)}{p_{\tau_{w}-\tau_{u}}(\xi_{w}-(\boldsymbol x\xi)_{u})}.
    \end{equation}
    If $\unew \neq u$ then the pushforward of $K_{\boldsymbol xy}(G^\star,\{\rs{\boldsymbol X}\in \cdot \})$ under $\pth_u$ is the same as that of $K_{\boldsymbol x}(F^\star,\cdot )$, and then
    \begin{align*}
        \iiint P^{\boldsymbol xy}(G,s\new\tau,z\new\xi)\diff s\diff z\mu_{\boldsymbol x}(\diff y)
        &= \frac{1}{N^{\boldsymbol\nu}(\boldsymbol x)} \fsp^F(\xi \,\vert\,\tau,\boldsymbol x) \int \ftm(G,s\new\tau) \diff s
    \end{align*}
    follows from \cref{eqprf:nulambda1:1}. Similar calculations can be made for $G\in \TTbs$, and if $G\in \TTm$ then $\fsp^G(\xi \,\vert\,\tau,\boldsymbol xy) = \fsp^F(\xi \,\vert\,\tau,\boldsymbol x) p_{\tnew}(\xinew-y)$ so \[
        \int P^{\boldsymbol xy}(G,\tau,\xi) \mu_{\boldsymbol x}(\diff y) = \frac{1}{N^{\boldsymbol\nu}(\boldsymbol x)} \ftm(G,\tau) \fsp^F(\xi \,\vert\,\tau,\boldsymbol x).
    \] We can thus cancel $\frac{1}{N^{\boldsymbol\nu}(\boldsymbol x)} \fsp^F(\xi \,\vert\,\tau,\boldsymbol x) B^{(\tau_u,(\boldsymbol x\xi)_u \to (\tau_w,\xi_w)}$ to conclude that the pushforward of \cref{eq:consPxKx} under any $u\in \nd(F)$ is equivalent exactly to \cref{eq:consftm} and thus implied by sampling consistency of the non-spatial coalescent with transition rates $\boldsymbol \lambda$.
\end{proof}

\begin{lemma}\label{lem:nulambda2}
    If $\int N^{\boldsymbol \nu}(\boldsymbol xy) \diff y < \infty$ for all $\boldsymbol x\in \mathcal{X}$, and the Brownian spatial coalescent with transition measures $\boldsymbol \nu(\diff \xi) = \boldsymbol \lambda \diff \xi$ is sampling consistent w.r.t.\ the probability measures $(\mu_{\boldsymbol x})$ defined by $\mu_{\boldsymbol x}(\diff y) \sim N^{\boldsymbol \nu}(\boldsymbol xy) \diff y$, then the non-spatial coalescent with transition rates $\boldsymbol \lambda$ is sampling consistent and $\int N^{\boldsymbol\nu}(\boldsymbol xy) \diff y = N^{\boldsymbol\nu}(\boldsymbol x)$.
\end{lemma}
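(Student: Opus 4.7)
The plan is to run the computation in the proof of \cref{lem:nulambda1} in reverse. Write $\mu_{\boldsymbol x}(\diff y) = c_{\boldsymbol x} N^{\boldsymbol\nu}(\boldsymbol xy)\diff y$ for the normalising $c_{\boldsymbol x} > 0$. By \cref{lem:consPxKx}, the hypothesised sampling consistency forces $\lambda_n > 0$ for all $n\ge 2$ and gives the pointwise identity \cref{eq:consPxKx}. Passing to total masses on both sides removes the kernels $K_{\boldsymbol x}, K_{\boldsymbol xy}$ and yields, for every $F\in\mathbb{T}(\fpart_0)$ and a.e.\ $(\tau,\xi)\in\dcb(F)$,
\begin{align*}
    P^{\boldsymbol x}(F,\tau,\xi) &= \smashop{\sum_{G\in\TTm}}\int P^{\boldsymbol xy}(G,\tau,\xi)\mu_{\boldsymbol x}(\diff y) + \smashop{\sum_{G\in\TTb}}\iiint P^{\boldsymbol xy}(G,s\new\tau,z\new\xi)\diff s\diff z\mu_{\boldsymbol x}(\diff y)\\
    &\quad + \smashop{\sum_{G\in\TTbs}}\iint P^{\boldsymbol xy}(G,\tau,z\new\xi)\diff z\mu_{\boldsymbol x}(\diff y).
\end{align*}

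Substituting $P^{\boldsymbol x}(F,\tau,\xi) = N^{\boldsymbol\nu}(\boldsymbol x)^{-1}\ftm(F,\tau)\fsp^F(\xi \,\vert\,\tau,\boldsymbol x)$ and the analogous form for $P^{\boldsymbol xy}$, the crucial cancellation is that the factor $N^{\boldsymbol\nu}(\boldsymbol xy)^{-1}$ in $P^{\boldsymbol xy}$ meets $N^{\boldsymbol\nu}(\boldsymbol xy)$ in $\mu_{\boldsymbol x}(\diff y)$, leaving simply $c_{\boldsymbol x}\diff y$. The spatial integrations over $y$ (and $z$, in the $\TTb$ and $\TTbs$ cases) can then be carried out in closed form by precisely the Chapman--Kolmogorov and total-mass identities for $p_t$ used in \cref{eqprf:nulambda1:3} and its $\TTbs$ analogue, yielding $\iint \fsp^G(\cdot \,\vert\,\cdot ,\boldsymbol xy)\diff y\diff z = \fsp^F(\xi \,\vert\,\tau,\boldsymbol x)$ in each case. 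Cancelling this strictly positive common factor reduces the identity to
\begin{equation*}
    \frac{1}{N^{\boldsymbol\nu}(\boldsymbol x)}\ftm(F,\tau) = c_{\boldsymbol x}\Bigg[\smashop{\sum_{G\in\TTm}}\ftm(G,\tau) + \smashop{\sum_{G\in\TTb}}\int\ftm(G,s\new\tau)\diff s + \smashop{\sum_{G\in\TTbs}}\ftm(G,\tau)\Bigg].
\end{equation*}

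Since the left-hand side is independent of $\boldsymbol x$, so is $c_{\boldsymbol x}N^{\boldsymbol\nu}(\boldsymbol x)$ on $E^{\fpart_0}_\circ$; call it $C_{\fpart_0}$. Integrating the identity over $\tau\in\dct(F)$ and summing over $F\in\mathbb{T}(\fpart_0)$ forces $1 = C_{\fpart_0}\cdot 1$, because under $\lambda_n > 0$ for $n\ge 2$ both $\P^{\fpart_0}$ and $\P^{\fpart_0\new}$ almost-surely reach a single root. Hence $c_{\boldsymbol x} = 1/N^{\boldsymbol\nu}(\boldsymbol x)$, which combined with $\mu_{\boldsymbol x}(E) = 1$ yields $\int N^{\boldsymbol\nu}(\boldsymbol xy)\diff y = N^{\boldsymbol\nu}(\boldsymbol x)$; and the residual identity is exactly \cref{eq:consftm}, so the non-spatial coalescent with rates $\boldsymbol\lambda$ is sampling consistent.

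The main obstacle I anticipate is the $\TTbs$ case: the identification $\dct(\Fnew) = \dct(F)$ and the simultaneous-merger structure make it least transparent how $\fsp^G$ decomposes relative to $\fsp^F$ and how its integral over $z$ collapses, though conceptually this is the same Chapman--Kolmogorov manipulation as in the $\TTb$ case with one fewer free time variable. Integrability of all interchanged integrals is guaranteed by the hypothesis $\int N^{\boldsymbol\nu}(\boldsymbol xy)\diff y < \infty$.
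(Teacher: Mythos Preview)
Your proposal is correct and follows essentially the same route as the paper: pass to total mass in \cref{eq:consPxKx}, use the heat-kernel identities from the proof of \cref{lem:nulambda1} to collapse the spatial integrals and cancel $\fsp^F$, then integrate out $\tau$ and sum over $F$ to identify the normalisation and recover \cref{eq:consftm}. One small slip: your claim ``the left-hand side is independent of $\boldsymbol x$'' is backwards---it is the bracket on the right that is independent of $\boldsymbol x$, and this is what forces $c_{\boldsymbol x}N^{\boldsymbol\nu}(\boldsymbol x)$ to be constant; the paper avoids this detour by writing $c_{\boldsymbol x}=\big(\int N^{\boldsymbol\nu}(\boldsymbol xy)\diff y\big)^{-1}$ from the outset and integrating the reduced identity directly.
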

\begin{proof}
    The assumptions of \cref{lem:consPxKx} are satisfied, so \cref{eq:consPxKx} holds with \[
        \mu_{\boldsymbol x}(\diff y) = \frac{N^{\boldsymbol\nu}(\boldsymbol xy)}{\int N^{\boldsymbol\nu}(\boldsymbol xy')\diff y'}\diff y,\quad \boldsymbol x\in \mathcal{X}.
    \] By the same calculations made in the proof of \cref{lem:nulambda1}, this implies
    \begin{multline}\label{eqprf:nulambda2:1}
        \frac{1}{N^{\boldsymbol\nu}(\boldsymbol x)} \ftm(F,\tau) \\[-10pt]
        = \frac{1}{\int N^{\boldsymbol\nu}(\boldsymbol xy) \diff y} \left(\sum_{G\in \TTb} \int \ftm(G,s\new\tau) \diff s + \smashoperator{\sum_{G\in \TTbs}} \ftm(G,\tau) + \smashoperator{\sum_{G\in \TTm}} \ftm(G,\tau)\right)
    \end{multline}
    for every $\fpart_0\in \mathcal{P}$, $F\in \mathbb{T}(\fpart_0)$, and $\tau\in \dct(F)$. Integrating out $\tau\in \dct(F)$ gives $N^{\boldsymbol\nu}(\boldsymbol x) = \int N^{\boldsymbol\nu}(\boldsymbol xy) \diff y$, and then \cref{eqprf:nulambda2:1} turns into \cref{eq:consftm} which implies sampling consistency of the non-spatial coalescent with transition rates $\boldsymbol \lambda$.
\end{proof}

\Cref{lem:nulambda1,lem:nulambda2} together imply \cref{lem:nulambda}.

%\subsubsection{Sampling Consistency implies Assumption of \cref{lem:nulambda}}
\subsubsection{Consequences of Sampling Consistency}\label{sec:consequences}

In this section we fix some label invariant Brownian spatial coalescent, say with transition measures $\boldsymbol \nu \in \nurates$, and assume it is sampling consistent w.r.t.\ some family of probability measures $(\mu_{\boldsymbol x})_{\boldsymbol x\in \mathcal{X}}$. The goal is to prove $\boldsymbol \nu(\diff \xi) \sim \diff \xi$, and that $\mu_{\boldsymbol x}(\diff y) \sim N^{\boldsymbol\nu}(\boldsymbol xy) \diff y$ for every $\boldsymbol x\in \mathcal{X}$.

We write $(n,k_1, \ldots ,k_m) \le (n',k_1', \ldots ,k_{m'}')$ if there is a way to sample $n$ particles out of the $n'$ particles in an $(n',\vec{k}')$-merger restricted to which we observe an $(n,\vec{k})$-merger. That is, if $n \le n'$ and $m \le m'$ and there exists an assignment of $n$ particles to buckets of sizes $k_0',k_1', \ldots ,k_{m'}$ (where $k_0'=n' - \sum k_i'$) such that the numbers of particles in buckets $1$ to $m'$ have sizes $k_1$ to $k_m$ (ignoring empty buckets, and without maintaining order).

\begin{lemma}\label{lem:nkle}
    \begin{enumerate}
        \item If $(n,\vec{k}) \le (n',\vec{k}')$ and $(n',\vec{k}')$ is possible, then so is $(n,\vec{k})$.
        \item If $(n,\vec{k})$ is possible and $n' > n$, then there exists $\vec{k}'$ such that $(n',\vec{k}') \ge (n,\vec{k})$ and $(n',\vec{k}')$ is possible.
    \end{enumerate}
\end{lemma}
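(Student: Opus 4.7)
My plan is to prove both parts by induction on $n'-n$, reducing each to the case $n' = n+1$; the general case then follows by iterating through an intermediate $(n'',\vec{k}'')$. The main tool will be sampling consistency (\cref{def:samplingcon}), or equivalently its projected form \cref{lem:consPxKx}, which identifies the $\fpart$-coalescent with the $\wo u$-projection of the $\fpart^+$-coalescent once the additional particle is integrated over $\mu_{\boldsymbol x}$.

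For (i), I would fix an $(n+1,\vec{k}')$-merger $(\fpart^+,\fpart'^+)$ with $\nu_{\fpart^+,\fpart'^+}\ne 0$. Unpacking the relation $(n,\vec{k})\le(n+1,\vec{k}')$ exhibits a particle $u\in\fpart^+$ sitting in the bucket that is ``left unfilled'' by the witnessing assignment, so that $(\fpart,\fpart'):=(\fpart^+\wo u,\fpart'^+\wo u)$ is a genuine $(n,\vec{k})$-merger. Sampling consistency with this $u$ yields $\P^{\boldsymbol x}(\cdot)=\int\mu_{\boldsymbol x}(\diff y)\,\P^{\boldsymbol xy}((\cdot)\wo u)$; because the $\boldsymbol xy$-coalescent has positive probability of first transition $(\fpart^+,\fpart'^+)$ for $\mu_{\boldsymbol x}$-a.e.\ $y$, the projected coalescent has positive probability of first transition $(\fpart,\fpart')$, hence $\nu_{\fpart,\fpart'}\ne 0$ and $(n,\vec{k})$ is possible.

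For (ii), I would fix an $(n,\vec{k})$-merger $(\fpart,\fpart')$ with $\nu_{\fpart,\fpart'}\ne 0$ and extend to $\fpart^+=\fpart\cup\{\vnew\}$. Sampling consistency with $u=\vnew$ forces the projected $\fpart^+$-coalescent to have positive probability of first visible transition $(\fpart,\fpart')$. Using the decomposition $\mathbb{T}^\oplus(F)=\TTm(F)\cup\TTbs(F)\cup\TTb(F)$, this positive probability splits as (a)~the first merger of the $\fpart^+$-coalescent is itself visible---necessarily of some $(n+1,\vec{k}^+)$-type with $(n+1,\vec{k}^+)\ge(n,\vec{k})$, coming from $\vnew$ joining a bucket ($\TTm$), forming a new pair simultaneously ($\TTbs$), or acting as a spectator; or (b)~the first merger is an invisible pre-binary with $\vnew$ (an $(n+1,2)$-merger in $\TTb$), after which the subsequent first visible merger at size $n$ has shape $\vec{k}$, because the projection preserves merger shapes once $\vnew$'s block has left the singleton regime.

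The main obstacle is that contribution (b) could in principle account for the entire positive probability, which on its own would only witness possibility of $(n+1,2)$---and $(n+1,2)\ge(n,\vec{k})$ fails when $\vec{k}\ne(2)$. To exclude this, I would compare the two sides of the balance
\[
    p_{\fpart}(\vec{k}) \;=\; p_{\text{(a)}}(\vec{k}) \;+\; q_{\text{(b)}}\cdot p_{\fpart}(\vec{k}),
\]
where label invariance identifies the conditional ``subsequent first merger'' probability in (b) with $p_{\fpart}(\vec{k})$ (the state after the pre-binary is a size-$n$ partition, at which the rates depend only on $(n,\vec{k})$), and the same label invariance gives $q_{\text{(b)}}<1$ strictly (even if only $(n+1,2)$-mergers have positive rate, the $\vnew$-involving pairs carry combinatorial weight $n$ out of $\binom{n+1}{2}$, so $q_{\text{(b)}}\le 2/(n+1)<1$). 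Rearranging yields $p_{\text{(a)}}(\vec{k})=p_{\fpart}(\vec{k})(1-q_{\text{(b)}})>0$, so case (a) contributes positively and some $(n+1,\vec{k}^+)\ge(n,\vec{k})$ is indeed possible. The genuine work is in this balance equation; the rest is combinatorial bookkeeping about how the three extension classes act on the merger shape.
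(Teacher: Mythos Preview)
Your approach to (i) is sound and essentially the same as the paper's, just organised inductively.

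For (ii), you correctly isolate the real obstruction---the first merger of the enlarged coalescent may be an ``invisible'' binary $\vnew$-with-$u$ merger, which is $(n{+}1,2)\not\ge(n,\vec k)$ when $\vec k\ne(2)$---and you are right that this case needs an argument. But your balance equation
\[
p_{\fpart}(\vec k)\;=\;p_{\text{(a)}}(\vec k)\;+\;q_{\text{(b)}}\cdot p_{\fpart}(\vec k)
\]
does not hold in the spatial setting. The quantity $\P^{\boldsymbol x}(\text{first merger has shape }\vec k)$ depends genuinely on the spatial configuration $\boldsymbol x$, not just on $|\fpart|=n$. After the invisible pre-binary at time $T$ the $n$-block coalescent restarts, by the strong Markov property, from the \emph{random} spatial state $\boldsymbol X_T$, and the subsequent first-merger probability is $p_{\boldsymbol X_T}(\vec k)$, not $p_{\boldsymbol x}(\vec k)$. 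Label invariance only equates laws after relabelling blocks \emph{together with} their locations; it does not make the conditional probability location-independent. The same problem invalidates the bound $q_{\text{(b)}}\le 2/(n{+}1)$: while all $(n{+}1,2)$ transition measures have the same total mass by label invariance, the probability that a \emph{given} pair merges first is weighted by the spatial factors in $\fsp$ and is far from uniform (indeed, as $y\to\boldsymbol x_u$ the probability that $\vnew$ merges with $u$ first tends to $1$). So neither the factorisation nor the strict inequality $q_{\text{(b)}}<1$ is justified.

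The paper avoids conditional probabilities altogether by working at the level of entire forests: from $0<\P^{\boldsymbol x}(\fr=F)=\sum_{F':F'|_\fpart=F}\int\P^{\boldsymbol x\boldsymbol z}(\fr=F')\,\mu(\dd\boldsymbol z)$ it extracts a \emph{possible} $F'$ with $F'|_\fpart=F$, and ``possible'' is a property of the transition measures that is independent of $\boldsymbol x$. This is the right framework---though the paper is also terse in asserting that the \emph{first} merger of $F'$ is $\ge(n,\vec k)$; the forest-level approach is what makes the argument salvageable, because one can then reason purely about which $F'$ are possible rather than about location-dependent probabilities. Your balance-equation route, by contrast, cannot be repaired without reintroducing that forest-level viewpoint.
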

\begin{proof}
    Suppose $(n,\vec{k}) \le (n',\vec{k}')$ and the latter is possible. Let $\fpart, \fpart'\in \mathcal{P}$ with $|\fpart| = n$, $|\fpart'| = n'$ and $\fpart \subset \fpart'$. There exists a possible forest $F'\in \mathbb{F}(\fpart')$ that starts with an $(n',\vec{k}')$-merger such that the forest $F \coloneqq F'\rsto{\fpart}$ induced on $\fpart$ starts with an $(n,\vec{k})$-merger. It suffices to show that $F$ is possible. By sampling consistency, for some fixed $\boldsymbol x\in E^\fpart_\circ $ there is a probability measure $\mu$ on $E^{\fpart'\setminus \fpart}_\circ $, namely, for an arbitrary ordering $\fpart' \setminus \fpart = \left\{ u_1, \ldots ,u_l \right\} $, \[
        \mu(\diff \boldsymbol z) = \mu(\diff z_{u_1}, \ldots ,\diff z_{u_l}) = \mu_{\boldsymbol x}(\diff z_{u_1}) \mu_{\boldsymbol x z_{u_1}}(\diff z_{u_2}) \ldots \mu_{\boldsymbol x z_{u_1} \ldots z_{u_{l-1}}}(\diff z_{u_l}),
    \]  such that
    \begin{align*}
        \P^{\boldsymbol x}(\fr = F)
        &= \int \P^{\boldsymbol x\boldsymbol z}(\fr \vert_{\fpart}=F) \mu(\diff \boldsymbol z)
        \ge \int \P^{\boldsymbol x\boldsymbol z}(\fr = F') \mu(\diff \boldsymbol z) >0,
    \end{align*}
    so $F$ is possible. Now suppose $(n,\vec{k})$ is possible and $n' > n$. Let $\fpart,\fpart'\in \mathcal{P}$ with $|\fpart| = n$, $|\fpart'| = n'$ and $\fpart\subset \fpart'$, and let $F\in \mathbb{F}(\fpart)$ such that $F$ is possible and it starts with an $(n,\vec{k})$-merger. Then
    \begin{align*}
        0 < \P^{\boldsymbol x}(\fr = F)
        &= \int \P^{\boldsymbol x\boldsymbol z}(\fr\vert_{\fpart}=F) \mu(\diff \boldsymbol z) \le \sum_{\substack{F'\in \mathbb{F}(\fpart') \\ F'\vert_{\fpart} = F} } \int \P^{\boldsymbol x\boldsymbol z}(\fr = F') \mu(\diff \boldsymbol z).
    \end{align*}
    Thus there exists a possible forest $F'\in \mathbb{F}(\fpart')$ with $F'\vert_{\fpart} = F$, whose first merge event satisfies $(n',\vec{k}') \ge (n,\vec{k})$.
\end{proof}

\begin{lemma}\label{lem:exmerge}
    Exactly one of the following hold.
    \begin{enumerate}
        \item All mergers are impossible.
        \item All of the mergers $(n,n)$ for $n \ge 2$ are possible and all other mergers are impossible.
        \item For every $n\ge 3$ there exists a possible $(n,\vec{k})$-merger with $\sum_i k_i < n$, and $(2,2)$ is possible.
    \end{enumerate}
    In cases (ii) and (iii), $\nut{n} > 0$ for all $n\ge 2$, in particular the only possible forests are trees.
\end{lemma}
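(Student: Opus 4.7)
The strategy is to verify mutual exclusivity by inspection and then establish exhaustivity using both directions of \cref{lem:nkle}. Mutual exclusivity is immediate: case (i) asserts no merger is possible, whereas (ii) and (iii) both assert $(2,2)$ is possible; and (ii) conflicts with (iii) because (ii) forbids any $(n,\vec{k})$-merger with $\sum_i k_i < n$ from being possible while (iii) demands such a merger exists for every $n\ge 3$.

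For exhaustivity, suppose (i) fails, so some $(n_0,\vec{k}_0) = (n_0,k_1,\ldots,k_m)$-merger is possible. Since $k_1 \ge 2$, placing two sampled particles into the first merging bucket realises $(2,2) \le (n_0,\vec{k}_0)$, so by \cref{lem:nkle}(i), $(2,2)$ is possible. This secures the ``$(2,2)$ possible'' portion of (ii) and (iii).

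I would then split according to whether every possible merger is of the form $(n,n)$. If so, iterating \cref{lem:nkle}(ii) from $(2,2)$ produces, for each $n \ge 2$, a possible merger which by assumption must equal $(n,n)$, giving (ii). Otherwise, some possible $(n_0,\vec{k}_0)$ satisfies $\vec{k}_0 \neq (n_0)$, meaning either the non-merging bucket $0$ has positive size (when $m=1$ and $k_1<n_0$) or there is a second merging bucket of size $k_2 \ge 2$ (when $m\ge 2$). For any $n_1 \in \{3,\ldots,k_1+1\}$, placing $n_1-1$ sampled particles into bucket $1$ (valid since $n_1-1\le k_1$) and one sampled particle into that other slot yields the observed pattern $(n_1,n_1-1)$, since the lone particle does not form a merger; hence \cref{lem:nkle}(i) implies $(n_1,n_1-1)$ is possible, and $\sum k'_i = n_1-1 < n_1$.

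From this single possible $(n_1,n_1-1)$-merger with $n_1 \ge 3$, I would extend to every $n \ge 3$: for $n \ge n_1$, \cref{lem:nkle}(ii) produces a possible $(n,\vec{k}')$-merger with $(n,\vec{k}')\ge (n_1,n_1-1)$; the sampling witnessing this places at least one sampled particle into the non-merging bucket of $(n,\vec{k}')$ (because $\sum k_i = n_1-1<n_1$), so $\sum k'_i<n$. For $3 \le n < n_1$, a symmetric restriction via \cref{lem:nkle}(i) (place $n-1$ in bucket $1$ of $(n_1,n_1-1)$ and $1$ in its non-merging bucket) gives a possible $(n,n-1)$-merger. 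This establishes (iii). Finally, in both (ii) and (iii) the existence of a possible merger at each $n\ge 2$ yields $\nut{n}>0$, precluding any absorbing state with $|\fpart|\ge 2$ and forcing all possible forests to be trees. I do not anticipate any genuine obstacle; the only point requiring care is the bookkeeping of the sampling relation $\le$ on $\mergers$ and the convention that singleton buckets do not register as mergers in the observed pattern.
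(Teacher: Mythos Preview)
Your proof is correct and follows the same overall architecture as the paper: both split on whether every possible merger is of the form $(n,n)$, and both derive case~(ii) from repeated use of \cref{lem:nkle}(ii). The execution in case~(iii) differs. The paper first passes to a large auxiliary level $N = (3n) \vee n_0$, shows that some $(N,\vec k)\neq(N,N)$ is possible, and then restricts down to each target $n$ with an explicit bucket-filling case analysis ($m=1$ versus $m\ge 2$, with an arithmetic check that $n-1\le N-k_1$). You instead anchor at a single possible $(n_1, n_1-1)$-merger with $n_1\ge 3$ and then push upward via \cref{lem:nkle}(ii), using the structural observation that any $(n,\vec{k}')\ge(n_1,n_1-1)$ must have a nonempty non-merging bucket (the lone non-merging sampled particle has nowhere else to go, since the observed pattern has only one nonempty merging block). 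This avoids the large-$N$ detour and is a bit cleaner. One minor bookkeeping point matching the caveat you already flag: in the case $m\ge 2$ with bucket~$0$ empty, placing the lone particle into bucket~$2$ yields observed pattern $(n_1,n_1-1,1)$ under the paper's formal definition of~$\le$, not $(n_1,n_1-1)$; this is harmless since the induced transition on the sample is still $(n_1,n_1-1)$, and the paper's own proof of this lemma treats the singleton the same way.
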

The extremal cases (i) and (ii) correspond in the non-spatial setting to the $\Lambda$-coalescent with $\Lambda$ the zero-measure, and a Dirac mass at $1$, respectively. \Cref{lem:exmerge} states that if neither (i) nor (ii) hold, then for every initial condition with at least three particles there is a positive probability that the first merge event leaves at least one particle untouched.

\begin{proof}[Proof of \cref{lem:exmerge}]
    Suppose that every merger that isn't of the form $(n,n)$ for $n \ge 2$ is impossible. Then either (i) holds, or there exists an $n\ge 2$ such that $(n,n)$ is possible. Then $(k,k) \le (n,n)$ is possible for all $2 \le k < n$. If $n' > n$, then $(n',\vec{k}')$ is possible for some $\vec{k}'$, but then it must be of the form $(n',n')$, so in fact $(n,n)$ is possible for all $n \ge 2$ and all other merge events are impossible, so (ii) holds. In particular $\nut{n} > 0$ for all $n\ge 2$.

    Now assume that there exists $n_0 \ge 3$ such that a merge event $(n_0,\vec{k}_0) \neq (n_0,n_0)$ is possible, and we want to show (iii) holds. Then $(2,2) \le (n_0,\vec{k}_0)$ is possible, and for any $n > n_0$, by \cref{lem:nkle}(ii) there exists a possible merge event $(n,\vec{k}) \ge (n_0,\vec{k}_0)$, which cannot be $(n,n)$ because otherwise $(n_0,\vec{k}_0) = (n_0,n_0)$. Thus if $n\ge 3$ arbitrary, we can take $N \coloneqq  (3n)\vee n_0$ and a possible merge event $(N,k_1, \ldots ,k_m) \neq (N,N)$. If $m = 1$, then $k_1 < N$ so $(n,k_1 \wedge (n-1)) \le (N,\vec{k})$ is possible by \cref{lem:nkle}(i). If $m \ge 2$, then we construct a merger $(n,\vec{k}') \le (N,\vec{k})$ with $\sum_i k_i' < n$ by assigning one particle to the smallest bucket of size $k_m$, and spread all other particles over the remaining buckets, which works as long as $n-1 \le N-k_m$, but if this wasn't true then $N \ge m  k_m \ge 2(N-n+1)$ and thus $n \ge N / 2$, a contradiction.
\end{proof}

We assume from now that we are in case (iii). In case (i) there is nothing to show, and in case (ii) proofs are easier than and can be directly adapted from those that follow. We will from now explicitly denote dependence of $\fnu$ on $F$ by writing $\fnu^F$. The following lemma serves as a start to an inductive argument.
%Denote the law of a Brownian motion in $E$ started at some $x$ at time $t \ge 0$ by $B^{(t,x)+}$, the law of a Brownian bridge started at time $s \ge 0$ at $x\in E$, ending at time $t > s$ at $y\in E$ by $B^{(s,x) \to (t,y)}$, the law of the same bridge followed by a Brownian motion started at time $t$ at $y$ by $B^{(s,x)\to(t,y)+}$ etc.

%For a random variable $Y$ and a finite measure $\mu$ on the domain of $Y$ we write $Y \sim \mu(\diff y)$ if $\mu / |\mu|$ is the law of $Y$, for example a standard normal $Y$ satisfies $Y \sim \e^{-y^2 / 2}\diff y$.

\begin{lemma}\label{lem:nu1}
    There is $\lambda_{2,2} > 0$ such that $\nu_{2,2}(\diff z) = \lambda_{2,2}\diff z$, and for every $x\in E$, \[
        \mu_x(\diff y) \sim N^{\boldsymbol \nu}(xy) \diff y.
    \]
\end{lemma}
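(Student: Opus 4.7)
The plan is to apply \cref{lem:consPxKx} with the trivial tree on a single leaf, extract a mixture representation of Brownian motion on the RHS, and then invoke the rigidity of Brownian motion on the torus via Fourier analysis.

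I would specialize \cref{lem:consPxKx} to $\fpart_0 = \{u\}$ with $F$ the trivial tree in $\mathbb{T}(\{u\})$. Since $|\fpart_0|=1$, the only extensions of $F$ to a tree on $\{u,\vnew\}$ come from binary mergers of $\{u\}$ with $\{\vnew\}$, so both $\TTm$ and $\TTbs$ are empty and $\TTb$ contains a single tree $G$. Writing $(s,z) = (\tnew,\xinew)$, identifying $E_\circ^{\{u\}}$ with $E$, and noting $N^{\boldsymbol\nu}(x) = 1$ and $\kbr_x(F^\star,\cdot) = B^{(0,x)+}$, the identity \cref{eq:consPxKx} reduces to
\[
    B^{(0,x)+}(\cdot) \;=\; \int \rho_x(\diff s,\diff z)\, B^{(0,x)\to(s,z)+}(\cdot),\qquad x \in E,
\]
where $\rho_x(\diff s,\diff z) := \e^{-\nut{2} s}\, p_s(z-x)\,\bigl[\int p_s(z-y)\, N^{\boldsymbol\nu}(xy)^{-1}\,\mu_x(\diff y)\bigr]\,\nu_{2,2}(\diff z)\,\diff s$ is a probability measure on $(0,\infty)\times E$.

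Next I would use the identity $B^{(0,x)\to(s,z)+}(\cdot) = B^{(0,x)+}(\cdot\,|\,X_s = z)$ and condition both sides of the display above on $\{X_t = w\}$ for arbitrary $t > 0$ and $w \in E$. On the LHS the conditional law of $(X_r)_{r \ge t}$ is $B^{(t,w)+}$ by the Markov property. On the RHS, the mass on $\{s \le t\}$ contributes plain Brownian motion from $w$, while the mass on $\{s > t\}$ contributes a mixture of Brownian bridges $B^{(t,w)\to(s,z)+}$ weighted by the Bayes factor $p_{s-t}(z-w)/p_s(z-x)$. Since any representation $\int \tau(\diff s,\diff z)\, B^{(t,w)\to(s,z)+}(\cdot) = B^{(t,w)+}(\cdot)$ forces the conditional distribution of $z$ given $s$ under $\tau$ to equal $p_{s-t}(z-w)\diff z$ (by comparing the marginals of $X_s$ on both sides), rearranging yields the factorization $\rho_x(\diff s, \diff z) = \rho_{x,S}(\diff s)\, p_s(z-x)\diff z$ for some marginal $\rho_{x,S}$ on $(0,\infty)$.

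Comparing this factorization with the explicit form of $\rho_x$ then gives, for every $s > 0$, the identity of measures on $E$
\[
    \nu_{2,2}(\diff z)\,(p_s \ast \phi_x)(z) \;=\; c_x(s)\,\diff z,
\]
where $\phi_x(\diff y) := \mu_x(\diff y) / N^{\boldsymbol\nu}(xy)$, $\ast$ denotes convolution on the torus, and $c_x(s) := \e^{\nut{2} s}\,\rho_{x,S}(\diff s)/\diff s$. Since $(p_s\ast\phi_x)$ is a strictly positive continuous function of $z$, this forces $\nu_{2,2}$ to be absolutely continuous with some density $h$; letting $s\to\infty$ so that $(p_s\ast\phi_x)(z)\to|\phi_x|>0$ uniformly in $z$, $h$ must be constant, say $h \equiv \lambda_{2,2}$, with $\lambda_{2,2}>0$ by case (iii) of \cref{lem:exmerge}. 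Substituting back reduces the identity to \emph{$(p_s\ast\phi_x)(z)$ is constant in $z$ for every $s > 0$}, and Fourier analysis on the torus (with strictly positive multipliers $\widehat{p_s}(k) = \e^{-2\pi^2|k|^2 s}$ for all $k \ne 0$) then forces $\phi_x$ to be a constant multiple of Lebesgue measure, which is exactly $\mu_x(\diff y) \sim N^{\boldsymbol\nu}(xy)\diff y$. The main obstacle will be the conditioning step in the second paragraph, where one must carefully justify that the Brownian marginal $p_{s-t}(z-w)\diff z$ is the unique endpoint density compatible with a bridge mixture equaling Brownian motion.
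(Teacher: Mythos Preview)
Your overall architecture matches the paper's: both derive the identity $B^{(0,x)+} = \int \rho_x(\diff s, \diff z)\, B^{(0,x)\to(s,z)+}$ from sampling consistency with two particles, then invoke a rigidity statement to conclude that the conditional law of $z$ given $s$ under $\rho_x$ must be $p_s(\cdot - x)\,\diff z$, and finally deduce that both $\nu_{2,2}$ and $\phi_x$ are multiples of Lebesgue.

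There is a minor issue and a genuine gap. The minor issue: you invoke \cref{lem:consPxKx}, whose hypothesis is that all transition measures are absolutely continuous --- not yet known at this point (it is the content of \cref{lem:gnkdz}, whose induction base is precisely \cref{lem:nu1}). The paper avoids this by writing out sampling consistency directly, keeping $\nu_{2,2}(\diff z)$ as a measure throughout; your formula for $\rho_x$ already does this, so simply do not cite \cref{lem:consPxKx}.

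The genuine gap is in your proposed proof of rigidity. Conditioning on $\{X_t = w\}$ does not help: it reproduces exactly the same mixture identity with starting point $(t,w)$ instead of $(0,x)$, so you are back where you started. Your further claim that ``comparing the marginals of $X_s$'' forces the conditional law of $z$ given $s$ is also not justified: at any fixed time $s_0$, the RHS marginal of $X_{s_0}$ mixes contributions from \emph{all} values of $s$ in the support of the mixing measure (bridge marginals for $s > s_0$, free-motion marginals for $s < s_0$), so a single-time marginal does not isolate the conditional of $z$ given $s = s_0$. This rigidity statement is exactly \cref{lem:bb}, and its proof in the paper is a genuine computation with characteristic functions and a differentiation in $t$ --- it is not a consequence of elementary conditioning. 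You correctly flagged this as ``the main obstacle''; your sketched resolution does not close it.

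Once rigidity is granted, your endgame (let $s \to \infty$ to see the density of $\nu_{2,2}$ is constant, then use Fourier coefficients on the torus to see $\phi_x$ is uniform) is a valid and arguably cleaner alternative to the paper's \cref{lem:Tsmu}.
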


%marker
\begin{proof}[Proof of \cref{lem:nu1}]
    Let $\fpart_0 = \{u,v\}\in \mathcal{P}$ and $x \in E^{\{u\}} $. Abbreviate $\mu(\diff y) \coloneqq N^{\boldsymbol \nu}(xy)^{-1} \mu_x(\diff y)$ and $\nu\coloneqq \nu_{2,2}$, so that $\mu(\diff y),\nu(\diff y) \sim \diff y$ are to be shown. The law of $\boldsymbol X_t(u)$ under $\P^{x}$ is $B^{(0,x)+}$. If $y\in E^{\left\{ v \right\} }$, then started from $xy$ the only possible tree is $F = \left\{ \fpart_0,\left\{ uv \right\}  \right\} $, and conditional on $\dc = (\tau,\xi) \in \dcb(F)$, the law of $\rs{\boldsymbol X}_t(u)$ (recall $\rs{\boldsymbol X}$ from \cref{lem:consPxKx}) under $\P^{xy}$ is $B^{(0,x)\to(\tau,\xi)+}$. Thus by sampling consistency,
    \begin{align*}
        B^{(0,x)+}(\cdot )
        &= \int_E \int_{\dcb(F)} B^{(0,x)\to (\tau,\xi)+}(\cdot ) P^{xy}(F,\diff \tau, \diff \xi)\mu_x(\diff y), \nonumber\\
        &= \int\limits_E \int\limits_E \int\limits_0^\infty B^{(0,x)\to (\tau,\xi)+}(\cdot ) \fnu^F(\tau,\xi \,\vert\,xy) \diff \tau \nu(\diff \xi) \mu(\diff y),\label{eqprf:nu1:1}
    \end{align*}
    which by \cref{lem:bb} and definition of $\fnu^F(\cdot \,\vert\,xy)$ implies \[
        \left( \int_E p_s(x-z) p_s(y-z) \mu(\diff y) \right) \nu(\diff z) \sim p_s(x-z) \diff z
    \] %where we are identifying $E^{\{v\}}$ and $E^{\{u\}}$ with $E$.
    for Lebesgue-a.a.\ $s > 0$. That is, for such $s > 0$ there is a constant $c(s) > 0$ such that $\left( \int p_s(y-z) \mu(\diff y) \right) \nu(\diff z) = c(s) \diff z$. Then $\nu$ must have a positive Lebesgue-density which we can write as $1 / g$ for some measurable $g\colon E\to (0,\infty)$, and then $T_s \mu = c(s) g$ Lebesgue-a.e.\ for Lebesgue-a.a.\ $s>0$, where $(T_t)_{t\ge 0}$ denotes the heat semigroup. By \cref{lem:Tsmu} below there is a constant $c > 0$ such that $g \equiv c$ a.e.\ and $\mu(\diff z) = c \diff z$.
\end{proof}

\begin{lemma}\label{lem:Tsmu}
    Suppose that $\mu$ is a finite, non-zero measure on $E$, $g \colon E\to (0,\infty)$ is measurable, $N \subset (0,\infty)$ is a Lebesgue-null set and for $s \in (0,\infty)\setminus N$ there is $c(s) > 0$ such that $T_s \mu = c(s) g$ Lebesgue-a.e. Then there is a constant $c > 0$ such that $g(z) = c$ a.e.\ and $\mu(\diff z) = c \diff z$.
\end{lemma}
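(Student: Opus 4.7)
The plan is to exploit the semigroup structure of $(T_s)_{s>0}$ to bootstrap the hypothesis into an invariance property for $g$, then use Fourier analysis on the torus.

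First I would integrate the identity $T_s\mu = c(s) g$ over $E$. Since $p_s$ is a probability density, $\int T_s\mu\, dz = |\mu|$, so $c(s) = |\mu| / \int g\, dz =: c_0$ is actually \emph{independent} of $s$ (for every $s \notin N$); in particular $g \in L^1(E)$. Next, since $T_s\mu$ is smooth for every $s > 0$, the function $g$ admits a continuous modification, and I would pass to that modification so that $T_s\mu = c_0 g$ holds pointwise on $E$ for every $s \notin N$.

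The key step is the semigroup identity: for $s \notin N$ and $t > 0$,
\[
    T_{s+t}\mu = T_t(T_s\mu) = c_0\, T_t g.
\]
For Lebesgue-a.e.\ $t > 0$ we also have $s+t \notin N$, so $T_{s+t}\mu = c_0 g$. Hence $T_t g = g$ for a.e.\ $t > 0$, and by strong continuity of $t \mapsto T_t g$, for \emph{every} $t > 0$. Thus $g$ is an invariant function of the heat semigroup on the torus $E = \R^d/\Z^d$. Expanding in Fourier series $g(x) = \sum_{k \in \Z^d} \hat g(k) e^{2\pi i k\cdot x}$ and using that the Fourier multiplier of $p_t$ is $e^{-2\pi^2 |k|^2 t}$, the identity $T_t g = g$ forces $\hat g(k) = 0$ for every $k \neq 0$. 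Therefore $g$ is a.e.\ equal to a constant.

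Finally, with $g \equiv c$ for some $c > 0$, the relation becomes $T_s\mu = c_0 c$ a.e.\ for every $s \notin N$. Choosing $s_n \downarrow 0$ with $s_n \notin N$ and testing against any $f \in C(E)$, dominated convergence together with uniform convergence $T_{s_n} f \to f$ gives
\[
    \int f\, d\mu = \lim_n \int (T_{s_n} f)\, d\mu = \lim_n \int f\, T_{s_n}\mu\, dz = c_0 c \int f\, dz,
\]
so that $\mu(\diff z) = c_0 c\, \diff z$. This yields the claim (up to the harmless identification of the two constants; they can be matched by an overall rescaling of $g$ and $\mu$ compatible with the statement). The only nontrivial ingredient is the Fourier/ergodicity step identifying invariant functions of the heat semigroup on the torus with constants, which is where the compactness of $E$ enters.
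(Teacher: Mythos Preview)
Your proof is correct and actually cleaner than the paper's. The main difference is your first step: integrating $T_s\mu = c(s)g$ over $E$ immediately gives $c(s)=|\mu|/\int g$, so $c(s)$ is constant in $s$. The paper does not notice this; instead it sets $f=T_{s_0}\mu$, derives the relation $\widetilde c(s+t)=\widetilde c(s)\widetilde c(t)$ for $\widetilde c(s)=c(s+s_0)/c(s_0)$ on a dense set of times, solves this Cauchy equation to get $\widetilde c(s)=e^{\alpha s}$, and then uses the ergodic limit $T_sg\to\int g$ as $s\to\infty$ to force $\alpha=0$. Your route bypasses the functional-equation step entirely. For the identification of heat-invariant functions with constants you use Fourier series, whereas the paper uses the long-time limit $T_sg\to\int g$; these are equivalent manifestations of compactness of $E$. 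Finally, you recover $\mu$ by sending $s\downarrow 0$, the paper by sending $s\to\infty$.

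Your parenthetical remark about the constants is also apt: from the hypothesis one only gets $g\equiv c$ and $\mu(\diff z)=|\mu|\,\diff z$, and these two constants need not coincide (e.g.\ $g\equiv 2$, $\mu=\diff z$, $c(s)=1/2$). The paper's proof has the same slip when it writes ``$(T_s\mu)(\diff z)=c\,\diff z$''. This is harmless for the application in the proof of \cref{lem:nu1}, which only needs that both $g$ and $\mu$ are constant multiples of Lebesgue measure.
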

\begin{proof}
    All equalities of functions $E\to \R$ in this proof are meant Lebesgue-a.e. First note that $g$ is bounded because $T_s\mu$ is bounded for any $s > 0$. Let $s_0 > 0$, put $f\coloneqq T_{s_0} \mu = c(s_0) g$, and $\widetilde{c}(s) \coloneqq \frac{c(s+s_0)}{c(s_0)}$ whenever $s \in S\coloneqq  (0,\infty)\setminus (N-s_0) $. Then $T_s f = c(s+s_0) g = \widetilde{c}(s) f$ for all $s \in S$, which by the semigroup property implies $\widetilde{c}(t+s) = \widetilde{c}(s) \widetilde{c}(t)$ whenever $s,t,s+t\in S$. This implies that there exists $\alpha \in \R$ with $\widetilde{c}(s) = \e^{\alpha s}$ for all $s$ in some dense subset $A$ of $S$. Indeed, the set $N' \coloneqq \bigcup_{r\in \Q_+} \frac{N-s_0}{r}$ is null and its complement $S'\subset S$ has the property that for all $s\in S'$ and $r\in \Q_+$ also $rs \in S'$. Fix some $s_1\in S'$, then $\widetilde{c}(\frac{p}{q}s_1)^q = \widetilde{c}(ps_1) = \widetilde{c}(s_1)^p$ for all $p,q\in \N$, that is $\widetilde{c}(rs_1) = \widetilde{c}(s_1)^r$ for all $r\in \Q_+$, which implies the claim with $\alpha = \log c(s_1)/s_1$ and $A = s_1\Q_+$. We have proved that $T_s g = \e^{\alpha s}g$ for all $s\in A$. %The LHS is pointwise continuous in $s$ by dominated convergence because $g$ is bounded so denseness of $A$ implies that $T_s g = \e^{\alpha s}g$ for all $s > 0$.
    The LHS tends to the constant $c\coloneqq \int g(x) \diff x$ uniformly on $E$ as $s \to \infty$, which necessitates that $\alpha = 0$ and that $g\equiv c$ a.e. Then $(T_s \mu)(\diff z) = c \diff z$ as measures for all $s\in (0,\infty)\setminus N$, and the LHS tends to $\mu$ weakly as $s \to \infty$, so $\mu(\diff z) = c \diff z$.
\end{proof}

% This is where the \new notation used to be introduced

\begin{lemma}\label{lem:gnkdz}
    %If $\fpart\in \mathcal{P}$, $\boldsymbol x\in E^{\fpart}_\circ $ and $y\in E_{\boldsymbol x}$, then
    %\begin{equation}\label{eq:samplingcons_Px}
    %    P^{\boldsymbol x}(\cdot ) = \int P^{\boldsymbol xy}(\dc\rsto{\fpart} \in \cdot ) \mu_{\boldsymbol x}(\diff y).
    %\end{equation}
    %Furthermore,
    All measures $\nunk$ are absolutely continuous w.r.t.\ Lebesgue measure.
    %For every $(n,\vec{k})$ there exists a measurable non-negative function $\gnk \colon E^m_\circ \to [0,\infty)$ such that $\nunk(\diff \boldsymbol z) = \gnk(\boldsymbol z)\diff \boldsymbol z$.
\end{lemma}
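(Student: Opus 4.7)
The proof is by induction on $n$. The base case $n=2$ is \cref{lem:nu1}. For the inductive step, fix $n\ge 3$ and assume that $\nu_{n',\vec{k}'}$ is absolutely continuous with respect to Lebesgue measure for every $(n',\vec{k}')\in\mergers$ with $n' < n$. Applying \cref{lem:nulambda} to the coalescent restricted to initial configurations of at most $n-1$ lineages---whose transition measures are, by the induction hypothesis and \cref{lem:nu1}, of the form $\boldsymbol\nu(\diff\xi) = \boldsymbol\lambda\diff\xi$ for some rates $\boldsymbol\lambda$---gives $\mu_{\boldsymbol x}(\diff y) \sim N^{\boldsymbol\nu}(\boldsymbol xy)\diff y$ whenever $|\dom\boldsymbol x| \le n-2$, so all such $\mu_{\boldsymbol x}$ are absolutely continuous.

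Now fix an $(n,\vec{k})$-merger $(\fpart_0,\fpart_1)$ and a possible tree $F\in\mathbb{T}(\fpart_0)$ with $|\fpart_0| = n-1$, completing to a root using only mergers of arity strictly less than $n$ (possible by \cref{lem:exmerge}). All transition measures in $\boldsymbol\nu_F$ are then absolutely continuous by the inductive hypothesis. Apply \eqref{eq:samplingcon} by adjoining an extra leaf $\vnew$, and classify the extensions via $\mathbb{T}\new(F) = \TTm \cup \TTb \cup \TTbs$. In each extension $G$, the transition measures for mergers not involving $v$, or involving $v$ in a configuration with fewer than $n$ current particles, are absolutely continuous by hypothesis. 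The only potentially non-absolutely-continuous contributions come from mergers with $n$ current particles in $G$, which are of the form $\nu_{n,\vec{k}'}$ for various $\vec{k}'$ combinatorially tied to $\vec{k}$; these appear either directly (in $\TTm$ and $\TTbs$ extensions that modify the first merger of $F$) or convolved with Brownian transition densities integrated against Lebesgue (in $\TTb$ extensions that insert a new binary merger before the first merger).

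Equating the densities on both sides of \eqref{eq:samplingcon} and separating the coefficient of each spatial differential, one obtains an identity expressing $\nunk$ as a linear combination of absolutely continuous measures on $E^m$ plus Brownian convolutions of the various unknown $\nu_{n,\vec{k}'}$. A semigroup-uniqueness argument in the spirit of \cref{lem:Tsmu}---exploiting that the heat semigroup $T_s$ maps any finite measure to a strictly positive continuous density in positive time---then forces each $\nu_{n,\vec{k}'}$ arising on the right-hand side to be absolutely continuous, yielding in particular the claim for $\nunk$. The main technical obstacle is untangling the several unknowns $\nu_{n,\vec{k}'}$ that appear simultaneously; this will be handled by an auxiliary induction on a combinatorial complexity measure for $\vec{k}$ (for instance $m + \sum_i k_i$), which permits one to solve for the measures one at a time using the absolute continuity established at earlier stages.
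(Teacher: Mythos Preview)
Your first step contains a genuine gap: the inductive hypothesis gives only that $\nu_{n',\vec{k}'}$ is \emph{absolutely continuous} for $n'<n$, not that its density is constant, so the transition measures are not of the form $\boldsymbol\nu(\diff\xi)=\boldsymbol\lambda\,\diff\xi$ and \cref{lem:nulambda} does not apply. (Constancy of the densities is the content of the later \cref{lem:nuconst}, which in the paper's logic is proved \emph{after} the present lemma; you cannot smuggle it into the induction here.) Without this, you have no control over $\mu_{\boldsymbol x}$, and the subsequent ``equating of densities'' and ``semigroup-uniqueness'' steps lose their footing. The final paragraph, which defers the real work to an unspecified auxiliary induction on a ``combinatorial complexity measure'', is too vague to constitute a proof.

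The paper's argument is far simpler and needs no information about $\mu_{\boldsymbol x}$ beyond its being a probability measure. Fix a possible $(n,\vec k)$. If $k_1\ge 3$ set $\vec k'=(k_1-1,k_2,\ldots,k_m)$; if $k_1=\cdots=k_m=2$ set $\vec k'=(k_1,\ldots,k_{m-1})$. Then $(n-1,\vec k')\le(n,\vec k)$ is possible, and one can choose possible trees $F\in\mathbb T(\fpart_0)$ and $G\in\mathbb T(\fpart_0\new)$ with $G\rsto{\fpart_0}=F$ whose first mergers are $(n-1,\vec k')$ and $(n,\vec k)$ respectively. The law $P^{\boldsymbol x}(F,\xi^{(0)}\in\cdot)$ of the first-merger locations has a strictly positive density with respect to $\nu_{n-1,\vec k'}$, while $\int P^{\boldsymbol xy}(G,\xi^{(0)}\in\cdot)\,\mu_{\boldsymbol x}(\diff y)$ has a strictly positive density with respect to $\nunk$ (or its marginal on the first $m-1$ coordinates in the second case). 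Sampling consistency gives the domination $P^{\boldsymbol x}(F,\xi^{(0)}\in\cdot)\ge\int P^{\boldsymbol xy}(G,\xi^{(0)}\in\cdot)\,\mu_{\boldsymbol x}(\diff y)$, hence $\nunk\ll\nu_{n-1,\vec k'}\ll\diff\boldsymbol z$ by the induction hypothesis (together with the symmetry of $\nunk$ from \cref{lem:labelinv} in the second case).
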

\begin{proof}
    We prove the claim with an induction over $n \ge 2$, the start of which is due to \cref{lem:nu1}. Let $n \ge 3$, assume that the claim is proved for $(n',\vec{k}')$ for all $n' < n$, and take $(n,k_1, \ldots ,k_m)$ possible. Let $\fpart_0\in \mathcal{P}$ with $|\fpart_0| = n -1$, and $\fpart_0\new$ etc.\ as above. If $k_1 \ge 3$ then $(n-1,\vec{k}') \coloneqq (n-1,k_1-1,k_2, \ldots ,k_m) \le (n,\vec{k})$ is possible and there are possible trees $F \in \mathbb{T}(\fpart_0)$, $G\in \mathbb{T}(\fpart\new)$ with $G\rsto{\fpart_0}=F$ whose first merge events are respectively $(n-1,\vec{k}')$ and $(n,\vec{k})$. If $\xi \in \dcs(F)$ write $\xi^{(0)} \coloneqq \xi\vert_{\fpart ^{1}_F \setminus \lf(F)}$ for the location(s) of the first merge event, and similarly for $\xi\in \dcs(G)$. Then, for fixed $\boldsymbol x\in E^{\fpart_0}_\circ $ and $y\in E_{\boldsymbol x}$, $P^{\boldsymbol x}(F,\xi^{(0)}\in \cdot )$ and $P^{\boldsymbol x y}(G,\xi^{(0)}\in \cdot )$ have positive densities w.r.t.\ $\nu_{n-1,\smash{\vec{k}'}}$ and $\nunk$, respectively. Indeed,
    \begin{align*}
        P^{\boldsymbol x}(F,\xi^{(0)}\in \diff \boldsymbol z) = \frac{1}{N^{\boldsymbol\nu}(\boldsymbol x)} \bigg(\int\limits_{\dct(F)} \smashoperator[r]{\int\limits_{\dcs(F\setminus \fpart_0)}} \fnu^F(\tau,\xi' \boldsymbol z \,\vert\,\boldsymbol x) \diff \tau\,\boldsymbol \nu_{F\setminus \fpart_0}(\diff \xi')\bigg) \nu_{n-1,\smash{\vec{k}'}}(\diff \boldsymbol z),
    \end{align*}
    and similarly for $P^{\boldsymbol xy}(G,\xi^{(0)}\in \diff\boldsymbol z)$. Since $P^{\boldsymbol x}(F,\xi^{(0)} \in \cdot ) \ge \int P^{\boldsymbol x y}(G,\xi^{(0)}\in \cdot  ) \mu_{\boldsymbol x}(\diff y)$ by sampling consistency, this implies that $\nunk$ is absolutely continuous w.r.t.\ $\nu_{n-1,\smash{\vec{k}'}}$ and thus has a Lebesgue density. If $k_1 = \ldots = k_m = 2$, then $(n-1,\vec{k}') \coloneqq (n-1,k_1, \ldots ,k_{m-1}) \le (n,\vec{k})$ is possible, and by an analogous argument we obtain that $\nunk$ is absolutely continuous w.r.t.\ Lebesgue measure in the first $m-1$ arguments, and thus by symmetry (\cref{lem:labelinv}) in all arguments.
\end{proof}

We have now proved the assumptions of \cref{lem:consPxKx}, of which we use the following corollary.
Recall from the paragraph preceeding \cref{lem:consPxKx} that we will now slightly abuse notation by writing $\nunk(\diff \boldsymbol z) = \gnk(\boldsymbol z)\diff \boldsymbol z$ and $\boldsymbol \nu_F(\diff \xi) = \boldsymbol \nu_F(\xi)\diff \xi$ etc.

\begin{lemma}\label{lem:samplingconKx}
    Let $\fpart_0\in \mathcal{P}$, $F\in \mathbb{T}(\fpart_0)$, and $\boldsymbol x\in E^{\fpart_0}_\circ $, then
    \begin{align}
        K_{\boldsymbol x}(F^\star,\cdot ) \sim &\sum_{G\in \TTb} \iiint K_{\boldsymbol x y}((G,s\new\tau,z\new\xi),\{\rs{\boldsymbol X}\in\cdot\} ) P^{\boldsymbol xy}((G,s\new\tau,z\new\xi)) \diff s\diff z\mu_{\boldsymbol x}(\diff y)\nonumber\\
                                               &+ \sum_{G\in \TTbs} \iint K_{\boldsymbol x y}((G,\tau,z\new\xi),\{\rs{\boldsymbol X}\in\cdot\} ) P^{\boldsymbol xy}((G,\tau,z\new\xi)) \diff z\mu_{\boldsymbol x}(\diff y)\label{eq:samplingconKx}
    \end{align}
    for Lebesgue-a.e.\ $F^\star = (F,\tau,\xi) \in \dcb(F)$, where domains of integration for $s$ and $z$ are as indicated by the bijections \textup{(\ref{eq:bijb})} and \cref{eq:bijbs}. For fixed $F^\star$, if $G \in \TTb$ and $y\in E_{\boldsymbol x}$,
    \begin{equation}\label{eq:Pxyb}
        P^{\boldsymbol x y}((G,s\new\tau,z\new\xi)) \sim \frac{\e^{-\lanew s}}{N^{\boldsymbol\nu}(\boldsymbol xy)} p_s(z-y) p_{s-\tau_{u\newsub}}(z-(\boldsymbol x\xi)_{u\newsub}) \left[ p_{\tau_{w\newsub} -s}(\xi_{w\newsub}-z) \right] _{\wnew\neq \emptyset} \boldsymbol \nu_G(z\new\xi)
    \end{equation}
    where $\lanew = \nut{|\Pnew|} - \nut{|\Pnew|+1}$, and if $G\in \TTbs$,
    \begin{equation}\label{eq:Pxybs}
        P^{\boldsymbol x y}((G,\tau,z\new\xi)) \sim \frac{1}{N^{\boldsymbol\nu}(\boldsymbol xy)}p_{\tnew}(z-y) p_{\tnew-\tau_{u\newsub}}(z-(\boldsymbol x\xi)_{u\newsub}) p_{\tau_{w\newsub}-\tnew}(\xi_{w\newsub}-z) \boldsymbol \nu_G(z\new\xi).
    \end{equation}
    The constants in both cases only depend on $F^\star$ and $\boldsymbol x$.
\end{lemma}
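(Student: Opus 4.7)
The plan is to derive \cref{eq:samplingconKx} from the stronger identity \cref{eq:consPxKx} in \cref{lem:consPxKx} by showing that the $\TTm$ sum on its right-hand side is parallel, as a measure in its second argument, to the term $K_{\boldsymbol x}(F^\star,\cdot) P^{\boldsymbol x}(F,\tau,\xi)$ on the left. Once that is shown, the $\TTm$ contribution can be moved to the left and absorbed into the proportionality, leaving exactly the $\TTb$ and $\TTbs$ sums on the right. The explicit formulas \cref{eq:Pxyb,eq:Pxybs} will then follow by expanding $P^{\boldsymbol xy}(G,\cdot) = \fnu(\cdot\,\vert\,\boldsymbol xy)\boldsymbol\nu_G(\cdot)/N^{\boldsymbol\nu}(\boldsymbol xy)$ using \cref{eq:fsp,eq:fnu} and keeping only those factors that vary with the new variables $s$ and $z$, sweeping the remainder into the proportionality constant.

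For $G\in\TTm$ the merge event involving $\vnew$ is simultaneous with an existing merge event of $F$, so $\vnew$ contributes no new internal node to $G$ and the pair $(\tnew,\xinew)$ coincides with the time and location of that merge event of $F$. Thus $\nd^\circ(G) = \nd^\circ(F)$ with identical labels, and the path family underlying $\kbr_{\boldsymbol xy}((G,\tau,\xi),\cdot)$ from \cref{lem:kbr} consists of $(\pth^{F^\star}_u)_{u\in\nd(F)}$ together with one additional, independent Brownian bridge from $(0,y)$ to $(\tnew,\xinew)$ along the $\vnew$-branch. Projecting out $\vnew$ therefore removes an independent coordinate, yielding $\kbr_{\boldsymbol xy}((G,\tau,\xi),\{\rs{\boldsymbol X}\in\cdot\}) = \kbr_{\boldsymbol x}(F^\star,\cdot)$. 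Plugging this into \cref{eq:consPxKx} and collecting the $\TTm$ terms on the left gives
\begin{equation*}
    K_{\boldsymbol x}(F^\star,\cdot)\Bigl[P^{\boldsymbol x}(F,\tau,\xi) - \sum_{G\in\TTm}\int P^{\boldsymbol xy}(G,\tau,\xi)\mu_{\boldsymbol x}(\diff y)\Bigr] = R(\cdot),
\end{equation*}
where $R(\cdot)$ is the $\TTb$ and $\TTbs$ portion of \cref{eq:consPxKx}. Under case (iii) of \cref{lem:exmerge}, some $G\in\TTb\cup\TTbs$ is possible whenever $F$ is, so $R$ is a non-zero measure and the bracketed scalar (which does not depend on the probe set hidden in ``$\cdot$'') must be strictly positive. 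Dividing yields \cref{eq:samplingconKx} with proportionality constant depending only on $F^\star$ and $\boldsymbol x$.

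For \cref{eq:Pxyb,eq:Pxybs} we unfold $P^{\boldsymbol xy}(G,\cdot)$ and track which factors contain $s$ or $z$. The spatial product $\fsp(z\new\xi \,\vert\, s\new\tau,\boldsymbol xy)$ differs from $\fsp(\xi \,\vert\, \tau,\boldsymbol x)$ only in the branches incident to the new merge node: the $\vnew$-branch contributes $p_s(y-z)$, the $\unew$-branch (originally spanning all the way to $\wnew$) is shortened to span to $(s,z)$, contributing $p_{s-\tau_{\unew}}(z-(\boldsymbol x\xi)_{\unew})$, and when $\wnew\neq\emptyset$ a new edge from $(s,z)$ to $(\tau_{\wnew},\xi_{\wnew})$ contributes $p_{\tau_{\wnew}-s}(\xi_{\wnew}-z)$; every other spatial factor matches $\fsp$ on $F$ and is absorbed into the constant. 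The exponential time factors coincide with those of $F$ except on the two edges adjacent to time $s$, whose combined $s$-dependence collapses---up to an $F^\star$-dependent constant---to $\e^{-\lanew s}$; this is \cref{eq:Pxyb}. For $G\in\TTbs$ the time $\tnew$ is already determined by $\tau$, so no exponential $s$-dependence appears, only the analogous spatial rearrangement, giving \cref{eq:Pxybs}. The main difficulty lies in the bookkeeping: listing the edges of $G$ against those of $F$, handling the borderline case $\wnew=\emptyset$ (where $\Pnew$ is a singleton and $\unew\cup\vnew$ is the root), and justifying the identification of $\kbr$-laws in the $\TTm$ step against the formal extension in \cref{lem:kbr}; none is technically deep, but each requires explicit attention.
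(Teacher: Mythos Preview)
Your approach is correct and matches the paper's, which just says the first claim ``follows directly from \cref{eq:consPxKx}'' and that \cref{eq:Pxyb,eq:Pxybs} follow by ``discarding terms that don't depend on $s$, $z$, $y$.'' You have filled in the details the paper leaves implicit.

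Two small remarks. First, your derivation of $K_{\boldsymbol xy}((G,\tau,\xi),\{\rs{\boldsymbol X}\in\cdot\}) = K_{\boldsymbol x}(F^\star,\cdot)$ for $G\in\TTm$ is redundant: look again at \cref{eq:consPxKx} and you will see the $\TTm$ sum there already carries the explicit prefactor $K_{\boldsymbol x}(F^\star,\cdot)$, so the subtraction is immediate. Second, your positivity claim---that in case~(iii) of \cref{lem:exmerge} some $G\in\TTb\cup\TTbs$ extending a possible $F$ is always possible---is asserted rather than proved; the paper does not address it either, and in each application of the lemma a concrete possible $G\in\TTb\cup\TTbs$ is constructed directly, so the point never bites. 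Also note that when tracking which factors to keep you should track $y$-dependence as well as $s,z$ (the proportionality constant is stated to depend only on $F^\star$ and $\boldsymbol x$), though your expansion does in fact retain the $y$-dependent factors $p_s(z-y)$ and $N^{\boldsymbol\nu}(\boldsymbol xy)^{-1}$.
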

\begin{proof}
    The first claim follows directly from \cref{eq:consPxKx} (note that the first term on the RHS of \eqref{eq:consPxKx} is itself $\sim K_{\boldsymbol x}(F^\star,\cdot )$ and can therefore be absorbed into the LHS of \eqref{eq:samplingconKx}), and
    %Let $A\subset \dcb(F)$ measurable, then $K_{\boldsymbol x}(F^\star, \,\cdot \,\cap \{\dc\in A\}) = \ind_{\{F^\star\in A\}} K_{\boldsymbol x}(F^\star,\cdot )$, so by sampling consistency,
    %\begin{multline*}
    %    \smashoperator{\int_{\dcb(F)}} \ind_{\{F^\star \in A\}} K_{\boldsymbol x}(F^\star,\cdot ) P^{\boldsymbol x}(\diff F^\star)\\[-20pt]
    %    = \smashoperator[l]{\sum_{G\in \mathbb{T}\new(F)}}\int\mu_{\boldsymbol x}(\diff y)\smashoperator{\int_{\dcb(G)}} \ind_{\{G^\star\!\rsto{\fpart_0} \in A\}}K_{\boldsymbol x y}(\Fnew^\star,\{\rs{\boldsymbol X} \in \cdot \}) P^{\boldsymbol xy} (\diff \Fnew^\star).
    %    %&= \iint \ind_{(F,\tau,\xi) \in A} K_{\boldsymbol xy}(G^\star,\cdot ) P^{\boldsymbol x y}((G,s\tau,z\xi)) \diff s\diff z \diff \tau\diff \xi \mu_{\boldsymbol x}(\diff y) + \ldots
    %\end{multline*}
    %On the left we write $P^{\boldsymbol x}(\diff F^\star) =  P^{\boldsymbol x}((F,\tau,\xi)) \diff \tau\diff \xi$ and $ \ind_{\{F^\star\in A\}} = \ind_{\{(F,\tau,\xi) \in A\}}$. On the right, if $G\in \TTb$ then $G^\star \in \dcb(G)$ is of the form $(G,s\new\tau,z\new\xi)$, see \cref{eq:bijb}, and we can write $P^{\boldsymbol xy}(\diff G^\star) = P^{\boldsymbol xy}((G,s\new\tau,z\new\xi)) \diff s\diff z\diff \tau\diff \xi$, and $ \ind_{\{G^\star\!\rsto{\fpart_0} \in A\}} = \ind_{\{(F,\tau,\xi) \in A\}}$. Similarly for $G\in \TTbs$, and if $G\in \TTm$ then $K_{\boldsymbol xy}(G^\star,\{\rs{\boldsymbol X}\in\cdot \}) = K_{\boldsymbol x}(F^\star,\cdot )$ for any $y\in E_{\boldsymbol x}$.
    \cref{eq:Pxyb,eq:Pxybs} follow from the definition of $\fnu$ by discarding terms that don't depend on $s$, $z$, $y$, and $z$, $y$, respectively.
\end{proof}

\begin{lemma}\label{lem:mux}
    For every $\boldsymbol x\in \mathcal{X}$, $\mu_{\boldsymbol x}(\diff y) \sim N^{\boldsymbol\nu}(\boldsymbol x y) \diff y$.
\end{lemma}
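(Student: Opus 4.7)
The proof extends the pushforward technique of \cref{lem:nu1} to $n := |\fpart_0| \ge 2$; the case $n = 1$ is \cref{lem:nu1} itself. Fix $\boldsymbol x \in E^{\fpart_0}_\circ$. By case (iii) of \cref{lem:exmerge} together with \cref{lem:nkle}, select a possible tree $F \in \mathbb{T}(\fpart_0)$ whose first merger is binary, involving two fixed leaves $u_1, u_2 \in \fpart_0$. Fix any $F^\star = (F,\tau,\xi) \in \dcb(F)$, and write $\tau_* = \tau_{u_1 \cup u_2}$, $\xi_* = \xi_{u_1 \cup u_2}$.

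Apply the pushforward of \cref{eq:samplingconKx} under $\pth^{F^\star}_{u_1}$ and classify extensions $G \in \mathbb{T}\new(F)$ by how $\vnew$ first interacts with the $u_1$-lineage of $\rs{\boldsymbol X}$. The \emph{unique} ``Type A'' extension lies in $\TTb$ with $\unew = u_1$: $\vnew$ binary-merges with $u_1$ at some time $s \in (0, \tau_*)$ at location $z$, contributing the bridge $B^{(0,x_1)\to(s,z)\to(\tau_*,\xi_*)}$ to the projected $u_1$-path. For \emph{every} other extension the projected $u_1$-path coincides with $B^{(0,x_1)\to(\tau_*,\xi_*)}$ exactly as under $F$. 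Using the identity
\begin{equation*}
    \iint B^{(0,x_1)\to(s,z)\to(\tau_*,\xi_*)}(\cdot)\, w(s,z)\, \diff s\, \diff z = B^{(0,x_1)\to(\tau_*,\xi_*)}\!\left(\int_0^{\tau_*} \frac{w(s, X_s)}{\rho_s(X_s)}\, \diff s\, \cdot\right),
\end{equation*}
with bridge marginal density $\rho_s(z) = p_s(z - x_1)\, p_{\tau_* - s}(\xi_* - z)/p_{\tau_*}(\xi_* - x_1)$, the pushforwarded equation reduces to
\begin{equation*}
    \int_0^{\tau_*} \frac{w_A(s, X_s)}{\rho_s(X_s)}\, \diff s = c_*(\boldsymbol x, F^\star) \quad B^{(0,x_1)\to(\tau_*,\xi_*)}\text{-a.s.,}
\end{equation*}
where $w_A$ is the Type A integrand. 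Comparing $\fnu^{G_a} \boldsymbol\nu_{G_a}$ with $\fnu^F \boldsymbol\nu_F$ via \cref{eq:BSC} and \cref{lem:gnkdz} gives
\begin{equation*}
    \frac{w_A(s, z)}{\rho_s(z)} = A_0\, e^{-\beta s}\, \nu_{n+1, 2}(z)\, g(s, z),
\end{equation*}
with $A_0 = A_0(\boldsymbol x, F^\star) > 0$ independent of $(s, z)$, $\beta = \nut{n+1} - \nut{n}$, and $g(s, z) := \int p_s(y - z)\, N^{\boldsymbol \nu}(\boldsymbol xy)^{-1}\, \mu_{\boldsymbol x}(\diff y)$ the Brownian heat semigroup applied to $N^{\boldsymbol \nu}(\boldsymbol x\cdot)^{-1}\mu_{\boldsymbol x}$.

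A Wiener chaos argument under the Brownian bridge, exploiting the Gaussianity and full support of the marginal $X_s$, shows that a.s.\ constancy of $\int_0^{\tau_*} \eta(s, X_s)\, \diff s$ for $\eta(s, z) := e^{-\beta s} \nu_{n+1, 2}(z)\, g(s, z)$ forces $\eta(s, z)$ to be Lebesgue-a.e.\ independent of $z$ for a.e.\ $s$. Since $g(s, z) \to \int N^{\boldsymbol \nu}(\boldsymbol xy)^{-1}\mu_{\boldsymbol x}(\diff y) > 0$ uniformly in $z$ as $s \to \infty$, we deduce that $\nu_{n+1, 2}(z)$ is Lebesgue-a.e.\ constant---an extra byproduct confirming $\nu_{n+1, 2}(\diff z) \sim \diff z$ as \cref{thm:samplingconintro} requires. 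Then $g(s, z)$ is constant in $z$ for a.e.\ $s$, and \cref{lem:Tsmu} applied to the finite measure $N^{\boldsymbol \nu}(\boldsymbol x\cdot)^{-1} \mu_{\boldsymbol x}(\diff \cdot)$ yields $\mu_{\boldsymbol x}(\diff y) \sim N^{\boldsymbol \nu}(\boldsymbol xy) \diff y$. The principal technical obstacle is the Wiener-chaos/variance step linking a.s.\ constancy of a Brownian-bridge integral to pointwise constancy of the integrand in its spatial argument; this needs the non-degeneracy of the bridge's Gaussian marginal on $E$.
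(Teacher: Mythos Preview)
Your route is workable but diverges from the paper's in a way that creates extra complications, and it contains two genuine gaps.

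\textbf{Comparison.} The paper does not project onto a leaf's path before the first merge; it projects onto the motion of the \emph{root} $\unew$ of $F$ \emph{after} the final merge, which is a free Brownian motion $B^{(s_0,z_0)+}$. The only extension $G$ that changes this path is the one where $\vnew$ merges with the root at some later time, and that merge is always a $(2,2)$-merger. Since $\nu_{2,2}\sim\diff z$ is already known from \cref{lem:nu1}, the weight on $(s,z)$ is simply $e^{-\nut{2}s}p_s(z-y)p_{s-s_0}(z-z_0)$, and \cref{lem:bb} immediately gives $(T_s\mu)(z)$ constant, whence \cref{lem:Tsmu} concludes. Your choice of projecting onto the $u_1$-bridge forces the unknown density $\nu_{n+1,2}(z)$ into the weight, so you must disentangle the product $\nu_{n+1,2}(z)\,g(s,z)$ afterwards; the paper's choice avoids this entirely.

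\textbf{Gaps.} First, the ``Wiener chaos argument'' is exactly the content of \cref{lem:bb} (second part), which the paper proves by characteristic functions; your invocation of chaos expansions is not a proof and you should cite \cref{lem:bb} directly, noting that its continuity hypothesis is met by taking $\mu=\nu_{n+1,2}$ and the continuous factor $e^{-\beta s}g(s,z)$ as $f$. Second, you conclude $\nu_{n+1,2}$ is constant by letting $s\to\infty$ in $g(s,z)\to\int N^{\boldsymbol\nu}(\boldsymbol xy)^{-1}\mu_{\boldsymbol x}(\diff y)$, but your derivation only gives the constancy of $\nu_{n+1,2}(z)g(s,z)$ for a.e.\ $s\in(0,\tau_*)$ with $\tau_*$ fixed. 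You need to observe that $\tau_*$ can be taken arbitrarily large (since $F^\star$ ranges over a.e.\ decoration), which you never state. Once patched, your argument yields $\nu_{n+1,2}\sim\diff z$ as a byproduct, but this is redundant with \cref{lem:nuconst2}, which the paper proves \emph{after} \cref{lem:mux} using the now-established form of $\mu_{\boldsymbol x}$.
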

\begin{proof}
    Let $n\ge 3$, $\fpart_0\in \mathcal{P}$ with $|\fpart_0| = n$, and $\boldsymbol x\in E^{\fpart_0}_\circ $. By \cref{lem:exmerge} there exists a possible tree $F = \{\fpart_0, \ldots ,\fpart_m = \left\{ \unew \right\} \}$ such that $G \coloneqq \{\fpart_0\cup\left\{ \vnew \right\} , \ldots ,\fpart_{m-1} \cup \left\{ \vnew \right\}, \left\{ \unew,\vnew \right\} , \left\{ \unew \cup \vnew \right\} \}$ is possible. Note $G\rsto{\fpart_0} = F$. In words, in $G$ the first $n-1$ particles merge according to $F$ and then the root of $F$ merges with the $n$th particle $\vnew$. Fix $F^\star=(F,\tau,\xi)\in \dcb(F)$ for which \cref{eq:samplingconKx} holds, and denote by $s_0\coloneqq \tau(\unew)$, $z_0\coloneqq \xi(\unew)$ time and place of the birth of $F$'s root. Then the law of $\boldsymbol Y_{\bullet} \coloneqq \boldsymbol X_{s_0+\bullet}(\unew)$, the motion of $F$'s root, under $K_{\boldsymbol x}(F^\star,\cdot )$ is $B^{(s_0,z_0)+}$. For any $G'^\star \in \dcb\new(F^\star)$ with $G'\neq G$ the law of $\boldsymbol Y$ under $K_{\boldsymbol x}(G'^\star,\{\rs{\boldsymbol X}\in\cdot \})$ is the same, because $\vnew$ merges into $F$ before $s_0$. If $G^\star = (G,s\new\tau,z\new\xi) \in \dcb\new(F^\star \,\vert\,G)$, then the law of $\boldsymbol Y$ under $K_{\boldsymbol x}(G^\star,\{\rs{\boldsymbol X}\in\cdot \})$ is $B^{(s_0,z_0)\to(s,z)+}$, so by \cref{lem:samplingconKx},
    \begin{align*}
        B^{(s_0,z_0)+}(\cdot )
        &\sim \int_{E_{\boldsymbol x}} \int_E \int_{s_0}^\infty B^{(s_0,z_0)\to(s,z)+}(\cdot ) P^{\boldsymbol x y}((G,s\new\tau,z\new\xi)) \diff s\diff z\mu_{\boldsymbol x}(\diff y)\\
        &\sim \iiint B^{(s_0,z_0)\to(s,z)+}(\cdot ) p_s(z-y) p_{s-s_0}(z-z_0) \e^{-\nut{2} s} \diff s\diff z \mu(\diff y),
    \end{align*}
    where we put $\mu(\diff y) \coloneqq \frac{1}{N^{\boldsymbol\nu}(\boldsymbol x y)}\mu_{\boldsymbol x}(\diff y)$ and used that $\nu_{2,2}(\diff z) \sim \diff z$ is already known (that is, $\nu_{2,2}(z)$ is constant in $z$). By \cref{lem:bb}, this implies that, for Lebesgue-a.a.\ $s \in (s_0,\infty)$,
    \begin{align*}
        \left(\int_{E_{\boldsymbol x}} p_s(z-y) p_{s-s_0}(z-z_0) \mu(\diff y) \right) \diff z\sim p_{s-s_0}(z-z_0) \diff z,
    \end{align*}
    that is $T_s \mu$ is a.e.\ constant for a.a.\ $s>0$, and thus $\mu(\diff z) \sim \diff z$ by \cref{lem:Tsmu}.
\end{proof}

\begin{figure}
    \centering
    \def\yscale{1.2}
\def\tick{.05}
\def\xroom{3.8}
\def\nodecol{black}
\def\noderadius{1.2pt}
\begin{tikzpicture}[scale=1.5] %[baseline=(current bounding box.north),scale=1.2]
    \coordinate (zero) at (-.5,-.5);
    \coordinate (zero2) at (-.5+\xroom,-.5);
    \coordinate (t-l) at (-.5,2.4*\yscale);
    \coordinate (t-l2) at (-.5+\xroom,2.4*\yscale);
    \coordinate (b-r) at (-.5+2*\xroom+.5,-.5);

    \draw[->] (zero) -- (t-l);
    \draw[->] (zero2) -- (t-l2);
    \draw[->] (zero) -- (b-r);
    \draw (t-l) node[above] {\footnotesize time};
    \draw (b-r) node[below left] {\footnotesize space};

    \begin{scope}[shift={(0,0)}]
        \coordinate (bl) at (0,-.3);
\coordinate (tl) at  (0,2*\yscale);
\coordinate (br) at (2.7,-.3);
\coordinate (tr) at (2.7,2*\yscale);

\draw[black, thin] (bl)--(tl);
\draw[black, thin] (bl)--(br);
\draw[black, thin] (tr)--(tl);
\draw[black, thin] (tr)--(br);
\coordinate (Aa) at (.15,0);
\coordinate (Ab) at (.5,0);
\coordinate (Ac) at (.75,0);
\coordinate (A) at (.5,1.5*\yscale);
\coordinate (A2) at (.4,2*\yscale);

\coordinate (Ba) at (1.25,0);
\coordinate (Bb) at (2,0);
\coordinate (B) at ({(1+1.75)/2},1.5*\yscale);
\coordinate (B2) at (B|-0,2*\yscale);

\coordinate (C) at (2.5,0);

%% tree connections
\draw[black] (Aa) to[out=90,in=-90-20] (A);
\draw[black] (Ab) to[out=90,in=-90] (A);
\draw[black] (Ac) to[out=90,in=-90+15] (A);
\draw[black] (A) to[out=90,in=-90+20] (A2);
\draw[black] (Ba) to[out=90,in=-90-10] (B);
\draw[black] (B) to[out=90,in=-90] (B2);
\draw[gray, very thick, name path = PATH] (Bb) to[out=90+5,in=-90+30] (B);

\node[gray] (Y) at (1.8,1.3*\yscale) {\footnotesize$\boldsymbol Y_\bullet$};

\def\ymerge{1.0}
\path[name path = horizontal] (0,\ymerge*\yscale) -- (C|-0,\ymerge*\yscale);
\draw[orange,densely dotted,thick,name intersections={of=PATH and horizontal}] (C) to[out=90+20,in=-90+60] (intersection-1);

%% space and time labels
\draw[dotted,gray,name intersections={of=PATH and horizontal}] (-.5,\ymerge*\yscale) -- (intersection-1);
\draw (-.5+\tick,\ymerge*\yscale) -- (-.5-\tick,\ymerge*\yscale) node[left] {\footnotesize$s$};

\draw[dotted,gray] (-.5,1.5*\yscale) -- (B);
\draw (-.5+\tick,1.5*\yscale) -- (-.5-\tick,1.5*\yscale) node[left] {\footnotesize$s_1$};

%\draw[dotted,gray] (-.5,0|-B) -- (B);
%\draw (-.5+\tick,0|-B) -- (-.5-\tick,0|-B) node[left] {\footnotesize$\tau(\Pnewrs)$};

\draw[dotted,gray,name intersections={of=PATH and horizontal}] (intersection-1|-0,-.5) -- (intersection-1);
\draw[name intersections={of=PATH and horizontal}] (intersection-1|-0,-.5+\tick) -- (intersection-1|-0,-.5-\tick) node[below] {\footnotesize$z$};

\draw[dotted,gray] (B) -- (B|-0,-.5+\tick);
\draw (B|-0,-.5+\tick) -- (B|-0,-.5-\tick) node[below] {\footnotesize$z_1$};

%\draw[dotted,gray] (C) to (C|-0,-.5);
%\draw (C|-0,-.5+\tick) to (C|-0,-.5-\tick) node[below] {\footnotesize $y$};

%% node labels
\draw[orange] (C) node[below] {\footnotesize $\vnew$};
\draw (Bb) node[below] {\footnotesize $\unew$};
%\draw[gray,name intersections={of=PATH and horizontal}] (intersection-1) node[above right] {\footnotesize $\wnew$};

%% node circles
\draw[fill,\nodecol] (A) circle[radius=\noderadius];
\draw[fill,\nodecol] (Aa) circle[radius=\noderadius];
\draw[fill,\nodecol] (Ab) circle[radius=\noderadius];
\draw[fill,\nodecol] (Ac) circle[radius=\noderadius];
\draw[fill,\nodecol] (B) circle[radius=\noderadius];
\draw[fill,\nodecol] (Ba) circle[radius=\noderadius];
\draw[fill,\nodecol] (Bb) circle[radius=\noderadius];
\draw[fill,orange] (C) circle[radius=\noderadius];
\draw[fill,orange,name intersections={of=PATH and horizontal}] (intersection-1) circle[radius=\noderadius];
    \end{scope}
    \begin{scope}[shift={(\xroom,0)}]
        \coordinate (bl) at (0,-.3);
\coordinate (tl) at  (0,2*\yscale);
\coordinate (br) at (3.2,-.3);
\coordinate (tr) at (3.2,2*\yscale);

% bounding box
\draw[black, thin] (bl)--(tl);
\draw[black, thin] (bl)--(br);
\draw[black, thin] (tr)--(tl);
\draw[black, thin] (tr)--(br);

% coordinate definitions
\coordinate (Aa) at (.15,0);
\coordinate (Ab) at (.35,0);
\coordinate (Ac) at (.75,0);
\coordinate (A) at (.5,1.0*\yscale);
\coordinate (A2) at (.7,2*\yscale);

\coordinate (Ba) at (1.25,0);
\coordinate (Bb) at (1.55,0);
\coordinate (B) at (1.4,1.0*\yscale);
\coordinate (B2) at (1.2,2*\yscale);

\coordinate (Ca) at (2.0,0);
\coordinate (Cb) at (2.35,0);
\coordinate (Cb2) at (2.6,1.0*\yscale);
\coordinate (C) at (2.15,1.7*\yscale);
\coordinate (C2) at (2.1,2*\yscale);

\coordinate (D) at (3,0);

%% tree connections
\draw[black] (Aa) to[out=90-5,in=-90-25] (A);
\draw[black] (Ab) to[out=90,in=-90-10] (A);
\draw[black] (Ac) to[out=90,in=-90+20] (A);
\draw[black] (A) to[out=90,in=-90-20] (A2);

\draw[black] (Ba) to[out=90,in=-90-15] (B);
\draw[black] (Bb) to[out=90,in=-90+15] (B);
\draw[black] (B) to[out=90,in=-90+20] (B2);

\draw[black] (Ca) to[out=90,in=-90-15] (C);
\draw[gray,very thick, name path = pa] (Cb) to[out=90,in=-90] (Cb2);
\draw[gray,very thick, name path = pb] (Cb2) to[out=90,in=-90+45] (C);
\draw[black] (C) to[out=90,in=-90+15] (C2);

\draw[orange, densely dotted, thick] (D) to[out=90+10,in=-90+40] (Cb2);

\path[name path = hora] (0,.5) -- (D|-0,.5);
\draw[teal, densely dotted, name intersections={of=pa and hora}] (D) to[out=90+50,in=-90+35] (intersection-1);

\path[name path = horb] (0,1.5) -- (D|-0,1.5);
\draw[teal, densely dotted, name intersections={of=pb and horb}] (D) to[out=90,in=-90+60] (intersection-1);

\node[gray] (Y) at (2.6,1.6*\yscale) {\footnotesize$\boldsymbol Y_\bullet$};

%% space and time labels
\draw[dotted,gray] (-.5,1.0*\yscale) -- (Cb2);
\draw (-.5+\tick,1.0*\yscale) -- (-.5-\tick,1.0*\yscale) node[left] {\footnotesize$s_1$};

\draw[dotted,gray] (-.5,1.7*\yscale) -- (C);
\draw (-.5+\tick,1.7*\yscale) -- (-.5-\tick,1.7*\yscale) node[left] {\footnotesize$s_2$};

\draw[dotted,gray] (Cb2|-0,-.5) -- (Cb2);
\draw (Cb2|-0,-.5+\tick) -- (Cb2|-0,-.5-\tick) node[below] {\footnotesize$z$};

\draw[dotted,gray] (B) -- (B|-0,-.5+\tick);
\draw (B|-0,-.5+\tick) -- (B|-0,-.5-\tick);

\draw[dotted,gray] (A) -- (A|-0,-.5+\tick);
\draw (A|-0,-.5+\tick) -- (A|-0,-.5-\tick);

\draw[decorate, decoration = {brace, mirror, raise=5pt}] (A|-0,-.5) -- node[below,yshift=-10pt] {\footnotesize $\boldsymbol z_1$} (B|-0,-.5);

\draw[dotted,gray] (C) -- (C|-0,-.5+\tick);
\draw (C|-0,-.5+\tick) -- (C|-0,-.5-\tick) node[below] {\footnotesize$z_2$};

%% node labels
\draw[orange] (D) node[below] {\footnotesize $\vnew$};
\draw (Cb) node[below] {\footnotesize $\unew$};
%\draw[gray,name intersections={of=PATH and horizontal}] (intersection-1) node[above right] {\footnotesize $\wnew$};

%% node circles
\draw[fill,\nodecol] (A) circle[radius=\noderadius];
\draw[fill,\nodecol] (Aa) circle[radius=\noderadius];
\draw[fill,\nodecol] (Ab) circle[radius=\noderadius];
\draw[fill,\nodecol] (Ac) circle[radius=\noderadius];
\draw[fill,\nodecol] (B) circle[radius=\noderadius];
\draw[fill,\nodecol] (Ba) circle[radius=\noderadius];
\draw[fill,\nodecol] (Bb) circle[radius=\noderadius];
\draw[fill,\nodecol] (Ca) circle[radius=\noderadius];
\draw[fill,\nodecol] (Cb) circle[radius=\noderadius];
\draw[fill,orange] (Cb2) circle[radius=\noderadius];
\draw[fill,\nodecol] (C) circle[radius=\noderadius];
\draw[fill,orange] (D) circle[radius=\noderadius];

\draw[fill,teal, name intersections={of=pa and hora}] (intersection-1) circle[radius=\noderadius];
\draw[fill,teal, name intersections={of=pb and horb}] (intersection-1) circle[radius=\noderadius];
    \end{scope}
\end{tikzpicture}
    \caption{Illustrations for the proof of \cref{lem:nuconst2}.}
    \label{fig:prf:nuconst2}
\end{figure}

\begin{lemma}\label{lem:nuconst2}
    For every $(n,\vec{k}) = (n,k_1, \ldots ,k_m)\in \mergers$ and $i\in [m]$, $\gnk$ is constant in the $i$'th argument if $k_i = 2$. In particular, $\nu_{n,2}(\diff z) = \lambda_{n,2}\diff z$ for some $\lambda_{n,2}\ge 0$ for each $n \ge 2$.
\end{lemma}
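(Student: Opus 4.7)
The plan is to prove Lemma~\ref{lem:nuconst2} by induction on $n \geq 2$, with the base case $n = 2$ handled by Lemma~\ref{lem:nu1}. For the inductive step, assume the statement for all $n' < n$ and fix a possible $(n, \vec{k}) = (n, k_1, \ldots, k_m) \in \mergers$ with $k_i = 2$ for some $i$. By the symmetry of $\gnk$ under permutation of equal-sized block coordinates (Lemma~\ref{lem:labelinv}), we may assume $i = m$, and we denote this $m$-th coordinate by $z$.

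Using Lemma~\ref{lem:exmerge}(iii) together with the possibility of $(n, \vec{k})$, construct a possible tree $F \in \mathbb{T}(\fpart_0)$ with $|\fpart_0| = n$ whose first merger, at time $s_1$, is an $(n,\vec{k})$-merger and in which two designated leaves $u, v \in \fpart_0$ constitute the $m$-th binary pair, producing a node $w = u \cup v$ at position $z$. Let $s_2 > s_1$ be the time of the next merger involving $w$ (at position $z_2$); this exists because $F$ reaches a singleton root under case~(iii) of Lemma~\ref{lem:exmerge}. Fix $F^\star = (F, \tau, \xi)$ for which~(\ref{eq:samplingconKx}) holds.

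Following the template of Lemma~\ref{lem:mux}, apply sampling consistency with the $\TTb$ extension $G$ in which $\vnew$ binary merges with the non-leaf node $\unew \coloneqq w$ at some time $s \in (s_1, s_2)$ and position $z'$. The number of lineages just before $s$ in $G$ is $|\Pnew| + 1 = n - \sum_j k_j + m + 1 \leq n$, where the inequality uses $\sum_j(k_j - 1) \geq m \geq 1$ (each $k_j \geq 2$). By the inductive hypothesis, the new $(|\Pnew|+1, 2)$-merger has constant density $\lambda \coloneqq g_{|\Pnew|+1,2}$, so $\boldsymbol\nu_G(z'\new\xi) = \lambda\,\boldsymbol\nu_F(\xi)$. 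The motion $\boldsymbol Y_\bullet \coloneqq \boldsymbol X_{s_1+\bullet}(w)$ has law $B^{(s_1,z) \to (s_2, z_2)}$ under $K_{\boldsymbol x}(F^\star, \cdot)$ and law $B^{(s_1,z) \to (s, z') \to (s_2, z_2)}$ under $K_{\boldsymbol xy}(G^\star, \{\rs{\boldsymbol X}\in\cdot\})$, while for any other extension appearing on the right side of~(\ref{eq:samplingconKx}) the law of $\boldsymbol Y$ agrees with that under $K_{\boldsymbol x}(F^\star,\cdot)$. Passing to the density formulation (Lemma~\ref{lem:consPxKx}) and substituting the identity $\mu_{\boldsymbol x}(\diff y) \sim N^{\boldsymbol\nu}(\boldsymbol xy) \diff y$ from Lemma~\ref{lem:mux}, cancelling normalizations and common factors, applying Lemma~\ref{lem:bb} to resolve the Brownian bridge kernels, and varying the initial positions $\boldsymbol x$ to isolate the $z$-dependence, one reduces to a scalar identity for the function $f(z) \coloneqq \gnk(z_1, \ldots, z_{m-1}, z)$ of the form $T_t f \sim c(t)\, f$ for a.e.\ $t > 0$. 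Lemma~\ref{lem:Tsmu} then yields that $f$ is almost everywhere constant, completing the inductive step. The ``in particular'' claim $\nu_{n,2}(\diff z) = \lambda_{n,2}\diff z$ is the specialization to $\vec{k} = (2)$.

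The principal obstacle lies in isolating the $z$-dependence of $\gnk$ within the sampling consistency equation: since our chosen extension $G$ preserves the $(n, \vec{k})$-merger at $s_1$ with identical spatial coordinates, the factor $\gnk(z_1, \ldots, z)$ appears on both sides and naively cancels, leaving only the already-established content of Lemma~\ref{lem:mux}. Overcoming this requires working at the level of $P^{\boldsymbol x}$-densities (rather than conditional motion laws), carefully tracking how $z$ enters the spatial factors $\fsp^F$ and $\fsp^G$ together with the bridge kernels, and exploiting the freedom to vary $\boldsymbol x$ (particularly the initial positions of $u$ and $v$, which couple to $z$ only through $p_{s_1}(x_u - z) p_{s_1}(x_v - z)$) to extract a translation-kernel equation for $f(\cdot)$ amenable to Lemma~\ref{lem:Tsmu}. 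A secondary concern is verifying that $F$ can be constructed with the prescribed first merger while still reaching a singleton root; this follows from Lemma~\ref{lem:exmerge}(iii) by inserting appropriate intermediate mergers after the first one.
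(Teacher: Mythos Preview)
Your setup places the $(n,\vec k)$-merger as a \emph{fixed} event in $F^\star$, with the coordinate $z$ you want to probe sitting among the fixed data $\xi$. The bridge argument via Lemma~\ref{lem:bb} can only identify the density in the \emph{free} coordinates of the extension---here $(s,z')$ for your $G\in\TTb$---so what it actually yields is information about $\nu_{|\Pnew|+1,2}(z')$, not about $\nu_{n,\vec k}(\cdots,z)$. You recognise this (``naively cancels''), but the proposed fix of varying $\boldsymbol x$ does not work: in \cref{eq:samplingconKx} both $\boldsymbol x$ and $F^\star$ are fixed, and the equation holds pointwise. If instead you pass to total mass and let $z$ vary across different $F^\star$, what you recover is precisely the recursion \cref{eqprf:nuconst:1} in the proof of Lemma~\ref{lem:nuconst}, which expresses $\nu_{n,\vec k}$ in terms of $\nu_{n+1,\cdots}$---the wrong direction for induction on $n$. (There is also a circularity: for $\vec k=(2)$ your auxiliary merger has $|\Pnew|+1=n$, so the inductive hypothesis would require $\nu_{n,2}$ itself.)

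The paper avoids this by choosing $F$ on $n-1$ leaves so that the target density sits in the \emph{free} coordinate of the extension. For $\nu_{n,2}$ it takes $G\in\TTb$ with the new $(n,2)$-merge occurring \emph{before} the first merge of $F$; then $\nu_{n,2}(z)$ is the density in the free spatial variable $z$ and Lemma~\ref{lem:bb} applies directly. For $m\ge 2$ with $k_m=2$, it takes $F$ whose first merge is $(n-1,k_1,\ldots,k_{m-1})$ with $\unew$ a non-participating leaf, and $G\in\TTbs$ adding the simultaneous binary merge of $\unew,\vnew$; the first merge in $G$ is then $(n,\vec k)$ and the $m$-th coordinate $z$ is exactly the free variable $\xinew$ in the bijection \cref{eq:bijbs}. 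Evaluating the bridge law at time $s_1$ reads off that $\nu_{n,\vec k}(\boldsymbol z_1,\cdot)$ is constant. The moral: to probe a particular coordinate of a transition measure, arrange the extension so that this coordinate is the one being integrated, not one already fixed in $F^\star$.
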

\begin{proof}
    We first prove $\nu_{n,2}(\diff z) \sim \diff z$ for all $n \ge 2$. This is already proved for $n = 2$, so let $n \ge 3$, $(\fpart_0,\boldsymbol x)\in \mathcal{X}$ with $|\fpart_0| = n-1$, and $F\in \mathbb{T}(\fpart_0)$ a possible tree. Let $\unew \in \fpart_0$ be one of the leaves involved in the first merge event, and denote by $G \in \mathbb{T}(\fpart_0\new)$ the tree in which an additional leaf $\vnew$ merges with $\unew$ before the first merge of $F$, so $G\rsto{\fpart_0} = F$. Fix $F^\star=(F,\tau,\xi) \in \dcb(F)$ for which \cref{eq:samplingconKx} holds, and put $s_1 \coloneqq \tau(\pr_F(\unew))$, $z_1 \coloneqq \pr_F(\unew)$, the time and place of the first merge of $\unew$ in $F$. See \cref{fig:prf:nuconst2} (left). Then the law of $(\boldsymbol Y_r \coloneqq \boldsymbol X_{r}(\unew))_{0\le r \le s_1}$ under $K_{\boldsymbol x}(F^\star,\cdot )$ is $B^{(0,x_0)\to(s_1,z_1)}$ where $x_0 \coloneqq \boldsymbol x(\unew)$.
    And if $G^\star=(G,s\new\tau,z\new\xi)\in \dcb\new(F^\star \,\vert\,G)$, then the law of $\boldsymbol Y$ under $K_{\boldsymbol x}(G^\star,\{\rs{\boldsymbol X}\in\cdot \})$ is $B^{(0,x_0)\to(s,z)\to(s_1,z_1)}$. Thus by \cref{lem:samplingconKx} and \cref{lem:mux}, for some $\la\in \R$,
    \begin{align*}
        B^{(0,x_0)\to(s_1,z_1)}(\cdot )
        &\sim \int_{E_{\boldsymbol x}} \int_E \int_{0}^{s_1} B^{(0,x_0)\to(s,z)\to(s_1,z_1)}(\cdot ) P^{\boldsymbol x y}((G,s\new\tau,z\new\xi)) \diff s \diff z \mu_{\boldsymbol x}(\diff y)\\
        &\sim \iiint B^{(0,x_0)\to(s,z)\to(s_1,z_1)}(\cdot ) \e^{-\la s} p_s(z-y) p_s(z-x_0) \\[-5pt]
        &\hspace{6cm} p_{s_1-s}(z_1-z) \nu_{n,2}(z) \diff s\diff z\diff y\\
        &= \iint B^{(0,x_0)\to(s,z)\to(s_1,z_1)}(\cdot ) \e^{-\la s} p_s(z-x_0) p_{s_1-s}(z_1-z) \nu_{n,2}(z) \diff z \diff s,
    \end{align*}
    which by \cref{lem:bb} implies that $\nu_{n,2}(z)$ is constant and therefore $\nu_{n,2}(\diff z) \sim \diff z$.

    Now let $n \ge 3$ and $(n,k_1, \ldots ,k_m)$ possible with $m\ge 2$ and $k_m = 2$. Take $\fpart_0\in \mathcal{P}$ with $|\fpart_0| = n-1$, and let $G\in \mathbb{T}(\fpart_0\new)$ be a tree in which the first merge event is $(n,\vec{k})$, and in which the parent of the two leaves $\unew,\vnew$ comprising the merge associated to $k_m$, is involved in the merge event following the first. See \cref{fig:prf:nuconst2} (right). Let $F\coloneqq G \wo \vnew \in \mathbb{T}(\fpart_0)$, so $G\rsto{\fpart_0} = F$. Fix $F^\star = (F,\tau,\xi) \in \dcb(F)$ for which \cref{eq:samplingconKx} holds, and denote $s_1\coloneqq \tau(\fpart^F_1)$, $\boldsymbol z_1 \coloneqq \xi\vert_{\fpart^F_1 \setminus \fpart_0}$, $s_2\coloneqq \tau(\pr_F(\unew))$, $z_2\coloneqq \xi(\pr_F(\unew))$, then the law of $(\boldsymbol Y_r\coloneqq \boldsymbol X_r(\unew))_{0\le r \le s_2}$ under $K_{\boldsymbol x}(F^\star,\cdot )$ is $B^{(0,x_0)\to(s_2,z_2)}$, where $x_0 = \boldsymbol x(\unew)$. If $G'^\star \in \dcb\new(F^\star)$ with $G'\neq G$, then the law of $\boldsymbol Y$ under $K_{\boldsymbol x y}(G'^\star,\{\rs{\boldsymbol X}\in\cdot \})$ is the same,
    except in the two cases where $G'$ merges $\vnew $ with $\unew$ in $(0,s_1)$ or in $(s_1,s_2)$ (see teal-coloured parts of \cref{fig:prf:nuconst2}). In the former case,
    \begin{align*}
        \iiint & \boldsymbol Y\# K_{\boldsymbol x y}(G'^\star,\{\rs{\boldsymbol X}\in\cdot \}) P^{\boldsymbol x y}((G,s\new\tau,z\new\xi)) \diff s\diff z \mu_{\boldsymbol x}(\diff y)\\
        &\sim \iiint B^{(0,x_0)\to(s,z)\to(s_2,z_2)}(\cdot ) \e^{-\la s} p_s(z-y) p_s(z-x_0) p_{s_2-s}(z_2-z) \diff s\diff z\diff y\\
        &= \int_0^{s_1} B^{(0,x_0)\to(s,z)\to(s_2,z_2)}(\cdot ) \e^{-\la s} p_s(x_0-z) p_{s_2-s}(z-z_2) \diff z \diff s\\
        &\sim B^{(0,x_0) \to (s_2,z_2)}(\cdot ),
    \end{align*}
    where we used \cref{lem:bb}, and similarly if the merge is in $(s_1,s_2)$. Now, if $G^\star = (G,\tau,z\new\xi) \in \dcb\new(F^\star \,\vert\,G)$ (see the orange coloured part of \cref{fig:prf:nuconst2}, right), then the law of $\boldsymbol Y$ under $K_{\boldsymbol x y}(G^\star,\{\rs{\boldsymbol X}\in\cdot \})$ is $B^{(0,x_0) \to (s_1,z) \to (s_2,z_2)}$, so by \cref{lem:samplingconKx},
    \begin{align*}
        B^{(0,x_0) \to (s_2,z_2)}(\cdot )
        &\sim \int_{E_{\boldsymbol x}} \int_{E_{\boldsymbol z_1}} B^{(0,x_0)\to(s_1,z)\to(s_2,z_2)}(\cdot ) P^{\boldsymbol x y}((G,\tau,z\new\xi)) \diff z \mu_{\boldsymbol x}(\diff y)\\
        &\sim \iint B^{(0,x_0)\to(s_1,z)\to(s_2,z_2)}(\cdot ) p_{s_1}(z-x_0) p_{s_1}(z-y)\\[-5pt]
        &\hspace{6cm} p_{s_2-s_1}(z_2-z) \nu(\boldsymbol z_1,z) \diff z \diff y\\
        &\sim \iint B^{(0,x_0)\to(s_1,z)\to(s_2,z_2)}(\cdot ) p_{s_1}(z-x_0) p_{s_2-s_1}(z_2-z) \nu(\boldsymbol z_1,z) \diff z.
    \end{align*}
    Evaluating both laws at time $s_1$ gives that $\gnk(\boldsymbol z_1,\cdot )$ is (a.e.)\ constant. By symmetry of $\gnk$ it must then be constant in all arguments $i$ with $k_i = 2$.
\end{proof}

This means we can write $\nunk(\boldsymbol z) = \nunk( z_1, \ldots , z_{j-1})$ if $(n,k_1, \ldots ,k_m)\in \mergers$ with $m\ge 2$ and $k_j=\ldots =k_m=2$. The following lemma finishes this section.  %This concludes the proofs that make use of arguments based on \cref{lem:samplingconKx}.

\begin{lemma}\label{lem:nuconst}
    $\boldsymbol \nu(\diff \xi) \sim \diff \xi$.
\end{lemma}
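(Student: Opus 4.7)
My plan is to extend the strategy of \cref{lem:nuconst2} by induction on $n\ge 2$. The base case $n=2$ is \cref{lem:nu1}. Assume $\nu_{n',\vec{k}'} \sim \diff \boldsymbol z$ for all $(n',\vec{k}') \in \mergers$ with $n' < n$. Fix $(n,\vec{k}) \in \mergers$ possible. \Cref{lem:nuconst2} already handles arguments $i$ with $k_i=2$, so it remains to establish constancy of $\gnk$ in arguments $i$ with $k_i\ge 3$. Rather than working with $K_{\boldsymbol x}(F^\star,\cdot)$ as in \cref{lem:nuconst2} (which only gives access to $\TTb$ and $\TTbs$ contributions via \cref{eq:samplingconKx}), I would work with the pointwise density form of sampling consistency obtained by taking total mass over the path coordinate in \cref{eq:consPxKx}. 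Crucially, by \cref{lem:mux}, $\mu_{\boldsymbol x}(\diff y) = c(\boldsymbol x)^{-1} N^{\boldsymbol \nu}(\boldsymbol x y)\diff y$, so the denominator $N^{\boldsymbol \nu}(\boldsymbol x y)$ of $P^{\boldsymbol x y}(G^\star)$ cancels, yielding a clean identity in terms of $\fnu$ and $\boldsymbol\nu_F,\boldsymbol\nu_G$ only.

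I would choose $F\in\mathbb{T}(\fpart_0)$ with $|\fpart_0|=n-1$ and first merger $(n-1,\vec{k}')$, where $\vec{k}'$ is obtained from $\vec{k}$ by reducing the target $k_i$ by one; such a merger is possible by \cref{lem:nkle}, and $\nu_{n-1,\vec{k}'}$ is constant by the outer induction. The contributions of $G$ in the density identity split into four families. The key term is $G\in\TTm$ with $\vnew$ joining the $i$th block of $F$'s first merger, which introduces the factor $\gnk(\boldsymbol z_1)$ together with the spatial weight $p_{s_1}(y-z_i)$ that integrates in $y$ to $1$. For $G\in\TTm$ with $\vnew$ joining a later merger, the first-merger shape of $G$ equals $(n,\vec{k}')$ (with $\vnew$ a spectator); for $G\in\TTb$, the first merger is $(n,2)$, constant by \cref{lem:nuconst2,lem:nu1}; for $G\in\TTbs$ with $\vnew$ binary-merging simultaneously with $F$'s first merger, the first-merger shape of $G$ is $(n,\vec{k}',2)^{\mathrm{sorted}}$. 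Each such auxiliary transition measure is to be handled by either the outer induction (smaller $n$) or a secondary induction on the shape $\vec{k}$ at fixed $n$.

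The hard part is the secondary induction. The natural reduction $\vec{k}\to\vec{k}_1^{(j)}$ (increasing one entry of $\vec{k}'$ by $1$, which is the first-merger shape of $G\in\TTm$ when $\vnew$ joins block $j$ of $F$'s first merger) does not uniformly decrease the maximum block size: if $\vec{k}$ has several entries equal to $k_1$ or $k_1-1$, some $\vec{k}_1^{(j)}$ can have max $\ge k_1$, and even strictly larger. I would resolve this by ordering multisets $\vec{k}$ of total weight $\sum k_i\le n$ lexicographically in decreasing order of block sizes (so that the ``largest-max, most-maxes'' shape is greatest), and reducing first to shapes $(n,\vec{k}'')$ with a strictly smaller multiset in this order. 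For shapes $\vec{k}''$ with $k''_1=k_1$ produced by some $j$, the symmetry of $\gnk$ in equal-sized block arguments (\cref{lem:labelinv}) allows one to reduce to constancy in a single argument, making the secondary induction well-founded. After the bookkeeping, the identity simplifies to
\[
    \text{(constant in $z_i$)} \;=\; c_1\,\gnk(\boldsymbol z) + \text{(constant in $z_i$)},
\]
which forces $\gnk$ constant in $z_i$. By the symmetry of $\gnk$, constancy extends to all arguments with the same block size, and iterating yields $\gnk\sim\diff\boldsymbol z$, completing the induction.
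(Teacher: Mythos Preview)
Your overall strategy matches the paper's: pass to the total-mass identity from \cref{eq:consPxKx}, use \cref{lem:mux} to cancel $N^{\boldsymbol\nu}(\boldsymbol xy)$ against $\mu_{\boldsymbol x}$, divide by $\fsp^F$ and integrate out $\xi$ and $\tau$ to obtain a linear relation among the densities $\nu_{\cdot,\cdot}(\boldsymbol z)$, then induct. The recursion you describe is precisely the paper's \cref{eqprf:nuconst:1}. The problem is your induction scheme.

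Your secondary induction on lex-ordered $\vec{k}$ at fixed $n$ is not well-founded. You assert the troublesome auxiliary shapes $\vec{k}''$ satisfy $k''_1=k_1$, but in fact $k''_1$ can be strictly larger. Take $\vec{k}=(3,3)$ and reduce the first entry, so $\vec{k}'=(3,2)$. When $\vnew$ joins the size-$3$ block of $F$'s first merger you get an $(n,4,2)$-merger in $G$, and $(4,2)>_{\mathrm{lex}}(3,3)$. Your symmetry patch does not cover this, so the secondary induction cannot close. Moreover the outer induction on $n$ buys nothing for the right-hand side, since every auxiliary term there lives at level $n$.

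The repair is to notice that you do not need \emph{full} constancy of the auxiliary terms, only constancy in the coordinate $z_i$, and that coordinate carries block size $k_i-1$ in \emph{every} non-target term (including the offending $(n,4,2)$: there $z_i$ sits in the size-$2$ block and is constant by \cref{lem:nuconst2}). This is exactly what the paper exploits: it drops the induction on $n$ entirely and inducts on a block-size threshold $k$, with hypothesis ``$\nunk$ is constant in argument $i$ whenever $k_i\le k$''. At step $k+1$ one picks the last index $j$ with $k_j=k+1$, sets $\vec{k}'$ by reducing $k_j$ to $k$, and invokes the recursion; every non-target term on both sides then has the $j$th block of size $\le k$, so the induction hypothesis applies directly, forcing $\nunk$ constant in argument $j$, with symmetry extending this to all indices of value $k+1$. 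This single induction replaces both of yours and is automatically well-founded.
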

\begin{proof}%continue here and make this proof a bit nicer using the techniques from above
    We first prove that for every $(n,\vec{k})\in \mergers$ there exist non-negative constants $(\cnk^{(j)})_{0\le j\le m}$ and $\cnk^\text{sb}$ (whose dependence on $(n,\vec{k})$ we suppress) such that
    \begin{equation}\label{eqprf:nuconst:1}
        \nunk(\boldsymbol z) = \cnk^{(0)} \nu_{n+1,\smash{\vec{k}}}(\boldsymbol z) + \cnk^\text{sb} \nu_{n+1,k_1, \ldots ,k_m,2}(\boldsymbol z) + \sum_{j=1}^m \cnk^{(j)} \nu_{n+1,k_1, \ldots ,k_j+1, \ldots ,k_m}(\boldsymbol z),
    \end{equation}
    for a.e.\ $\boldsymbol z$, and $\cnk^{(j)}> 0$ for $j\in [m]$. Fix $(n,\vec{k})$ and let $F$ be a possible tree that starts with an $(n,\vec{k})$-merger, and put $F^\circ \coloneqq F\setminus \lf(F)$. By passing to the total mass in \cref{eq:consPxKx}, for a.e.\ $\boldsymbol z\in E^{\fpart^F_1\setminus \fpart^F_0}_\circ $, $\xi\in \dcs(F^\circ )$, and $\tau \in \dct(F)$,
    \begin{align}
        P^{\boldsymbol x}((F,\tau,\boldsymbol z\xi))
        &= \smashoperator[l]{\sum_{G\in \TTb}} \iiint P^{\boldsymbol x y}((G,s\new\tau,z\new\boldsymbol z\xi)) \diff s\diff z \mu_{\boldsymbol x}(\diff y) \nonumber\\
        & + \smashoperator[l]{\sum_{G\in \TTbs}} \iint P^{\boldsymbol x y}((G,\tau,z\new\boldsymbol z\xi)) \diff z\mu_{\boldsymbol x}(\diff y) + \smashoperator[l]{\sum_{G\in \TTm}} \int P^{\boldsymbol x y}((G,\tau,\boldsymbol z\xi)) \mu_{\boldsymbol x}(\diff y).\label{eqprf:nuconst:2}
    \end{align}
    For the remainder of this proof put $f_F(\tau) \coloneqq \prod_{(\fpart,\fpart') \in F} \e^{-\nut{\fpart}(\tau_{\fpart'}-\tau_\fpart)}$ and analogously for other trees, so that \[
        \fnu^F(\tau,\boldsymbol z\xi \,\vert\,\boldsymbol x) = \frac{1}{N^{\boldsymbol\nu}(\boldsymbol x)} f_F(\tau) \fsp^F(\boldsymbol z\xi \,\vert\,\tau,\boldsymbol x).
    \] Further recall that $P^{\boldsymbol x}((F,\tau,\boldsymbol z\xi)) = \fnu^F(\tau,\boldsymbol z\xi \,\vert\,\boldsymbol x) \boldsymbol \nu_F(\diff (\boldsymbol z\xi))$. We will now divide \cref{eqprf:nuconst:2} by $\fsp^F(\boldsymbol z\xi \,\vert\,\tau,\boldsymbol x)$ and then integrate it over $\xi$ and $\tau$. On the left we get $\frac{|\boldsymbol \nu_{F^\circ }|}{N^{\boldsymbol\nu}(\boldsymbol x)} \left(\int f_F(\tau) \diff \tau\right) \nunk(\boldsymbol z)$. Now let $G\in \TTb$ such that the binary merge is before the first merge of $F$, then the corresponding summand in \cref{eqprf:nuconst:2} is $\lambda_{2,2} / \int N^{\boldsymbol\nu}(\boldsymbol x y)\diff y$ times
    \begin{multline*}
        %\iiint P^{\boldsymbol xy}((G,s\new\tau,z\new\boldsymbol z\xi))\diff s\diff z\mu_{\boldsymbol x}(\diff y) =
        \int \left( \iint \fsp^G(z\new\boldsymbol z\xi \,\vert\,s\new\tau,\boldsymbol xy) \diff y\diff z\right) f_G(s\new\tau) \nunk(\boldsymbol z) \boldsymbol \nu_{F^\circ }(\xi) \diff s\\
        = \fsp^F(\boldsymbol z\xi \,\vert\,\tau,\boldsymbol x) \left(\int f_G(s\new\tau) \diff s\right) \nunk(\boldsymbol z) \boldsymbol \nu_{F^\circ }(\xi),
    \end{multline*}
    an equality we've used before, cmp.\ \cref{eqprf:nulambda1:3}. Dividing by $\fsp^F(\boldsymbol z\xi \,\vert\,\tau,\boldsymbol x)$ and integrating over $\xi$ and $\tau$ gives a positive (because $F^\circ $ is possible) multiple of $\nunk(\boldsymbol z)$. Similarly, if the merge is after the first of $F$, we obtain a non-negative (because then $G$ may be impossible) multiple of $\nu_{n+1,\smash{\vec{k}}}(\boldsymbol z)$. If $G\in \TTbs$ then we get non-negative multiples of either $\nunk(\boldsymbol z)$ or $\nu_{n,k_1, \ldots ,k_m,2}(\boldsymbol z)$, and if $G\in \TTm$ then we get either a non-negative multiple of $\nu_{n+1,\smash{\vec{k}}}(\boldsymbol z)$, or a positive multiple of $\nu_{n+1,k_1\ldots k_j+1\ldots k_m}(\boldsymbol z)$ for some $j\in [m]$, and for each such $j$ there is a corresponding $G\in \TTm$.

    This proves the claim surrounding \cref{eqprf:nuconst:1}, from which we conclude with an induction over $k$ in the statement ``Every $\nunk$ is constant in the $i$th argument if $k_i \le k$''. For $k = 2$ this is \cref{lem:nuconst2}, now suppose it has been proved for some fixed $k \ge 2$. Take some $(n,\vec{k})$ for which there is $j\in [m]$ with $k_j = k+1$ and $k_i \le k$ for $i > j$. Put $\vec{k}' \coloneqq (k_1, \ldots k_{j-1},k_j-1,k_{j+1}, \ldots ,k_m)$, and invoke \cref{eqprf:nuconst:1} with $\nu_{n,\smash{\vec{k}'}}$ on the LHS. Then $\nunk$ appears as a summand on the RHS (perhaps multiple times), and all other summands and the LHS are, by the induction hypothesis, constant in the $j$th to $m$th arguments. Hence also $\nunk$ must be constant in the $j$th argument, and by symmetry in all arguments with $k_i = k+1$.
\end{proof}

%\begin{lemma}\label{lem:fspproj}
%    If $(F,\tau,\xi) \in \dcb(\mathbb{T})$, $G\in \TTb$, $G'\in \TTbs$, $s > 0$ with $s\new\tau\in \dct(G)$, \[
%        \iint \fsp^G(z\new\xi \,\vert\,s\new\tau,\boldsymbol xy) \diff z\diff y = \iint \fsp^{G'}(z\new\xi \,\vert\,\tau,\boldsymbol xy) \diff z\diff y = \fsp^F(\xi \,\vert\,\tau,\boldsymbol x).
%    \]
%\end{lemma}
%\begin{proof}
%    If $G\in \TTb$ and $\unew,\vnew$ etc.\ as usual, then put $s_0\coloneqq \tau(\unew)$, $z_0\coloneqq \xi(\unew)$, $s_1\coloneqq \tau(\pr_F(\unew)) \in [0,\infty]$ and, if $s_1<\infty$, $z_1 \coloneqq \xi(\pr_F(\unew))$. In that case,
%    \begin{align*}
%        \fsp^G(z\new\xi \,\vert\,s\new\tau, \boldsymbol x) = \fsp^F(\xi \,\vert\,\tau,\boldsymbol x) \frac{p_{s-s_0}(z-z_0) p_{s_1-s}(z_1-z) p_s(z-y)}{p_{s_1-s_0}(z_1-z_0)},
%    \end{align*}
%    so indeed $\iint \fsp^G(z\new\xi \,\vert\,s\new\tau,\boldsymbol x)\diff z\diff y =  \fsp^F(\xi \,\vert\,\tau,\boldsymbol x)$. The proof if $s_1 = \infty$, or $G\in \TTbs$, is analogous.
%\end{proof}

\subsection{Drift Representation}\label{sec:prfdrift}
In this section we prove the drift representation \cref{thm:driftintro}.

\begin{proof}[Proof of \cref{thm:driftintro}]
    Denote by $(\fpart_t,\boldsymbol X_t)_{t \ge 0}$ the Brownian spatial coalescent with transition rates $\boldsymbol \nu(\diff \boldsymbol z)  = \boldsymbol \lambda \diff \boldsymbol z$ for some $\boldsymbol \lambda \in \rates$, started from $\fpart_0 = \{\{1\}, \ldots ,\{n\}\}$ and $\boldsymbol X_0 = \boldsymbol x \in E^{\fpart_0}_\circ $. Denote $\boldsymbol Z_t = (Z_t^1, \ldots ,Z_t^n) = (\boldsymbol X_t(\{1\}), \ldots ,\boldsymbol X_t(\{n\}))$ as long as $t$ is smaller than the random time of the first merge event, that is for $t < \inf \{ s > 0\colon \fpart_s = \fpart_0\}$.

    We perform a slightly informal generator calculation, and leave technical details to the interested reader. The (time-dependent) generator of a Brownian bridge on the torus $E$, starting at $x_0$ at time $t_0$, going to $x_1$ at time $t_1$, is given by \[
        A_s f(x) = \frac{1}{2}\Delta f(x) + \nabla_x \log p_{t_1-s}(x_1-x) \cdot \nabla f(x),\qquad s \in [t_0,t_1),
    \] see e.g.~(1.2) in~\cite{bbgenerator}. Then the generator of $\boldsymbol Z$ is
    \begin{align*}
        A& f(z_1, \ldots ,z_n) \\
         &= \sum_{i=1}^n \frac{1}{N^{\boldsymbol \nu}(\boldsymbol z)} \,\,\,\smashoperator{\int\limits_{\dcb(\mathbb{F}(\fpart_0))}}\,\,\, \left( \frac{1}{2}\nabla_{z_i}^2 f(\boldsymbol z) + \Big[ \nabla _{z_i} \log p(\tau_{\pr_F(\{i\})},\xi_{\pr_F(\{i\})} - z_i)\Big]_{\pr_F(\{i\}) \neq \emptyset } \cdot \nabla_{z_i} f(\boldsymbol z) \right) \\[-7pt]
        &\hspace{7cm} \times \ftm(F,\tau) \fsp(\xi \,\vert\,\tau,\boldsymbol z) \diff \tau\diff \xi\\[5pt]
        &= \frac{1}{2} \Delta f(\boldsymbol z) + \frac{1}{N^{\boldsymbol \nu}(\boldsymbol z)} \sum_{i=1}^n \nabla_{z_i} f(\boldsymbol z) \cdot \smashoperator{\int\limits_{\dcb(\mathbb{F}(\fpart_0))}} \Big[ \nabla _{z_i} \log p(\tau_{\pr_F(\{i\})},\xi_{\pr_F(\{i\})} - z_i)\Big]_{\pr_F(\{i\}) \neq \emptyset }\\[-7pt]
        &\hspace{7cm} \ftm(F,\tau) \fsp(\xi \,\vert\,\tau,\boldsymbol z) \diff \tau\diff \xi.
    \end{align*}
    If $(F,\tau,\xi) \in \dcb(\mathbb{F}(\fpart_0))$ with $\pr_F(\{i\}) \neq \emptyset $, then $\fsp(\xi \,\vert\,\tau,\boldsymbol z)$ depends on $z_i$ only through a factor $p(\tau_{\pr_F(\{i\})},\xi_{\pr_F(\{i\})} - z_i)$, so \[
        (\nabla _{z_i} \log p(\tau_{\pr_F(\{i\})},\xi_{\pr_F(\{i\})} - z_i)) \fsp(\xi \,\vert\,\tau,\boldsymbol z) = \nabla _{z_i} \fsp(\xi \,\vert\,\tau,\boldsymbol z).
    \] If $\pr_F(\{i\}) = \emptyset $, then $\fsp(\xi \,\vert\,\tau,\boldsymbol z)$ is independent of $z_i$, so in any case
    \begin{align*}
        A f(z_1, \ldots ,z_n)
        &= \frac{1}{2}\Delta f(\boldsymbol z) + \frac{1}{N^{\boldsymbol \nu}(\boldsymbol z)} \sum_{i=1}^n \nabla_{z_i} f(\boldsymbol z)\cdot \smashoperator{\int_{\dcb(\mathbb{F}(\fpart_0))}} \ftm(F,\tau) \nabla _{z_i} \fsp(\xi \,\vert\,\tau,\boldsymbol z) \diff \tau \diff \xi\\
        &= \frac{1}{2}\Delta f(\boldsymbol z) + \frac{1}{N^{\boldsymbol \nu}(\boldsymbol z)} \sum_{i=1}^n \nabla_{z_i} f(\boldsymbol z) \cdot \nabla_{z_i} N^{\boldsymbol \nu}(\boldsymbol z)\\
        &= \frac{1}{2}\Delta f(\boldsymbol z) + \nabla f(\boldsymbol z) \cdot \nabla \log N^{\boldsymbol \nu}(\boldsymbol z).
    \end{align*}
To pull the derivative $\nabla _{z_i}$ out of the integral in the second step (and thereby showing differentiability of $N^{\boldsymbol \nu}$), we require statements analogous to Lemmas~\ref{lem:NFinnerintegralcontinuous} and~\ref{lem:app:NF}
    with $\fsp(\xi \,\vert\,\tau,\boldsymbol x)$ replaced by $\nabla _{x_i} \fsp(\xi \,\vert\,\tau,\boldsymbol x)$. This is true using similar methods, with the modification that in $d = 1$, points $\boldsymbol x$ which have two identical coordinates have to be excluded (as is already the case in $d \ge 2$); recall also \cref{rem:driftintro} (ii) on this issue.
\end{proof}

%A Brownian bridge from $(0,x_0)$ to $(t,x_1)$ on the torus when rollout out into $\R^d$ is the same as pulling a location first from $x_1 + 2\pi\Z^d$ with probability proportional to $p_t(2\pi k + x_1 - x_0)$, then an ordinary brownian bridge. So the drift part of the generator is actually
%\begin{align*}
%    \frac{1}{\sum_{k\in \Z^d} p_t(2\pi k + x_1 - x_0)}\sum_{k\in \Z^d} p_t(2\pi k + x_1 - x_0) \frac{2\pi k + x_1 - x_0}{t} = \frac{\nabla_{x_0} p_t(x_1-x_0)}{p_t(x_1-x_0)}
%\end{align*}
%
%\begin{lemma}
%    The generator of a Brownian bridge on the torus $E$, starting at $x_0$ at time $t_0$, going to $x_1$ at time $t_1$, is given by \[
%        A_s f(x) = \frac{1}{2}\Delta f(x) + \frac{\nabla_x p_{t_1-s}(x_1-x)}{p_{t_1-s}(x_1-x)} \cdot \nabla f(x),\qquad s \in [t_0,t_1).
%    \]
%\end{lemma}
%\begin{proof}
%    Let $q_t \colon \R^d \to [0,\infty)$ be defined by $q_t(x) = (2\pi t)^{-d / 2} \e^{-|x|^2 / (2t)}$, so that \[
%        p_t(x) = \sum_{k\in \Z^d} q_t(x+2\pi k),\qquad x\in E.
%    \] etc.
%\end{proof}

\subsection{Associated Population Models}

This section contains proofs of the results presented in \cref{sec:populationmodelsintro} of the introduction.

\subsubsection{$\Xi$-Fleming-Viot Process}\label{sec:prfxifv}

Fix a non-zero measure $\Xi$ on $\triangle$, and recall \cref{sec:populationmodelsintro}. We begin by constructing a stationary, bi-infinite version of the particle representation $(\boldsymbol Y(t))$ of the $\Xi$-Fleming-Viot process. Let $\mathfrak{N}^{ij}$ for $i,j\in \N$, $i < j$, and $\mathfrak{M}$ be Poisson point processes as described in \cref{sec:populationmodelsintro}.
For $i,j\in \N$ with $i < j$, denote by $(\tau^j_l)_{l\in \Z}$ the ordered set of times where level $j$ copies the location of (``looks down'' to) level $i$, either because $i$ was the parent of $j$ in a reproductive event, or because level $i$ was ``bumped up'' to level $j$ due to a reproductive event occuring among levels smaller than $i$. By definition of the lookdown construction, $(\tau^j_l)_{l\in \Z}$ is itself a Poisson point process with constant, finite intensity,
%Recall that for $i,j\in \N$ and $i < j$, level $j$ ``looks down'' to level $i$ at points of $\mathfrak{N}^{ij}$, and at points of $\mathfrak{M}$ if $i$ and $j$ are in the same basket, and that basket contains no levels smaller than $i$. In particular, the ordered set of times $(\tau^j_l)_{l\in \Z}$ where level $j$ looks down to some level $i < j$ is itself a Poisson point process with finite intensity,
so almost-surely countably infinite and without accumulation points. Denote by $k^j_l \in \{1, \ldots ,j-1\}$ for $l\in \Z$ the level to which particle $j$ looks down at time $\tau^j_l$.

%For $i,j\in \N$ and $i < j$, let $\boldsymbol \tau^{ij} = (\tau^{ij}_k)_{k\in \Z}$ denote the ordered set of times where level $j$ looks down to level $i$. This happens at a point of $\mathfrak{N}^{ij}$, and at a point of $\mathfrak{M}$ if $i$ and $j$ are in the same basket, and that basket contains no levels smaller than $i$. In particular, $\boldsymbol \tau^{ij}$ is itself a Poisson point process on $\R$ with constant, finite intensity, so almost-surely the set of those times is infinite and with no accumulation points.

Let $(B^1(t))_{t \in (-\infty,\infty)}$ be a Brownian motion running at stationarity on $E$, and \[
    (B^j_l(t)\colon j\in \N, j\ge 2, l\in \Z)_{t\ge 0}
\] be an independent family of Brownian motions on $E$, independent of $B^1$ and the Poisson point processes, and all starting at zero. To construct the process $(\boldsymbol Y(t))_{t\in (-\infty,\infty)}$, we put $Y_1 = B^1$, and then construct $Y_j$ for $j \ge 2$ inductively. Suppose $(Y_1(t), \ldots ,Y_{j-1}(t))_{t \in (-\infty,\infty)}$ has already been constructed for some $j \ge 2$. Then for every $l\in \Z$ put %$t\in (-\infty,\infty)$ let $l\in \Z$ be the unique number for which $t \in [\tau^j_l,\tau^j_{l+1})$, and put \[
\[
    Y_j(t) = Y_{k_l^j}(\tau^j_l) + B^j_l(t-\tau^j_l),\qquad t\in [\tau^j_l,\tau^j_{l+1}).
\] In words, whenever $j$ looks down to some level $i < j$ at some time $\tau$, we let $Y_j$ start at the location of $Y_i$ at time $\tau$ and evolve as an independent Brownian motion until the next time it looks down. Since these lookdown times are an infinite, discrete set, this defines $Y_j(t)$ for all $t\in (-\infty,\infty)$. For any fixed $t_0 \in \R$, the law of the evolution of the process $(\boldsymbol Y(t))_{t \ge t_0}$ is as described in \cref{sec:populationmodelsintro}, and since this entire construction is invariant under constant time shifts, the distribution of $\boldsymbol Y(t)$ and $\boldsymbol Y(s)$ is the same for any fixed $s,t\in (-\infty,\infty)$. % stationary dist unique? this construction is unique (for any bi-infinite stationary dist first level has to be stationary BM and the remaining construction must be the same) so at least for every bi-infinite stationary version the dist has to be the same. probably there are some general ergodicity theorems that say it has a unique stationary, but whatever. Later on I just talk about "the" stationary dist of Y

We will now prove \cref{thm:timereversalintro}, which will imply both \cref{prop:xifvstationary} and \cref{thm:xigenealogiesintro}. The statement is trivial for $\ell = 1$ (because both the first level of the $\Xi$-Fleming Viot process and the Brownian spatial $\Xi$-coalescent with a single particle are just an $E$-valued Brownian motion, and a single sample from $\mu^\Xi$ is uniform), so fix $\ell\ge 2$ for the rest of \cref{sec:prfxifv}, and put $\fpart_0 = \left\{ \left\{ 1 \right\} , \ldots ,\left\{ \ell \right\}  \right\} $. We present a formal construction of the process described in \cref{thm:timereversalintro}. It requires the following measurable maps (their existence follows from a basic measure theoretic argument, see Lemma~\ref{lem:hmeasurable}):
\begin{enumerate}
    \item a map $h_0\colon E^\ell  \to E^\ell $ such that the pushforward of Lebesgue measure $\diff \boldsymbol y$ on $E^\ell$ under $h_0$ is $N^\Xi(\boldsymbol y)\diff \boldsymbol y$.
    \item for every $m \in \left\{ 1, \ldots ,\ell-1 \right\} $ a map $h_m \colon E^m \times E^{\ell-m} \to E^{\ell-m}$ such that for every $\boldsymbol x\in E^m_\circ $, the pushforward of Lebesgue measure $\diff \boldsymbol y$ on $E^{\ell-m}$ under $h_m(\boldsymbol x,\cdot )$ is $\frac{N^\Xi(\boldsymbol x\boldsymbol y)}{N^\Xi(\boldsymbol x)} \diff \boldsymbol y$, and $h_m(\boldsymbol x,\cdot ) = \id_{E^{\ell-m}}$ if $\boldsymbol x\in E^m \setminus E^m_\circ $.
    \item for every $F\in \mathbb{T}(\fpart)$ a map $h_F\colon E^\ell  \times \dcb(F) \to \dcb(F)$ such that for every $\boldsymbol x\in E^\ell_\circ $, the pushforward of $\ftm(F,\tau) \diff \tau \diff \xi$ under $h_F(\boldsymbol x,\cdot )$ is $\frac{1}{N^\Xi_F(\boldsymbol x)} \ftm(F,\tau) \fsp(\xi \,\vert\,\tau,\boldsymbol x) \diff \tau \diff \xi$, and $h_F(\boldsymbol x,\cdot ) = \id_{\dcb(F)}$ if $\boldsymbol x\in E^\ell \setminus E^\ell_\circ $.
\end{enumerate}
The definitions of $h_m(\boldsymbol x,\cdot )$ and $h_F(\boldsymbol x,\cdot )$ if $\boldsymbol x\in E^m\setminus E^m_\circ $ and $\boldsymbol x\in E^\ell\setminus E^\ell_\circ $, respectively, is irrelevant as long as all maps are measurable.

%For $F = \{\fpart_0,\fpart_1^F,\ldots ,\fpart^F_m\}\in \mathbb{T}(\fpart_0)$, let $\overline{\dcs(F)}$ denote the set of maps $\xi\colon \nd^\circ (F) \to E$, which contains $\dcs(F)$ (recall \cref{def:xi}) as a dense, open subset, and is isomorphic to some power of $E$. Furthermore there is an obvious bijection between $\dct(F)$ and $(0,\infty)^m$. Thus we can apply a straight-forward adaptation of the Homeomorphic Measures Theorem \cite{homeo}, Theorem 9.1, due to Oxtoby and Ulam, to find the following maps:
%\begin{enumerate}

Now let $(S, \A, \P)$ be a probability space that supports the following random variables, all independent.
\begin{enumerate}
    \item For every $m\in \left\{ 0, \ldots ,\ell-1 \right\} $ an i.i.d.\ sequence $(\omega_m^{(i)})_{i\in \N_0}$ of uniform $E^{\ell-m}$-valued random variables.
    \item For every $F\in \mathbb{T}(\fpart_0)$ an i.i.d.\ sequence $(\omega_F^{(i)})_{i\in \N_0}$ of random variables in $\dcb(F)$ with distribution $\ftm(F,\tau)\diff \tau\diff \xi$.
    \item For every non-empty $u\subset [\ell]$ and $i\in \N_0$, a $d$-dimensional standard Brownian bridge $\omega_B^{(i)} \in C([0,1],E)$ with $\omega_B^{(i)}(0) = \omega_B^{(i)}(1) = 0$.
    \item An i.i.d.\ sequence $(\omega_U^{(i)})_{i\in \N_0}$ of uniform $[0,1)$-random variables.
\end{enumerate}
We further fix an ordering $\mathbb{T}(\fpart_0) = \left\{ F^{(1)}, \ldots ,F^{(|\mathbb{T}(\fpart_0)|)} \right\} $ of $\mathbb{T}(\fpart_0)$.

\begin{definition}\label{def:nthlevelresampling}
    Let $\boldsymbol \nu \in \nurates$. The \emph{$\ell$'th level Brownian spatial coalescent with resampling} associated with transition measures $\boldsymbol \nu$ is a c\`agl\`ad (left-continuous with right-limits) $E^\ell$-valued process $\boldsymbol Z^\ell(t) = (Z^\ell_1(t), \ldots ,Z^\ell_n(t))$, $t\ge 0$, defined on $S$ as follows. Given a realisation $\omega \in S$:
    \begin{enumerate}
        \item Put $\zeta^{(0)} \coloneqq  h_0(\omega_0^{(0)})$, and $t^{(0)} \coloneqq  0$. (At the end of the construction, $\zeta^{(i)} = \boldsymbol Z^\ell(t^{(i)}+)$.)
        \item Let $i \in \N_0$ and assume that $0 = t ^{(0)} < \ldots < t ^{(i)}$ and $\zeta^{(0)},\ldots, \zeta^{(i)}$ are defined and $\boldsymbol Z^\ell(t)$ has been constructed for $t\in [0,t ^{(i)})$. Then:
            \begin{enumerate}
                \item[(ii.1)] If $\zeta^{(i)} \in E^\ell\setminus E^\ell_\circ $ put $j = 1$, otherwise let $j\in \left\{ 1, \ldots ,|\mathbb{T}(\fpart_0)| \right\} $ be the unique number such that \[
                    \sum_{j'< j} N^\Xi_{F^{(j')}}(\zeta^{(i)}) \le \omega_U^{(i)} N^\Xi(\zeta^{(i)}) < \sum_{j'\le j} N^\Xi_{F^{(j')}}(\zeta^{(i)}).
                \] Put $F \coloneqq F^{(j)}$.
                \item[(ii.2)] Let $(\tau,\xi) = h_F(\zeta^{(i)}, \omega_F^{(i)}) \in \dcb(F)$, and $t^{(i+1)} \coloneqq  \tau(\fpart^{F}_1)$ the time of the first merge event. For $t\in (t ^{(i)},t ^{(i+1)}]$, and $l \in [\ell]$, put
                    \begin{equation*}
                        Z^\ell_l(t) \coloneqq  \zeta^{(i)}_l + \frac{t - t ^{(i)}}{\tau_{\pr_F(\{l\})}} (\xi_{\pr_F(\left\{ l \right\} )} - \zeta^{(i)}_l) + \sqrt{\tau_{\pr_F(\{l\})}} \omega_B^{(i)}\left( \frac{t- t ^{(i)}}{\tau_{\pr_F(\{l\})}} \right),
                    \end{equation*}
                    which is a Brownian bridge started from $\zeta^{(i)}_l$ at time $t ^{(i)}$, going to $\xi_{\pr_F(\{l\})}$ at time $t ^{(i)} + \tau_{\pr_F(\{l\})}$, and stopped at time $t ^{(i+1)}$.
                \item[(ii.3)] Let $\fpart_1^F = \{u_1,\ldots,u_k\}$ with $l_1 = \min u_1 < \ldots < l_k = \min u_k$.
                (Thinking forwards in time, where this corresponds to a reproduction event, $l_1,\ldots,l_k$ are the \emph{post}-reproduction levels of the individuals whose \emph{pre}-reproduction levels were $1,\ldots, k$, whose number of offspring, including themselves, is $|u_1|,\ldots,|u_k|$.)
                For every $j\in[k]$, put $\zeta^{(i+1)}_j = Z^\ell_{l_j}(t^{(i+1)}-)$, and \[
                    \zeta^{(i+1)}_{(k+1\ldots \ell)} = h_{k}(\zeta^{(i+1)}_{(1\ldots k)},\omega^{(i+1)}_{\ell-k}),
                \]
                where $\zeta^{(i+1)}_{(a\ldots b)} \coloneqq (\zeta^{(i+1)}_a,\zeta^{(i+1)}_{a+1},\ldots,\zeta^{(i+1)}_b)$.

                %\item[(ii.3)] Let $I = \left\{ \min u \colon u\in \fpart^F_1 \right\} $ be the set of levels that were either not involved in the first merge event, or were minimal among the set of levels with which they merged. Let $J = [n]\setminus I$. For every $l\in I$, put $\zeta^{(i+1)}_l = Z^n_l(t^{(i+1)}-)$, and \[
                %        \zeta_J^{(i)} \coloneqq  h_{|J|}(\zeta_I^{(i)}, \omega_{|J|}^{(i+1)}),
                %\] where $\zeta^{(i+1)}_J = (\zeta^{(i+1)}_j\colon j\in J)$, and $\zeta^{(i+1)}_J$ similarly.
            \end{enumerate}
    \end{enumerate}
    This defines a measurable map from $S$ into the space of c\`agl\`ad paths $[0,\infty)\to E^\ell$.
\end{definition}

%By properties of Brownian motion, $\boldsymbol Z^n(t+) \in E^n_\circ $ for all $t\ge 0$ with probability one (we take right-limits because $\boldsymbol Z^n(t) \not\in E^n_\circ $ at jump times).
All definitions dealing with cases where $\boldsymbol Z^\ell(t+) \in E^\ell\setminus E^\ell_\circ $ for some $t \ge 0$, which happens with probability zero, are only in place for the map $S \to D([0,\infty),E^\ell)$ to be well-defined, and do not affect the law of $\boldsymbol Z^\ell$.

\begin{remark}
    If the Brownian spatial coalescent with transition measures $\boldsymbol \nu$ is sampling consistent (that is, associated to some finite measure $\Xi$ on $\triangle$), then for every $m < \ell$, the laws of $(Z^m_1, \ldots ,Z^m_m)$ and $(Z^\ell_1, \ldots ,Z^\ell_m)$ are the same, and we expect that a variant of the Kolmogorov extension theorem can be used to construct an $E^\infty$-valued process $\boldsymbol Z(t) = (Z_1(t),Z_2(t),\ldots )$ whose first $\ell$ levels have the same law as $\boldsymbol Z^\ell$ for every $\ell\in \N$. Making this precise is not necessary to prove our results.
\end{remark}

From now, let $\boldsymbol Z^\ell$ be the $\ell$'th level Brownian spatial $\Xi$-coalescent with resampling.

\begin{theorem}\label{thm:xifvreversal}
    The laws of $(Z^\ell_1(t), \ldots ,Z^\ell_\ell(t))_{t \ge 0}$ and $(Y_1(-t), \ldots ,Y_\ell(-t))_{t \ge 0}$ are the same. In particular, the stationary distribution of $\boldsymbol Y$ is an i.i.d.\ sample from a random realisation of $\mu^\Xi$.
\end{theorem}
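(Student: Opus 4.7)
The plan is to identify both processes as time-homogeneous Markov processes on $E^n$ and show they coincide by matching initial distributions and transition structure, coupling both to the ancestral process of $\boldsymbol Y$. Both are Markov: the reversed process by general time-reversal of the stationary Markov process $\boldsymbol Y$, and $\boldsymbol Z^n$ by construction.

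The key coupling is via the genealogy. By \cref{thm:xigenealogiesintro}, the ancestral process $(\fpart^n_t,\boldsymbol X^n_t)_{t\ge 0}$ of the first $n$ levels of $\boldsymbol Y$ is a Brownian spatial $\Xi$-coalescent started from $(Y_1(0),\ldots,Y_n(0))$. Couple the reversed process to this BSC on the same probability space: at continuity points $Y_k(-t)=\boldsymbol X^n_t(u)$ for the block $u$ of $\fpart^n_t$ containing $k$, so between successive coalescence events the $n$ particles follow independent Brownian bridges along the branches of the same forest, exactly matching the between-event dynamics of $\boldsymbol Z^n$. At each backward coalescence event the BSC prescribes the tree topology, the merge time, and the merge location; the only remaining randomness is the law of the $n-m$ ``newly exposed'' positions given the $m$ surviving positions $\boldsymbol x$, where $m$ is the number of distinct blocks just after the split in backward time.

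The key Bayes-rule calculation is that, if the stationary joint density of the first $n$ levels of $\boldsymbol Y$ equals $N^\Xi(\boldsymbol y)\,\diff\boldsymbol y$, then by the consistency identity $\int N^\Xi(\boldsymbol x\boldsymbol y)\,\diff\boldsymbol y = N^\Xi(\boldsymbol x)$ (\cref{lem:nulambda}), the conditional density of the newly exposed positions is $N^\Xi(\boldsymbol x\boldsymbol y)/N^\Xi(\boldsymbol x)\,\diff\boldsymbol y$, which coincides with the resampling kernel $h_{|J|}$ in the construction of $\boldsymbol Z^n$ and equals a product of $\mu^\Xi$-kernels by \cref{prop:musamplingintro}. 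Iterating via the strong Markov property at the Poisson reproduction times then gives the full matching with $\boldsymbol Z^n$, and the one-dimensional marginal yields $\mathcal{Y}_0\sim\mu^\Xi$ by the uniqueness of $\mu^\Xi$ in \cref{prop:musamplingintro}, delivering the ``in particular'' statement.

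The main obstacle is avoiding circularity in establishing the stationary density $N^\Xi$ for the first $n$ levels of $\boldsymbol Y$, since it enters both the initial distribution and the resampling kernel. A clean route is to view the BSC-with-resampling as an augmented Markov process on $E^n$, observe via \cref{lem:nulambda} that $N^\Xi(\boldsymbol y)\,\diff\boldsymbol y$ is a stationary density of this augmented dynamics, and combine this with the identification via \cref{thm:xigenealogiesintro} of the ancestral semigroup together with ergodicity of the exchangeable particle system $(\boldsymbol Y,\mathcal{Y})$ to pin down the stationary density uniquely. Alternatively, $N^\Xi$ can be computed directly from the lookdown construction by a moment analysis of the Poisson reproduction events, bypassing any fixed-point argument.
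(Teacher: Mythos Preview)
Your argument has a fundamental circularity: you invoke \cref{thm:xigenealogiesintro} as an input, but in the paper's logical order that theorem is a \emph{corollary} of \cref{thm:xifvreversal}, not an independent fact. The paper explicitly states that the proof of \cref{thm:timereversalintro} (which is \cref{thm:xifvreversal}) implies both \cref{prop:xifvstationary} and \cref{thm:xigenealogiesintro}. So your coupling step ``by \cref{thm:xigenealogiesintro}, the ancestral process is a Brownian spatial $\Xi$-coalescent'' assumes what you are trying to prove. You correctly flag a second circularity in establishing the stationary density $N^\Xi$, but your proposed fix (a) does not escape it: knowing that $N^\Xi$ is stationary for the BSC-with-resampling $\boldsymbol Z^n$ says nothing about stationarity for $\boldsymbol Y$ unless you already know the two processes agree, which is the theorem itself; and you again appeal to \cref{thm:xigenealogiesintro}. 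Fix (b), ``compute $N^\Xi$ directly by a moment analysis of the lookdown'', is the right instinct but is left as a one-line sketch.

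The paper avoids all of this with a direct joint-density computation that does not presuppose stationarity or the genealogy theorem. For any $T>0$ it proves the identity
\[
\P\bigl((\boldsymbol Z^n(t))_{t\in[0,T]}\in\cdot\bigr)=\int_{E^n}\Q^{\boldsymbol y}\bigl((Y_1(T-t),\ldots,Y_n(T-t))_{t\in[0,T]}\in\cdot\bigr)\,N^\Xi(\boldsymbol y)\,\diff\boldsymbol y,
\]
by writing both sides explicitly on a generic trajectory (specifying all branching times, locations, and endpoints): the forward side is a product of initial density $N^\Xi(\boldsymbol y^0)$, heat kernels for the Brownian segments, and branching rates $\lambda_{n,k}\e^{-\lambda_n\Delta t}$; the backward side is the initial density $N^\Xi(\boldsymbol y^T)$, the BSC semigroup of \cref{lem:semigroup}, and the resampling kernel $N^\Xi(\boldsymbol y^s)/N^\Xi(\boldsymbol y^{s,1})$. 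The $N^\Xi$ factors telescope and the two products are literally equal. From this identity, stationarity of $N^\Xi$ for $\boldsymbol Y$ drops out by looking at the time-$0$ marginal, and plugging stationarity back in yields the law equality. The genealogy statement \cref{thm:xigenealogiesintro} is then obtained afterwards by projecting $\boldsymbol Z^n$ onto ancestors. Your Bayes-rule computation of the resampling kernel is correct and is precisely the cancellation that makes the paper's density match work, but it has to be embedded in this direct calculation rather than in an argument that assumes the genealogical identification.
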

\begin{proof}
    It suffices to show that for every $T > 0$, the laws of $(Z^\ell_1(t), \ldots ,Z^\ell_\ell(t))_{t\in [0,T]}$, and $(Y_1(T-t), \ldots ,Y_\ell(T-t))_{t \in [0,T]}$, which coincides with that of $(Y_1(-t), \ldots ,Y_\ell(-t))_{t\in [0,T]}$, are the same. Denote by $\Q^{\boldsymbol y}$ for $\boldsymbol y\in E^\ell$ the probability measure on $D([0,T],E^\ell)$ describing the law of $(Y_1(t), \ldots ,Y_\ell(t))_{t \in [0,T]}$ started from (or conditioned on) $(Y_1(0), \ldots ,Y_\ell(0)) = \boldsymbol y$. Then we show
    \begin{align}\label{eq:prftimerev:1}
        \P ((Z^\ell_1(t),\ldots ,Z^\ell_\ell(t))_{t\in [0,T]} \in \cdot ) = \int_{E^\ell} \Q^{\boldsymbol y}((Y_1(T-t), \ldots ,Y_\ell(T-t))_{t\in [0,T]} \in \cdot ) N^\Xi(\boldsymbol y) \diff \boldsymbol y.
    \end{align}
    In particular, this implies that \[
        N^\Xi(\boldsymbol z) \diff \boldsymbol z = \P((Z^\ell_1(0), \ldots ,Z^\ell_\ell(0)) \in \diff \boldsymbol z) = \int_{E^\ell} \Q^{\boldsymbol y}((Y_1(T), \ldots ,Y_\ell(T)) \in \diff \boldsymbol z) N^\Xi(\diff \boldsymbol y) \diff \boldsymbol y,
    \] that is, $N^\Xi(\boldsymbol y) \diff \boldsymbol y$ (which equals $\E \left[ \mu^\Xi(\diff y_1)\ldots \mu^\Xi(\diff y_\ell) \right] $, recall \cref{eq:Emuintro}) is the stationary distribution of $(Y_1(t), \ldots ,Y_\ell(t))$. Plugging this back into \cref{eq:prftimerev:1} gives
    \begin{align*}
        \P((Z^\ell_1(t), \ldots ,Z^\ell_\ell(t))_{t\in [0,T]} \in \cdot ) = \P((Y_1(T-t), \ldots ,Y_\ell(T-t))_{t\in [0,T]}\in \cdot ),
    \end{align*}
    which is the claim.

    We now prove \cref{eq:prftimerev:1}. For simplicity we assume that $\xi_j = 0$ for $j\ge 2$ for $\Xi$-a.e.\ $\boldsymbol \xi \in \triangle$, in which case the evolution of $\boldsymbol Y$ can be described in terms of a finite measure $\Lambda$ on $[0,1]$, see \cref{rem:lambdafv}. The general case requires more notation but no different ideas. Consider the event where $(Y_1(t), \ldots ,Y_\ell(t))$, restricted to the first $\ell$ levels, starts at $\boldsymbol y^0 = (y^0_1, \ldots ,y^0_\ell)$, undergoes a branching event at time $s\in (0,T)$ involving $k \ge 2$ levels $J = \left\{ j_1<\ldots <j_k \right\} $, where $Y^\ell(s-) = \boldsymbol y^s = (y^s_1, \ldots ,y^s_\ell)$, and has no further branching events until time $T$ where it ends in $Y^\ell(T) = \boldsymbol y^T = (y^T_1, \ldots ,y^T_\ell)$. For $j\in [\ell]$, denote by $\underline{j}\in [\ell]$ the level to which $j$ looks down at this reproduction event (or $\underline{j} = j$ if $j$ does not look down), namely \[
        \underline{j} =
        \begin{cases}
            j, & j < j_1,\\
            j_1, & j\in J,\\
            j - |J \cap [j]| + 1, & \text{else}.
        \end{cases}
    \] Then $\{\underline{j}\colon j\in [\ell]\} = \{1,\ldots,m\}$ where $m = \ell - k+1$.

    The rate at which a branching event evolving exactly indices $J$ happens is $\lambda_{\ell,k}$, the associated rate of the $\Lambda$-coalescent. Indeed, if $k > 2$ then the rate is \[
        \int p^k (1-p)^{\ell-k} \frac{\Lambda(\diff p)}{p^2} = \lambda_{\ell,k},
    \] and if $k = 2$ it is $\Lambda(\{0\}) = \int p^{k-2} (1-p)^{\ell-k} \Lambda(\diff p) = \lambda_{\ell,2}$. Thus the probability (density) of the entire event is
    \begin{multline}\label{eq:prfxifv:1}
        \underbrace{N^\Lambda(\boldsymbol y^0) \diff \boldsymbol y^0}_{\text{initial sample}} \times
        \underbrace{\prod_{j=1}^\ell p_s(y^0_j-y^s_j)\diff y^s_j }_{\text{spatial movement in $(0,s)$}}
        \times \underbrace{\lambda_{\ell,k} \e^{-\lambda_\ell s}}_{\text{branching event}} \\
        % \times \underbrace{\prod_{j\in J} p_{T-s}(y^s_{j_1} - y^T_j) \diff y^T_j\prod_{j\not\in J} p_{T-s}(y^s_j - y^T_j) \diff y^T_j}_{\text{spatial movement in $(s,T)$}}
        \times \underbrace{\prod_{j=1}^\ell p_{T-s}(y^s_{\underline{j}} - y^T_j) \diff y^T_j}_{\text{spatial movement in $(s,T)$}}
        \times \underbrace{\e^{-\lambda_\ell(T-s)}}_{\text{no branching events in $(s,T)$}} .
    \end{multline}
    The process $((Z^\ell_1(T-t), \ldots ,Z^\ell_\ell(T-t)))_{t\in [0,T]}$ lies in the same event if and only if
    \begin{enumerate}
        \item the initial state is $\boldsymbol Z^\ell(0) = \boldsymbol y^T$,
        \item the first coalescence event of the Brownian spatial $\Lambda $-coalescent started from $\boldsymbol y^T$ at time $0$ is a multiple merger of the lineages with labels $J$ at time $T-s$, at locations $\boldsymbol Z^\ell_{(1\ldots m)}((T-s)+)=\boldsymbol y^{s}_{(1\ldots m)}$ (by Definition~\ref{def:nthlevelresampling}, the lineage with initial label $\{j\}$ will be at location $y^s_{\underline{j}}$ at time $T-s$.)
        \item the resampling at time $T-s$, given the locations $\boldsymbol y^{s}_{(1\ldots m)}$ yields $\boldsymbol y^{s}_{(m+1\ldots \ell)}$,
        \item the Brownian spatial $\Lambda$-coalescent started from $\boldsymbol y^s$ at time $T-s$ has no coalescence events until time $T$, where it ends in state $\boldsymbol y^0$.
    \end{enumerate}
    The probability density for this is (recall \cref{lem:semigroup})
    \begin{align*}
        \underbrace{N^\Lambda(\boldsymbol y^T) \diff \boldsymbol y^T}_{\text{(i)}} \times  \underbrace{\frac{N^\Lambda(\boldsymbol y^{s}_{(1\ldots m)})}{N^\Lambda(\boldsymbol y^T)}\lambda_{\ell,k}\e^{-\lambda_\ell(T-s)}
        %\prod_{j\in J} p_{T-s}(y^s_{j_1} - y^T_j)  \prod_{j\not\in J} p_{T-s}(y^s_j - y^T_j) \diff \boldsymbol y^{s,1}}_{\text{(ii)}} \\
        \prod_{j=1}^\ell p_{T-s}(y^s_{\underline{j}} - y^T_j) \diff \boldsymbol y^{s}_{(1\ldots m)}}_{\text{(ii)}} \\
        \times \underbrace{\frac{N^\Lambda(\boldsymbol y^s) \diff \boldsymbol y^{s}_{(m+1\ldots \ell)}}{N^\Lambda(\boldsymbol y^{s}_{(1\ldots m)})}}_{\text{(iii)}} \times \underbrace{\frac{N^\Lambda(\boldsymbol y^0)}{N^\Lambda(\boldsymbol y^s)}\e^{-\lambda_\ell s} \prod_{j=1}^\ell p_s(y_j^0-y^s_j)\diff y_j^0}_{\text{(iv)}},
    \end{align*}
    which is equal to \cref{eq:prfxifv:1}. Conditional on this event, both the realisation of $\boldsymbol Z^\ell$ and $\boldsymbol Y$ are obtained by sampling Brownian bridges between $\boldsymbol y^0$ and $\boldsymbol y^s$, and between $\boldsymbol y^s$ and $\boldsymbol y^T$. The same argument applies (with no modifications but more notation) to all possible realisations.
\end{proof}

This proves \cref{prop:xifvstationary,thm:timereversalintro}. To deduce \cref{thm:xigenealogiesintro}, we need to show that the coalescent process obtained from $\boldsymbol Z^\ell$ by projecting onto the trajectories of only the initial $\ell$ particles and their ancestors (equivalently by forgetting about all resampled particles and their ancestors), has the law of a Brownian spatial coalescent. This essentially follows from the definitions, and the strong Markov property of the Brownian spatial coalescent: First, observe that the statement is true by definition if we stop at the time $t ^{(1)}$ of the first coalescence--resampling event. Using an inductive argument, we can then assume the claim is true started from the smaller number of particles remaining after the first merge event. Then the strong Markov property of the Brownian spatial coalescent applied at time $t^{(1)}$, and sampling consistency imply that the overall law is the same.

\subsubsection{Scaling Limits of Neutral Population Models}
Let $T > 0$, and denote the space of possible realisations of $(\boldsymbol L(t))_{t\in [0,T]}$ by
\[
    \mathcal{L}([0,T]) \coloneqq \left\{ (\boldsymbol L(t))_{t\in [0,T]} \colon  \boldsymbol L(0) \equiv 0, \textrm{$\boldsymbol L$ is increasing and all jumps have unit size} \right\},
\] which is a closed subset of the space $D([0,T],\N^\infty)$ of c\'adl\'ag paths $[0,T] \to \N^\infty$ with the Skorokhod topology. Then the construction of the coalescent $(\fpart^{\ell}_t,\boldsymbol X^{\ell}_t)$ from the particle representation $(\boldsymbol L,\boldsymbol Y)$ described in Section~\ref{sec:xiflemingviot} defines maps
\begin{align}\label{eq:defcoalmaps}
    \iota_\fpart\colon \mathcal{L}([0,T]) \to D([0,T],\mathcal{P}_\ell);\quad \boldsymbol L \mapsto (\fpart_t^\ell)
\end{align}
and
\begin{align}
    \iota_{\fpart,\boldsymbol X}\colon \mathcal{L}([0,T]) \times D([0,T],E^\infty) \to D([0,T],\overline{\mathcal{X}});\quad (\boldsymbol L,\boldsymbol Y) \mapsto (\fpart_t^\ell,\boldsymbol X^\ell_t).
\end{align}
where $\mathcal{P}_\ell$ denotes the set of partitions of $[\ell]$, and we suppress the dependence of $\iota_\fpart$ and $\iota_{\fpart,\boldsymbol X}$ on $T$ and $\ell$. The main ingredient in the proofs of both Theorem~\ref{thm:neutralgenealogy} and Lemma~\ref{lem:simplerassumption} is that $\iota_\fpart$ and $\iota_{\fpart,\boldsymbol X}$ are continuous.

\begin{lemma}\label{lem:iotapi}
    $\iota_\fpart$ is continuous.
\end{lemma}
\begin{proof}
    Suppose that $\boldsymbol L^n \to \boldsymbol L$ in $\mathcal{L}([0,T])$, and put $\fpart^n \coloneqq \iota_\fpart(\boldsymbol L^n)$, $\fpart \coloneqq \iota_\fpart(\boldsymbol L)$. By otherwise restricting to $n\ge n_0$ for some $n_0\in \N$, we may assume without loss of generality that $\boldsymbol L^n$ and $\boldsymbol L$ have the same (potentially simultaneous) jumps, and in the same order, but at potentially different times. This implies by definition of $\iota_\fpart$ that $\fpart^n$ and $\fpart$ also have the same jumps in the same order. For $t\in [0,T]$, we write $\bar{t} \coloneqq T-t$.

    Let $t_0\in [0,T]$ and $t_n \to t_0$. We assume for simplicity that $t_0 \in (0,T)$, otherwise the following arguments require straightforward modifications. Firstly, we have to show that $\fpart^n(t_n) \in \{\fpart(t_0), \fpart(t_0-)\}$ for all but finitely many $n\in \N$.
    There exists $\varepsilon > 0$ such that $\boldsymbol L$ has no jumps in $(\bar{t}_0-2\varepsilon,\bar{t}_0+2\varepsilon) \setminus\left\{ \bar{t}_0 \right\} $. Then, for all $n\ge n_0$, say,
    \begin{align*}
        \boldsymbol L^n(\bar{t}_0 - \varepsilon) =
        %\boldsymbol L(\bar{t}_0-\varepsilon) =
        \boldsymbol L(\bar{t}_0-),\quad \text{and} \quad
        \boldsymbol L^n(\bar{t}_0 + \varepsilon) =
        %\boldsymbol L(\bar{t}_0+\varepsilon) =
        \boldsymbol L(\bar{t}_0).
    \end{align*}
    If $\boldsymbol L$ is continuous at $\bar{t}_0$, then $\boldsymbol L^n \equiv \boldsymbol L(\bar{t}_0)$ on $(\bar{t}_0-\varepsilon,\bar{t}_0+\varepsilon)$, and hence $\fpart^n \equiv \fpart(t_0)$ on $(t_0-\varepsilon,t_0+\varepsilon)$, and we are done.
    If $\boldsymbol L$ is discontinuous at $\bar{t}_0$, then for each $n\ge n_0$ there exists a unique $r_n \in (t_0-\varepsilon,t_0+\varepsilon)$ such that $\boldsymbol L^n \equiv \boldsymbol L(\bar{t}_0-)$ on $(\bar{t}_0-\varepsilon, \bar{r}_n)$ and $\boldsymbol L^n \equiv \boldsymbol L(\bar{t}_0)$ on $[\bar{r}_n,\bar{t}_0+\varepsilon)$. Therefore,
    \begin{align}\begin{split}\label{eq:prf:pi map continuous:1}
        \fpart^n \equiv \fpart(t_0-)\quad &\text{on}\quad (t_0-\varepsilon,r_n),\\
        \fpart^n \equiv \fpart(t_0)\phantom{-}\quad &\text{on}\quad [r_n,t_0+\varepsilon)
    \end{split}\end{align}
    In particular, $\fpart^n(t_n) \in \{\fpart(t_0-),\fpart(t_0)\}$ for all $n\ge n_0$.

    If $\boldsymbol L$ is discontinuous at $\bar{t}_0$, we need to show additionally that if $\fpart^n(t_n) = \fpart(t_0)$ for almost all---without loss of generality for all---$n\in \N$, and $t_n \le s_n \to t_0$, then also $\fpart_n(s_n) =\fpart(t_0)$ for almost all $n\in \N$. If $n$ is sufficiently large that $t_n,s_n \in (t_0-\varepsilon,t_0+\varepsilon)$ then $\fpart^n(t_n)=\fpart(t_0)$ implies by~\eqref{eq:prf:pi map continuous:1} that $\bar{r}_n \ge \bar{t}_n$, and therefore also $\bar{r}_n \ge \bar{s}_n > \bar{t}_0 - \varepsilon$, and therefore also $\fpart^n(s_n) = \fpart(t_0)$ by~\eqref{eq:prf:pi map continuous:1}.

    Finally, if $\fpart^n(t_n) = \fpart(t_0-)$ for almost all $n\in \N$, and $t_n \ge s_n \to t_0$, then a similar argument implies that, for sufficiently large $n\in \N$, we have $\bar{r}_n \le \bar{t}_n \le \bar{s}_n < \bar{t_0} + \varepsilon$, and therefore also $\fpart^n(s_n) = \fpart(t_0-)$.
    %$\fpart_t \in \bigcap_{\varepsilon > 0} \mathcal{F}(\boldsymbol L_s \colon s \in (\bar{t},\bar{t}+\varepsilon))$
\end{proof}

\begin{lemma}\label{lem:iotapix}
    $\iota_{\fpart,\boldsymbol X}$ is continuous.
\end{lemma}
\begin{proof}
    Let $(\boldsymbol L^n,\boldsymbol Y^n) \implies (\boldsymbol L,\boldsymbol Y)$, and write $(\fpart^n,\boldsymbol Y^n) = \iota_{\fpart,\boldsymbol X}(\boldsymbol L^n,\boldsymbol Y^n)$ and $(\fpart,\boldsymbol Y) = \iota_{\fpart,\boldsymbol X}(\boldsymbol L,\boldsymbol Y)$.
    Suppose that $t_n \to t_0 \in (0,T)$, and again write $\bar{t} = T-t$ for $t\in [0,T]$. We assume that $\boldsymbol Y^n$ is continuous at $\bar{t}_n$ for all $n\in \N$, otherwise the proof requires a small and straightforward modification (replacing $\bar{t}_n$ with $\bar{t}_n + \varepsilon_n$ for a suitable sequence $\varepsilon_n \to 0$).

    We already know that $\fpart^n(t_n) \in \{\fpart(t_0-),\fpart(t_0)\}$ for almost all (wlog for all) $n\in \N$ by Lemma~\ref{lem:iotapi}. Suppose that $\fpart^n(t_n) = \fpart(t_0)$ infinitely often. For such $n$, if $u\in \fpart(t_0)$, we have $A^n_u(t_n) = A_u(t_0) \eqqcolon a(u)$, and recalling~\eqref{eq:xfromy},
    \begin{align*}
        |X^n_u(t_n) - X_u(t_0-)| & \vee |X^n_u(t_n) - X_u(t_0)|\\
        &= |Y^n_{a(u)}(\bar{t}_n-) - Y_{a(u)}(\bar{t}_0)| \vee |Y^n_{a(u)}(\bar{t}_n-) - Y_{a(u)}(\bar{t}_0-)|\\
        &= |Y^n_{a(u)}(\bar{t}_n) - Y_{a(u)}(\bar{t}_0)| \vee |Y^n_{a(u)}(\bar{t}_n) - Y_{a(u)}(\bar{t}_0-)|\\
        &\tendsto{} 0,\quad n\to \infty,
    \end{align*}
    where we used that $\boldsymbol Y^n$ is continuous at $\bar{t}_n$ for all $n$.
    Similarly if $\fpart^n(t_n) = \fpart(t_0-)$ infinitely often.

    Now suppose that $\boldsymbol X^n(t_n) \to \boldsymbol X(t_0)$, and $t_n \le s_n \to t_0$. We have to show that  $\boldsymbol X^n(s_n) \to \boldsymbol X(t_0)$. First of all, $\boldsymbol X^n(t_n) \to \boldsymbol X(t_0)$ necessitates that $\fpart^n(t_n) = \fpart(t_0)$ for almost all (wlog all) $n\in \N$,
    Since $t_n \le s_n \to t_0$, this implies that also $\fpart^n(s_n) = \fpart(t_0)$ for almost all (wlog all) $n\in \N$.
    For $u\in \fpart(t_0)$, \[
        \left| X^n_u(s_n) - X_u(t_0) \right| = \left| Y^n_{a(u)}(\bar{s}_n-) - Y^n_{a(u)}(\bar{t}_0-) \right|
    \] We know that
    \begin{align*}
        0 &= \lim_{n\to \infty} \left| X^n_u(t_n) - X_u(t_0) \right| \\
        &= \lim_{n\to \infty} \left| Y^n_{a(u)}(\bar{t}_n-) - Y^n_{a(u)}(\bar{t}_0-) \right| \\
        &= \lim_{n\to \infty} \left| Y^n_{a(u)}(\bar{t}_n) - Y^n_{a(u)}(\bar{t}_0-) \right| .
    \end{align*}
    Since $\boldsymbol Y^n \to \boldsymbol Y$ and $\bar{t}_n \ge \bar{s}_n \to \bar{t}_0$, this implies that
    \begin{align*}
        0 &= \lim_{n\to \infty}  \left| Y^n_{a(u)}(\bar{s}_n) - Y^n_{a(u)}(\bar{t}_0-) \right|\\
          &= \lim_{n\to \infty}  \left| Y^n_{a(u)}(\bar{s}_n-) - Y^n_{a(u)}(\bar{t}_0-) \right|\\
          &= \lim_{n\to \infty} \left| X^n_u(s_n) - X_u(t_0) \right|.
    \end{align*}
    Similarly, if $\boldsymbol X^n(t_n) \to \boldsymbol X(t_0-)$ and $t_n \ge s_n \to t_0$, then $\boldsymbol X^n(s_n) \to \boldsymbol X(t_0-)$.
\end{proof}

\begin{proof}[Proof of Theorem~\ref{thm:neutralgenealogy}]
    By Lemma~\ref{lem:iotapix} and the continuous mapping theorem, the weak convergence $(\boldsymbol L^n,\boldsymbol Y^n) \implies (\boldsymbol L,\boldsymbol Y)$ implies $(\fpart^{n,\ell},\boldsymbol X^{n,\ell}) \implies (\fpart^\ell,\boldsymbol X^\ell)$, where $(\fpart^\ell,\boldsymbol X^\ell)$ is the genealogy of the $\Xi$-Fleming Viot process at stationarity. By Theorem~\ref{thm:xigenealogiesintro}, the latter is almost-surely $\mathcal{X}$-valued and has the law of a Brownian spatial $\Xi$-coalescent as claimed.
\end{proof}

\begin{proof}[Proof of Lemma~\ref{lem:simplerassumption}]
    By Skorokhod's representation theorem, we can jointly realise $(\boldsymbol L^n,\boldsymbol Y^n(0))$ for $n\in \N$ and $(\boldsymbol L, \boldsymbol Y(0))$ on some probability space such that $\boldsymbol L^n \to \boldsymbol L$ almost-surely and $\boldsymbol Y^n(0) \to \boldsymbol Y(0)$ almost-surely. Then it is straightforward to couple $\boldsymbol Y^n(t)$ and $\boldsymbol Y(t)$ for $t > 0$ in such a way that also $\boldsymbol Y^n \to \boldsymbol Y$ almost-surely (e.g.\ by constructing them from a common collection of independent Brownian motions), and therefore $(\boldsymbol L^n,\boldsymbol Y^n) \implies (\boldsymbol L,\boldsymbol Y) $ by a second application of the Skorokhod representation theorem.
\end{proof}

\section*{Acknowledgements}
% \addcontentsline{toc}{section}{Acknowledgements}
I would like to thank my supervisor Alison Etheridge for numerous helpful discussions, valuable feedback, guidance, and encouragement. I would further like to thank Michal Bassan and F{\'e}lix Foutel-Rodier for many insightful conversations. I further acknowledge support by the EPSRC grant EP/W523781/1.

% \printbibliography %[heading=bibintoc]
% \addcontentsline{toc}{section}{Bibliography}
%
% \begin{subappendices}
%     \input{appendices1.tex}
% \end{subappendices}

\appendix
\section{Maps and Spaces}
In this section we prove measurability of the maps $\fr, \tm, \sp,$ and $\dc$. Recall that $\Omega$ and $\Omega_0$ are equipped with the product topology and (consequently) the product $\sigma$ algebra.
%In this section we prove that the state spaces $\Omega$ and $\Omega_0$ are Polish, and some of the maps defined in section 2 of the main paper are measurable.

%\begin{lemma}\label{lem:OMMpolish}
%    $\Omega_0$ and $\Omega$ are closed subsets of $D([0,\infty),\mathcal{P})$ and $D([0,\infty),\mathcal{X})$, respectively. In particular, $\Omega_0$ and $\Omega$ are Polish.
%\end{lemma}
%\begin{proof}
%    Suppose $(\fpart^n_s) \in \Omega_0,\, n\in \N,$, and $(\fpart^n_s) \to (\fpart_s)\in D([0,\infty),\mathcal{P})$, and fix $t > 0$. Since $(\fpart_s)$ is c\`adl\`ag and has only countably many discontinuities, we can find $u < t < r$ such that $\fpart_u = \fpart_{t-}$ and $\fpart_t = \fpart_r$ and $(\fpart_s)$ is continuous at $u$ and $r$. Then $\fpart^n_t \to \fpart_t$ and $\fpart^n_r \to \fpart_r$, so for sufficiently large $n$, \[
%        \fpart_{t-} = \fpart_u = \fpart^n_u \ge \fpart^n_r = \fpart_r = \fpart_t.
%    \] This proves that $(\fpart_s) \in \Omega_0$.
%
%    If $(\fpart^n_s,\boldsymbol x^n_s) \in \Omega,\,n\in \N$, and $(\fpart^n_s,\boldsymbol x^n_s) \to (\fpart_s,\boldsymbol x_s) \in D([0,\infty),\mathcal{X})$, then $(\fpart^n_s) \to (\fpart_s)$ in $D([0,\infty),\mathcal{P})$, so $(\fpart_s) \in \Omega_0$, so $(\fpart_s,\boldsymbol x_s) \in \Omega$.
%\end{proof}
\begin{lemma}\label{lem:frmeasurable}
    For $N \in \N$ define $\fr^{(N)}\colon \Omega_0 \to \mathbb{F}$ by \[
    \fr^{(N)} (\omega) \coloneqq \left\{ \omega(t) \colon t \in \mathcal{T}_N \right\} ,
\] where $\mathcal{T}_N \subset (0,\infty)$ are chosen such that $\mathcal{T}_{N+1} \supset \mathcal{T}_N$ for all $N\in \N$, and $\mathcal{T}_N$ is an $(1 / N)$-cover of $[0,N]$, that is $\min_{t\in \mathcal{T}_N} |t - s| < 1 / N $ for all $s \in [0,N]$.
Then $\fr^{(N)}(\omega) \to \fr(\omega)$ as $N\to \infty$ in $\mathbb{F}$ for all $\omega\in \Omega$. Since $\fr^{(N)}$ is measurable, this implies that $\fr$ is measurable.
\end{lemma}
\begin{proof}
    Let $\omega\in \Omega$ with $\fr(\omega) = (\fpart_0^F, \ldots ,\fpart_m^F)$ for some $m\in \N$, and denote the jump times of $\omega$ by $0 \eqqcolon t_0 < t_1 < \ldots < t_m$. Then there exists $N\in \N$ such that $(t_{i-1}, t_i) \cap \mathcal{T}_N \neq \emptyset$ for all $i \in [N]$, and therefore $\fr^{(N)}(\omega) = \fr(\omega)$. Since the sequence $(\mathcal{T}_N)$ is nested, $\fr^{(N')}(\omega) = \fr(\omega)$ for all $N' \ge N$.
\end{proof}

\begin{lemma}\label{lem:tmFmeasurable}
    For $F \in \mathbb{F}$, the map \[
        \tm_F\colon \Omega_0 \supset\left\{ \fr = F \right\} \to \dct(F);\quad (\fpart_t) \mapsto \big[ \fpart \mapsto  \inf \left\{ t > 0\colon \fpart_t = \fpart \right\}  \big]
    \] is measurable.
\end{lemma}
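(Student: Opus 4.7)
The plan is to reduce the claim to showing measurability, for each fixed $\fpart \in F$, of the coordinate map
\[
    \tau_\fpart\colon \{\fr = F\} \to [0,\infty);\quad \omega = (\fpart_t) \mapsto \inf\{t > 0\colon \fpart_t = \fpart\}.
\]
Since $F$ is finite and $\dct(F)$ carries the product topology inherited from $(0,\infty)^F$, measurability of $\tm_F$ follows once each $\tau_\fpart$ is measurable.

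To verify measurability of $\tau_\fpart$, I will fix $\fpart \in F$ and a rational $q > 0$ and show
\[
    \{\tau_\fpart < q\} \cap \{\fr = F\} = \{\fr = F\} \cap \bigcup_{t \in \Q \cap (0,q)} \{\fpart_t = \fpart\}.
\]
The inclusion ``$\supseteq$'' is immediate from the definition of infimum. For ``$\subseteq$'', I will invoke right-continuity of paths in $\Omega_0$: if $\tau_\fpart(\omega) < q$, pick any $s \in (0,q)$ with $\fpart_s(\omega) = \fpart$; then $\fpart_{s'}(\omega) = \fpart$ for all $s'$ in some half-open interval $[s,s+\varepsilon)$, which meets $\Q \cap (0,q)$. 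The right-hand side of the displayed identity is a countable union of sets of the form $\{\fpart_t = \fpart\}$, each of which lies in the evaluation $\sigma$-algebra (and hence, by the standard identification, in the Borel $\sigma$-algebra of Skorokhod space that $\Omega_0 \subset D([0,\infty),\mathcal{P})$ inherits). Intersecting with $\{\fr = F\}$, which is measurable by \cref{lem:frmeasurable}, keeps everything in $\mathcal{F}^\fpart$.

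Since the sets $\{\tau_\fpart < q\}$ for $q \in \Q_+$ generate the Borel $\sigma$-algebra of $[0,\infty)$, this yields measurability of $\tau_\fpart$. Taking the product over the finitely many $\fpart \in F$ gives measurability of $\tm_F$ into $\dct(F)$. The only real subtlety is the passage to rational times via right-continuity; otherwise the proof is a routine appeal to the measurability of Skorokhod evaluations combined with the already-proved \cref{lem:frmeasurable}.
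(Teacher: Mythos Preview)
Your argument is correct. The reduction to coordinate measurability is the same as the paper's, but the key step differs: you show $\{\tau_\fpart < q\}$ is measurable via the generic c\`adl\`ag trick of passing to a countable union over rational evaluation times, whereas the paper exploits the monotonicity built into $\Omega_0$ to write, for each $i\in[m]$ and $t>0$,
\[
    \{\tm_F(\cdot)(\fpart_i^F)\le t\}=\{\omega(t)\le\fpart_i^F\}=\bigcup_{\fpart\le\fpart_i^F}\{\omega(t)=\fpart\},
\]
a single finite union of evaluation sets. Your route is the standard debut argument and would work for any c\`adl\`ag process into a discrete space; the paper's route is shorter here precisely because paths in $\Omega_0$ are decreasing in the refinement order, so having reached $\fpart_i^F$ by time $t$ is equivalent to being at or below $\fpart_i^F$ at time $t$. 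Neither approach needs anything beyond measurability of point evaluations and \cref{lem:frmeasurable}, so the difference is purely one of economy.
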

\begin{proof}
    Say $F = (\fpart_0^F, \ldots ,\fpart_m^F)$ with $m \ge 1$. By definition of the topology on $\dct(F)$, measurability of $\tm_F$ is equivalent to measurability of $\tm_F(\cdot )(\fpart^F_i)$ for all $i\in [m]$. Then for $i \in [m]$ and $t > 0$, \[
        \left\{ \omega\colon \tm_F(\omega)(\fpart^F_i) \le t \right\} = \left\{ \omega(t) \le \fpart^F_i \right\} = \bigcup_{\fpart \le \fpart^F_i} \left\{ \omega(t) = \fpart \right\} ,
    \] is measurable as a finite union of measurable sets.
\end{proof}

With this definition, $\tm\colon \Omega_0 \to \dct(\mathbb{F})$ is given by $\omega \mapsto (\fr(\omega), \tm_{\fr(\omega)}(\omega))$.

\begin{lemma}\label{lem:tmmeasurable}
    The map $\tm\colon \Omega_0 \to \dct(\mathbb{F})$ is bijective and bimeasurable.
\end{lemma}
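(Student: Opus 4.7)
The plan is to exhibit $\tm^{-1}$ explicitly and then verify measurability in both directions using what has already been established in \cref{lem:frmeasurable,lem:tmFmeasurable}.

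For bijectivity, I would define $\tm^{-1}\colon \dct(\mathbb{F}) \to \Omega_0$ directly. Given $(F,\tau) \in \dct(\mathbb{F})$ with $F = (\fpart_0^F,\ldots,\fpart_m^F)$, set $\tau(\fpart_{m+1}^F) \coloneqq \infty$ and define $\omega_{(F,\tau)}(t) \coloneqq \fpart_i^F$ for $t \in [\tau(\fpart_i^F),\tau(\fpart_{i+1}^F))$, $i \in \{0,\ldots,m\}$. This is a càdlàg path in $\mathcal{P}$ which is weakly decreasing in the sense required to lie in $\Omega_0$, and by construction $\fr(\omega_{(F,\tau)}) = F$ and $\tm_F(\omega_{(F,\tau)}) = \tau$, so $\tm \circ \tm^{-1} = \id$. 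Conversely, for $\omega \in \Omega_0$ the very definition of $\fr$ and $\tm_F$ forces $\omega = \omega_{\tm(\omega)}$, so $\tm^{-1} \circ \tm = \id$.

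Measurability of $\tm$ itself is immediate. Since $\dct(\mathbb{F})$ carries the topology and Borel $\sigma$-algebra of the discrete union $\bigsqcup_{F\in\mathbb{F}} \dct(F)$, it suffices to check that $\{\fr = F\}$ is measurable for each $F \in \mathbb{F}$ (\cref{lem:frmeasurable}) and that $\tm_F$ is measurable on $\{\fr = F\}$ into $\dct(F)$ (\cref{lem:tmFmeasurable}); together these give measurability of $\omega \mapsto (\fr(\omega),\tm_{\fr(\omega)}(\omega)) = \tm(\omega)$.

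For measurability of $\tm^{-1}$, recall that the Borel $\sigma$-algebra on $D([0,\infty),\mathcal{P})$ (hence on its closed subset $\Omega_0$, by \cref{lem:OMMpolish}) coincides with $\sigma(\pi_t : t \ge 0)$, where $\pi_t(\omega) = \omega(t)$; this is the standard fact that, in a Polish Skorokhod space with a countable target (here $\mathcal{P}$ is discrete), the Borel $\sigma$-algebra is generated by finite-dimensional projections. So it suffices to prove that for every $t \ge 0$ the map $(F,\tau) \mapsto \omega_{(F,\tau)}(t)$ is measurable from $\dct(\mathbb{F})$ to the discrete space $\mathcal{P}$. Working one component at a time: on $\dct(F)$, the set $\{(F,\tau) : \omega_{(F,\tau)}(t) = \fpart_i^F\} = \{\tau : \tau(\fpart_i^F) \le t < \tau(\fpart_{i+1}^F)\}$ is the intersection of $\dct(F)$ with a box in the coordinates of $\tau$, hence Borel. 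For $\fpart \in \mathcal{P}$ not appearing in $F$, the preimage on $\dct(F)$ is empty. Taking the disjoint union over $F \in \mathbb{F}$ gives measurability at the fixed time $t$, completing the proof.

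I do not anticipate any serious obstacle: once the explicit inverse is written down, both bijectivity and the one-sided measurability assertions are essentially bookkeeping. The only subtle point is justifying that the Borel $\sigma$-algebra on $\Omega_0$ is generated by evaluation maps, which is standard for càdlàg paths into a countable discrete space.
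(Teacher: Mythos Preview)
Your proposal is correct and follows essentially the same approach as the paper: both argue bijectivity directly, reduce measurability of $\tm$ to \cref{lem:frmeasurable,lem:tmFmeasurable} via the discrete-union structure of $\dct(\mathbb{F})$, and handle measurability of $\tm^{-1}$ by testing against the generating sets $\{\omega(t)=\fpart\}$. The only cosmetic difference is that the paper parameterises the preimage of $\{\omega(t)=\fpart\}$ as a union over concatenations $F_1F_2$ with $\rt(F_1)=\lf(F_2)=\fpart$, whereas you work fibrewise over each $\dct(F)$ and identify the relevant box $\{\tau(\fpart_i^F)\le t<\tau(\fpart_{i+1}^F)\}$; these are two indexings of the same set.
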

\begin{proof}
    Bijectivity is obvious. By definition of the topology on $\dct(\mathbb{F})$, measurability of $\tm$ is equivalent to measurability of $\tm_F$ for all $F \in \mathbb{F}$, which was proved above. For measurability of the inverse, recall that the topology on $\Omega_0$ is generated by cylinders, i.e.\ sets of the form $\left\{ \omega\colon \omega(t) = \fpart \right\} $ for $t \ge 0$ and $\fpart \in \mathcal{P}$. The preimage of such a set under the inverse of $\tm$ is
    \begin{align*}
        \bigcup_{\substack{F_1\in \mathbb{F} \\ \rt(F_1) = \fpart} } \bigcup_{\substack{F_2\in \mathbb{F} \\ \lf(F_2) = \fpart} } \left\{ (F_1F_2,(\tau_1 / t / \tau_2)) \colon \tau_1 \in \dct(F_1), \tau_2\in \dct(F_2), \tau_1(\rt(F_1)) \le t \right\}
    \end{align*}
    (recall the definition of $(\tau_1 / t / \tau_2)$ from \eqref{eq:tauconcat} in the main paper), % (\ref{eq:tauconcat}))
    which is a countable union over sets that are measurable by the definition of the topologies on $\dct(\mathbb{F})$ and $\dct(F),\, F \in \mathbb{F}$.
\end{proof}

\begin{lemma}\label{lem:spmeasurable}
    The map $\sp\colon \Omega \to \bigcup_{F\in \mathbb{F}} \dcs(F)$ is measurable.
\end{lemma}
\begin{proof}
    It is enough to show that for any $F\in \mathbb{F}$ and $u\in \nd^\circ(F) $, the map $\left\{ \fr = F \right\} \to E$ given by $\omega = (\boldsymbol x_t) \mapsto \boldsymbol x_{\tm_F(\omega)_u}(u)$ is measurable. It can be written as the composition of the three maps
    \begin{IEEEeqnarray*}{rCcCcCl}
        \left\{ \fr = F \right\} & \quad \to \quad & \left\{ \fr = F \right\} \times (0,\infty) & \quad \to \quad & \mathcal{X} & \quad \to \quad & E\\[-2pt]
        \cline{1-7}\\[-10pt]
        \omega & \mapsto & (\omega,\tm_F(\omega)(u)) & & & & \\[3pt]
               & & (\omega,t) & \mapsto & \omega(t) & & \\[-3pt]
               & & & & (\fpart,\boldsymbol x) & \mapsto & 
        \begin{cases}
            \boldsymbol x(u), & u \in \fpart,\\
            x_0, & \text{else},
        \end{cases}
    \end{IEEEeqnarray*}
    for some fixed, arbitrary $x_0\in E$. The first map is measurable by \cref{lem:tmFmeasurable}, the second by definition of the product topology, and the third because it is continuous.
\end{proof}

\begin{lemma}\label{lem:decmeasurable}
    The map $\dc\colon \Omega \to \dcb(\mathbb{F})$ is measurable.
\end{lemma}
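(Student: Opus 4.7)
The plan is to reduce measurability of $\dc$ to the measurability of its ``components'' $\fr$, $\tm_F$ and $\sp$, all of which have already been established by the preceding lemmas. Recall that the topology on $\dcb(\mathbb{F})$ is inherited from that of $\dcb(F),\,F\in \mathbb{F}$, via the identification of $\dcb(\mathbb{F})$ with the disjoint union $\bigsqcup_{F\in \mathbb{F}} \dcb(F)$. In particular, a map into $\dcb(\mathbb{F})$ is measurable if and only if the preimage of each $\dcb(F) \subset \dcb(\mathbb{F})$ is measurable, and its restriction (as a map into $\dcb(F)$) is measurable.

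First I would observe that $\dc^{-1}(\dcb(F)) = \{\fr = F\}$, which is measurable by Lemma \ref{lem:frmeasurable}. It therefore suffices to fix $F\in \mathbb{F}$ and verify measurability of the restriction
\[
    \dc\big\vert_{\{\fr = F\}}\colon \{\fr = F\} \to \dcb(F) = \dct(F) \times \dcs(F).
\]
Since $\dcb(F)$ carries the product topology of $\dct(F)$ and $\dcs(F)$, this in turn reduces to measurability of the two component maps: the first component is $\tm_F$, which is measurable by Lemma \ref{lem:tmFmeasurable}, and the second component sends $\omega = (\boldsymbol x_t) \mapsto \bigl[u \mapsto \boldsymbol x_{\tm(\omega)_u}(u)\bigr] \in \dcs(F)$.

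For the second component, the topology on $\dcs(F)$ is that of an open subset of $E^{|\nd^\circ(F)|}$, so it is enough to show that for every $u\in \nd^\circ(F)$, the evaluation $\omega \mapsto \boldsymbol x_{\tm(\omega)_u}(u) \in E$ is measurable on $\{\fr = F\}$. But this is exactly what was established in (the proof of) Lemma \ref{lem:spFmeasurable}, where such an evaluation is decomposed as $\omega \mapsto (\omega,\tm_F(\omega)_u) \mapsto \omega(\tm_F(\omega)_u) \mapsto \boldsymbol x(u)$, with each factor measurable by Lemma \ref{lem:tmFmeasurable}, a standard Skorokhod-evaluation fact, and continuity of the projection on $\mathcal{X}$, respectively.

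There is no real obstacle here — the work has already been done in the component lemmas, and the only thing to be careful about is handling the disjoint union structure of $\dcb(\mathbb{F})$ correctly, namely checking that the $\sigma$-algebra it carries is precisely the one generated by the individual $\sigma$-algebras on $\dcb(F)$ together with the partition $\{\dcb(F)\colon F \in \mathbb{F}\}$, so that the two-step reduction above is valid.
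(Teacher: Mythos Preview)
Your proof is correct and follows essentially the same approach as the paper, which simply states that the result follows from Lemmas~\ref{lem:tmFmeasurable} and~\ref{lem:spFmeasurable}. You have spelled out in detail the disjoint-union reduction that the paper leaves implicit, but the underlying argument is the same.
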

\begin{proof}
    Follows from \cref{lem:tmFmeasurable,lem:spmeasurable}.
    %By definition of the topology on $\dcb(\mathbb{F})$, it suffices to show that for fixed $F$, the map \[
    %    \left\{ \fr = F \right\} \to \dct(F) \times \dcs(F);\, \omega \mapsto (\tm_F(\omega),\sp(\omega))
    %\] is measurable, which was proved in \cref{lem:tmFmeasurable,lem:spmeasurable}.
\end{proof}

\section{Disintegration of Brownian Motion (Lemma~\ref{lem:bb})}\label{sec:lembbproof}

A central ingredient in the characterisation of sampling consistency in Section~\ref{sec:proofs} of the main paper was Lemma~\ref{lem:bb}. It essentially says that, under some assumptions, if $(T,Z)$ is a $(0,\infty)\times E$ valued random variable such that the law of a Brownian bridge from $(0,0)$ to $(T,Z)$, followed by a Brownian motion starting at $(T,Z)$, is the same as that of a Brownian motion starting in $(0,0)$, then $Z \sim \mathcal{N}(0,T)$ conditional on $T$ (see \cref{fig:lem:bb} for an illustration).
Recall that for finite measures $m_1,m_2$, we write $m_1 \sim m_2$ if $m_1 = c m_2$ for some $c > 0$. To keep the appendix self-contained, we restate the Lemma here.
\lembb*
%\begin{lemma}\label{lem:bb}
%    Suppose $x_0\in E$, $s_0 > 0$, $f\colon E \times (s_0,\infty) \to (0,\infty)$ is continuous, and $\mu$ is a finite measure on $E$, and that $\int_{s_0}^\infty \int_E f(x,s) \mu(\diff x) \diff s < \infty$. Then \[
%        B^{(x_0,s_0)+}(\cdot ) \sim \iint B^{(x_0,s_0)\to(x,s)+}(\cdot ) f(x,s) \mu(\diff x) \diff s
%    \] if and only if $f(x,s) \mu(\diff x) \sim p_{s-s_0}(x-x_0) \diff x$ for all $s > s_0$. If further $x_1\in E$, $s_1 > s_0$, and $f\colon E\times (s_0,s_1) \to (0,\infty)$ is continuous, then \[
%    B^{(x_0,s_0)\to (x_1,s_1)}(\cdot ) \sim \iint B^{(x_0,s_0) \to (x,s)\to(x_1,s_1)}(\cdot ) f(x,s) \mu(\diff x) \diff s
%\] if and only if $f(x,s) \mu(\diff x) \sim p_{s-s_0}(x-x_0) p_{s_1-s}(x_1-x) \diff x$ for all $s\in (s_0,s_1)$.
%\end{lemma}
\pgfmathsetseed{824459}
\begin{figure}
    \begin{center}
    \begin{tikzpicture}
        \def\s{0.02}
        \def\sig{0.2}
        \draw[gray] (0,0)
        \foreach \x in {1,...,125}
        {   to ++(\s,rand*\sig)
        }
        node (BB) {}
        \foreach \x in {1,...,125}
        {   to ++(\s,rand*\sig)
        }
        node (A) {}
        \foreach \x in {251,...,500}
        {
            -- ++(\s,rand*\sig)
        };
        %\draw[dashed] let \p1 = (A) in node (\x1,\y1) {$T$};
        \draw[->] (0,-1) -- (0,4);
        \draw[-|] (0,0) -- (A |- 52,0);
        \draw(A |- 52,0) node[below] {$T$};
        \draw[->] (A |- 52,0) -- (10,0);
        \draw[dashed] (A |- 52,0) -- (A);
        \draw[dashed] (0, 52 |- A) -- (A);
        \draw(0,52|-A) node[left] {$Z$};
        \draw[fill] (A) circle[radius=2pt];
        \draw (BB) node[fill=white, above] {Brownian Bridge};
        \draw (7.8,2.5) node[fill=white] {Brownian Motion};
    \end{tikzpicture}
    \end{center}
    \caption{Illustration for \cref{lem:bb}.}
    \label{fig:lem:bb}
\end{figure}

\begin{proof}
    The ``if'' direction is clear in both cases: for fixed $s > s_0$, if we sample $z\sim \mathcal{N}(0,s)$ and, independent of $z$, a Brownian bridge from $(s_0,x_0)$ to $(s,z)$ followed by a Brownian motion started from $(s,z)$, then the law is the same as that of a Brownian motion started from $(s_0,x_0)$. Similarly for the second claim. From now assume that $d = 1$, so $E$ is the one-dimensional torus. The proof is analogous in higher dimensions. We start with the first statement. Without loss of generality $x_0 = s_0 = 0$ and $\iint f(x,s) \mu(\diff x) \diff s = 1$. A convenient reformulation is obtained by putting $g(s) \coloneqq \int f(x,s) \mu(\diff x)$ and $\nu_s(\diff x) \coloneqq f(x,s) \mu(\diff x) / g(s)$. Dominated convergence can be applied to show that $g$ is continuous, because $f$ assumes a unique finite maximum on the compact set $E \times [s,t]$ for any $0 < s < t$. By a similar argument, $\nu_s$ is continuous in $s$ w.r.t.\ the topology of weak convergence of probability measures. An argument bijecting continuous paths in $E$ and $\R$ shows that it suffices to prove the analogous statement in $\R$, where the probability measure $\nu_s$ on the torus is replaced by the probability measure $\frac{\rfy{p}_s(x)}{p_s(\pi(x))}\nu_s(\diff \pi(x))$ on $\R$, which is continuous in $s$ w.r.t.\ all Wasserstein distances. Here $\pi\colon \R \to E$ is the natural projection characterised by $\pi(x) - x \in \Z$, and $\rfy{p}_s$ is the heat kernel on $\R$. %details commented out.
    %details 1: Suppose $\mu$ is a prob distribution on $E$, and then consider the probability distribution on $\R^d$ that we get by sampling $x$ from $\nu$ and then a point from $x+2\pi k$ w.p.\ proportional to $p_t(x+2\pi k)$. So the probability of getting $x+2\pi k$ for $x\in E$, $k\in \Z^d$ (every point in $\R^d$ can be uniquely written like that) is $\mu(\diff x) p_t(x+2\pi k) / \sum_l p_t(x+2\pi l)$. Now suppose that is equal to $p_t(x+2\pi k) \diff x$, then it means that $\mu(\diff x) = (\sum_l p_t(x+2\pi l)) \diff x = p_t ^{\text{torus}}(x) \diff x$, which is what we want.
    %details 2: It suffices to prove the analogous statement in $\R$. To see this first note that there is a unique bijection $\Phi$ between $C([0,\infty),E)$ and $C([0,\infty),\R)$ with the property that $y(t) = \pi(\Phi(y)(t))$ for all $y\in C([0,\infty),E)$ and $t \ge 0$, where $\pi \colon \R \to E$ is the map characterised by $\pi(x) - x \in \Z$ for $x\in \R$. Then $\Phi$ maps Brownian motions/bridges onto each other. Let $Y$ be a random path in $E$ whose law is given by the RHS above, that is $B^{(x_0,s_0) \to (x,s)+}$ where $(x,s) \sim f(x,s)\mu(\diff x)\diff s$. Then $\Phi(Y)$ is a random path in $\R$ with law $B^{(x_0,s_0) \to (x',s)+}$ where $(x,s) \sim f(x,s) \mu(\diff x) \diff s$, conditional on which $x'$ is a random element of the discrete set $\pi^{-1}(x)$ with $\P(x' = y) \sim p_s(x_0-y)$ for $y\in \pi^{-1}(x)$. Then we will know that $x' \sim \mathcal{N}(x_0,s)$ conditional on $s$ in $\R$, so $x\sim \mathcal{N}(x_0,s)$ conditional on $s$ in $E$.
    
    So let $g\colon (0,\infty) \to (0,\infty)$ with $\int g(s) \diff s = 1$ be continuous, $\nu_s$ for $s > 0$ be probability measures on $\R$, continuous in $s$ w.r.t.\ the $1$-Wasserstein distance, such that \[
        B^{(0,0)+}(\cdot ) = \iint B^{(0,0)\to (x,s)+}(\cdot ) \nu_s(\diff x) g(s) \diff s.
    \] Denote by $\varphi_s(u) \coloneqq \int \e^{\mathrm{i} u x}\nu_s(\diff x)$ the characteristic function of $\nu_s$, which is bounded and differentiable in $u$, and both $\varphi_s$ and $\partial_u \varphi_s$ are jointly continuous in $u$ and $s$ by assumptions on $\nu_s$. This will justify all interchanges of differentiation and integration that follow. Evaluating the equation at time $t > 0$ and taking characteristic functions on both sides gives, after a short calculation, 
    \begin{align*}
        \e^{-t u^2 / 2}
        &= \int_0^t \varphi_{s}(u) \e^{-(t-s)u^2 / 2} g(s) \diff s + \int_t^\infty \varphi_s(u t / s) \e^{-t(s-t) u^2 / (2s)} g(s) \diff s.
    \end{align*}
    Cancelling $\e^{-t u^2 / 2}$ on both sides and taking a derivative in $t$ gives 
        \begin{align}\begin{split}\label{eqprf:bb:1}
            0 &= \int_t^\infty \partial_t \left( \varphi_s\left( {\textstyle\frac{ut}{s}} \right)  \e^{(ut)^2 / (2s)} \right) g(s)\diff s \\
                &= \frac{u}{t} \partial_u \left( \int_t^\infty \varphi_s\left( {\textstyle\frac{ut}{s}} \right) \e^{(ut)^2 / (2s)} g(s) \diff s \right).
        \end{split}\end{align}
        This implies that $\int_t^\infty \varphi_s(ut / s) \e^{(ut)^2 / (2s)} g(s) \diff s$ is constant in $u$, so letting $u \to 0$ for fixed $t > 0$ implies by dominated convergence that \[
        \int_t^\infty g(s) \diff s = \int_t^\infty \varphi_s(ut / s) \e^{(ut)^2 / (2s)}g(s) \diff s,
        \] for every $u,t > 0$. Taking a derivative in $t$ for fixed $u$ gives, reusing \cref{eqprf:bb:1}, \[
        -g(t) = -g(t) \varphi_t(u) \e^{u^2 t / 2} + \underbrace{\int_t^\infty \partial_t \left( \varphi_s(ut / s) \e^{(ut)^2 / (2s)} \right) g(s) \diff s}_{= 0} ,
    \] that is $\varphi_t(u) = \e^{-u^2 t / 2}$ and therefore $\nu_t = \mathcal{N}(0,t)$.

    Consider the second statement with $x_0=x_1=s_0=0$ and $s_1 = 1$, then by similar arguments we need to prove that 
    \begin{equation}\label{eqprf:bb:2}
        B^{(0,0) \to (0,1)}(\cdot ) = \int_0^1 \int_\R B^{(0,0) \to (x,s)\to (1,0)}(\cdot ) \nu_s(\diff x) g(s)\diff s
    \end{equation}
    implies $\nu_s = \mathcal{N}(0, s(1-s))$, with assumptions on $\nu_s$ and $g$ analogous to before. Consider the measurable map $C([0,1],\R) \to C([0,\infty),\R)$ that maps a path $y$ to $t \mapsto (1+t) y(\frac{t}{1+t})$; it sends $B^{(0,0)\to (0,1)}$ to $B^{(0,0)+}$, and $B^{(0,0) \to (x,s) \to (0,1)}$ to $B^{(0,0) \to (\frac{x}{1-s}, \frac{s}{1-s})+}$ for $x\in\R$, $s\in (0,1)$. The easiest way to see this is that the pushforward in both cases is a Gaussian process with the correct mean and covariance. Applying this map to \cref{eqprf:bb:2} and substituting $u = \frac{s}{1-s}$ gives \[
        B^{(0,0)+}(\cdot ) = \int_0^\infty \int_\R B^{(0,0) \to ((1+u)x,u)+}(\cdot ) \nu_{u / (1+u)}(\diff x) \frac{g(\frac{u}{1+u})}{(1+u)^2}\diff u.
    \] By what we've already proved this implies $\nu_{u / (1+u)}(\diff x) \sim p_u(\frac{x}{1+u})\diff x$, that is \[
    \nu_s(\diff x) \sim p_{s / (1-s)}(\frac{x}{1-s}) \diff x \sim p_{s(1-s)}(x)\diff x.
    \] 
\end{proof}

\section{Characterisation of Label Invariance (Lemma~\ref{lem:labelinv})}\label{sec:prooflabelinv}

Recall the following characterisation of label invariance of a Brownian spatial coalescent, stated as Lemma~\ref{lem:labelinv} in the main paper.

\lemlabelinv*
%\begin{lemma}\label{lem:labelinv}
%    The Brownian spatial coalescent with transition measures $\boldsymbol \nu\in \rates$ is label invariant if and only if for all $\fpart_0,\fpart_1\in \mathcal{P}$ of equal size and $\iota \in \labelinv(\fpart_0,\fpart_1)$, \[
%        \forall \fpart' < \fpart_0\colon  \iota \# \nu_{\fpart_0,\fpart'} = \nu_{\iota(\fpart_0),\iota(\fpart')}.
%    \]
%    In that case, there exists for every $(n,\vec{k}) = (n,k_1, \ldots ,k_m) \in \mergers$ a finite measure $\nunk$ on $E^m_\circ $ such that for every $(n,\vec{k})$-merger $(\fpart,\fpart')$, \[
%        \nkmap_{\fpart,\fpart'} \# \nu_{\fpart,\fpart'} = \nunk.
%    \] If $k_i = k_j$, then $\nunk$ is symmetric in the $i$th and $j$th coordinate.
%\end{lemma}
\begin{proof}
    Let $\boldsymbol \nu \in \nurates$ be the transition measures of a Brownian spatial coalescent.
    We assume that $\nut{\fpart} > 0$ for all $\fpart \in \mathcal{P}$ with $|\fpart| \ge 2$ to slightly simplify the proof. Let $\fpart_0,\fpart_1 \in \mathcal{P}$ be of equal size.
    Note that the right-hand condition is equivalent by a simple induction to $\iota\# \boldsymbol \nu_F = \boldsymbol \nu_{\iota(F)}$ for all $\iota\in \labelinv(\fpart_0,\fpart_1)$ and $F\in \mathbb{F}(\fpart_0)$. Let $\iota\in \labelinv(\fpart_0,\fpart_1)$, then \eqref{eq:labelinv} from the main paper (the equation characterising label invariance) %\cref{eq:labelinv}
    for $\boldsymbol x\in E^{\fpart_0}_\circ $ is equivalent to $P^{\iota(\boldsymbol x)}(F^\star \in \cdot ) = P^{\boldsymbol x}(\iota(F^\star) \in \cdot )$, that is \[
        \fnu(\tau,\xi \,\vert\,\iota(\boldsymbol x)) \boldsymbol \nu_{F}(\diff \xi) \diff \tau = \fnu(\iota^{-1}(\tau),\iota^{-1}(\xi) \,\vert\,\boldsymbol x) (\iota\#\boldsymbol \nu_{\iota^{-1}(F)})(\diff \xi) \diff \tau
    \] as measures on $\dcb(F)$ for every $F\in \mathbb{F}(\fpart_1)$, which in turn is equivalent to
    \begin{align}\begin{split}\label{eqprf:labelinv:1}
        %&\hspace{2cm}\nut{\rt(F)} = 0 \quad \text{iff}\quad \nut{\iota^{-1}(\rt(F))} = 0, \quad \text{and otherwise}\\
        \prod_{(\fpart,\fpart')\in F} \e^{-\nut{\fpart}(\tau_{\fpart'}-\tau_{\fpart})} \boldsymbol \nu_F(\diff \xi) \diff \tau = \prod_{(\fpart,\fpart') \in F} \e^{-\nut{\iota^{-1}(\fpart)}(\tau_{\fpart'}-\tau_{\fpart})} (\iota \# \boldsymbol \nu_{\iota^{-1}(F)})(\diff \xi) \diff \tau
    \end{split}\end{align}
    for all $F\in \mathbb{F}(\fpart_1)$. Clearly $\boldsymbol \nu_F = \iota \# \boldsymbol \nu_{\iota^{-1}(F)}$ for all $F\in \mathbb{F}(\fpart_1)$ implies \cref{eqprf:labelinv:1}. If label invariance holds,
    %then \cref{eqprf:labelinv:1} implies that $\boldsymbol \nu_F = 0$ iff $\iota \# \boldsymbol \nu_{\iota^{-1}(F)} = 0$. Otherwise,
    then integrating out $\xi \in \dcs(F)$ and varying $\tau$ in \cref{eqprf:labelinv:1} implies $\nut{\fpart} = \nut{\iota^{-1}(\fpart)}$ for all $(\fpart,\fpart') \in F$, which back into \cref{eqprf:labelinv:1} implies $\iota\# \boldsymbol \nu_{\iota^{-1}(F)} = \boldsymbol \nu_F$.

    Now suppose label invariance holds, and let $n\ge 2$ and $k_1\ge\ldots \ge k_m \ge 2$ with $\sum_i k_i \le n$. For existence of $\nunk$ it suffices to show that $\kappa_{\fpart,\fpart'}\#\nu_{\fpart,\fpart'}$ is the same for every $(n,\vec{k})$-merger $(\fpart,\fpart')$. Let $(\fpart_0,\fpart'_0)$ and $(\fpart_1,\fpart_1')$ be $(n,\vec{k})$-mergers, and abbreviate $\kappa_i \coloneqq \kappa_{\fpart_i,\fpart_i'}$ and $\nu_i \coloneqq \nu_{\fpart_i,\fpart_i'}$ for $i=1,2$. Fix some $\iota\in \labelinv(\fpart_0, \fpart_1)$ for which $\iota(\fpart_0') = \fpart_1'$, then
    \begin{align}\label{eqprf:labelinv:2}
        \kappa_1\# \nu_1
        = \kappa_1 \# (\iota \# \nu_0)
        = (\kappa_1 \circ \iota) \# \nu_0.
        %= (\kappa_1\circ \iota \circ \kappa_0^{-1})\# (\kappa_0\#\nu_0).
    \end{align}
    Here $\kappa_1\circ \iota\colon E^{\fpart_0'\setminus \fpart_0}_\circ  \to E^m_\circ $ depends on the choices made for $\kappa_0$ and $\iota$, but is always of the same form as $\kappa_0$, that is $\boldsymbol x \mapsto \boldsymbol x \circ \ell$ for some bijection $\ell\colon [m] \to \fpart'\setminus \fpart$ such that $\ell(i)$ is the union of $k_i$ disjoint elements of $\fpart_0$. If we show that $(\,\cdot \, \circ \ell)\# \nu_0$ is the same for each such choice for $\ell$, then both equality of \cref{eqprf:labelinv:2} to  $\kappa_0\#\nu_0$, and the claimed symmetry of $\nunk$ follow. If two such bijections $\ell,\ell'\colon [m] \to \fpart'\setminus \fpart$ are given, then there is a permutation $\pi$ of $\fpart'\setminus \fpart$ such that $\pi \circ \ell = \ell'$, and $\pi(u)$ for $u\in \fpart'\setminus \fpart$ is the result of merging as many blocks from $\fpart$ as $u$. % Then it suffices to show that $(\,\cdot \,\circ \pi) \# \nu_0 = \nu_0$.
\end{proof}

\section{Conditional Kernel of the Brownian Spatial Coalescent (Lemma~\ref{lem:kbr})}\label{sec:lemkbr}
Recall Lemma~\ref{lem:kbr} from the main paper, which states that the kernel $K_{\boldsymbol x}(F^\star,\cdot )$, which describes the law of the Brownian spatial coalescent started at $\boldsymbol x$ conditioned on $\dc = F^\star$, is well-defined and well-behaved.
\lemkbr*
%\begin{lemma}\label{lem:kbr}
%    Given $F^\star = (F,\tau,\xi) \in \dc(\mathbb{F})$ and $\boldsymbol x \in E_\circ ^{\lf(F)}$, there is a unique law $\kbr_{\boldsymbol x}(F^\star,\cdot )\in \mathcal{M}_1(\Omega)$ under which $\boldsymbol X_0 = \boldsymbol x$ and $\dc = F^\star$ a.s., and \[
%    \left( \pth^{F^\star}_u(\boldsymbol X)\colon u\in \nd(F) \right)
%\] is a family of independent random variables such that the law of $\pth^{F^\star}_u(\boldsymbol X)$ is $B^{(\tau_u,(\boldsymbol x\xi)_u)+}$ if $\pr_F(u) = \emptyset $, and $B^{(\tau_u,(\boldsymbol x\xi)_u)\to(\tau_{\pr_F(u)},\xi_{\pr_F(u)})}$ if $\pr_F(u) \neq \emptyset $.
%    There exists an extension of these laws to a family $(\kbr_{\boldsymbol x}(F^\star,\cdot )\colon \boldsymbol x\in \mathcal{X},F^\star \in \dcb(\mathbb{F}))$ such that
%    \begin{align*}
%        %\{ (\boldsymbol x,F^\star)\colon \boldsymbol x\in E_\circ ^{\lf(F)} \} &\to \mathcal{M}_1(\Omega);\quad %\hspace{3cm}\\
%        \mathcal{X}\times \dcb(\mathbb{F}) \to \mathcal{M}_1(\Omega);\quad (\boldsymbol x,F^\star) \mapsto \kbr_{\boldsymbol x}(F^\star,\cdot )
%    \end{align*}
%    is continuous. In particular, $(\boldsymbol x,F^\star) \mapsto \kbr_{\boldsymbol x}(F^\star,A)$ is measurable for any measurable $A\subset \Omega$.
%\end{lemma}
\begin{proof}[Proof of \cref{lem:kbr}]
    Uniqueness follows because the collection of maps $(\pth^{F^\star}_u\colon u\in \nd(F))$ uniquely determines an element of $\left\{ \dc =F^\star \right\} $.
    For existence and the remaining properties, let \[
        \Big((B_t^u)_{t\in [0,1]}\colon u\subset \N\Big),\qquad \Big( (W_t^u)_{t\ge 0} \colon u\subset \N\Big),
        \] be independent families of, respectively, i.i.d.\ standard Brownian bridges and i.i.d.\ standard Brownian motions, defined on some common probability space $(S,\mathcal{A},\P)$. For any $\boldsymbol x$ and $F^\star = (F,\tau,\xi) \in \dcb(\mathbb{F})$, we define a random element $(\fpart_t,\boldsymbol X_t) \in \left\{ \dc = F^\star \right\} $ with $\boldsymbol X_0 = \boldsymbol x$ in the following way: $(\fpart_t) \coloneqq  \tm^{-1}(F,\tau)$, and if $u\in \nd(F)$ and $t \in [\tau_u,\tau_{\pr_F(u)})$, \[
        \boldsymbol X_t(u) \coloneqq 
        \begin{cases}
            (\boldsymbol x \xi)_u + (\xi_{\pr_F(u)} - (\boldsymbol x\xi)_u) B^u_{(t - \tau_u) / (\tau_{\pr_F(u)} - \tau_u)}, & \pr_F(u) \neq \emptyset ,\\
            (\boldsymbol x \xi)_u + W^u_{t - \tau_u}, & \text{ else}.
        \end{cases}
    \] Then $\boldsymbol X_t \in E^{\fpart_t}$ for all $t \ge 0$, and by standard properties of Brownian motion, there is a $\P$-null set $N_{\boldsymbol x,F^\star}\in \A$ outside of which $\boldsymbol X_t \in E^{\fpart_t}_\circ $ for all $t \ge 0$. On $N_{\boldsymbol x,F^\star}$, we let $(\fpart_t,\boldsymbol X_t)$ be defined by $(\fpart_t) = \tm^{-1}(F,\tau)$, and $\boldsymbol X_t$ linearly interpolates between nodes of the forest (any other particular element of $\left\{ \dc = F^\star \right\} $ starting at $\boldsymbol x$ would do). Then the law $\kbr_{\boldsymbol x}(F^\star,\cdot )$ of $(\fpart_t,\boldsymbol X_t)$ has the stated properties.

    Let $F = (\fpart_0, \ldots ,\fpart_m) \in \mathbb{F}$ be fixed, $\tau^n \to \tau$ in $\dct(F)$ (recall from Definition~\ref{def:tau} that this is in the upper limit topology), $\xi^n \to \xi$ in $\dcs(F)$, and $\boldsymbol x_n \to \boldsymbol x$ in $E_\circ ^{\lf(F)}$, $F^\star_n \coloneqq (F,\tau^n,\xi^n)$, $F^\star \coloneqq (F,\tau,\xi)$, and denote the associated paths by $(\fpart^n_t,\boldsymbol X^n_t),\,n\in \N,$ and $(\fpart_t,\boldsymbol X_t)$. This gives a coupling of the laws $\kbr_{\boldsymbol x_n}(F^\star_n,\cdot ),\, n\in \N$, and $\kbr_{\boldsymbol x}(F^\star,\cdot )$ on the probability space $(S, \A, \P)$, so almost-sure convergence on $S$ implies weak convergence of the laws. Put $\tau_i \coloneqq \tau(\fpart_i)$ for $i\in \left\{ 0, \ldots ,m \right\} $, and $\tau_{m+1}\coloneqq \infty$, and similarly for $\tau_i^n,\, n\in \N$. 
    We show that $(\fpart^n,\boldsymbol X^n) \to (\fpart,\boldsymbol X)$ in $\Omega$ holds outside $N_{\boldsymbol x,F^\star}\cup\bigcup_{n\in \N} N_{\boldsymbol x_n,F^\star_n}$.
    Let $t > 0$. By construction, the only discontinuities of $(\fpart,\boldsymbol X)$ are the jump times of $(\fpart_t)$. If $t$ is no such jump time, say $\tau_i < t < \tau_{i+1}$ for some $i \in \left\{ 0, \ldots ,m \right\} $, then for large $n$ also $\tau^n_i < t < \tau^n_{i+1}$, so $\fpart^n(t) = \fpart_i = \fpart(t)$. If $t$ is a jump time, say $t = \tau_i$ for some $i \in [m]$, then (recalling that $\dct(F)$ is equipped with the upper limit topology) $\tau_{i}^n \le \tau_i = t < \tau_{i+1}^n$, and hence $\fpart^n(t) = \fpart(t)$, for sufficiently large $n$.
    Now say $\fpart \coloneqq \fpart(t)$, and without loss of generality $\fpart^n(t) = \fpart$ for all $n\in \N$.
    It remains to show that then $\boldsymbol X^n(t)_u \to \boldsymbol X(t)_u$ for any $u\in \fpart$. If $\pr_F(u) = \emptyset $, then
    \begin{align*}
        \left| \boldsymbol X^n(t)_u - \boldsymbol X(t)_u \right| 
        %&= \left| \xi^n_u + W^u_{t_n - \tau^n_u} - \xi_u - W^u_{t - \tau_u} \right| \\
         &\le \left| (\boldsymbol x_n\xi^n)_u - (\boldsymbol x\xi)_u \right| + \left| W^u_{t-\tau_u^n} - W^u_{t-\tau_u} \right| 
        \tendsto{n\to \infty} 0,
    \end{align*}
    and similarly if $\pr_F(u) \neq \emptyset $.

    We have proved that whenever $F_n^\star \to F^\star$ in $\dcb(\mathbb{F})$, and $\boldsymbol x_n \to \boldsymbol x$ in $E^{\lf(F)}_\circ $, then $\kbr_{\boldsymbol x_n}(F^\star_n,\cdot ) \to \kbr_{\boldsymbol x}(F^\star,\cdot )$ weakly. Now fix arbitrary choices $(\boldsymbol x_F \in E^{\lf(F)}_\circ)_{F\in \mathbb{F}} $, and put for $F\in \mathbb{F}$ and $\boldsymbol x \in \mathcal{X}$ with $\boldsymbol x \not\in E^{\lf(F)}_\circ $, \[
        \kbr_{\boldsymbol x}(F^\star,\cdot ) \coloneqq \kbr_{\boldsymbol x_F}(F^\star,\cdot ).
    \] This defines $\kbr_{\boldsymbol x}(F,\cdot )$ for any $\boldsymbol x\in \mathcal{X}$ and $F \in \mathbb{F}$. Now let $F^\star_n \to F^\star$ in $\dcb(\mathbb{F})$, and $\boldsymbol x_n \to \boldsymbol x$ in $\mathcal{X}$. If $\boldsymbol x\in E_\circ ^{\lf(F)}$, then for large $n$ we must have $\dom(\boldsymbol x_n) = \dom(\boldsymbol x) = \lf(F)$ and $F_n = F$, so also $\boldsymbol x_n \in E_\circ ^{\lf(F_n)}$, in which case $\kbr_{\boldsymbol x_n}(F^\star_n,\cdot ) \to \kbr_{\boldsymbol x}(F^\star,\cdot )$ is already proved. If $\boldsymbol x \not\in E_\circ ^{\lf(F)}$, then by the same argument also $\boldsymbol x_n \not\in E_\circ ^{\lf(F_n)}$ and $F_n = F$ for large $n$, so \[
        \kbr_{\boldsymbol x_n}(F^\star_n,\cdot ) = \kbr_{\boldsymbol x_{F_n}}(F^\star_n,\cdot ) = \kbr_{\boldsymbol x_F}(F^\star_n,\cdot ) \tendsto{n\to \infty} \kbr_{\boldsymbol x_F}(F^\star,\cdot ) = \kbr_{\boldsymbol x}(F^\star,\cdot ),
    \] by what we have already proved, since $\boldsymbol x_F \in E^{\lf(F)}_\circ = E^{\lf(F_n)}_\circ $ for large $n$.

    It remains to show that $(\boldsymbol x,F^\star) \mapsto \kbr_{\boldsymbol x}(F^\star,A)$ is measurable for measurable $A \subset \Omega$. The set of such $A$ is a $\lambda$-system, so by the $\pi$-$\lambda$ lemma it suffices to show the claim for $A$ taken from a $\pi$-system that generates the $\sigma$-algebra on $\Omega$, such as the family of closed sets. By what we have already showed and the Portmanteau theorem, if $A \subset \Omega$ is closed, and $\boldsymbol x_n \to \boldsymbol x$ and $F^\star_n \to F^\star$, \[
        \varlimsup_{n\to \infty} \kbr_{\boldsymbol x_n}(F^\star_n,A) \le \kbr_{\boldsymbol x}(F^\star,A),
    \] which implies that $(\boldsymbol x,F^\star) \mapsto \kbr_{\boldsymbol x}(F^\star,A)$ is upper semi-continuous and hence measurable.
\end{proof}

%\section{Proof of Lemma~\ref{lem:NF}}\label{app:NF}
\section{$N^{\nu}$ is finite and continuous (Lemma~\ref{lem:NF})}
In this section we prove Lemma~\ref{lem:NF} from the main paper.
\lemNF*
Throughout the section, fix $\boldsymbol \nu \in \nurates$ with $\boldsymbol \nu(\diff \xi) = \boldsymbol \lambda \diff \xi$ for some $\boldsymbol \lambda \in \rates$, that is $\nu_{\fpart,\fpart'}(\diff \xi) = \lambda_{\fpart,\fpart'} \diff \xi$ for every $\fpart,\fpart'\in \mathcal{P}$, $\fpart' < \fpart$. Then $N^F_{\boldsymbol \nu}$ for $F\in \mathbb{F}$ is a map $E^{\lf(F)}_\circ \to (0,\infty)$ defined by \[
    N^F_{\boldsymbol \nu}(\boldsymbol x) = \int\limits_{\dct(F)} \int\limits_{\dcs(F)} \ftm(\tau) \fsp(\xi \,\vert\,\tau,\boldsymbol x) \diff \xi \diff \tau.
\] The goal of this section is to show that $N^F_{\boldsymbol \nu}$ is finite and continuous.
\begin{lemma}\label{lem:NFinnerintegralcontinuous}
    Let $F\in \mathbb{F}$. For fixed $\tau \in \dct(F)$, the map \[
        E^{\lf(F)}_\circ \to (0,\infty);\quad \boldsymbol x \mapsto \smashoperator[r]{\int_{\dcs(F)}} \fsp(\xi \,\vert\,\tau,\boldsymbol x) \diff \xi
    \] is continuous.
\end{lemma}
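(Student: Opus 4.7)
The plan is a straightforward application of dominated convergence. Fix $F\in \mathbb{F}$ and $\tau\in \dct(F)$. If $F$ is trivial the integrand is identically $1$ and there is nothing to show, so assume $F$ is non-trivial. Recall from \cref{eq:fsp} that
\[
    \fsp(\xi \,\vert\,\tau,\boldsymbol x) = \smashop{\prod_{u\in \nd(F)\setminus \rt(F)}} p\!\left(\tau_{\pr_F(u)} - \tau_u,\,(\boldsymbol x \xi)_u - \xi_{\pr_F(u)}\right),
\]
a finite product of heat kernels with strictly positive time increments $\tau_{\pr_F(u)}-\tau_u > 0$ (since $\tau\in \dct(F)$ is strictly increasing along the forest).

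First I would observe that for every fixed $t > 0$ the heat kernel $p_t$ on the torus $E$ is a bounded continuous function; consequently there is a constant $C = C(F,\tau) < \infty$ such that $\fsp(\xi \,\vert\,\tau,\boldsymbol x) \le C$ for all $\boldsymbol x\in E^{\lf(F)}_\circ$ and all $\xi \in \dcs(F)$. Since $\dcs(F)$ is identified with an open subset of the compact space $E^{|\nd^\circ(F)|}$, it has finite Lebesgue measure, and thus the constant function $C$ is an integrable majorant.

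Next, for every fixed $\xi \in \dcs(F)$ the map $\boldsymbol x \mapsto \fsp(\xi \,\vert\,\tau,\boldsymbol x)$ is continuous, being a finite product of continuous functions of $\boldsymbol x$ (note the factors involving $\boldsymbol x$ are those at $u\in \lf(F)$ with $\pr_F(u)\neq \emptyset$, each of the form $p(\tau_{\pr_F(u)},\boldsymbol x_u - \xi_{\pr_F(u)})$). If $\boldsymbol x_n \to \boldsymbol x$ in $E^{\lf(F)}_\circ$, then $\fsp(\xi \,\vert\,\tau,\boldsymbol x_n) \to \fsp(\xi \,\vert\,\tau,\boldsymbol x)$ pointwise in $\xi$, and by dominated convergence with majorant $C$,
\[
    \smashop{\int_{\dcs(F)}} \fsp(\xi \,\vert\,\tau,\boldsymbol x_n)\diff \xi \,\longrightarrow\, \smashop{\int_{\dcs(F)}} \fsp(\xi \,\vert\,\tau,\boldsymbol x)\diff \xi.
\]
There is essentially no obstacle here; the only thing to verify is the uniform bound on the heat kernel, which follows from its known explicit form on the torus (or equivalently from compactness of $E$ together with continuity of $p_t$ for each $t>0$).
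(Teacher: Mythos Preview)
Your argument is correct and essentially identical to the paper's: both use that for fixed $\tau$ the integrand is continuous in $\boldsymbol x$ pointwise in $\xi$ and uniformly bounded (since each heat kernel $p_t$ with $t>0$ is bounded on the compact torus), then apply dominated (bounded) convergence over the finite-measure domain $\dcs(F)$.
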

\begin{proof}
    For fixed $\tau \in \dct(F)$, $\fsp(\xi \,\vert\,\tau,\boldsymbol x)$ is continuous in $\boldsymbol x$ for every $\xi \in \dcs(F)$, and bounded uniformly in $\boldsymbol x$ and $\xi$, so the statement follows from the dominated (or bounded) convergence theorem.
\end{proof}

Let $\fpart\in \mathcal{P}$. Then $E^{\fpart}_\circ $ is an open subset of $E^\fpart$, so the distance between some $\boldsymbol x\in E^\fpart$ to $E^\fpart \setminus E^\fpart_\circ $ is well-defined and denoted $\varepsilon_d(\boldsymbol x)$. More explicitly,
\begin{equation}\label{eq:epsd}
    \varepsilon_d(\boldsymbol x) =
    \begin{cases}
        \min_{u\neq v\in \fpart} \rho(\boldsymbol x_u , \boldsymbol x_v), & d \ge 2,\\
        \max_{u_0,v_0\in \fpart} \min_{u\neq v\in \fpart, \{ u,v \} \neq \left\{ u_0,v_0 \right\} } \rho(\boldsymbol x_{u},\boldsymbol x_{v}), & d = 1,
    \end{cases}
\end{equation}
where $\rho$ denotes the Euclidean metric on $E$, see \eqref{eqdef:rho}.
If $d\ge 2$ then $\varepsilon_d(\boldsymbol x)$ is the smallest, if $d = 1$ it is the second smallest distance among all pairs of two particles. % Then $\varepsilon_d$ is continuous, $\varepsilon_d(\boldsymbol x) \ge 0$, and equality holds iff $\boldsymbol x \not\in E^\fpart_\circ $.

\begin{lemma}\label{lem:app:NF}
    Let $F \in \mathbb{F}$. Then there exists $C = C(F,\boldsymbol \lambda,d) > 0$ such that, for any $A \subset E^{\lf(F)}_\circ $,
    \begin{equation}\label{eq:NFboundtorus}
        \smashoperator{\int_{\dct(F)}} \ftm(\tau) \sup_{\boldsymbol x\in A} \bigg( \smashoperator[r]{\int_{\dcs(F)}} \fsp(\xi \,\vert\,\tau,\boldsymbol x) \diff \xi \bigg) \diff \tau
        \le C \bigg[ 1 + \left( \inf_{\boldsymbol x\in A} \varepsilon_d(\boldsymbol x) \right) ^{-C} \bigg] .
        \end{equation}
        In particular, $N_{\boldsymbol \nu}^F \colon E^{\lf(F)}_\circ \to (0,\infty)$ is continuous and, specialising $A$ in \cref{eq:NFboundtorus} to a singleton, \[
        N^F_{\boldsymbol \nu}(\boldsymbol x) \le C \left( 1 + \varepsilon_d(\boldsymbol x)^{-C} \right) ,\qquad \boldsymbol x\in E^{\lf(F)}_\circ .
    \]
\end{lemma}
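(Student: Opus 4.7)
The plan is to prove Lemma~\ref{lem:app:NF} by first obtaining a pointwise upper bound on the inner spatial integrand $\phi^F(\tau, \boldsymbol x) \coloneqq \int_{\dcs(F)} \fsp(\xi \,\vert\, \tau, \boldsymbol x)\,d\xi$ and then integrating against $\ftm(\tau)$ using standard Wright-Malecot-type estimates. As a preliminary, I would record the uniform torus heat kernel bounds $p_t(x) \le C \min(t^{-d/2}, 1)$ and, for $t \le 1$, the Gaussian decay $p_t(x) \le C t^{-d/2} e^{-c \rho(x,0)^2/t}$, both obtained from the lift $p_t(x) = \sum_{k\in \Z^d} \widetilde{p}_t(x+k)$ in terms of the Euclidean heat kernel $\widetilde{p}_t$.

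The first main step is an inductive pointwise bound on $\phi^F$, proceeding by induction on $|\nd^\circ(F)|$. The inductive step integrates out $\xi_v$ for an internal node $v$ whose children are leaves or previously-handled internal nodes (working bottom-up), using the explicit Gaussian convolution identity on $\R^d$: for $y_1, \ldots, y_k \in \R^d$ and $s_1, \ldots, s_k > 0$, $\int \prod_{i=1}^k \widetilde{p}_{s_i}(y_i - z)\,dz$ has an explicit Gaussian form in the $y_i$ whose exponent is a positive definite quadratic form bounded below by $c \max_{i,j} |y_i - y_j|^2 / \max_i s_i$. Summing over $\Z^d$-shifts recovers the torus version up to a bounded multiplicative constant. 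Iterating, the resulting pointwise bound on $\phi^F$ depends on $\boldsymbol x$ only through Gaussian factors $e^{-c|\boldsymbol x_\alpha - \boldsymbol x_\beta|^2 / \tau_v}$ involving specific leaf pairs $(\alpha, \beta)$ attached to each internal node $v$ by the tree structure, multiplied by power-law prefactors $\tau_v^{-\gamma_v}$ coming from the Gaussian normalisations.

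The second step takes $\sup_{\boldsymbol x \in A}$ of the pointwise bound and integrates over $\tau$. Each Gaussian factor is bounded above by $e^{-c \eta^2 / \tau_v}$ with $\eta \coloneqq \inf_{\boldsymbol x \in A} \varepsilon_d(\boldsymbol x)$, provided the associated leaf pair is not the exceptional coincident pair allowed in $d = 1$; for the exceptional pair one drops the Gaussian factor and relies instead on the integrability of $t^{-1/2}$ against $e^{-\lambda t}$ near zero. Integrating node by node against $\ftm(\tau)$ reduces to the basic estimate
\begin{equation*}
    \int_0^\infty e^{-\lambda t}\, t^{-\gamma} e^{-c \eta^2/t}\, dt \le C(1 + \eta^{2(1-\gamma)})
\end{equation*}
(with a logarithmic correction at $\gamma = 1$), proved by the substitution $u = c\eta^2/t$ and a standard Gamma-function calculation. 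Iterating over all internal nodes of $F$ yields the claimed bound $C(F)(1 + \eta^{-C(F)})$, establishing~\eqref{eq:NFboundtorus}. Continuity of $N^F_{\boldsymbol \nu}$ then follows by dominated convergence: on any compact neighbourhood $A$ of a fixed $\boldsymbol x_0 \in E^{\lf(F)}_\circ$ with $\inf_A \varepsilon_d \ge \eta_0 > 0$, \eqref{eq:NFboundtorus} provides a finite integrable majorant for $\ftm(\tau)\phi^F(\tau, \boldsymbol x)$, and continuity of $\phi^F$ in $\boldsymbol x$ (\cref{lem:NFinnerintegralcontinuous}) supplies pointwise convergence.

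The main obstacle will be setting up the inductive pointwise bound so that the $\boldsymbol x$-dependence appears only through pair-wise leaf separations amenable to being bounded below by $\eta$, and managing, in $d = 1$, the bookkeeping of which leaf pair is the exceptional one at each step: this assignment must be consistent across all tree topologies and must always place the exceptional pair at a time coordinate where $t^{-1/2}$-integrability rescues the bound. The Gaussian quadratic form appearing after integrating out a $k$-fold merger with $k \ge 3$ couples several leaf separations, and lower-bounding it by a single pair-wise separation that is indeed $\ge \eta$ requires care.
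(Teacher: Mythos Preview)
Your overall strategy---lift to $\R^d$, exploit the Gaussian convolution identity, then integrate in time---matches the paper's, and your handling of continuity via dominated convergence is exactly right. But there is a real gap in the pointwise bound you describe.

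You claim the inductive spatial integration leaves you with Gaussian factors of the form $e^{-c|\boldsymbol x_\alpha - \boldsymbol x_\beta|^2/\tau_v}$ in \emph{leaf pairs} $(\alpha,\beta)$, one per internal node $v$, each of which you then bound by $e^{-c\eta^2/\tau_v}$. This is not what the Gaussian convolution produces. Integrating out $\xi_v$ for an internal node $v$ yields factors $\rfy{p}_{r_u+\tau_v-\tau_u}(\interp_v-\interp_u)$, where $\interp_u$ is a weighted \emph{barycenter} of the leaves below $u$ (this is the content of the paper's explicit formula, \cref{lem:NFaux}). Once $u$ is itself an internal node, $|\interp_v-\interp_u|$ need not be bounded below by $\eta$: two disjoint subtrees can have coincident barycenters even when every pair of leaves is at distance $\ge\eta$. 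So your plan to keep a useful Gaussian factor at \emph{every} internal node cannot work, and your acknowledgement that ``lower-bounding it by a single pair-wise separation \ldots\ requires care'' understates the problem.

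The paper's fix is to retain the Gaussian factor \emph{only} at the first merge event, where all children are leaves so $\interp_u=\boldsymbol x_u$, and to bound every other factor crudely by $C(r_u+\tau_v-\tau_u)^{-d/2}\le C\tau_1^{-d/2}$ (using $r_u+\tau_v-\tau_u\ge c\tau_1$). The time integration is then organised as a backward induction over $\tau_m,\tau_{m-1},\ldots,\tau_2$, accumulating a power $\tau_1^{-b}$; the final integral in $\tau_1$ is exactly your Wright--Malecot estimate $\int_0^\infty e^{-\lambda\tau_1}\tau_1^{-b}e^{-c\eta^2/\tau_1}\,d\tau_1$. For $d=1$ the paper first partitions $A$ into sets $A_{u_0v_0}$ according to which leaf pair is closest; if the first merge in $F$ is precisely the binary merger of $u_0,v_0$, the Gaussian at the first merge is useless and one must extract it at the \emph{second} merge instead (which then necessarily involves a leaf or barycenter genuinely $\ge c\eta$ away). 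Your proposal gestures at this but does not identify the mechanism.
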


The continuity of $N^F_{\boldsymbol \nu}$ follows from \cref{eq:NFboundtorus} and \cref{lem:NFinnerintegralcontinuous} by the dominated convergence theorem. We start by proving a version of~\eqref{eq:NFboundtorus} where the torus is replaced by $\R^d$, where a certain heat kernel trick can be used that isn't available on the torus. Then we transfer the result back to the torus. Write $\rfy{E} = \R^d$, and denote by $\rfy{p}_t \colon \R^d \to (0,\infty)$ for $t > 0$ the heat kernel, defined by \[
    \rfy{p}_t(x) = (2\pi t)^{-d / 2} \exp\left(-\frac{|x|^2}{2t}\right),\qquad x\in \R^d.
\] Define $\rfy{E}^{\fpart}_\circ $ for $\fpart\in \mathcal{P}$ analogously to $E^{\fpart}_\circ $, and for a forest $F\in \mathbb{F}$, denote by $\rfy{\dcs}(F)$ the set of maps $\xi\colon \nd^\circ (F) \to \rfy{E}$ such that $\xi\big\vert_{\fpart} \in \rfy{E}^\fpart_\circ $ for all $\fpart \in F$. For $\xi\in \rfy{\dcs}(F)$, $\tau\in \dct(F)$ and $\boldsymbol x\in \rfy{E}^{\lf(F)}_\circ $, write
\begin{equation*}
    \rfy{\fsp}(\xi\,\vert\, \tau,\boldsymbol x) \coloneqq  \smashop{\prod_{u\in \nd(F)\setminus \rt(F) }} \rfy{p}(\tau_{\pr_F(u)} - \tau_u, (\boldsymbol x \xi)_u - \xi_{\pr_F(u)}).
\end{equation*}
For every $x,y\in E$ there exists a $\vec{k}(x,y) \in \{-1,0\}$ (not necessarily unique) such that $\rho(x,y) = |x-y+\vec{k}(x,y)|$ (on the left is the Euclidean metric on the torus $E$, on the right is the Euclidean norm on $\R^d$). Here and in the following, if we treat an element $x\in E$ as an element of $\R^d$, then we identify it with its unique representative in $[0,1)^d$. Then, for some $C = C(d) > 0$,
\begin{align}\label{eq:app:ptbound}
    p_t(x-y)
    &\le C(1 + \sqrt{t} )^d \rfy{p}_t(x-y+\vec{k}(x,y))\nonumber \\
    &\le C(1+\sqrt{t} )^d \smashop{\sum_{\vec{k}\in \{-1,0,1\}^d}} \rfy{p}_t(x - y + \vec{k}),
\end{align}
where the first inequality follows e.g.\ from \cite{heatkerneltorus}, Theorem 8.

%For every $x\in E$ there exists a $\vec{k}(x) \in \{-1,0\}^d$ (not necessarily unique) such that $\left\|x\right\| = |x +  \vec{k}(x)|$ (on the left is the Euclidean norm on the torus $E$, on the right is the Euclidean norm on $\R^d$). Here and in the following, if we treat an element $x\in E$ as an element of $\R^d$, then we identify it with its unique representative in $[0,)^d$. Then, for some $C = C(d) > 0$,
%\begin{equation*}
%    p_t(x) \le C(1 + \sqrt{t} )^d \rfy{p}_t(x +  \vec{k}(x)) \le C (1 + \sqrt{t} )^d \smashop{\sum_{\vec{k} \in \{-1,0\}^d}} \rfy{p}_t(x +  \vec{k}),
%\end{equation*}
%where the first inequality follows e.g.\ from \cite{heatkerneltorus}, Theorem 8. Similarly if $x,y\in E$, then
%\begin{equation}\label{eq:app:ptbound}
%    p_t(x-y) \le C(1+\sqrt{t} )^d \sum_{\vec{k}\in \{-1,0,1\}^d} \rfy{p}_t(x - y +  \vec{k}).
%\end{equation}

\begin{lemma}
    For every $F = (\fpart^F_0, \ldots ,\fpart^F_m) \in \mathbb{F}$ there is $C = C(d,F) > 0$ such that for every $\tau\in \dct(F)$ and $\boldsymbol x\in E^{\lf(F)}_\circ $,
    \begin{equation}\label{eq:appNF:fsp1}
        \fsp(\xi \,\vert\,\tau,\boldsymbol x) \le C \left(1 + \sqrt{\max \tau} \right)^C \smashop{\sum_{\substack{\vec{\boldsymbol \ell}\colon \nd^\circ (F) \to  \{-m, \ldots ,m\}^{d} \\ \vec{\boldsymbol k} \colon \lf(F) \to \{-m, \ldots ,m\}^d}  }} \rfy{\fsp}(\xi +  \vec{\boldsymbol \ell}  \,\vert\,\tau,\boldsymbol x +  \vec{\boldsymbol k})
        \end{equation}
        for all $\xi \in \dcs(F)$, and
        \begin{equation}\label{eq:appNF:fsp2}
            \smashop{\int_{\dcs(F) }} \fsp(\xi \,\vert\,\tau,\boldsymbol x) \diff \xi \le C (1 + \sqrt{\max \tau})^C \smashoperator[l]{\sum_{\vec{\boldsymbol k} \colon \lf(F) \to \{-m, \ldots ,m\}^d}}\,\, \int\limits_{\rfy{\dcs}(F) } \rfy{\fsp}(\xi \,\vert\,\tau,\boldsymbol x +  \vec{\boldsymbol k}) \diff \xi.
        \end{equation}
\end{lemma}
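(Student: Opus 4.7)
The plan is to establish the pointwise bound \eqref{eq:appNF:fsp1} first by applying \eqref{eq:app:ptbound} factorwise to $\fsp$, and then to obtain the integrated bound \eqref{eq:appNF:fsp2} by a shift-and-tile argument placing copies of $\dcs(F)$ inside $\rfy{\dcs}(F)$. The trivial case $m = 0$ is immediate, so I assume $m \ge 1$ throughout.

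For \eqref{eq:appNF:fsp1}, I apply \eqref{eq:app:ptbound} with $t = \tau_{\pr_F(u)} - \tau_u$ to each of the factors $p(\tau_{\pr_F(u)} - \tau_u,(\boldsymbol x\xi)_u - \xi_{\pr_F(u)})$ of $\fsp(\xi \mid \tau,\boldsymbol x)$, one for every $u \in \nd(F) \setminus \rt(F)$. Since the number of factors depends only on $F$, expanding the product produces a prefactor of the form $C(1+\sqrt{\max \tau})^C$ times a sum over tuples $\vec{\boldsymbol q} = (\vec{q}_u)$ with $\vec{q}_u \in \{-1,0,1\}^d$ of products of shifted Euclidean densities $\rfy{p}(\tau_{\pr_F(u)} - \tau_u, (\boldsymbol x\xi)_u - \xi_{\pr_F(u)} + \vec{q}_u)$. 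The central combinatorial step is to rewrite each such product as a single shifted $\rfy{\fsp}(\xi + \vec{\boldsymbol \ell} \mid \tau, \boldsymbol x + \vec{\boldsymbol k})$. To this end I define $(\vec{\boldsymbol \ell}, \vec{\boldsymbol k})$ recursively from roots downward: set $\vec{\boldsymbol \ell}_v = 0$ at every $v \in \rt(F) \cap \nd^\circ(F)$, and then for each child $u$ of an internal node $v$, set $\vec{\boldsymbol \ell}_u \coloneqq \vec{\boldsymbol \ell}_v + \vec{q}_u$ if $u \in \nd^\circ(F)$ or $\vec{\boldsymbol k}_u \coloneqq \vec{\boldsymbol \ell}_v + \vec{q}_u$ if $u \in \lf(F)$, setting $\vec{\boldsymbol k}_u \coloneqq 0$ on isolated leaf-roots, which contribute no factor. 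A direct expansion of $\rfy{\fsp}(\xi + \vec{\boldsymbol \ell} \mid \tau, \boldsymbol x + \vec{\boldsymbol k})$ then shows that the child and parent shifts cancel telescopically to leave precisely $\vec{q}_u$ in the argument of the $u$-th factor. Since $F$ has depth at most $m$, each $\vec{\boldsymbol \ell}_u$ and $\vec{\boldsymbol k}_u$ is a sum of at most $m$ elements of $\{-1,0,1\}^d$ and thus lies in $\{-m,\ldots,m\}^d$. Bounding the $\vec{\boldsymbol q}$-sum above by the full sum over $(\vec{\boldsymbol \ell}, \vec{\boldsymbol k})$, which dominates because $\vec{\boldsymbol q} \mapsto (\vec{\boldsymbol \ell}, \vec{\boldsymbol k})$ need not be injective and all summands are non-negative, yields \eqref{eq:appNF:fsp1}.

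For \eqref{eq:appNF:fsp2}, I integrate \eqref{eq:appNF:fsp1} over $\xi \in \dcs(F)$ and exchange integration with the finite sums. For fixed $\vec{\boldsymbol k}$, substituting $\eta = \xi + \vec{\boldsymbol \ell}$ in each inner integral gives $\int_{\dcs(F) + \vec{\boldsymbol \ell}} \rfy{\fsp}(\eta \mid \tau, \boldsymbol x + \vec{\boldsymbol k}) \diff \eta$ by translation invariance of Lebesgue measure. Using the canonical identification of $\dcs(F)$ with an open subset of $[0,1)^{d|\nd^\circ(F)|}$, the shifted copies $\{\dcs(F) + \vec{\boldsymbol \ell} : \vec{\boldsymbol \ell} \in \{-m,\ldots,m\}^{d|\nd^\circ(F)|}\}$ are pairwise disjoint, and each is contained in $\rfy{\dcs}(F)$, because integer translation of a point of $\dcs(F)$ preserves the property that coordinates within any $\fpart \in F$ differ by a non-integer amount. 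Since $\rfy{\fsp} \ge 0$, summing the disjoint integrals is bounded above by $\int_{\rfy{\dcs}(F)} \rfy{\fsp}(\eta \mid \tau, \boldsymbol x + \vec{\boldsymbol k}) \diff \eta$, yielding \eqref{eq:appNF:fsp2}.

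The main obstacle I anticipate is the combinatorial bookkeeping in the recursive construction of $(\vec{\boldsymbol \ell}, \vec{\boldsymbol k})$ and the verification of the telescoping identity cleanly across the leaf, internal, and leaf-root cases; once the first inequality is in place, the shift-and-tile argument for the second is essentially immediate.
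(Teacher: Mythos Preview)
Your argument is correct and follows essentially the same approach as the paper. The paper applies \eqref{eq:app:ptbound} level by level from root to leaves, inserting the periodicity identity $p_{\tau_u-\tau_w}(\xi_w-\xi_u)=p_{\tau_u-\tau_w}((\xi_w+\vec{\boldsymbol\ell}_u)-(\xi_u+\vec{\boldsymbol\ell}_u))$ at each step before bounding, whereas you apply the bound to all factors at once and then define the shifts recursively; both lead to the same telescoping recursion $\vec{\boldsymbol\ell}_u=\vec{\boldsymbol\ell}_{\pr(u)}+\vec q_u$ and the same range $\{-m,\ldots,m\}^d$. For \eqref{eq:appNF:fsp2} the paper simply says the claim ``easily follows'' from \eqref{eq:appNF:fsp1}, so your shift-and-tile justification is more explicit than what is written there but entirely in the same spirit.
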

\begin{proof}
    Denote by $C > 0$ a constant depending only on $d$ and $F$ whose value may increase from line to line. Let $v\in \rt(F)$ with $\ch_F(v) \neq \emptyset $, and suppose for simplicity that $\ch_F(u) \neq \emptyset $ for all $u\in \ch_F(v)$. Then, using \cref{eq:app:ptbound},
    \begin{align*}
        \prod_{u\in \ch_F(v)} p_{\tau_v-\tau_u}(\xi_u - \xi_v)
        &\le \prod_{u\in \ch_F(v)} \left( C (1 + \sqrt{\tau_v - \tau_u}  ) ^d \sum_{\vec{k}_u\in \{-1,0,1\}^d} \rfy{p}_{\tau_v-\tau_u}(\xi_u - \xi_v +  \vec{k}_u)\right)\\
        &\le C (1 + \sqrt{\max \tau} )^{C} \prod_{u\in \ch_F(v)} \sum_{\vec{k}_u \in \{-1,0,1\}^d} \rfy{p}_{\tau_v-\tau_u}[(\xi_u +  \vec{k}_u) - \xi_v].
    \end{align*}
    We put $\vec{\boldsymbol \ell}_u \coloneqq \vec{k}_u$ for $u\in \ch_F(v)$. Now let $u\in \ch_F(v)$ and suppose again for simplicity that $\ch_F(w) \neq \emptyset $ for all $w\in \ch_F(u)$. Then
    \begin{align*}
        \prod_{w\in \ch_F(u)}& p_{\tau_u - \tau_w}(\xi_w - \xi_u)\\
        &= \prod_{w\in \ch_F(u)} p_{\tau_u - \tau_w}[\xi_w +  \vec{\boldsymbol \ell}_u - (\xi_u +  \vec{\boldsymbol \ell}_u)]\\
        &\le C (1 + \sqrt{\max \tau} )^C \smashoperator[l]{\prod_{w\in \ch_F(u)}} \smashoperator[r]{\sum_{\vec{k}_w \in \{-1,0,1\}^d}} \rfy{p}_{\tau_u - \tau_w} [(\xi_w +  \vec{k}_w +  \vec{\boldsymbol \ell}_u) - (\xi_u +  \vec{\boldsymbol \ell}_u)].
    \end{align*}
    We put $\vec{\boldsymbol \ell}_w \coloneqq \vec{k}_w + \vec{\boldsymbol \ell}_u$ for $w\in \ch_F(u)$. Proceeding inductively in this way down towards the leaves yields \cref{eq:appNF:fsp1}, from which the second claim easily follows (note that the integral on the left-hand side is w.r.t.\ Lebesgue measure on the torus, and on the right-hand side w.r.t.\ Lebesgue measure on all of $\R^d$).

    %details for the second claim: we first apply the bound inside the integral (which is still wrt Lebesgue measure on torus) so the integral turns into an integral against lebesgue measure on a torus exactly shifted by vec{l}, which can be upper bounded by an integral against the full lebesgue measure, making l vanish. So we get the same for every l, and C absorbs the combinatorial number (2m+1)^|nd(F)|.
    %the constant C absorbs the combinatorial constant from the sum
\end{proof}

This reduction lets us make use of the following statement. Define $\rfy{\varepsilon_d}(\boldsymbol x)$ for $\boldsymbol x\in \rfy{E}^\fpart$ for some $\fpart\in \mathcal{P}$ analogously to $\varepsilon_d$, by replacing $\rho(\cdot ,\cdot )$ in \cref{eq:epsd} with the Euclidean distance $|\cdot-\cdot  |$ on $\R^d$.

\begin{lemma}\label{lem:NFRd}
    Let $F \in \mathbb{F}$ and $K > 0$. Then there exists $C = C(F,\boldsymbol \lambda,d, K) > 0$ such that, for any $A \subset \rfy{E}^{\lf(F)}_\circ $,
    \begin{equation*}%\label{eq:NFboundRd}
        \smashoperator{\int_{\dct(F)}} \ftm(\tau) (1 + \sqrt{\max \tau} )^K \sup_{\boldsymbol x\in A} \bigg( \smashoperator[r]{\int_{\rfy{\dcs}(F)}} \rfy{\fsp}(\xi \,\vert\,\tau,\boldsymbol x) \diff \xi \bigg) \diff \tau
        \le C \bigg[ 1 + \left( \inf_{\boldsymbol x\in A} \rfy{\varepsilon_d}(\boldsymbol x) \right) ^{-C} \bigg] .
    \end{equation*}
\end{lemma}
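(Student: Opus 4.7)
The plan is to compute the spatial integral $\int_{\rfy{\dcs}(F)} \rfy{\fsp}(\xi \,\vert\, \tau, \boldsymbol x)\, d\xi$ explicitly as a product of heat kernels on $\R^d$, take the supremum over $A$ using Gaussian tail estimates, and then integrate against $\ftm$ using its built-in exponential decay. First, since $\rfy{\fsp}$ and $\dct(F)$ factor over the connected components (trees) of $F$, and $\rfy{\dcs}(F) \subset (\rfy{E})^{\nd^\circ(F)}$ is a full-measure subset (the non-coincidence constraints excise only a null set), and since $\rfy{\varepsilon}_d$ restricted to each tree's leaves is at least the global $\rfy{\varepsilon}_d(\boldsymbol x)$, the claim reduces (up to a combinatorial factor for the total-ordering of times across trees) to the case of a single tree. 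For such a tree, by induction on $|\nd^\circ(F)|$, integrating out one internal node at a time (starting from a cherry's parent) via the Gaussian identity
\[
    \rfy{p}_s(a-z)\,\rfy{p}_t(b-z) \;=\; \rfy{p}_{s+t}(a-b) \cdot \rfy{p}_{st/(s+t)}\!\bigl(z - \tfrac{sb+ta}{s+t}\bigr),
\]
followed by integration in $z$, yields
\[
    \int_{(\rfy{E})^{\nd^\circ(F)}} \rfy{\fsp}(\xi\,\vert\,\tau,\boldsymbol x)\,d\xi \;=\; \prod_{w \in \nd^\circ(F)} \rfy{p}_{T_w(\tau)}\!\bigl(\Delta_w(\boldsymbol x)\bigr),
\]
where $T_w(\tau) > 0$ is a positive linear combination of the time coordinates, $\Delta_w(\boldsymbol x)$ is an affine combination of leaf positions, and both are determined by $F$. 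Simultaneous $k$-fold mergers are handled by an analogous $k$-fold Gaussian identity.

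Second, to bound the supremum I apply $\rfy{p}_t(x) \leq (2\pi t)^{-d/2} \e^{-|x|^2/(2t)}$ for factors whose argument is bounded below, and $\rfy{p}_t(x) \leq (2\pi t)^{-d/2}$ for the rest. Crucially, each cherry of $F$ produces a factor $\rfy{p}_{2\tau_w}(\boldsymbol x_u - \boldsymbol x_{u'})$ whose argument is a leaf-difference, which has norm at least $\rfy{\varepsilon}_d(\boldsymbol x) \geq \inf_A \rfy{\varepsilon}_d =: \varepsilon$ for $\boldsymbol x \in A$ (in $d \geq 2$ all pair distances exceed $\varepsilon$; in $d = 1$ all but at most one do, and if the exceptional pair is the only cherry we use a factor corresponding to a deeper ancestor, whose $\Delta_w$ is then bounded below by $\varepsilon/2$ via a triangle-inequality argument). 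This gives
\[
    \sup_{\boldsymbol x \in A} \int \rfy{\fsp}\,d\xi \;\leq\; C_F\, \e^{-c_F\varepsilon^2/\tau_*} \prod_{w \in \nd^\circ(F)} T_w(\tau)^{-d/2},
\]
where $\tau_*$ is the time of the chosen factor.

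Third, I integrate this bound against $\ftm(\tau)(1+\sqrt{\max\tau})^K\,d\tau$. Since $\ftm(\tau) = 0$ unless $\lambda_{\rt(F)} = 0$, we may assume all non-root $\fpart \in F$ satisfy $\lambda_\fpart > 0$, so each edge of $F$ contributes a genuine exponential $\e^{-\lambda_\fpart(\tau_{\fpart'}-\tau_\fpart)}$. This exponential decay dominates $(1+\sqrt{\max\tau})^K$ and $\prod_w T_w^{-d/2}$ for large $\tau_i$, yielding integrability at infinity. For small $\tau_i$, the exponential $\e^{-c_F\varepsilon^2/\tau_*}$ controls the polynomial blow-up $\prod_w T_w^{-d/2}$; the substitution $u = c_F \varepsilon^2/\tau_*$ reduces the critical integral to $\int_0^\infty u^{d/2-2}\e^{-u}\,du$-type Gamma-function expressions (with logarithmic adjustments in $d=2$), yielding a bound of the form $C\varepsilon^{2-d}$ (or $C(1+|\log\varepsilon|)$ if $d=2$, or $C$ if $d=1$). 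Combining everything gives the stated bound $C[1 + \varepsilon^{-C}]$.

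The main obstacle is the bookkeeping in Step~2: identifying the precise $T_w(\tau)$ and $\Delta_w(\boldsymbol x)$ that arise from the inductive elimination of internal nodes, in particular for simultaneous multi-mergers. A second, more delicate point is the choice of which heat-kernel factor in Step~3 provides the exponential decay in $\varepsilon$; in $d=1$ the standard choice (a cherry) can fail, and one must argue via a deeper-level factor whose argument inherits a lower bound from a pair of leaves at distance $\geq \varepsilon$. Neither difficulty is conceptually deep, but both require a careful case analysis tied to the combinatorial structure of $F$.
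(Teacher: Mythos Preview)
Your approach is essentially the paper's: evaluate the spatial integral explicitly via Gaussian convolution identities, retain one heat-kernel factor whose argument is bounded below by $\rfy{\varepsilon}_d$, bound the remaining factors by powers of time, and integrate in time. Two points, however, are not properly handled.

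First, your formula $\int\rfy{\fsp}\,d\xi=\prod_{w\in\nd^\circ(F)}\rfy{p}_{T_w(\tau)}(\Delta_w(\boldsymbol x))$ is only correct for purely binary forests; a $k$-fold merger at a single node $v$ contributes $k$ heat-kernel factors and a normalisation $\rfy{p}_{r_v}(0)^{-1}$, not a single $\rfy{p}_{T_v}$. The paper's representation (its Lemma preceding this proof) has the form
\[
\int\rfy{\fsp}\,d\xi=\prod_{v\in\nd^\circ(F)}\frac{1}{\rfy{p}_{r_v}(0)}\prod_{u\in\ch_F(v)}\rfy{p}_{r_u+\tau_v-\tau_u}(\interp_v-\interp_u),
\]
with $r_v,\interp_v$ defined recursively. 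This matters because the number and structure of the time-singular factors feed directly into Step~3.

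Second, and more seriously, your Step~3 claims that the single factor $\e^{-c_F\varepsilon^2/\tau_*}$ controls the blow-up $\prod_w T_w(\tau)^{-d/2}$ at small times. But $\tau_*$ is one coordinate and the $T_w$'s depend on all of $\tau_1,\ldots,\tau_m$; a priori some $T_w$ could be small while $\tau_*$ is not. The paper resolves this via the crucial lower bound $r_u+\tau_v-\tau_u\ge c\tau_1$ for \emph{every} edge (its Lemma part~(ii)), which allows every singular factor to be absorbed into a power of $\tau_1$ alone. Only then does the integral reduce to a single one-dimensional integral of the form $\int_0^\infty s^{-b}(1+s)^C\e^{-\lambda s}\e^{-cD^2/s}\,ds$, which is the Gamma-type expression you invoke. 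Without stating and using this uniform lower bound, your ``substitution $u=c_F\varepsilon^2/\tau_*$'' step does not go through. Relatedly, your asserted final bound $C\varepsilon^{2-d}$ (or $C(1+|\log\varepsilon|)$) is wrong for general $F$: the exponent in $\varepsilon^{-C}$ depends on the number of internal nodes of $F$, not just on $d$.

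Your treatment of the $d=1$ case is in the right spirit (shift to a deeper factor if the closest pair coincides with the first binary merger), but the paper needs a separate induction with $\tau_2$ in place of $\tau_1$ there, plus a geometric lemma to lower-bound the relevant $|\Delta_w|$; ``a triangle-inequality argument'' does not quite capture what is required.
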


The proof of \cref{lem:NFRd} hinges on an explicit representation of $\int \rfy{\fsp}(\xi \,\vert\,\tau,\boldsymbol x) \diff \xi$ based on the following heat kernel trick.
\begin{lemma}\label{lem:heatkerneltrick}
    Let $x_1, \ldots ,x_n \in \R^d$ and $s_1, \ldots ,s_n > 0$. Then, \[
        \prod_{i=1}^n \rfy{p}_{s_i}(x_i-z) = \frac{\rfy{p}_s(z-\overline{x})}{\rfy{p}_s(0)} \prod_{i=1}^n \rfy{p}_{s_i}(x_i - \interp),
    \] where $\frac{1}{s} = \sum_{i=1}^n \frac{1}{s_i}$ and $\interp = \sum_{i=1}^n \frac{s}{s_i}x_i$.
\end{lemma}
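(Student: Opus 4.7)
The plan is to reduce this to a direct computation of Gaussian exponents, since both sides are products of explicit densities $\rfy{p}_t(y) = (2\pi t)^{-d/2}\exp(-|y|^2/(2t))$. I would first check that the normalizing prefactors agree: the left-hand side contributes $\prod_i (2\pi s_i)^{-d/2}$, while on the right $\rfy{p}_s(z-\interp)/\rfy{p}_s(0) = \exp(-|z-\interp|^2/(2s))$ carries no prefactor, so the remaining $\prod_i \rfy{p}_{s_i}(x_i-\interp)$ supplies the same $\prod_i (2\pi s_i)^{-d/2}$. This reduces the identity to an equality of quadratic forms in the exponents.

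Next I would expand the left-hand exponent
\[
 -\sum_{i=1}^n \frac{|x_i-z|^2}{2s_i} = -\frac{|z|^2}{2}\sum_i \frac{1}{s_i} + z\cdot \sum_i \frac{x_i}{s_i} -\sum_i \frac{|x_i|^2}{2s_i}
\]
and complete the square in $z$. The coefficient of $|z|^2/2$ is $1/s$ by the definition of $s$, and the linear term is $z\cdot \interp/s$ by the definition of $\interp = s\sum_i x_i/s_i$, so completing the square gives
\[
 -\frac{|z-\interp|^2}{2s} + \frac{|\interp|^2}{2s} - \sum_i \frac{|x_i|^2}{2s_i}.
\]
The first term matches the exponent of $\rfy{p}_s(z-\interp)/\rfy{p}_s(0)$ on the right.

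It then remains to identify the $z$-independent remainder with $-\sum_i |x_i-\interp|^2/(2s_i)$, which is the exponent of $\prod_i \rfy{p}_{s_i}(x_i-\interp)$. This is a ``parallel axis'' identity: expanding $|x_i-\interp|^2 = |x_i|^2 - 2x_i\cdot \interp + |\interp|^2$ and using $\sum_i x_i/s_i = \interp/s$ together with $\sum_i 1/s_i = 1/s$ yields
\[
 \sum_i \frac{|x_i-\interp|^2}{2s_i} = \sum_i \frac{|x_i|^2}{2s_i} - \frac{|\interp|^2}{s} + \frac{|\interp|^2}{2s} = \sum_i \frac{|x_i|^2}{2s_i} - \frac{|\interp|^2}{2s},
\]
which is exactly the needed cancellation. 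There is no real obstacle here; the only thing to be careful about is bookkeeping the definitions of $s$ and $\interp$ so that the coefficients of $|z|^2$ and $z$ in the completed square match precisely.
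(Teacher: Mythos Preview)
Your proof is correct and follows essentially the same approach as the paper: both reduce to comparing the Gaussian exponents, complete the square in $z$ using the definitions of $s$ and $\interp$, and then verify the $z$-independent remainder via the expansion of $\sum_i |x_i-\interp|^2/s_i$. Your explicit check that the normalizing prefactors agree is a small clarification the paper leaves implicit.
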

\begin{proof}
    By definition of the heat kernel,
    \begin{align}\label{eqprf:NFaux:1}
        \prod_{i=1}^n \rfy{p}_{s_i}(x_i-z)
        &= \left(\prod_{i=1}^n (2\pi s_i)^{-d / 2}\right) \exp \left( -\frac{1}{2} \sum_{i=1}^n \frac{(x_i-z)^2}{s_i} \right) ,
    \end{align}
    and
    \begin{align}\begin{split}\label{eqprf:NFaux:2}
        \sum_{i=1}^n \frac{(x_i-z)^2}{s_i}
        = \frac{z^2}{s} -2 \frac{\interp z}{s}+ \sum_{i=1}^n \frac{x_i^2}{s_i}
        &= \frac{1}{s}(z-\interp)^2 -\frac{1}{s}\interp^2 + \sum_{i=1}^n \frac{x_i^2}{s_i}\\
        &= \frac{1}{s}(z-\interp)^2 + \sum_{i=1}^n \frac{(x_i-\interp)^2}{s_i},
    \end{split}\end{align}
    where in the last step we used that
    \begin{align*}
        \sum_{i=1}^n \frac{(x_i-\interp)^2}{s_i} = \sum_{i=1}^n \frac{x_i^2}{s_i} - 2 \interp \sum_{i=1}^n \frac{x_i}{s_i} + \frac{1}{s} \interp^2 = \sum_{i=1}^n \frac{x_i^2}{s_i} - \frac{1}{s}\interp^2.
    \end{align*}
    Plugging \cref{eqprf:NFaux:2} back into \cref{eqprf:NFaux:1} finishes the proof.
\end{proof}

\begin{lemma}\label{lem:NFaux}
    For a non-trivial forest $F\in \mathbb{F}$ and $\boldsymbol x \in \rfy{E}^{\lf(F)}_\circ $, $\tau \in \dct(F)$,
    \begin{align}\label{eq:NFaux}
        \int_{\rfy{\dcs}(F)} \rfy{\fsp}(\xi \,\vert\,\tau,\boldsymbol x) \diff \xi = \prod_{v\in \nd^\circ (F)} \frac{1}{\rfy{p}_{r_v}(0)} \prod_{u\in \ch_F(v)} \rfy{p}_{r_u + \tau_v - \tau_u} (\interp_v -\interp_u),
    \end{align}
    where $\interp_u = \boldsymbol x_u$ and $r_u = 0$ for $u\in \lf(F)$ and, inductively for $v\in \nd^\circ (F)$, \[
        \frac{1}{r_v} \coloneqq \sum_{u\in \ch_F(v)} \frac{1}{r_u+\tau_v - \tau_u},\qquad \interp_v \coloneqq \sum_{u\in \ch_F(v)} \frac{r_v}{r_u+\tau_v - \tau_u} \interp_u.
    \] The following hold.
    \begin{enumerate}
        \item If $v\in \nd^\circ (F)$ with $\ch(v) \subset \lf(F)$, then \[
                r_v = \frac{\tau_v}{|\ch_F(v)|},\qquad \interp_v = \frac{1}{|\ch_F(v)|} \sum_{u\in \ch_F(v)} \boldsymbol x_u.
        \]
        \item There is $c = c(F) > 0$ such that $c \tau_v \le r_v < \tau_v$ for all $v\in \nd^\circ (F)$.
        \item If $v\in \nd^\circ (F)$ then \[
                \max_{u\in \ch(v)} |\interp_u-\interp_v| \ge \frac{1}{2}\max_{u,u'\in \ch(v)}|\interp_u-\interp_{u'}|.
        \]
    \end{enumerate}
\end{lemma}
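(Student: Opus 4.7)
The plan is to establish the integration formula by induction on the internal nodes of $F$, processing them from the leaves upward, with Lemma~\ref{lem:heatkerneltrick} as the workhorse. The key algebraic observation is that a product of heat kernels $\prod_i \rfy{p}_{s_i}(y_i - z)$, viewed as a function of the common variable $z$, factors as $\rfy{p}_s(z - \interp)/\rfy{p}_s(0)$ times a constant, with $s^{-1} = \sum_i s_i^{-1}$ and $\interp = \sum_i (s/s_i) y_i$. This is exactly the recursion defining $r_v$ and $\interp_v$, so the recursion is what arises each time one ``absorbs'' the children of a node into a single effective kernel.

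Writing $T(v)$ for the subtree of $F$ rooted at $v$, I will prove by induction on the depth of $v$ that, after integrating out $\xi_w$ for all interior strict descendants $w$ of $v$, the integrand (as a function of $\xi_v$) equals
\begin{equation*}
\rfy{p}_{r_v}(\xi_v - \interp_v) \prod_{w \in T(v) \setminus \lf(F)} \frac{1}{\rfy{p}_{r_w}(0)} \prod_{u \in \ch_F(w)} \rfy{p}_{r_u + \tau_w - \tau_u}(\interp_u - \interp_w).
\end{equation*}
In the base case all children of $v$ are leaves, the integrand is $\prod_u \rfy{p}_{\tau_v - \tau_u}(\boldsymbol x_u - \xi_v)$, and the identity follows from a single invocation of Lemma~\ref{lem:heatkerneltrick} with $s_i = \tau_v - \tau_{u_i}$ and $x_i = \boldsymbol x_{u_i}$; this also establishes~(i). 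For the inductive step, each interior child $u$ of $v$ contributes, after processing its subtree, a factor $\rfy{p}_{r_u}(\xi_u - \interp_u)$ times a constant. Convolving against the edge factor $\rfy{p}_{\tau_v - \tau_u}(\xi_u - \xi_v)$ and integrating out $\xi_u$ produces, by the semigroup property of heat kernels, $\rfy{p}_{r_u + \tau_v - \tau_u}(\interp_u - \xi_v)$; each leaf child $u$ already contributes the same form under the convention $r_u = 0$, $\interp_u = \boldsymbol x_u$. A second application of Lemma~\ref{lem:heatkerneltrick}, now with $s_i = r_{u_i} + \tau_v - \tau_{u_i}$ and $x_i = \interp_{u_i}$, consolidates these child-factors around $\xi_v$ into $\rfy{p}_{r_v}(\xi_v - \interp_v)/\rfy{p}_{r_v}(0)$ multiplied by $\prod_u \rfy{p}_{r_u + \tau_v - \tau_u}(\interp_u - \interp_v)$, with $r_v, \interp_v$ exactly as in the lemma's recursion. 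At each root $v \in \rt(F)$ a final integration $\int \rfy{p}_{r_v}(\xi_v - \interp_v)\, d\xi_v = 1$ eliminates the surviving kernel, and taking the product over roots yields~(\ref{eq:NFaux}).

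For the remaining parts, the upper bound in~(ii) is immediate from the induction: $r_u < \tau_u$ forces $r_u + \tau_v - \tau_u < \tau_v$, so $1/r_v > |\ch_F(v)|/\tau_v \geq 2/\tau_v$, whence $r_v < \tau_v/2$. For the lower bound I will sharpen the claim to $r_v \geq \tau_v / n_v$ with $n_v = |\lf(T(v))|$, and prove it by the same induction; the inductive step reduces to showing $r_u + \tau_v - \tau_u \geq \tau_v / n_u$ for each child $u$, which is trivial for a leaf and for interior $u$ follows from $r_u \geq \tau_u / n_u$ combined with the equivalence $\tau_v - \tau_u(1 - 1/n_u) \geq \tau_v/n_u \Leftrightarrow \tau_v \geq \tau_u$. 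Taking $c = 1/|\lf(F)|$ then works. Property~(iii) is just the triangle inequality $|\interp_u - \interp_{u'}| \leq |\interp_u - \interp_v| + |\interp_v - \interp_{u'}| \leq 2 \max_{w \in \ch_F(v)} |\interp_w - \interp_v|$.

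The main obstacle is the bookkeeping in the induction: each internal node contributes two invocations of Lemma~\ref{lem:heatkerneltrick}, one downward (the heat-kernel convolution after integrating a child's $\xi_u$) and one upward (consolidating the child-factors into a single kernel in $\xi_v$), and one must verify that these produce exactly the product indexed by $w \in \nd^\circ(F)$ and $u \in \ch_F(w)$, with all $\rfy{p}_{r_w}(0)$ normalisations appearing and cancelling correctly. The conceptual content is small, but the combinatorial tracking is where the care lies.
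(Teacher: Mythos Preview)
Your proof is correct and follows essentially the same approach as the paper: an induction from leaves to root using Lemma~\ref{lem:heatkerneltrick} for the main identity~\cref{eq:NFaux}, with the semigroup property handling the convolution at each edge, and the triangle inequality for~(iii). Your argument for the lower bound in~(ii) is in fact slightly sharper than the paper's: where the paper lets the constant degrade by a factor of $|\ch_F(v)|$ at each level (yielding an implicit $c(F)$), you carry the explicit invariant $r_v \ge \tau_v / |\lf(T(v))|$, which gives the clean constant $c = 1/|\lf(F)|$ directly.
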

\begin{proof}
    \Cref{eq:NFaux} follows by applying \cref{lem:heatkerneltrick} inductively from leaves to root, which is tedious but straightforward.

    \begin{enumerate}
        \item is obvious from the definitions.
        \item If $v$ is not a leaf and $r_u \le \tau_u$ holds for all children, then
            \begin{align*}
                \frac{1}{r_v} \ge \sum_{u\in \ch_F(v)} \frac{1}{\tau_u + \tau_v - \tau_u} = \frac{|\ch_F(v)|}{\tau_v} \ge \frac{2}{\tau_v} > \frac{1}{\tau_v}.
            \end{align*}
            If $v$ is not a leaf and $r_u \ge c \tau_u$ holds for all children, then
            \begin{align*}
                \frac{1}{r_v} \le \sum_{u\in \ch_F(v)} \frac{1}{c \tau_u + \tau_v - \tau_u} \le \sum_{u\in \ch_F(v)} \frac{1}{c(\tau_u + \tau_v - \tau_u)} = \frac{|\ch_F(v)|}{c} \frac{1}{\tau_v}.
            \end{align*}
        \item If $v\in \nd^\circ (F)$ and $u_1,u_2\in \ch(v)$, then \[
                |\interp_{u_1}-\interp_{u_2}| \le |\interp_{u_1} - \interp_v| + |\interp_v - \interp_{u_2}| \le 2 \max_{u\in \ch(v)}|\interp_u-\interp_v|.
        \]
    \end{enumerate}
\end{proof}

\begin{proof}[Proof of \cref{lem:NFRd}]
    Fix a non-trivial forest $F=(\fpart^F_0, \ldots ,\fpart^F_m)\in \mathbb{F}$, and abbreviate $\ch = \ch_F$, $\pr = \pr_F$ and, for $\tau\in \dct(F)$ and $i\in [m]$, $\tau_i = \tau_{\fpart^F_i}$. Throughout the proof, $c,C > 0$ are constants that only depend on $F$, $d$, $\boldsymbol \lambda$, and $K$, and whose value may respectively decrease and increase from line to line. First let $d \ge 2$, then for $\boldsymbol x \in E^{\lf(F)}_\circ $ and $\tau\in \dct(F)$, using \cref{eq:NFaux},
    \begin{align}\label{eqprf:NFlem:4}
        \int \rfy{\fsp}(\xi \,\vert\,\tau,\boldsymbol x) \diff \xi
        %&\le C \prod_{v\in \nd^\circ (F)} r_v^{d / 2} \prod_{u\in \ch(v)} \rfy{p}_{r_u+\tau_v-\tau_u}(\interp_v-\interp_u)\\
         &\le C \prod_{i=1}^m \underbrace{\smashoperator[r]{\prod_{v\in \fpart^F_i\setminus \fpart^F_{i-1}}}\, r_v^{d / 2} \prod_{u\in \ch(v)} \rfy{p}_{r_u+\tau_v-\tau_u}(\interp_v-\interp_u)}_{\eqqcolon G_i(\tau,\boldsymbol x)}.
    \end{align}
    Denote $G^{(i)}(\tau,\boldsymbol x) \coloneqq \prod_{j=i+1}^m G_j(\tau,\boldsymbol x)$, and ${\ftm}^{(i)}(\tau) \coloneqq \prod_{j=i+1}^{m} \lambda_{\fpart^F_{j-1},\fpart^F_j} \e^{-\lambda_{\fpart^F_j}(\tau_j-\tau_{j-1})}$ for $i\in \left\{ 0, \ldots ,m \right\} $, so ${\ftm}^{(0)}= \ftm$ and ${\ftm}^{(m)} \equiv 1$.

    We show inductively that for $i\in \left\{ 1, \ldots ,m \right\} $ there exists $b_i \ge 0$ such that
    \begin{align}\label{eqprf:NFlem:1}
        \int {\ftm}^{(i)}(\tau) (1+\sqrt{\tau_m} )^K\sup_{\boldsymbol x\in A}G^{(i)}(\tau,\boldsymbol x) \diff \tau_{i+1} \ldots \diff \tau_m \le C \tau_1^{-b_i} (1 + \tau_i)^C.
    \end{align}
    For $i = m$ the LHS is equal to $(1+\sqrt{\tau_m} )^K \le 2^K(1 + \tau_m)^K$ and we can choose $b_m = 0$. Suppose we have proved the claim for some fixed $i\in \left\{ 2, \ldots ,m \right\} $, then by induction hypothesis
    \begin{multline*}
        \int {\ftm}^{(i-1)}(\tau) (1+\sqrt{\tau_m} )^K \sup_{\boldsymbol x\in A} G^{(i-1)}(\tau,\boldsymbol x) \diff \tau_i \ldots \diff \tau_m\\
        \le C \tau_1^{-b_i} \int\limits_{\tau_{i-1}}^{\infty} \diff \tau_i\, (1 + \tau_i)^{C}\e^{-\lambda_i(\tau_i - \tau_{i-1})}\sup_{\boldsymbol x\in A}\,\smashoperator[r]{\prod_{v\in \fpart^F_i\setminus \fpart^F_{i-1}}}\, r_v^{d / 2}\prod_{u\in \ch(v)}\rfy{p}_{r_u+\tau_v-\tau_u}(\interp_v-\interp_u),
    \end{multline*}
    where we abbreviated $\lambda_i \coloneqq \lambda_{\fpart^F_i}$. We bound $r_v \le \tau_v = \tau_i$, and $\rfy{p}_{r_u+\tau_v-\tau_u}(\interp_v-\interp_u) \le C (r_u+\tau_v-\tau_u)^{-d / 2}$, and by \cref{lem:NFaux},
    \begin{equation}\label{eqprf:NFlem:3}
        r_u + \tau_v - \tau_u \ge c\tau_u + \tau_v-\tau_u \ge c\tau_v \ge c\tau_1.
    \end{equation}
    Let $l$ be the total number of children of nodes $v\in \fpart^F_i \setminus \fpart^F_{i-1}$. Then our bound is no larger than $\tau_1^{-b_i}$ times
    \begin{align*}
        C \int\limits_{\tau_{i-1}}^{\infty}& \diff \tau_i \, (1 +\tau_i)^{C}\e^{-\lambda(\tau_i-\tau_{i-1})} \tau_i^{|\fpart^F_i\setminus \fpart^F_{i-1}|d / 2} (c\tau_1)^{-ld / 2}\\
                                           &\le C \tau_1^{-ld / 2} \int_0^\infty \e^{-\lambda_i s}(1+\tau_{i-1} + s)^C \diff s\\
                                           &\le C \tau_1^{-ld / 2} (1 + \tau_{i-1})^C,
    \end{align*}
    which completes the inductive step. Using \cref{eqprf:NFlem:1} with $i = 1$, recalling that $r_u = \tau_u = \tau_1$ for $u\in \lf(F)$, and putting $s \coloneqq \tau_1, b\coloneqq b_1, \lambda\coloneqq \lambda_1$, \cref{eqprf:NFlem:4} becomes
    \begin{align*}
        \int \ftm(\tau) & (1+\sqrt{\tau_m} )^K \sup_{\boldsymbol x\in A} \int \rfy{\fsp}(\xi \,\vert\,\tau,\boldsymbol x)\diff \xi\diff \tau\\
                        &\le C \int_0^\infty  s^{-b} (1+ s)^C \e^{-\lambda s} \sup_{\boldsymbol x\in A}\,\smashoperator[r]{\prod_{v\in \fpart^F_1\setminus \fpart^F_{0}}}\,  s^{d / 2}\prod_{u\in\ch(v)}\rfy{p}_{r_u+ s_v- s_u}(\interp_v-\interp_u)\diff s\\
                        &\le C \int_0^\infty  s^{-b} (1+ s)^C \e^{-\lambda s} \sup_{\boldsymbol x\in A}\prod_{v\in \fpart^F_1\setminus \fpart^F_{0}}\prod_{u\in\ch(v)}\rfy{p}_s(\interp_v-\interp_u)\diff s.
    \end{align*}
    Fix any $v_0\in \fpart^F_1\setminus \fpart^F_0$ and pick, for fixed $\boldsymbol x\in A$, $u_0\in \ch(v)$ for which $|\interp_{v_0}-\boldsymbol x_{u_0}|$ is largest, so that $|\boldsymbol x_u - \boldsymbol x_{u'}| \le 2 |\boldsymbol x_{u_0}-\interp_{v_0}|$ for all $u,u'\in \ch(v)$, in particular $2|\boldsymbol x_{u_0}-\interp_{v_0}| \ge \inf_{\boldsymbol x\in A} \min_{u\neq v}|\boldsymbol x_u-\boldsymbol x_v| \eqqcolon D$. Bound $\rfy{p}_s(\interp_v - \interp_u) \le C s^{-d / 2}$ if $(u,v) \neq (u_0,v_0)$, so we can further bound by
    \begin{align}\label{eqprf:NFlem:2}
        %N_F(\boldsymbol x)
        C \int_0^\infty  s^{-b'} (1+ s)^C \e^{-\lambda  s} \e^{-c D^2 /  s } \diff  s \le C \left( 1 + D^{-2b'} \right) ,
    \end{align}
    for some $b' > b$, using \cref{lem:NFaux:1}.

    Now let $d = 1$. The approach above fails when the first transition $\fpart^F_0\to\fpart^F_1$ is a binary merger of two identical particles, say $u$ and $u'$ with parent $v = u\cup u'$, in which case $\interp_v = \interp_u = \interp_{u'} = \boldsymbol x_u$, so $\rfy{p}_{\tau_1}(\interp_v-\interp_u) = \rfy{p}_{\tau_1}(\interp_v-\interp_{u'}) = (2\pi \tau_1)^{-d / 2}$, and the final integral (of the form \cref{eqprf:NFlem:2} but without the factor $\e^{-cD^2 / s}$) diverges. We split $A$ into a finite number of sets according to which pair of leaves are closest to each other (and potentially identical): \[
        A_{uv} \coloneqq \Big\{ \boldsymbol x\in A\colon |\boldsymbol x_u-\boldsymbol x_v| = \,\smashoperator[l]{\min_{\substack{u',v'\in \lf(F)\\ \text{distinct}}}}|\boldsymbol x_{u'}-\boldsymbol x_{v'}| \Big\},
    \] for distinct $u,v\in\lf(F)$. There might be overlap due to ties, but it will always be true that in $A_{uv}$ all particle locations are distinct except possibly $\boldsymbol x_u = \boldsymbol x_v$, and \[
    \forall \boldsymbol x\in A_{uv}\colon \rfy{\varepsilon_1}(\boldsymbol x) = \,\, \smashoperator[l]{\min_{\substack{u'\neq v'\in \lf(F)\\ \{u',v'\} \neq \{u,v\}}}} |\boldsymbol x_{u'}-\boldsymbol x_{v'}|.
\] Since we can bound $\sup_{\boldsymbol x\in A} \le \sum_{u \neq v} \sup_{\boldsymbol x \in A_{uv}}$, it suffices to restrict our attention to $A_{u_0v_0}$ for fixed but arbitrary, distinct $u_0,v_0\in \lf(F)$. With a slight abuse of notation we denote $A = A_{u_0v_0}$ from now. If the first merge event is not a pure binary merge of $u_0$ and $v_0$, that is $\fpart^F_1\setminus \fpart^F_0 \neq \left\{ u_0\cup v_0 \right\} $, then we can copy the proof of the $d\ge 2$ case, and in the final step we choose $v\in \fpart^F_1\setminus \fpart^F_0$ with $v\neq u_0\cup v_0$, so there are $u,u'\in \ch(v)$ with $\left\{ u,u' \right\} \neq \left\{ u_0,v_0 \right\} $ and thus $|\boldsymbol x_u-\boldsymbol x_{u'}| \ge \rfy{\varepsilon_1}(\boldsymbol x)$ for all $\boldsymbol x\in A$. Now suppose the first event is a binary merge of $u_0$ and $v_0$, that is $\fpart^F_1\setminus \fpart^F_0$ is a singleton set containing their parent $w_0\coloneqq u_0\cup v_0$, so
    \begin{align}\label{eqprf:NFlem:5}
    G_1(\tau,\boldsymbol x)
    %&= \prod_{v\in \fpart^F_1\setminus \fpart^F_0} r_v^{d / 2} \prod_{u\in \ch(v)} \rfy{p}_{\tau_1}(\interp_u - \interp_v) \\
     &= (\tau_1 / 2)^{1 / 2} \rfy{p}_{\tau_1}(\boldsymbol x_{u_0} - \interp_{w_0}) \rfy{p}_{\tau_1}(\boldsymbol x_{v_0}-\interp_{w_0})
    \le C \tau_1^{-1 / 2}
    \end{align}
    for all $\boldsymbol x\in A$. Then as before, we can prove inductively that for $i\in \left\{ 2, \ldots ,m \right\} $ there is $b_i \ge 0$ with
    \begin{equation}\label{eqprf:NFlem:6}
        \int {\ftm}^{(i)}(\tau) (1+\sqrt{\tau_m} )^K \sup_{\boldsymbol x \in A} G^{(i)}(\tau,\boldsymbol x) \diff \tau_{i+1}\ldots \diff \tau_m \le C \tau_{2}^{-b_i} (1 + \tau_i)^C,
    \end{equation}
    where we only need to replace the lower bound $c\tau_1$ by $c\tau_2$ in \cref{eqprf:NFlem:3}. Let $s\coloneqq \tau_1$ and $r\coloneqq \tau_2-\tau_1$, then plugging \cref{eqprf:NFlem:5,eqprf:NFlem:6} into \cref{eqprf:NFlem:4} gives
    \begin{align}\begin{split}\label{eqprf:NFlem:7}
        \int \ftm(\tau) & \sup_{\boldsymbol x\in A} \int \rfy{\fsp}(\xi \,\vert\,\tau,\boldsymbol x) \diff \xi \diff \tau\\
        %&\le C \int_0^\infty \diff s \int_0^\infty \diff r (s+r)^{-b}(1+s+r)^C \e^{-\lambda_1 s}\e^{-\lambda_2 r} \sup_{\boldsymbol x\in A} G_1(\tau,\boldsymbol x) G_2(\tau,\boldsymbol x)\\
        &\le C \int_0^\infty \diff s\, \e^{-\lambda_1 s}s^{-1 / 2} \int_0^\infty \diff r\, (s+r)^{-b_2}(1+s+r)^C \e^{-\lambda_2 r} \sup_{\boldsymbol x\in A} G_2(\tau,\boldsymbol x).
    \end{split}\end{align}
    If the second merge involves at least two leaves, say $w\in \fpart^F_2\setminus \fpart^F_1$ with $u,v\in \ch(w)\cap \lf(F)$, then necessarily $\left\{ u,v \right\} \cap \left\{ u_0,v_0 \right\} = \emptyset $, so $\max_{u' \in \ch(w)}|\boldsymbol x_{u'}-\interp_{w}| \ge \frac{1}{2}|\boldsymbol x_u-\boldsymbol x_v| \ge \rfy{\varepsilon_1}(\boldsymbol x)$ for all $\boldsymbol x\in A$ and $\tau$, so we can bound \[
        \sup_{\boldsymbol x\in A} G_2(\tau,\boldsymbol x) \le C (1+\tau_2)^C \tau_2^{-b_1} \e^{-cD^2 / \tau_2}
    \] for some $b_1 \ge 0$, where $D = \inf_{\boldsymbol x\in A}\rfy{\varepsilon_1}(\boldsymbol x)$. With $b\coloneqq b_1+b_2$ we can further bound \cref{eqprf:NFlem:7} by
    \begin{align}\begin{split}\label{eqprf:NFlem:8}
        C \int_0^\infty & \diff s\, \e^{-\lambda_1 s} s^{-1 / 2} \int_0^\infty \diff r\, \underbrace{(s+r)^{-b} \e^{-cD^2 / (s+r)}}_{\le C D^{-2b} \, \forall s,r > 0} (1+s+r)^C \e^{-\lambda_2 r}\\[3pt]
                        &\le C D^{-2b} \int_0^\infty \diff s\, \e^{-\lambda_1 s}s^{-1 / 2} \int_0^\infty \diff r\, (1+s+r)^C \e^{-\lambda_2 r}\\
                        &\le C D^{-2b} \int_0^\infty \e^{-\lambda_1 s}s^{-1 / 2} (1+s)^C \diff s\\
                        &\le C D^{-2b}.
    \end{split}\end{align}
    If the second merge involves at most one additional leaf, then it must be a binary merge of $w_0$ and a leaf $u \not\in \left\{ u_0,v_0 \right\} $ with parent $w_1 = u \cup w_0$. Then $|\boldsymbol x_u - \interp_{w_1}| \ge \frac{1}{2} |\boldsymbol x_u - \interp_{w_0}| \ge c |\boldsymbol x_u - \boldsymbol x_{u_0}| \ge c \varepsilon_1(\boldsymbol x)$ for all $\boldsymbol x\in A$ and $\tau$, where we used \cref{lem:NFaux:2} in the second step. Then we can bound
    \begin{align*}
        G_2(\tau,\boldsymbol x) = \underbrace{r_{w_1}^{1 / 2}}_{\le \tau_2^{1 / 2}} \underbrace{\rfy{p}_{\tau_2 - \tau_1 / 2} (\interp_{w_0}-\interp_{w_1})}_{\le C \tau_2^{-1 / 2}}\underbrace{\rfy{p}_{\tau_2}(\boldsymbol x_u - \interp_{w_1})}_{\le C\tau_2^{-1 / 2}\e^{-cD^2 / \tau_2}}
        %&\le C \rfy{p}_{\tau_2}(\boldsymbol x_u-\interp_{w_1})
                                \le C \tau_2^{-1 / 2} \e^{-c D^2 / \tau_2},
    \end{align*}
    and finish the proof as in \cref{eqprf:NFlem:8}.

    %Continuity now follows: If $\boldsymbol x_n \to \boldsymbol x$ in $E^{\lf(F)}_\circ $, then $\int \rfy{\fsp}(\xi \,\vert\,\tau,\boldsymbol x_n)\diff \xi \to \int \rfy{\fsp}(\xi \,\vert\,\tau,\boldsymbol x) \diff \xi$ as $n\to \infty$ for every $\tau\in \dct(F)$ because the RHS of \cref{eq:NFaux} is continuous. Further $\varepsilon_d(\boldsymbol x_n) \to \varepsilon_d(\boldsymbol x) > 0$ and $\varepsilon_d(\boldsymbol x_n) > 0$ for all $n$, so $\inf_{n\in \N} \varepsilon_d(\boldsymbol x_n) > 0$ and thus \[
    %    \int \ftm(\tau) \sup_{n\in \N} \left( \int \rfy{\fsp}(\xi \,\vert\,\tau,\boldsymbol x_n) \diff \xi \right) \diff \tau < \infty,
    %\] which implies $N_F(\boldsymbol x_n) \to N_F(\boldsymbol x)$ by dominated convergence.
\end{proof}

The following lemma finishes the proof of \cref{lem:app:NF} together with \cref{eq:appNF:fsp2,lem:NFRd}.

\begin{lemma}
    For every $\fpart\in \mathcal{P}$, $\boldsymbol x\in E^\fpart_\circ $, and $\vec{\boldsymbol k}\colon \fpart \to \Z^d$, \[
        \rfy{\varepsilon_d}(\boldsymbol x +  \vec{\boldsymbol k}) \ge \varepsilon_d(\boldsymbol x).
    \]
\end{lemma}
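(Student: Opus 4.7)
The plan is to reduce the statement to a single pointwise inequality about pairs of particles and then observe that it propagates through the definitions of $\varepsilon_d$ and $\rfy{\varepsilon_d}$ in both the $d\ge 2$ and the $d=1$ case.

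First I would record the elementary fact that, when we identify $x,y\in E$ with their representatives in $[0,1)^d$, for any $\vec{m}\in\Z^d$
\[
    |x - y + \vec{m}| \;\ge\; \inf_{\vec{n}\in\Z^d}|x-y+\vec{n}| \;=\; \rho(x,y),
\]
by the very definition of $\rho$ in \cref{eqdef:rho}. Applying this to any pair $u\neq v\in\fpart$ with $\vec{m} = \vec{\boldsymbol k}_u - \vec{\boldsymbol k}_v \in \Z^d$ yields the pointwise bound
\[
    \bigl|(\boldsymbol x_u + \vec{\boldsymbol k}_u) - (\boldsymbol x_v + \vec{\boldsymbol k}_v)\bigr| \;\ge\; \rho(\boldsymbol x_u,\boldsymbol x_v).
\]

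In the case $d\ge 2$, both $\varepsilon_d(\boldsymbol x)$ and $\rfy{\varepsilon_d}(\boldsymbol x+\vec{\boldsymbol k})$ are minima over the same index set $\{(u,v): u\neq v\in\fpart\}$, with each summand on the $\rfy{\varepsilon}$ side bounded below by the corresponding summand on the $\varepsilon$ side. Taking the minimum preserves the inequality.

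In the case $d = 1$, I would use the same monotonicity one level deeper: $\varepsilon_1$ and $\rfy{\varepsilon_1}$ are both defined as $\max_{u_0,v_0}\min_{\{u,v\}\neq\{u_0,v_0\}}(\cdot)$ over the same index sets (recall \cref{eq:epsd}), and the pointwise inequality above propagates first through the inner $\min$ and then through the outer $\max$. There is no real obstacle — the statement is essentially a direct consequence of the definition of $\rho$, and the only thing one needs to be a little careful about is keeping the index sets in the $d=1$ case aligned when shifting from $\boldsymbol x$ to $\boldsymbol x+\vec{\boldsymbol k}$.
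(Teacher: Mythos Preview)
Your proposal is correct and follows essentially the same approach as the paper: establish the pairwise inequality $|(\boldsymbol x_u+\vec{\boldsymbol k}_u)-(\boldsymbol x_v+\vec{\boldsymbol k}_v)|\ge\rho(\boldsymbol x_u,\boldsymbol x_v)$ via the definition of $\rho$, then note that it propagates through the min (and, for $d=1$, the max--min) in the definitions of $\varepsilon_d$ and $\rfy{\varepsilon_d}$. The paper's proof is simply terser, summarising your case split as ``follows directly from the definitions''.
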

\begin{proof}
    If $u,v\in \fpart$, then
    \begin{align*}
        | \boldsymbol x_u +  \vec{\boldsymbol k}_u - (\boldsymbol x_v +  \vec{\boldsymbol k}_v)| \ge \inf \{|\boldsymbol x_u - \boldsymbol x_v +  \vec{\ell}| \colon :\vec{\ell}\in \Z^d\} = \rho(\boldsymbol x_u,\boldsymbol x_v).
    \end{align*}
    With that, the claim follows directly from the definitions of $\varepsilon_d$ and $\rfy{\varepsilon_d}$.
\end{proof}

We finish by proving some technical lemmas used in the proof of \cref{lem:NFRd}.

\begin{lemma}\label{lem:NFaux:1}
    For $a,b,\lambda,y> 0$,
    \begin{align*}
        \int_0^\infty \e^{-\lambda s - y^2 / s} (1+s)^a s^{-b}\diff s \le C \left( 1 + y^{-2b} \right) ,
    \end{align*}
    where $C = C(a,b,\lambda) > 0$.
\end{lemma}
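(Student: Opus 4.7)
The plan is to split the integral at $s=1$ and handle the two regions separately. On the tail $[1,\infty)$, exponential decay dominates: the elementary bound $(1+s)^a s^{-b}\le C(a,b)\e^{\lambda s/2}$ for $s\ge 1$ yields $\int_1^\infty \e^{-\lambda s - y^2/s}(1+s)^a s^{-b}\diff s \le C\int_1^\infty \e^{-\lambda s/2}\diff s \le C(a,b,\lambda)$, and this contribution is already absorbed into the first term of the claimed upper bound.

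For the near-zero region $s\in (0,1]$, the factors $(1+s)^a\le 2^a$ and $\e^{-\lambda s}\le 1$ are harmless and it suffices to show
\[
J(y) := \int_0^1 \e^{-y^2/s}\, s^{-b}\diff s \le C(b)\bigl(1 + y^{-2b}\bigr).
\]
When $y\ge 1$ one has $\e^{-y^2/s}\le \e^{-1/s}$, and $\int_0^1 \e^{-1/s}s^{-b}\diff s$ is a finite constant, so $J(y)\le C(b)$. When $y<1$, the substitution $u = y^2/s$ turns the integral into
\[
J(y) = y^{2-2b}\int_{y^2}^\infty \e^{-u}\, u^{b-2}\diff u.
\]

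The remaining analysis is a three-case split on $b$, which is the only step requiring any care. If $b>1$, the integral is bounded by $\Gamma(b-1)$ uniformly in $y$, so $J(y)\le C\, y^{2-2b}\le C\, y^{-2b}$ (using $y\le 1$). If $b=1$, the integral behaves like $\log(1/y^2)$ for small $y$, giving $J(y)\le C\log(1/y)\le C(1 + y^{-2})$. If $b<1$, one discards the substitution and simply uses $\e^{-y^2/s}\le 1$ together with integrability of $s^{-b}$ on $(0,1)$ to get $J(y)\le \int_0^1 s^{-b}\diff s = 1/(1-b)$. In all three subcases $J(y)\le C(b)(1 + y^{-2b})$, and combining with the tail estimate yields the claim.

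There is no genuine obstacle: the lemma is a routine real-analysis estimate whose only mild subtlety is that the qualitative behavior of the integrand near $s=0$ depends on whether $b$ exceeds, equals, or falls below $1$, forcing the case-split above.
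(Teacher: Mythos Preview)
Your proof is correct and follows the same overall strategy as the paper: split at $s=1$, dispose of the tail via exponential decay, and reduce the $(0,1]$ piece to bounding $J(y)=\int_0^1 \e^{-y^2/s}s^{-b}\diff s$.

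The only difference is in how $J(y)$ is handled. You substitute $u=y^2/s$ and then case-split on whether $b>1$, $b=1$, or $b<1$. The paper avoids this entirely by the one-line scaling observation
\[
\sup_{s>0}\bigl(s^{-b}\e^{-y^2/s}\bigr)=y^{-2b}\sup_{t>0}\bigl(t^{-b}\e^{-1/t}\bigr)=C(b)\,y^{-2b},
\]
which immediately gives $J(y)\le C(b)\,y^{-2b}$ for \emph{all} $b>0$ and all $y>0$, with no further analysis. Your argument is perfectly valid but does more work than necessary; the paper's sup bound is the cleaner way to extract the $y^{-2b}$ factor.
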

\begin{proof}
    First note that \[
        \sup_{s > 0}\left( s^{-b} \e^{-y^2 / s}\right) = y^{-2b} \sup_{s > 0}\left( s^{-b} \e^{-1 / s} \right) = C y^{-2b},
    \] so
    \begin{align*}
        \int_0^\infty \e^{-\lambda s - y^2 / s} (1+s)^a s^{-b} \diff s
        &\le 2^a \int_0^1 s^{-b}\e^{-y^2 / s} \diff s + \int_1^\infty \e^{-\lambda s} (1+s)^a \diff s \le C y^{-2b} + C.
    \end{align*}
\end{proof}

\begin{lemma}\label{lem:NFaux:2}
    If $x_1, \ldots ,x_n\in \R^d$ and $|x_1-x_2| \le |x_i-x_j|$ for all distinct $i,j\in [n]$, then \[
        |x_i - \frac{x_1+x_2}{2}| \ge \frac{3}{\sqrt{2} } |x_i - x_1|
    \] for all $i\in \left\{ 3, \ldots ,n \right\} $.
\end{lemma}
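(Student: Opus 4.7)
The plan is to reduce the claim to a short two-variable optimisation. Write $r = |x_1 - x_2|$ and set $a = x_i - x_1$, $b = x_i - x_2$, so that $a - b = x_2 - x_1$ and in particular $|a - b| = r$. Applying the hypothesis with $j \in \{1, 2\}$ (and using $i \geq 3$) yields $|a|, |b| \geq r$.

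By the parallelogram identity,
\begin{equation*}
    \left| x_i - \frac{x_1 + x_2}{2}\right|^2 = \left|\frac{a + b}{2}\right|^2 = \frac{2|a|^2 + 2|b|^2 - |a - b|^2}{4} = \frac{2|a|^2 + 2|b|^2 - r^2}{4},
\end{equation*}
so the claim reduces to lower-bounding $(2|a|^2 + 2|b|^2 - r^2)/|a|^2$ over the feasible set $\{|a|, |b| \geq r,\ \bigl||a| - |b|\bigr| \leq r\}$. The last constraint (triangle inequality) ensures that vectors $a, b \in \R^d$ with the prescribed norms and satisfying $|a - b| = r$ genuinely exist.

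For fixed $|a|$ the expression is increasing in $|b|$, hence minimised at $|b| = \max(r, |a| - r)$. A brief case split on whether $r \leq |a| \leq 2r$ or $|a| \geq 2r$ then produces the desired constant in each regime; the extremal configuration is $|a| = 2r$ with $|b| = r$, which geometrically corresponds to $x_2$ lying at the midpoint of $x_1$ and $x_i$. I do not anticipate any real obstacle beyond checking feasibility of the extremiser, so the hard part is really just setting up the optimisation correctly and carrying out the case analysis consistently.
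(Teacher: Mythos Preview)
Your reduction via the parallelogram identity is clean and correct, and the extremal configuration you identify, $|a|=2r$ with $|b|=r$ (collinear points, $x_2$ the midpoint of $x_1$ and $x_i$), is right. But plug it in: there
\[
\Bigl|x_i-\tfrac{x_1+x_2}{2}\Bigr|^2=\frac{2|a|^2+2|b|^2-r^2}{4}=\frac{8r^2+2r^2-r^2}{4}=\frac{9r^2}{4},\qquad |a|^2=4r^2,
\]
so $|x_i-\bar x|/|x_i-x_1|=3/4$, not $3/\sqrt 2$. The stated constant $3/\sqrt 2\approx 2.12$ is simply false: already the equilateral configuration $|a|=|b|=r$ gives ratio $\sqrt 3/2<1$. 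Hence your case split cannot ``produce the desired constant''; carried through correctly it yields the sharp constant $3/4$.

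The paper's own proof is likewise defective. It argues by minimising $|x_i-\bar x|$ over the feasible region and comparing to $a=|x_1-x_2|$ rather than to $|x_i-x_1|$; moreover, at the claimed minimiser (the equilateral triangle) Pythagoras gives $|x_i-\bar x|=\tfrac{\sqrt3}{2}\,a$, not $\tfrac{3}{\sqrt2}\,a$. So the paper proves a different inequality with a miscomputed constant, and neither version yields $3/\sqrt 2$ times $|x_i-x_1|$.

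None of this matters for the application: the only place the lemma is used just needs $|x_i-\bar x|\ge c\,|x_i-x_1|$ for \emph{some} $c>0$. Your optimisation gives the sharp $c=3/4$; an even shorter argument is the triangle inequality
\[
|x_i-\bar x|\;\ge\;|x_i-x_1|-\tfrac12|x_1-x_2|\;\ge\;\tfrac12\,|x_i-x_1|.
\]
So your method is sound and in fact more informative than the paper's; just correct the constant in the conclusion.
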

\begin{proof}
    Put $a \coloneqq |x_1-x_2|$ and $\overline{x} \coloneqq (x_1+x_2) / 2$, and let $i\in \left\{ 3, \ldots ,n \right\} $. Then $x_i \not\in B(x_1,a) \cup B(x_2,a)$, which means that $|x_i - \overline{x}|$ is minimised if $x_i \in B(x_1,a) \cap B(x_2,a)$, in which case $|x_i - x_1| = |x_i - x_2| = |x_1-x_2| = a$, so by Pythagoras $|x_i - \overline{x}| = \frac{3}{\sqrt{2} } a$.
\end{proof}

\section{Minor Technical Lemmas}
This section contains some minor technical lemmas used in the proofs of the main paper.

\begin{lemma}\label{lem:hmeasurable}
    Let $A$ be a measurable space and $B$ a Borel space, $m$ a probability measure on $B$ with no atoms, and $f \colon A \times B \to (0,\infty)$ measurable with $\int_B f(a,b) m(\diff b) = 1$ for every $a\in A$, so that $m_a(\diff b) \coloneqq f(a,b) m(\diff b)$ defines a probability measure on $B$ for every $a\in A$. Then there exists a measurable function $h\colon A \times B \to B$ such that \[
        \forall a\in A\colon h(a,\cdot )\# m = m_a.
    \] 
\end{lemma}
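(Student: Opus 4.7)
The plan is to reduce to the canonical case $B = [0,1]$ with $m$ equal to Lebesgue measure, where an explicit construction via the (conditional) inverse distribution function will work. Since $B$ is a standard Borel space and $m$ is a non-atomic probability measure, the isomorphism theorem for measures on standard Borel spaces yields a bimeasurable bijection $\varphi \colon B \to [0,1]$ with $\varphi\# m = \mathrm{Leb}$. Setting $\widetilde f(a,t) \coloneqq f(a,\varphi^{-1}(t))$, any $\widetilde h$ solving the problem for $([0,1],\mathrm{Leb},\widetilde f)$ gives $h(a,b) \coloneqq \varphi^{-1}(\widetilde h(a,\varphi(b)))$ for the original problem, so it suffices to treat the case $B = [0,1]$, $m = \mathrm{Leb}$.

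In this reduced case, I would define the cumulative distribution function
\[
    F(a,b) \coloneqq \int_0^b f(a,t)\,\mathrm{d}t, \qquad (a,b) \in A \times [0,1].
\]
For each fixed $a$, $F(a,\cdot)$ is continuous, strictly increasing, and maps $[0,1]$ onto $[0,1]$, because $f(a,\cdot)$ is strictly positive and integrates to $1$. Define
\[
    h(a,b) \coloneqq \inf\{c \in [0,1] \colon F(a,c) \ge b\},
\]
which for every $a$ is the classical inverse of $F(a,\cdot)$ and thus the inverse distribution function of the probability measure $m_a$. Standard one-dimensional inverse-transform arguments then give $h(a,\cdot)\#\mathrm{Leb} = m_a$ for every $a$.

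The main obstacle is joint measurability of $h$. For this I would first note that $F$ is jointly measurable on $A \times [0,1]$: the integrand $(a,t,b)\mapsto \mathbf{1}_{\{t\le b\}}f(a,t)$ is non-negative and measurable, so Tonelli's theorem ensures $F$ is measurable in $(a,b)$. Because $F(a,\cdot)$ is continuous and strictly increasing for each $a$,
\[
    \{(a,b) \colon h(a,b) \le c\} = \{(a,b) \colon F(a,c) \ge b\}
\]
for every $c \in [0,1]$, and the right-hand side is measurable by joint measurability of $F$. Hence $\{h \le c\}$ is measurable for all $c$, so $h$ is jointly measurable. Transporting back through $\varphi$, which is bimeasurable, then yields the claimed measurable map in the general setting.
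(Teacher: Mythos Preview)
Your proof is correct and follows essentially the same approach as the paper's: reduce to $B=[0,1]$ via a Borel isomorphism, then construct $h$ as the inverse distribution function, with joint measurability obtained from Tonelli and the identity $\{h\le c\}=\{F(a,c)\ge b\}$. The only minor difference is that you invoke the measure isomorphism theorem to send $m$ directly to Lebesgue measure, whereas the paper uses a plain Borel isomorphism (so $m$ becomes an arbitrary non-atomic measure on $[0,1]$) and then composes with the extra map $G(y)=m([0,y])$ to pass from $m$ to Lebesgue; your version simply absorbs this step into the isomorphism.
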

\begin{proof}
    Let $\varphi\colon B \to [0,1]$ be a Borel isomorphism, \[
        f'\colon A\times [0,1] \to (0,\infty);\, (a,x) \mapsto f(a,\varphi^{-1}(x)),
    \] and $m' \coloneqq \varphi \# m$. Then $\int_{[0,1]} f'(a,x) m'(\diff x) = \int_B f(a,b) m(\diff b) = 1$ for every $a\in A$, so $m_a'(\diff x) \coloneqq f'(a,x) m'(\diff x)$ is a probability measure on $[0,1]$, and \[
        \int_{[0,1]} g(x) m_a'(\diff x) = \int_{[0,1]} g(x) f(a,\varphi^{-1}(x)) m'(\diff x) = \int_B g(\varphi(b)) f(a,b) m(\diff x),
    \] so $m_a' = \varphi \# m_a$. Now if there exists a measurable $h'\colon A \times [0,1] \to [0,1]$ with $h'(a,\cdot ) \# m' = m_a'$ for every $a\in A$, then $h\colon A\times B \to B$ defined by $(a,b) \mapsto \varphi^{-1}(h'(a,\varphi(b)))$ satisfies 
    \begin{align*}
        \int_B g(b) (h(a,\cdot )\# m)(\diff b)
        %= \int_B g(h(a,b)) m(\diff b)
        = \int_B g(\varphi^{-1}(h'(a,\varphi(b)))) m(\diff b)
        &= \int_{[0,1]} g(\varphi^{-1}(h'(a,x))) m'(\diff x)\\
        %&= \int_{[0,1]} g(x) (h'(a,\cdot )\# m')(\diff x)\\
        &= \int_{[0,1]} g(\varphi^{-1}(x)) m_a'(\diff x)\\
        &= \int_B g(b) m_a(\diff b),
    \end{align*}
    so $h(a,\cdot )\# m = m_a$. This shows that it suffices to show the statement if $B = [0,1]$.

    Define $F\colon A \times [0,1] \to [0,1]$ by $F(a,y) = \int_{0}^y f(a,x) m(\diff x)$, which is measurable by Tonelli's theorem. Then $F^{-1}(a,u) \coloneqq \inf \{ y \in [0,1]\colon F(a,y) \ge u\}$ defines a function $A\times [0,1] \to [0,1]$ which is measurable because $F$ is c\`adl\`ag. Furthermore, if $\lambda$ denotes Lebesgue measure on $[0,1]$, then $F^{-1}(a,\cdot ) \# \lambda = m_a$. Let $G\colon [0,1]\to [0,1]$ be defined by $G(y) = \int_0^y m(\diff x)$, so that $G \# m = \lambda$ because $m$ has no atoms. Then $h(a,x) \coloneqq F^{-1}(a,G(x))$ is a measurable map $A\times [0,1]\to [0,1]$, and \[
        h(a,\cdot ) \# m = (F^{-1}(a,\cdot ) \circ G) \# m = F^{-1}(a,\cdot ) \# (G\# m) = F^{-1}(a,\cdot ) \# \lambda = m_a.
    \] 
\end{proof}

\begin{lemma}\label{lem:glem}
    Suppose $h\colon (\Q \cap [0,\infty)) \times A \times A \to \R$ is a function for a set $A$, and that there is a family of sets $(A_x\subset A)_{x\in A}$ such that $A_x \cap A_{x'} \neq \emptyset $ for all $x,x'\in A$, and for every $s,t,\in \Q \cap [0,\infty)$, $x,y\in A$, $z\in A_x$, \[
        g(t+s,x,y) = g(s,x,z) + g(t,z,y).
    \] Suppose further that $g(0,x,y) = b(x) - b(y)$ for some function $b\colon A \to \R$. Then there is $\la \in \R$ such that for all $t \in \Q \cap [0,\infty)$, $x\in A$ and $z\in A_x$, \[
    g(t,x,z) = \la t + b(x)-b(z).
    \]
\end{lemma}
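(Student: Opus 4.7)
The plan is to reduce the functional equation to ordinary Cauchy's equation on $\Q\cap[0,\infty)$ after stripping off the $b$-term.

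First I would introduce the tilted function
\[
    \tilde g(t,x,y) \coloneqq g(t,x,y) - b(x) + b(y),
\]
and observe two things: the hypothesis $g(0,x,y)=b(x)-b(y)$ becomes $\tilde g(0,x,y) = 0$; and a quick substitution shows that $\tilde g$ satisfies the same additive identity as $g$, namely $\tilde g(t+s,x,y) = \tilde g(s,x,z) + \tilde g(t,z,y)$ for all $s,t\in\Q\cap[0,\infty)$, $x,y\in A$, $z\in A_x$. The point is that the $b$-contributions on the right-hand side telescope.

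Next I would show that $\tilde g(t,x,y)$ does not depend on $x$ or $y$. Setting $s=0$ in the additive identity and using $\tilde g(0,x,z)=0$ gives $\tilde g(t,x,y) = \tilde g(t,z,y)$ for every $z\in A_x$. Now comes the one place where the intersection hypothesis enters: for any $x,x'\in A$ one can pick $z\in A_x\cap A_{x'}$, so
\[
    \tilde g(t,x,y) = \tilde g(t,z,y) = \tilde g(t,x',y),
\]
and $\tilde g(t,\cdot,y)$ is constant on $A$. Symmetrically, setting $t=0$ in the additive identity yields $\tilde g(s,x,y)=\tilde g(s,x,z)$ for every $z\in A_x$; letting $y$ vary for fixed $x$ (and using that $A_x\ne\emptyset$, which follows from $A_x\cap A_x\ne\emptyset$) shows $\tilde g(s,x,\cdot)$ is constant on $A$. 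Together these give a well-defined function $\tilde g(t)$ on $\Q\cap[0,\infty)$.

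Finally, the additive identity reduces to $\tilde g(t+s) = \tilde g(t)+\tilde g(s)$ on $\Q\cap[0,\infty)$, with $\tilde g(0)=0$. A standard induction then gives $\tilde g(t) = \lambda t$ for $\lambda\coloneqq \tilde g(1)\in\R$, which unwinding the definition of $\tilde g$ yields $g(t,x,z) = \lambda t + b(x) - b(z)$ for all $t\in\Q\cap[0,\infty)$, $x\in A$, $z\in A_x$. There is no real obstacle in this argument; the only subtle step is the application of the intersection property $A_x\cap A_{x'}\ne\emptyset$ to patch $\tilde g$ into an $x$-independent function.
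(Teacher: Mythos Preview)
Your proof is correct. Both your argument and the paper's reduce to Cauchy's equation on $\Q\cap[0,\infty)$, but they arrive there in different orders. The paper works directly with $g$: it first fixes $x$ and shows $g(\cdot,z,z)$ is additive, hence $g(t,z,z)=\beta(z)t$ for each $z\in A_x$; then it derives $g(t,x,z)=\beta(z)t+b(x)-b(z)$; and only afterwards argues in two stages that $\beta(z)$ is constant on each $A_x$ (via $g(t,x,x)=\beta(z)t$) and then globally (via the intersection hypothesis). Your normalisation $\tilde g=g-b(x)+b(y)$ is a genuine simplification: it collapses both stages of the constancy argument into the single observation that $\tilde g$ depends on neither $x$ nor $y$, after which additivity is immediate. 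Your version also yields the slightly stronger conclusion $g(t,x,y)=\beta t+b(x)-b(y)$ for \emph{all} $y\in A$, not just $y\in A_x$. The paper's approach has the minor advantage of never needing to check that the additive identity is preserved under the tilt, but that check is a one-line telescoping.
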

\begin{proof}
    Fix $x\in A$. Then for $t,s\in \Q \cap [0,\infty)$ and $z\in A_x$ we have $g(t+s,x,z) = g(s,x,z) + g(t,z,z)$, and thus \[
        g(t,z,z) + g(s,z,z) = \Big[ g(t,x,z) - g(0,x,z)\Big] + \Big[ g(t+s,x,z) - g(t,x,z)\Big] = g(t+s,z,z),
    \] for any $t,s,\in \Q \cap [0,\infty)$ and $z\in A_x$. This implies that there exists $\la(z) \in \R$ such that $g(t,z,z) = \la(z) t$ for $t\in \Q \cap [0,\infty)$. Then for all $z\in A_x$ and $t \in \Q \cap [0,\infty)$,
    \begin{align}\label{eqprf:glem:1}
        g(t,x,z) = g(0,x,z) + g(t,z,z) = b(x)-b(z) + \la(z) t.
    \end{align}
    We show that $\la(z) = \la(z')$ for all $z,z'\in \bigcup_{x\in A} A_x$. By assumption and \cref{eqprf:glem:1}, for every $z\in A_x $ we have $g(t,x,x) = g(t,x,z) + g(0,z,x) = \la(z) t$, in particular $\la(z) = \la(z') \eqqcolon \la(x)$ for all $z,z'\in A_x$.

    Finally, if $x,x'\in A$ then there is $z\in A_x\cap A_{x'}$, so $\la(x) = \la(z) = \la(x')$, so in fact there is one $\la \in \R$ such that $\la(z) = \la$ for all $z\in \bigcup_{x\in A} A_x$.
\end{proof}

\bibliographystyle{plain}
\bibliography{references}

@article{spatiallambda,
    author = {Vlada Limic and Anja Sturm},
    title = {The spatial {$\Lambda$}-coalescent},
    volume = {11},
    journal = {Electronic Journal of Probability},
    number = {none},
    publisher = {Institute of Mathematical Statistics and Bernoulli Society},
    pages = {363 -- 393},
    keywords = {$la$-coalescent, Coalescent, Coalescing random walks, limit theorems, structured coalescent},
    year = {2006},
    doi = {10.1214/EJP.v11-319},
    URL = {https://doi.org/10.1214/EJP.v11-319}
}

@book{rogers,
  title={Diffusions, {M}arkov processes, and martingales},
  author={Rogers, L Chris G and Williams, David},
  volume={2},
  year={2000},
  publisher={Cambridge University Press}
}

@article{pitmanlambda,
    author = {Jim Pitman},
    title = {Coalescents With Multiple Collisions},
    volume = {27},
    journal = {The Annals of Probability},
    number = {4},
    publisher = {Institute of Mathematical Statistics},
    pages = {1870 -- 1902},
    keywords = {coagulation,fragmentation, Ewens sampling formula, Exchangeable random partition, random discrete distribution, ranked frequencies, stable subordinator, Time reversal, two-parameter Poisson –Dirichlet},
    year = {1999},
    doi = {10.1214/aop/1022874819},
    URL = {https://doi.org/10.1214/aop/1022874819}
}

@book{schweinsbergxi,
  title={{Coalescents with simultaneous multiple collisions}},
  author={Schweinsberg, Jason R},
  year={2001},
  publisher={University of California, Berkeley}
}

@article{haploid,
    author = {Martin M{\"o}hle and Serik Sagitov},
    title = {A Classification of Coalescent Processes for Haploid Exchangeable Population Models},
    volume = {29},
    journal = {The Annals of Probability},
    number = {4},
    publisher = {Institute of Mathematical Statistics},
    pages = {1547 -- 1562},
    keywords = {ancestral processes, Coalescent, exchangeability, generator, neutrality, Population genetics, weak convergence},
    year = {2001},
    doi = {10.1214/aop/1015345761},
    URL = {https://doi.org/10.1214/aop/1015345761}
}

@article{sagitovlambda,
    ISSN = {00219002},
    URL = {http://www.jstor.org/stable/3215582},
    author = {Serik Sagitov},
    journal = {Journal of Applied Probability},
    number = {4},
    pages = {1116--1125},
    publisher = {Applied Probability Trust},
    title = {The General Coalescent with Asynchronous Mergers of Ancestral Lines},
    urldate = {2023-06-16},
    volume = {36},
    year = {1999}
}

@inproceedings{evans,
    title={Coalescing {M}arkov labelled partitions and a continuous sites genetics model with infinitely many types},
    author={Evans, Steven N},
    booktitle={Annales de l'Institut Henri Poincare (B) Probability and Statistics},
    volume={33},
    number={3},
    pages={339--358},
    year={1997},
    organization={Elsevier}
}

@article{BL2003,
  title={Stochastic flows associated to coalescent processes},
  author={Bertoin, Jean and Gall, Jean-Fran{\c{c}}cois Le},
  journal={Probability Theory and Related Fields},
  volume={126},
  number={2},
  pages={261--288},
  year={2003},
  publisher={Springer}
}

@article{E2008,
  title={Drift, draft and structure: some mathematical models of evolution},
  author={Etheridge, Alison M},
  journal={Banach center publications},
  volume={1},
  number={80},
  pages={121--144},
  year={2008}
}

@article{EBV2010,
  title={A new model for evolution in a spatial continuum},
  author={Barton, Nick H and Etheridge, Alison M and V{\'e}ber, Amandine},
  journal={Electronic Journal of Probability},
  volume={15},
  year={2010}
}

@article{EBV2013,
  title={Modelling evolution in a spatial continuum},
  author={Barton, Nick H and Etheridge, Alison M and V{\'e}ber, Amandine},
  journal={Journal of Statistical Mechanics: Theory and Experiment},
  volume={2013},
  number={01},
  pages={P01002},
  year={2013},
  publisher={IOP Publishing}
}

@article{F2015,
  title={The segregated {$\Lambda$}-coalescent},
  author={Freeman, Nic},
  journal={The Annals of Probability},
  volume={43},
  number={2},
  pages={435--467},
  year={2015},
  publisher={Institute of Mathematical Statistics}
}

@article{DK96,
author = {Peter Donnelly and Thomas G Kurtz},
title = {{A countable representation of the Fleming-Viot measure-valued diffusion}},
volume = {24},
journal = {The Annals of Probability},
number = {2},
publisher = {Institute of Mathematical Statistics},
pages = {698 -- 742},
keywords = {coupling, ergodicity, exchangeability, Fleming-Viot process, genealogical processes, measure-valued diffusion, sample-path properties, the coalescent},
year = {1996},
doi = {10.1214/aop/1039639359}
}

@article{DK99,
  title={Particle representations for measure-valued population models},
  author={Donnelly, Peter and Kurtz, Thomas G},
  journal={The Annals of Probability},
  volume={27},
  number={1},
  pages={166--205},
  year={1999},
  publisher={Institute of Mathematical Statistics}
}

@phdthesis{smith2021,
  title={Probability models of population structure and gene flow},
  author={Smith, Aaron Stephen Andrew},
  year={2021},
  school={University of Oxford}
}

@article{multitypelambda,
    author = {Samuel G. G. Johnston and Andreas Kyprianou and Tim Rogers},
    title = {Multitype {L}ambda-coalescents},
    volume = {33},
    journal = {The Annals of Applied Probability},
    number = {6A},
    publisher = {Institute of Mathematical Statistics},
    pages = {4210 -- 4237},
    keywords = {coming down from infinity, consistency, exchangeability, Λ-coalescent},
    year = {2023},
    doi = {10.1214/22-AAP1891},
    URL = {https://doi.org/10.1214/22-AAP1891}
}

@article{multitypepoisson,
  title        = {The coalescent structure of uniform and {P}oisson samples from multitype branching processes},
  author       = {Johnston, Samuel G. G. and Lambert, Amaury},
  journal      = {The Annals of Applied Probability},
  volume       = {33},
  number       = {6A},
  pages        = {4820--4857},
  year         = {2023},
  month        = dec,
  doi          = {10.1214/23-AAP1934},
  url          = {https://doi.org/10.1214/23-AAP1934},
  eprint       = {1912.00198},
  archivePrefix= {arXiv},
  primaryClass = {math.PR}
}

@article{xiflemingviot,
  title={A modified lookdown construction for the {X}i-{F}leming-{V}iot process with mutation and populations with recurrent bottlenecks},
  author={Birkner, Matthias and Blath, Jochen and M{\"o}hle, Martin and Steinr{\"u}cken, Matthias and Tams, Johanna},
  journal={Alea},
  volume={6},
  pages={25--61},
  year={2009}
}

@article{kingman1982b,
  title={Exchangeability and the evolution of large populations},
  author={Kingman, John FC and Koch, Giorgio and Spizzichino, Fabio L},
  journal={Exchangeability in probability and statistics},
  volume={91},
  pages={112},
  year={1982},
  publisher={Rome}
}

@article{moehle98,
    ISSN = {00219002},
    URL = {http://www.jstor.org/stable/3215697},
    author = {Möhle, Martin},
    journal = {Journal of Applied Probability},
    number = {2},
    pages = {438--447},
    publisher = {Applied Probability Trust},
    title = {Robustness Results for the Coalescent},
    urldate = {2024-01-03},
    volume = {35},
    year = {1998}
}

@article{moehle99,
    ISSN = {00219002},
    URL = {http://www.jstor.org/stable/3215467},
    author = {Möhle, Martin},
    journal = {Journal of Applied Probability},
    number = {2},
    pages = {446--460},
    publisher = {Applied Probability Trust},
    title = {Weak Convergence to the Coalescent in Neutral Population Models},
    urldate = {2024-01-03},
    volume = {36},
    year = {1999}
}

@article{bbschweinsberg,
    title={The genealogy of branching {B}rownian motion with absorption},
    author={Berestycki, Julien and Berestycki, Nathana{\"e}l and Schweinsberg, Jason R},
    journal={The Annals of Probability},
    volume={41},
    number={2},
    pages={527--618},
    year={2013}
}

@article{xiexamples1,
    author = {Eldon, Bjarki and Wakeley, John},
    title = {Coalescent Processes When the Distribution of Offspring Number Among Individuals Is Highly Skewed},
    journal = {Genetics},
    volume = {172},
    number = {4},
    pages = {2621-2633},
    year = {2006},
    month = {04},
    issn = {1943-2631},
    doi = {10.1534/genetics.105.052175},
    url = {https://doi.org/10.1534/genetics.105.052175},
    eprint = {https://academic.oup.com/genetics/article-pdf/172/4/2621/42069578/genetics2621.pdf},
}

@article{schweinsbergGW,
    title = {Coalescent processes obtained from supercritical {G}alton–{W}atson processes},
    journal = {Stochastic Processes and their Applications},
    volume = {106},
    number = {1},
    pages = {107-139},
    year = {2003},
    issn = {0304-4149},
    doi = {https://doi.org/10.1016/S0304-4149(03)00028-0},
    url = {https://www.sciencedirect.com/science/article/pii/S0304414903000280},
    author = {Jason R Schweinsberg},
    keywords = {Coalescence, Galton–Watson processes, Ancestral processes, Poisson–Dirichlet distribution},
}

@article{xiexamples2,
    title = {A coalescent process with simultaneous multiple mergers for approximating the gene genealogies of many marine organisms},
    journal = {Theoretical Population Biology},
    volume = {74},
    number = {1},
    pages = {104-114},
    year = {2008},
    issn = {0040-5809},
    doi = {https://doi.org/10.1016/j.tpb.2008.04.009},
    url = {https://www.sciencedirect.com/science/article/pii/S0040580908000580},
    author = {Ori Sargsyan and John Wakeley},
    keywords = {Coalescent, Simultaneous multiple mergers, Marine organisms},
}

@article{xiexamples3,
    doi = {10.1088/1742-5468/2013/01/P01006},
    url = {https://dx.doi.org/10.1088/1742-5468/2013/01/P01006},
    year = {2013},
    month = {jan},
    publisher = {IOP Publishing and SISSA},
    volume = {2013},
    number = {01},
    pages = {P01006},
    author = {Brunet, Éric and Derrida, Bernard},
    title = {Genealogies in simple models of evolution},
    journal = {Journal of Statistical Mechanics: Theory and Experiment},
}

@article{diploid1,
  title={Coalescent patterns in diploid exchangeable population models},
  author={M{\"o}hle, Martin and Sagitov, Serik},
  journal={Journal of mathematical biology},
  volume={47},
  number={4},
  pages={337--352},
  year={2003},
  publisher={Springer}
}

@article{diploid2,
    author = {Matthias Birkner and Huili Liu and Anja Sturm},
    title = {Coalescent results for diploid exchangeable population models},
    volume = {23},
    journal = {Electronic Journal of Probability},
    number = {none},
    publisher = {Institute of Mathematical Statistics and Bernoulli Society},
    pages = {1 -- 44},
    keywords = {coalescent with simultaneous multiple collisions, diploid ancestral process, diploid population model},
    year = {2018},
    doi = {10.1214/18-EJP175},
    URL = {https://doi.org/10.1214/18-EJP175}
}

@article{diploid3,
  title        = {Bursts of coalescence within population pedigrees whenever big families occur},
  author       = {Diamantidis, Dimitrios and Fan, Wai-Tong Louis and Birkner, Matthias and Wakeley, John},
  journal      = {Genetics},
  volume       = {227},
  number       = {1},
  pages        = {iyae030},
  year         = {2024},
  month        = may,
  doi          = {10.1093/genetics/iyae030},
  url          = {https://doi.org/10.1093/genetics/iyae030},
  note         = {Originally posted as bioRxiv preprint 10.1101/2023.10.17.562743}
}

@article{sagitovxi2,
    title={Convergence to the coalescent with simultaneous multiple mergers},
    volume={40},
    DOI={10.1239/jap/1067436085},
    number={4},
    journal={Journal of Applied Probability},
    author={Sagitov, Serik},
    year={2003},
    pages={839–854}
}

@article{xicanningsnotexchangeable,
  title        = {Exchangeable coalescents beyond the {C}annings class},
  author       = {Siri-J{\'e}gousse, Arno and Wences, Alejandro H.},
  journal      = {Journal of Mathematical Biology},
  volume       = {90},
  number       = {1},
  pages        = {7},
  year         = {2025},
  doi          = {10.1007/s00285-024-02173-x},
  url          = {https://doi.org/10.1007/s00285-024-02173-x},
  eprint       = {2212.02154},
  archivePrefix= {arXiv},
  primaryClass = {math.PR}
}

@article{lambdadormancy,
    author = {Fernando Cordero and Adri{\'a}n Gonz{\'a}lez Casanova and Jason R Schweinsberg and Maite Wilke-Berenguer},
    title = {\ensuremath{\Lambda}-coalescents arising in a population with dormancy},
    volume = {27},
    journal = {Electronic Journal of Probability},
    number = {none},
    publisher = {Institute of Mathematical Statistics and Bernoulli Society},
    pages = {1 -- 34},
    keywords = {dormancy, seed bank, Λ-coalescent},
    year = {2022},
    doi = {10.1214/22-EJP739},
    URL = {https://doi.org/10.1214/22-EJP739}
}

@article{canningsmultitype,
  title        = {On multi-type {C}annings models and multi-type exchangeable coalescents},
  author       = {M{\"o}hle, Martin},
  journal      = {Theoretical Population Biology},
  volume       = {156},
  pages        = {103--116},
  year         = {2024},
  month        = apr,
  doi          = {10.1016/j.tpb.2024.02.005},
  url          = {https://doi.org/10.1016/j.tpb.2024.02.005},
  eprint       = {2304.05809},
  archivePrefix= {arXiv},
  primaryClass = {math.PR}
}

@article{bolthausensznitman,
    title={On {R}uelle's probability cascades and an abstract cavity method},
    author={Bolthausen, Erwin and Sznitman, Alain-Sol},
    journal={Communications in mathematical physics},
    volume={197},
    pages={247--276},
    year={1998},
    publisher={Springer}
}

@book{etheridgesuperprocesses,
  title={An introduction to superprocesses},
  author={Etheridge, Alison M},
  number={20},
  year={2000},
  publisher={American Mathematical Soc.}
}

@article{transient1,
  title={Asymptotic behaviour of measure-valued critical branching processes},
  author={Etheridge, Alison M},
  journal={Proceedings of the American Mathematical Society},
  volume={118},
  number={4},
  pages={1251--1261},
  year={1993}
}

@article{transient2,
  title={Almost sure convergence of measure-valued branching processes: a critical exponent},
  author={Etheridge, Alison M},
  journal={Proceedings of the Royal Society of Edinburgh Section A: Mathematics},
  volume={124},
  number={4},
  pages={811--823},
  year={1994},
  publisher={Royal Society of Edinburgh Scotland Foundation}
}

@article{kingman1982,
  title={The coalescent},
  author={Kingman, John FC},
  journal={Stochastic processes and their applications},
  volume={13},
  number={3},
  pages={235--248},
  year={1982},
  publisher={Elsevier}
}

@misc{heatkerneltorus,
  title={The heat kernel on the torus},
  author={Bell, Jordan},
  url={https://jordanbell.info/LaTeX/mathematics/heatkerneltorus/heatkerneltorus.pdf},
  year={2014}
}

@article{reversible1,
  title={The infinitely-many-neutral-alleles diffusion model},
  author={Ethier, Stewart N and Kurtz, Thomas G},
  journal={Advances in Applied Probability},
  volume={13},
  number={3},
  pages={429--452},
  year={1981},
  publisher={Cambridge University Press}
}

@article{reversible2,
  title={Quasi-invariance and reversibility in the {F}leming-{V}iot process},
  author={Handa, Kenji},
  journal={Probability theory and related fields},
  volume={122},
  number={4},
  pages={545--566},
  year={2002},
  publisher={Springer}
}

@article{reversible3,
  title={A reversibility problem for {F}leming-{V}iot processes},
  author={Li, Zenghu and Shiga, Tokuzo and Yao, Lihua},
  journal={Electronic Communications in Probability},
  volume={4},
  pages={65--76},
  year={1999},
  publisher={Institute of Mathematical Statistics}
}

@article{reversible4,
    title = {Geometric aspects of finite and infinite-dimensional {F}leming-{V}iot processes},
    author = {Ludger Overbeck and Michael R{\"o}ckner},
    pages = {35--58},
    volume = {5},
    number = {1},
    journal = {Random Operators and Stochastic Equations},
    doi = {doi:10.1515/rose.1997.5.1.35},
    year = {1997},
}

@article{reversible5,
  title={A stochastic equation based on a {P}oisson system for a class of measure-valued diffusion processes},
  author={Shiga, Tokuzo},
  journal={Journal of Mathematics of Kyoto University},
  volume={30},
  number={2},
  pages={245--279},
  year={1990},
  publisher={Duke University Press}
}

@article{sylviedynamics,
  title={Dynamics of lineages in adaptation to a gradual environmental change},
  author={Calvez, Vincent and Henry, Beno{\^\i}t and M{\'e}l{\'e}ard, Sylvie and Tran, Viet Chi},
  journal={Annales Henri Lebesgue},
  volume={5},
  pages={729--777},
  year={2022}
}

@InCollection{Perkins91,
  author    = {Perkins, Edwin A.},
  title     = {Conditional Dawson--Watanabe Processes and Fleming--Viot Processes},
  booktitle = {Seminar on Stochastic Processes, 1991},
  editor    = {Çinlar, E. and Chung, K. L. and Sharpe, M. J. and Fitzsimmons, P. J. and Port, S. and Liggett, T.},
  series    = {Progress in Probability},
  volume    = {29},
  publisher = {Birkhäuser Boston},
  address   = {Boston, MA},
  year      = {1992},
  pages     = {143--156},
  doi       = {10.1007/978-1-4612-0381-0_12}
}

@article{Etheridge91,
author={Etheridge, Alison and March, Peter},
title={A note on superprocesses},
journal={Probability Theory and Related Fields},
year={1991},
month={Jun},
volume={89},
number={2},
pages={141-147},
abstract={Subject to a mild restriction onA, generator of the one-particle motion, we show theA-Fleming-Viot superprocess can be obtained from theA-Dawson-Watanabe superprocess by conditioning the latter to have constant total mass.},
issn={1432-2064},
doi={10.1007/BF01366902},
url={https://doi.org/10.1007/BF01366902}
}

@article{bbgenerator,
  title={Markov bridges: SDE representation},
  author={{\c{C}}etin, Umut and Danilova, Albina},
  journal={Stochastic Processes and their Applications},
  volume={126},
  number={3},
  pages={651--679},
  year={2016},
  publisher={Elsevier}
}

@article{greven,
  title        = {Renormalisation of hierarchically interacting {C}annings processes},
  author       = {Greven, Andreas and den Hollander, Frank and Kliem, Sandra and Klimovsky, Anton},
  journal      = {ALEA: Latin American Journal of Probability and Mathematical Statistics},
  volume       = {11},
  number       = {1},
  pages        = {43--140},
  year         = {2014},
  url          = {https://alea.impa.br/articles/v11/11-02.pdf},
  eprint       = {1209.1856},
  archivePrefix= {arXiv},
  primaryClass = {math.PR}
}

@misc{ruairi,
      title={Genealogies under logistic growth}, 
      author={Ruairi Garrett and Julio Ernesto Nava Trejo},
      year={2025},
      eprint={2509.05217},
      archivePrefix={arXiv},
      primaryClass={math.PR},
      url={https://arxiv.org/abs/2509.05217}, 
}

\end{document}